\newtheorem{definition}{Definition}
\newtheorem{remark}{Remark}
\newtheorem{lemma}{Lemma}
\newtheorem{proposition}{Proposition}
\newtheorem{theorem}{Theorem}
\newtheorem{corollary}{Corollary}
\newcommand{\N}{{\mathbb N}}
\newcommand{\Z}{{\mathbb Z}}
\newcommand{\R}{{\mathbb R}}
\newcommand{\C}{{\mathbb C}}
\newcommand{\T}{{\mathbb T}}
\newcommand{\eps}{{\varepsilon}}
\newcommand{\op}{\text{\rm Op}}
\newcommand{\optilde}{\widetilde{\text{\rm Op}}}
\newcommand{\opeg}{{\text{\rm Op}^{\varepsilon,\gamma}}}
\newcommand{\opteg}{\optilde^{\varepsilon,\gamma}}
\newcommand{\Ng}{| \! | \! |}
\newcommand{\Nd}{| \! | \! |}
\title[Singular pseudodifferential calculus]{Singular pseudodifferential calculus\\ for wavetrains and pulses}
\author[jean-fran\c{c}ois coulombel, olivier gu\`es \& mark williams]{}
\subjclass{Primary: 35S05; Secondary: 47G30}
\keywords{Pseudodifferential operators, oscillatory integrals, symbolic calculus}
\email{jean-francois.coulombel@univ-nantes.fr}
\email{gues@cmi.univ-mrs.fr}
\email{williams@email.unc.edu}
\thanks{Research of J.-F. C. and O. G. was supported by the French Agence Nationale de la Recherche,
contract ANR-08-JCJC-0132-01. Research of M. W. was partially supported by NSF grants number DMS-0701201
and DMS-1001616.}
\begin{document}
\maketitle
\centerline{\scshape Jean-Fran\c{c}ois Coulombel}
\smallskip
{\footnotesize
 \centerline{CNRS, Laboratoire de Math\'ematiques Jean Leray (UMR CNRS 6629)}
 \centerline{Universit\'e de Nantes, 2 rue de la Houssini\`ere, BP 92208, 44322 Nantes Cedex 3, France}
}
\medskip

\centerline{\scshape Olivier Gu\`es}
\smallskip
{\footnotesize
 \centerline{Universit\'e de Provence, Laboratoire d'Analyse, Topologie et Probabilit\'es (UMR CNRS 6632)}
 \centerline{Technop\^ole Ch\^ateau-Gombert, 39 rue F. Joliot Curie, 13453 Marseille Cedex 13, France}
}
\medskip

\centerline{\scshape Mark Williams}
\smallskip
{\footnotesize
 \centerline{Department of Mathematics, University of North Carolina, Chapel Hill}
 \centerline{North Carolina 27599, USA}
}
\bigskip

\begin{abstract}
We generalize the analysis of \cite{williams3} and develop a singular pseudodifferential calculus.
The symbols that we consider do not satisfy the standard decay with respect to the frequency variables.
We thus adopt a strategy based on the Calder\'on-Vaillancourt Theorem. The remainders in the symbolic
calculus are bounded operators on $L^2$, whose norm is measured with respect to some small parameter.
Our main improvement with respect to \cite{williams3} consists in showing a regularization effect for
the remainders. Due to a nonstandard decay in the frequency variables, the regularization takes place
in a scale of anisotropic, and singular, Sobolev spaces. Our analysis allows to extend the results of
\cite{williams3} on the existence of highly oscillatory solutions to nonlinear hyperbolic problems by
dropping the compact support condition on the data. The results are also used in our companion work
\cite{cgw3} to justify nonlinear geometric optics with boundary amplification, which corresponds
to a more singular regime than the one considered in \cite{williams3}. The analysis is carried out
with either an additional real or periodic variable in order to cover problems for pulses or wavetrains
in geometric optics.
\end{abstract}

\tableofcontents

\section{Introduction}
\label{sect1}

Nonlinear geometric optics is devoted to the construction and asymptotic analysis of highly oscillatory 
solutions to some partial differential equations. In the context of nonlinear hyperbolic partial differential 
equations, one of the main issues is to prove existence of a solution to the highly oscillatory problem on 
a time interval that is independent of the (small) wavelength. Such uniform existence results cannot follow 
from a naive application of a standard existence result in some functional space, say a Sobolev space $H^s$, 
because the sequence of data does not remain in a fixed ball of $H^s$. A now classical procedure for proving 
uniform existence results is to work on singular problems with additional variables and to prove uniform energy 
estimates with respect to the singular parameter. This strategy was used in \cite{jmr} for the hyperbolic 
Cauchy problem and adapted in \cite{williams3} to hyperbolic initial boundary value problems. While energy 
estimates in \cite{jmr} relied on symmetry assumptions and integration by parts, those in \cite{williams3} are 
much more difficult to obtain and rely on a suitable singular pseudodifferential calculus.  The operators are 
pseudodifferential in the singular derivative $\partial_x +\beta \, \partial_\theta /\eps$. This calculus is 
well-adapted to boundary value problems that satisfy a maximal energy estimate, that is an $L^2$ estimate 
with no loss derivative. In particular, remainders in the calculus of \cite{williams3} are bounded operators on 
$L^2$ whose norm is controlled with respect to some parameter $\gamma$. This calculus is adapted to the 
situation studied in \cite{williams3} because such terms of order $0$ can be absorbed in the energy estimates 
by choosing $\gamma$ large enough.

In \cite{jfcog}, two of the authors have studied and justified geometric optics expansions with an amplification 
phenomenon for a certain class of {\it linear} hyperbolic boundary value problems. For linear problems, uniform 
existence is no source for concern. In the companion article \cite{cgw3}, we extend the result of \cite{jfcog} 
to {\it semilinear} problems. One major issue in \cite{cgw3} is to prove that the amplification phenomenon 
exhibited in \cite{jfcog} combined with the nonlinearity of the zero order term does not rule out existence 
of a solution on a fixed time interval. Our strategy in \cite{cgw3} is to study a singular problem for which 
we need to prove uniform estimates. As in \cite{jfcog}, the linearized problems in \cite{cgw3} satisfy a weak 
energy estimate with a loss of one tangential derivative.\footnote{In fact, the loss in \cite{cgw3} is a loss of 
one singular derivative $\partial_x +\beta \, \partial_\theta /\eps$ which implies a very bad control with respect 
to $\eps$.} Such estimates with a loss of derivative were originally proved in \cite{jfc} and are optimal, as proved 
in \cite{jfcog}. The amplification of oscillations is more or less equivalent to the loss of derivatives in the estimates. 
Compared with \cite{williams3}, we now need to control our remainders by showing that they are smoothing 
operators, otherwise there will be no way to absorb these errors in the energy estimates. Moreover, since the 
nonlinear problems of \cite{cgw3} are solved by a Nash-Moser procedure where we use smoothing operators, 
it is crucial to extend all the results on the singular calculus of \cite{williams3} by including the following features:
\begin{itemize}
 \item The symbols should not be assumed to be independent of the space variables outside of a compact set.
       Otherwise, we would face a lot of difficulties with the smoothing procedure in the Nash-Moser iteration.

 \item The remainders in the calculus of \cite{williams3} should be smoothing operators when they were merely
       bounded operators on $L^2$ with a small ($O(\gamma^{-1})$, $\gamma$ large) norm in \cite{williams3}. 
       Moreover, we desire more systematic and easily applicable criteria than in \cite{williams3} for determining 
       the mapping properties of remainders.
\end{itemize}

The techniques in \cite{williams3} heavily use the fact that all symbols are either Fourier multipliers or they are 
independent of the space variables outside of a compact set. One major goal here is to get rid of this assumption. 
Furthermore our symbols do not satisfy the standard isotropic decay in the frequency variables (basically there 
is one direction in frequency space in which there is no decay). We thus adopt a different strategy that is based 
on the Calder\'on-Vaillancourt Theorem and more specifically Hwang's proof of this Theorem \cite{hwang}. Our 
motivation for doing so is that we shall only rely on $L^\infty$ bounds for pseudodifferential symbols, while the 
classical proofs seem inapplicable when the frequency decay fails. The situation is even worse here because 
some "expected" results on adjoints or products of singular pseudodifferential operators seem not to hold. For 
instance, asymptotic expansions of symbols do not hold beyond the first term, and even the justification of the 
first term in the expansion depends on the order of the operators. Our final results are thus in some ways rather 
weak, but they seem to be more or less the best one can hope for in a singular calculus. Fortunately, the calculus 
is strong enough to be applicable to a variety of geometric optics problems for both wavetrains and pulses, 
including problems that display an amplification phenomenon.

We thus review the results of \cite{williams3} by improving them along the lines described above. For practical 
purposes, we have found it convenient to first prove general results on $L^2$-boundedness of pseudodifferential 
and oscillatory integral operators. This will be done in Section \ref{sect3} below. The calculus rules proved in 
Section \ref{sect5} are then more or less ``basic'' applications of the general results. We have also found it 
convenient to include in the same article, the results for both the whole space and the periodic framework. 
Results in the case of the whole space are gathered in Sections \ref{sect6}, \ref{sect7}, \ref{sect8} and will 
be used in a future work to deal with pulse-like solutions to hyperbolic boundary value problems, while the 
companion article \cite{cgw3} is devoted to wavetrains.
\newpage

\begin{center}
{\sc Part A: singular pseudodifferential calculus for wavetrains}
\end{center}

\section{Functional spaces}
\label{sect2}

In all this article, functions may be valued in $\C$, $\C^N$ or even in the space of square matrices
${\mathcal M}_N(\C)$ (or $\C^{N \times N}$). Products have to be understood in the sense of matrices when
the dimensions agree. If $M \in {\mathcal M}_N(\C)$, $M^*$ denotes the conjugate transpose of $M$. The
norm of a vector $x \in \C^N$ is $|x|:=(x^* \, x)^{1/2}$. If $x,y$ are two vectors in $\C^N$, we let $x \cdot y$
denote the quantity $\sum_j x_j \, y_j$, which coincides with the usual scalar product in $\R^N$ when $x$
and $y$ are real.

\subsection{Functional spaces on $\R^d$}

The Schwartz space ${\mathcal S}(\R^d)$ of ${\mathcal C}^\infty$ functions with fast decay at infinity is
equipped with the family of semi-norms:
\begin{equation*}
\forall \, J \in \N \, ,\quad \| u \|_{{\mathcal S}(\R^d),J} := \sup_{\alpha \in \N^d, |\alpha| \le J} \,
\sup_{x \in \R^d} \, (1+|x|^2)^{J/2} \, \big| \partial_x^\alpha u \, (x) \big| \, .
\end{equation*}
When equipped with this topology, ${\mathcal S}(\R^d)$ is a Fr\'echet space. We shall say that a sequence
$(u_k)_{k \in \Z}$ in ${\mathcal S}(\R^d)$ has fast decay if for all polynomial $P$, the sequence $(P(k)
\, u_k)_{k \in \Z}$ is bounded in ${\mathcal S}(\R^d)$.

The Fourier transform on ${\mathcal S}(\R^d)$ is defined by
\begin{equation*}
\forall \, f \in {\mathcal S}(\R^d) \, ,\quad \forall \, \xi \in \R^d \, ,\quad \widehat{f} (\xi) :=
\int_{\R^d} {\rm e}^{-i\, x \cdot \xi} \, f(x) \, {\rm d}x \, .
\end{equation*}
In particular, the Fourier transform is a continuous isomorphism on ${\mathcal S}(\R^d)$. It is extended
to the space of temperate distributions ${\mathcal S}'(\R^d)$ in the usual way.
\bigskip

For $s \in \R$, we let $H^s(\R^d)$ denote the Sobolev space
\begin{equation*}
H^s(\R^d) := \left\{ u \in {\mathcal S}'(\R^d) \, / \, (1+|\xi|^2)^{s/2} \, \widehat{u} \in L^2(\R^d)
\right\} \, .
\end{equation*}
It is equipped with the family of norms
\begin{equation*}
\forall \, \gamma \ge 1 \, ,\quad \forall \, u \in H^s(\R^d) \, ,\quad
\| u \|_{s,\gamma}^2 := \dfrac{1}{(2\, \pi)^d} \,
\int_{\R^d} (\gamma^2+|\xi|^2)^s \, \big| \widehat{u}(\xi) \big|^2 \, {\rm d}\xi \, .
\end{equation*}
The norm $\| \cdot \|_{0,\gamma}$ does not depend on $\gamma$ and coincides with the usual $L^2$-norm on
$\R^d$. We shall thus write $\| \cdot \|_0$ instead of $\| \cdot \|_{0,\gamma}$ for the $L^2$-norm on $\R^d$.
For simplicity, we also write $\| \cdot \|_s$ instead of $\| \cdot \|_{s,1}$ for the standard $H^s$-norm
(when the parameter $\gamma$ equals $1$).

\subsection{Functional spaces on $\R^d \times \T$}

We now extend the previous definitions to functions that depend in a periodic way on an additional variable
$\theta$. We shall in some sense ``interpolate'' between Fourier transform and Fourier series. Let us begin
with the definition of the Schwartz space. The Schwartz space ${\mathcal S}(\R^d \times \T)$ is the set of
${\mathcal C}^\infty$ functions $f$ on $\R^d \times \R$, that are $1$-periodic with respect to the last
variable, and with fast decay at infinity in the first variable, that is:
\begin{equation*}
\forall \, \alpha \, ,\beta \in \N^d \, ,\quad \forall \, j \in \N \, ,\quad
\Big( (x,\theta) \in \R^d \times \R \mapsto
x^\alpha \, \partial_x^\beta \, \partial_\theta^j \, f \, (x,\theta) \Big) \in L^\infty (\R^d \times \R) \, .
\end{equation*}
Using the perdiocity of $f$ with respect to its last argument $\theta$, one can replace equivalently
$L^\infty (\R^d \times \R)$ by $L^\infty (\R^d \times [0,1])$. The Schwartz space ${\mathcal S}(\R^d \times \T)$
is equipped with the family of semi-norms
\begin{equation*}
\forall \, J \in \N \, ,\quad \| f \|_{{\mathcal S}(\R^d \times \T),J} :=
\sup_{\substack{(\alpha,j) \in \N^d \times \N \\ |\alpha|+j \le J}} \,
\sup_{(x,\theta) \in \R^d \times [0,1]} \, (1+|x|^2)^{J/2} \,
\big| \partial_x^\alpha \, \partial_\theta^j \, f \, (x,\theta) \big| \, .
\end{equation*}
When equipped with this topology, ${\mathcal S}(\R^d \times \T)$ is a Fr\'echet space. We let ${\mathcal S}'
(\R^d \times \T)$ denote its topological dual, that is the set of continuous linear forms on ${\mathcal S}
(\R^d \times \T)$.

The ``Fourier transform'' on ${\mathcal S}(\R^d \times \T)$ is defined by considering Fourier series
in $\theta$ and Fourier transform in $x$. More precisely, we introduce the $k$-th Fourier coefficient:
\begin{equation*}
\forall \, f \in {\mathcal S}(\R^d \times \T) \, ,\quad \forall \, k \in \Z \, ,\quad
\forall \, x \in \R^d \, ,\quad
c_k(f) (x) := \int_0^1 {\rm e}^{-2\, i\, \pi \, k \, \theta} \, f(x,\theta) \, {\rm d}\theta \, .
\end{equation*}
For all integer $k$, the Fourier coefficient $c_k(f)$ belongs to the standard Schwartz space ${\mathcal S}
(\R^d)$. We can therefore define its Fourier transform $\widehat{c_k(f)}$. In all what follows, the sequence
$(\widehat{c_k(f)})_{k \in \Z}$ is called the Fourier transform of $f$. When we only consider Fourier series
in $\theta$, we use the notation $c_k$ to denote the $k$-th Fourier coefficient. When we only consider Fourier
transform with respect to the first variable $x \in \R^d$, we use the classical ``hat'' notation introduced
previously.

The reader can check that the Fourier transform $(\widehat{c_k(f)})_{k \in \Z}$ of a function $f \in {\mathcal S}
(\R^d \times \T)$ is a sequence in ${\mathcal S}(\R^d)$ with fast decay. The inverse Fourier transform is defined
through the formula:
\begin{equation*}
f(x,\theta) = \sum_{k \in \Z}
{\mathcal F}^{-1} (\widehat{c_k(f)}) (x) \, {\rm e}^{2\, i\, \pi \, k \, \theta} \, ,
\end{equation*}
where ${\mathcal F}^{-1}$ stands for the inverse Fourier transform in ${\mathcal S}(\R^d)$. To summarize, the
Fourier transform is an isomorphism between ${\mathcal S}(\R^d \times \T)$ and the sequences $(g_k)_{k \in \Z}$
in ${\mathcal S}(\R^d)$ with fast decay.

Let us now extend the Fourier transform to the set of temperate distributions ${\mathcal S}'(\R^d \times \T)$.
For $u \in {\mathcal S}'(\R^d \times \T)$, the Fourier coefficients $c_k(u) \in {\mathcal S}'(\R^d)$ are defined
by the formula
\begin{equation*}
\forall \, k \in \Z \, ,\quad \forall \, g \in {\mathcal S}(\R^d) \, ,\quad
\langle c_k(u),g \rangle_{{\mathcal S}'(\R^d),{\mathcal S}(\R^d)} :=
\Big\langle u,g(x) \, {\rm e}^{-2\, i \, \pi \, k \, \theta}
\Big\rangle_{{\mathcal S}'(\R^d \times \T),{\mathcal S}(\R^d \times \T)} \, .
\end{equation*}
It is straightforward to check that there exists a constant $C$ and an integer $J$ such that for all
$k \in \Z$, there holds the continuity estimate
\begin{equation*}
\forall \, g \in {\mathcal S}(\R^d) \, ,\quad
\big| \langle c_k(u),g \rangle_{{\mathcal S}'(\R^d),{\mathcal S}(\R^d)} \big| \le C \, (1+|k|)^J \,
\| g \|_{{\mathcal S}(\R^d),J} \, .
\end{equation*}
The Fourier transform of $u$ is the sequence $(\widehat{c_k(u)})_{k \in \Z}$ in ${\mathcal S}'(\R^d)$.
For an appropriate constant that is still denoted $C$ and a possibly larger integer that is still denoted
$J$, there holds the continuity estimate
\begin{equation}
\label{fourierS'}
\forall \, g \in {\mathcal S}(\R^d) \, ,\quad
\big| \langle \widehat{c_k(u)},g \rangle_{{\mathcal S}'(\R^d),{\mathcal S}(\R^d)} \big| \le C \,
(1+|k|)^J \, \| g \|_{{\mathcal S}(\R^d),J} \, .
\end{equation}
The continuity estimate \eqref{fourierS'} is uniform with respect to $k \in \Z$: the constant $C$ and the
integer $J$ are independent of $k$. Moreover, the Fourier transform
\begin{equation*}
u \in {\mathcal S}'(\R^d \times \T) \longmapsto (\widehat{c_k(u)})_{k \in \Z} \in {\mathcal S}'(\R^d)^\Z \, ,
\end{equation*}
is an isomorphism between ${\mathcal S}'(\R^d \times \T)$ and the sequences in ${\mathcal S}'(\R^d)$ that
satisfy a uniform estimate of the type \eqref{fourierS'}. The inverse Fourier transform is defined as follows:
for a given sequence $(u_k)_{k \in \Z}$ in ${\mathcal S}'(\R^d)$ satisfying a uniform continuity estimate with
respect to $k$, we define an element $v$ of ${\mathcal S}'(\R^d \times \T)$ by the formula
\begin{equation*}
\forall \, f \in {\mathcal S}(\R^d \times \T) \, ,\quad
\langle v,f \rangle_{{\mathcal S}'(\R^d \times \T),{\mathcal S}(\R^d \times \T)} := \sum_{k \in \Z}
\big\langle u_{-k},{\mathcal F}^{-1}(c_k(f)) \big\rangle_{{\mathcal S}'(\R^d),{\mathcal S}(\R^d)} \, ,
\end{equation*}
where ${\mathcal F}^{-1}$ denotes the inverse Fourier transform in ${\mathcal S}(\R^d)$. Indeed the
reader can check first that $v$ is well-defined, that it is a continuous linear form with respect to the
topology of ${\mathcal S}(\R^d \times \T)$ and that $\widehat{c_k(v)}$ equals $u_k$ for all $k \in \Z$.
\bigskip

For $s \in \R$, the Sobolev space $H^s(\R^d \times \T)$ is defined by
\begin{multline*}
H^s(\R^d \times \T) := \Big\{
u \in {\mathcal S}'(\R^d \times \T) \, / \, (c_k(u))_{k \in \Z} \in H^s(\R^d)^\Z \\
\text{\rm and} \quad \sum_{k \in \Z} \int_{\R^d} (1+k^2+|\xi|^2)^s \, \big| \widehat{c_k(u)}(\xi) \big|^2
\, {\rm d}\xi <+\infty \Big\} \, .
\end{multline*}
It is equipped with the family of norms
\begin{equation*}
\forall \, \gamma \ge 1 \, ,\quad \forall \, u \in H^s(\R^d \times \T) \, ,\quad
\| u \|_{s,\gamma}^2 := \dfrac{1}{(2\, \pi)^d} \,
\sum_{k \in \Z} \int_{\R^d} (\gamma^2+k^2+|\xi|^2)^s \, \big| \widehat{c_k(u)}(\xi) \big|^2 \, {\rm d}\xi \, .
\end{equation*}
The norm $\| \cdot \|_{0,\gamma}$ does not depend on $\gamma$ and coincides with the usual $L^2$-norm on
$\R^d \times \T$. We shall thus write $\| \cdot \|_0$ instead of $\| \cdot \|_{0,\gamma}$ for the $L^2$-norm
on $\R^d \times \T$. More precisely, if $f \in L^2(\R^d \times \T)$, then the Fourier coefficient
\begin{equation*}
c_k(f)(x) := \int_0^1 {\rm e}^{-2\, i\, \pi \, k \, \theta} \, f(x,\theta) \, {\rm d}\theta
\end{equation*}
is well-defined for almost every $x \in \R^d$, and $c_k(f)$ belongs to $L^2(\R^d)$ (use Cauchy-Schwarz
inequality). The Parseval-Bessel equality and Plancherel's Theorem give
\begin{equation*}
\int_{\R^d \times [0,1]} |f(x,\theta)|^2 \, {\rm d}x \, {\rm d}\theta
=\sum_{k \in \Z} \int_{\R^d} |c_k(f)(x)|^2 \, {\rm d}x =\| f \|_0^2 \, .
\end{equation*}
In what follows, we always identify the space $L^2(\R^d \times \T)$ and Fourier series in $\theta \in \R$
whose coefficients belong to $\ell_2(\Z;L^2(\R^d))$. For simplicity, we also write $\| \cdot \|_s$ instead
of $\| \cdot \|_{s,1}$ for the standard $H^s$-norm on $\R^d \times \T$.

\begin{remark}
\label{rem1}
Observe that our notation for the norm $\| \cdot \|_{s,\gamma}$ is consistent with the notation for functions
that are defined on $\R^d$. More precisely, if $u \in H^s(\R^d)$, then one can also consider $u$ as an element
of $H^s(\R^d \times \T)$ that does not depend on $\theta$, meaning that only the $0$-th harmonic in $\theta$
occurs ($c_0(u)=u$ and $c_k(u)=0$ if $k \neq 0$). The norms of $u$ in $H^s(\R^d)$ and $H^s(\R^d \times \T)$
coincide. This is the reason why we omit to write the underlying space $\R^d$ or $\R^d \times \T$ in the
definition of the norms $\| \cdot \|_{s,\gamma}$.
\end{remark}

We now introduce the ``singular'' Sobolev spaces that we shall widely use in this article. From now on, we
consider a vector $\beta \in \R^d \setminus \{ 0\}$ that is fixed once and for all. For $s \in \R$ and $\eps
\in \, ]0,1]$, the anisotropic Sobolev space $H^{s,\eps}(\R^d \times \T)$ is defined by
\begin{multline*}
H^{s,\eps}(\R^d \times \T) := \Big\{ u \in {\mathcal S}'(\R^d \times \T) \, / \,
\forall \, k \in \Z \, ,\quad \widehat{c_k(u)} \in L^2_{\rm loc}(\R^d) \\
\text{\rm and} \quad
\sum_{k \in \Z} \int_{\R^d} \left( 1+\left| \xi+\dfrac{2\, \pi \, k \, \beta}{\eps} \right|^2 \right)^s
\, \big| \widehat{c_k(u)}(\xi) \big|^2 \, {\rm d}\xi <+\infty \Big\} \, .
\end{multline*}
It is equipped with the family of norms
\begin{equation}
\label{defnormeepsgamma}
\forall \, \gamma \ge 1 \, ,\quad \forall \, u \in H^{s,\eps}(\R^d \times \T) \, ,\quad
\| u \|_{H^{s,\eps},\gamma}^2 := \dfrac{1}{(2\, \pi)^d} \, \sum_{k \in \Z}
\int_{\R^d} \left( \gamma^2 +\left| \xi+\dfrac{2\, \pi \, k \, \beta}{\eps} \right|^2 \right)^s
\, \big| \widehat{c_k(u)}(\xi) \big|^2 \, {\rm d}\xi \, .
\end{equation}
Let us observe that the definition of the space $H^{s,\eps}$ depends on $\eps$, and there is no obivous inclusion
$H^{s,\eps_1} \subset H^{s,\eps_2}$ if $\eps_1 \le \eps_2$ or $\eps_1 \ge \eps_2$. However, for a fixed $\eps>0$,
the norms $\| \cdot \|_{H^{s,\eps},\gamma_1}$ and $\| \cdot \|_{H^{s,\eps},\gamma_2}$ are equivalent. In particular,
\eqref{defnormeepsgamma} defines a norm on the space $H^{s,\eps} (\R^d \times \T)$ defined above. When $m$ is
an integer, the space $H^{m,\eps} (\R^d \times \T)$ coincides with the space of functions $u \in L^2 (\R^d \times \T)$
such that the derivatives, in the sense of distributions,
\begin{equation*}
\left( \partial_{x_1} +\dfrac{\beta_1}{\eps} \, \partial_\theta \right)^{\alpha_1} \dots
\left( \partial_{x_d} +\dfrac{\beta_d}{\eps} \, \partial_\theta \right)^{\alpha_d} \, u \, ,\quad
\alpha_1+\dots+\alpha_d \le m \, ,
\end{equation*}
belong to $L^2 (\R^d \times \T)$. In the definition of the norm $\| \cdot \|_{H^{m,\eps},\gamma}$, one power of
$\gamma$ counts as much as one derivative.

In what follows, we shall also make use of the spaces ${\mathcal C}^k_b(\R^d \times \T)$, $k \in \N$: these
are the spaces of continuous and bounded functions on $\R^d \times \R$ that are $1$-periodic with respect to
their last argument, whose derivatives up to the order $k$ exist, are continuous and bounded.

\section{The main $L^2$ continuity results}
\label{sect3}

Our goal is to develop in Section \ref{sect4} a singular symbolic calculus on $\R^d \times \T$. This Section
will give the basic results to achieve this goal. As in \cite{williams3}, the symbols that we shall consider
do not satisfy the standard decay estimates in the frequency variable. Consequently, it will be more difficult
to show that remainders in the symbolic calculus are smoothing operators. As a matter of fact, this property
will hold only in a framework of the anisotropic Sobolev spaces defined above. A more embarassing consequence
of this non-decay is that there seems to be little hope for developing a paradifferential version of the calculus
below. More precisely, in the paradifferential calculus theory (see e.g. \cite{metivier3}), symbols have a fixed,
say $W^{k,\infty}$, regularity in $x$. To cope with this small regularity, one introduces an isotropic frequency
cut-off in the space variable. The regularized symbol belongs to the class $S^m_{1,1}$ and satisfies a suitable
spectral condition, which yields continuity results for the associated pseudodifferential operator. This
strategy applies when symbols are ${\mathcal C}^\infty$ in $\xi$ with the standard decay property (each
derivative in $\xi$ yields one negative power of $|\xi|$). However, when this frequency decay does not hold or
when it holds only in an anisoptropic way, the smoothing procedure yields symbols in the class $S^m_{0,1}$ where
derivatives in $x$ are not balanced anylonger by derivatives in $\xi$. For such symbols, even with an appropriate
spectral condition, there seems to be very little hope for continuity results in Sobolev spaces.

The remarks above are the main reason why we base our approach on the Calder\'on-Vaillancourt Theorem
\cite{calderonvaillancourt}. More precisely, we shall prove continuity results on $L^2(\R^d \times \T)$
with symbols satisfying $L^\infty$ bounds (no decay in the frequency variable is needed, see Theorem
\ref{thm1} below). This is the same strategy as in \cite{williams3}. However we shall use more elaborate
tools in order to get some refined estimates. Our goal is to get rid of the compact support assumptions
in \cite{williams3}, and to lower the regularity required on the symbols whenever this is possible.
We refer the reader to \cite{cordes,coifmanmeyer,bourdaudmeyer,hwang} for some background on the
Calder\'on-Vaillancourt Theorem and some generalizations. Here we clarify how these results can be
adapted to a mixed situation where part of the space variables lie in $\R^d$ while the other space
variables lie in the torus $\T$. As far as we know, all previous versions were restricted to the
case of $\R^d$ or to the case of the torus. Our first continuity result is:

\begin{theorem}
\label{thm1}
Let $\sigma : \R^d \times \T \times \R^d \times \Z \rightarrow \C^{N \times N}$ be a continuous
function\footnote{Here and in all what follows, a function $\sigma$ that is defined on ${\mathcal O} \times \Z$
is said to be continuous if for all $k \in \Z$, $\sigma (\cdot,k)$ is continuous on ${\mathcal O}$. The set
${\mathcal O}$ will represent either $\R^d$, or $\R^d \times \T$ or analogous sets. We adopt the same convention
for differentiability properties.} that satisfies the property: for all $\alpha,\beta \in \{ 0,1\}^d$ and for all
$j \in \{ 0,1\}$, the derivative (in the sense of distributions) $\partial_x^\alpha \, \partial_\theta^j \,
\partial_\xi^\beta \, \sigma$ belongs to $L^\infty (\R^d \times \T \times \R^d \times \Z)$.

For $u \in {\mathcal S}(\R^d \times \T;\C^N)$, let us define
\begin{equation*}
\forall \, (x,\theta) \in \R^d \times \T \, ,\quad
\op (\sigma) \, u \, (x,\theta) := \dfrac{1}{(2\, \pi)^d} \, \sum_{k \in \Z} \int_{\R^d}
{\rm e}^{i \, x \cdot \xi} \, {\rm e}^{2 \, i \, \pi \, k \, \theta} \, \sigma (x,\theta,\xi,k) \,
\widehat{c_k(u)}(\xi) \, {\rm d}\xi \, .
\end{equation*}
Then $\op (\sigma)$ extends as a continuous operator on $L^2(\R^d \times \T;\C^N)$. More precisely, there
exists a numerical constant $C$, that only depends on $d$ and $N$, such that for all $u \in {\mathcal S}
(\R^d \times \T;\C^N)$, there holds
\begin{equation}
\label{continuiteL2-1}
\| \op (\sigma) \, u \|_0 \le C \, \Ng \sigma \Nd \, \| u \|_0 \, ,\, \quad
\text{\rm with } \Ng \sigma \Nd := \sup_{\alpha,\beta \in \{ 0,1\}^d} \, \sup_{j \in \{ 0,1\}} \,
\left\| \partial_x^\alpha \, \partial_\theta^j \, \partial_\xi^\beta \, \sigma
\right\|_{L^\infty (\R^d \times \T \times \R^d \times \Z)} \, .
\end{equation}
\end{theorem}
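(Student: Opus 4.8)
The plan is to adapt Hwang's proof of the Calder\'on--Vaillancourt theorem to the mixed setting $\R^d \times \T$, treating the discrete frequency $k \in \Z$ in parallel with the continuous frequency $\xi \in \R^d$. First I would write $\op(\sigma)$ as an integral operator: expanding $\widehat{c_k(u)}(\xi) = \int_{\R^d \times [0,1]} {\rm e}^{-i y \cdot \xi} {\rm e}^{-2 i \pi k \eta} u(y,\eta)\, {\rm d}y\, {\rm d}\eta$, we get
\begin{equation*}
\op(\sigma) u\,(x,\theta) = \frac{1}{(2\pi)^d} \sum_{k \in \Z} \int_{\R^d} \int_{\R^d \times [0,1]} {\rm e}^{i(x-y)\cdot\xi}\, {\rm e}^{2 i \pi k (\theta-\eta)}\, \sigma(x,\theta,\xi,k)\, u(y,\eta)\, {\rm d}y\, {\rm d}\eta\, {\rm d}\xi\,,
\end{equation*}
so that $\op(\sigma) u = \sum_k \int_{\R^d} \op(\sigma_{\xi,k}) u$ where each ``frozen'' piece has kernel $K_{\xi,k}(x,\theta,y,\eta) = (2\pi)^{-d} {\rm e}^{i(x-y)\cdot\xi} {\rm e}^{2 i \pi k(\theta-\eta)} \sigma(x,\theta,\xi,k)$. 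The idea of Hwang's argument is to integrate by parts in the \emph{physical} variables: introduce the operators $L_x = (1+|x-y|^2)^{-1}(1 - \Delta_y')$-type multipliers (acting on the $\xi$-integral) and $L_\theta = (1 + \text{(something in }\theta-\eta))^{-1}$-type multipliers (acting on the $k$-sum via summation by parts), using that ${\rm e}^{i(x-y)\cdot\xi} = (1+|x-y|^2)^{-1}(1 - \Delta_\xi) {\rm e}^{i(x-y)\cdot\xi}$ and that ${\rm e}^{2 i \pi k(\theta-\eta)}$ satisfies a discrete analogue with the second-order difference in $k$. Each such integration by parts moves a $\partial_\xi$ or a discrete difference in $k$ onto $\sigma$ and costs a factor controlled by $\Ng \sigma \Nd$, while producing the decay factors $(1+|x-y|^2)^{-1}$ and the discrete-decay factor that makes the resulting object summable/integrable.

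Concretely, after integrating by parts I would obtain an estimate of the form
\begin{equation*}
\big\| \op(\sigma) u \big\|_0 \le C \, \Ng \sigma \Nd \, \Big\| \int_{\R^d} \sum_{k} \big| \widetilde{K}_{\xi,k} \big| * \, |u| \Big\|_0
\end{equation*}
where the modified kernel $\widetilde K$ has an integrable/summable majorant in the difference variables. The cleanest route is to reduce, via Schur's lemma (or Young's inequality for the convolution structure in $(x-y, \theta-\eta)$), to checking that $\sup_{x,\theta} \int \sum_k \int |\widetilde K_{\xi,k}(x,\theta,y,\eta)| \, {\rm d}y\, {\rm d}\eta$ and its transpose are finite, with bound $C \Ng\sigma\Nd$. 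Because only first derivatives $\partial_x^\alpha, \partial_\theta^j, \partial_\xi^\beta$ with $\alpha,\beta \in \{0,1\}^d$, $j \in \{0,1\}$ are available, one must organize the integrations by parts so that no variable is differentiated more than once; this is exactly the point of Hwang's clever choice to integrate by parts ``half as much'' in each of two groups of variables and then apply Cauchy--Schwarz (Plancherel) to split the kernel symmetrically. I would follow that template: split $\R^d_\xi$ (and $\Z$) into the relevant one-dimensional factors, perform a single integration by parts in each chosen direction, then use Plancherel in an auxiliary variable to glue the estimate together.

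The main obstacle I anticipate is the discrete variable $k \in \Z$: summation by parts produces second-order differences $\sigma(\cdot,k+1) - 2\sigma(\cdot,k) + \sigma(\cdot,k-1)$ rather than a derivative $\partial_\xi^2$, and one only has \emph{first} differences (from $j \in \{0,1\}$, i.e. one $\partial_\theta$) controlled after the discrete Fourier/integration-by-parts step. I would handle this by using that ${\rm e}^{2i\pi k(\theta-\eta)}$ with $\theta-\eta$ ranging over a bounded interval behaves like the continuous exponential localized to a period: rather than summing by parts in $k$ directly on the kernel, I would first use the periodicity to write the $k$-sum as an honest $\theta$-integration-by-parts via the identity $(1 - (2\pi)^{-2}\partial_\theta^2){\rm e}^{2i\pi k\theta} = (1+k^2){\rm e}^{2i\pi k\theta}$, which converts the needed $k$-decay into one $\partial_\theta$ derivative on $\sigma$ (hence the appearance of $j \le 1$) at the cost of a $(1+k^2)^{-1}$ factor that is summable in $k$. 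Combined with the analogous $(1+|\xi|^2)$ trick in $\xi$ — but here we only get $(1+|x-y|^2)^{-1}$, not enough for integrability in $\R^d$ when $d \ge 1$ — one sees why Hwang's two-group splitting with an intermediate Plancherel step, rather than a naive Schur estimate, is essential; carrying that bookkeeping through in the mixed $\R^d \times \T$ setting, keeping every constant depending only on $d$ and $N$, is the delicate part. Everything else (density of $\mathcal S$, matrix-valued symbols handled componentwise with the norm $|x| = (x^*x)^{1/2}$) is routine.
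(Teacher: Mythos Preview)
Your instinct to follow Hwang is right, and you correctly flag the discrete variable $k$ as the delicate point. But there is a genuine gap in how you propose to handle it. You write the identity $(1-(2\pi)^{-2}\partial_\theta^2)\,e^{2i\pi k\theta}=(1+k^2)\,e^{2i\pi k\theta}$ and claim this ``converts the needed $k$-decay into one $\partial_\theta$ derivative on $\sigma$''. It does not: integrating that identity by parts lands \emph{two} $\theta$-derivatives on $\sigma$, and only $j\le 1$ is available. So as written your mechanism for $k$-decay fails, and the earlier Schur/Young route you sketch (which would need an absolutely summable majorant in $k$) cannot be rescued this way.

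The paper's resolution is exactly the insight you are circling but not landing on. Work in duality from the start, $I=\int \op(\sigma)u\cdot v$, and expand $v=\sum_{\ell}c_\ell(v)\,e^{2i\pi\ell\theta}$. Then use the \emph{first-order} identity
\[
e^{2i\pi(k+\ell)\theta}=\dfrac{1}{1+2i\pi(k+\ell)}\,(1+\partial_\theta)\,e^{2i\pi(k+\ell)\theta}\,,
\]
so a single $\theta$-integration by parts costs only one $\partial_\theta$ on $\sigma$ and produces the weight $(1+2i\pi(k+\ell))^{-1}$. This is not summable in $k$, but it is \emph{square}-summable, and that is precisely what Hwang's scheme needs: after the analogous first-order integrations by parts in each $\xi_j$ (gaining $\varphi(x-y)=\prod_j(1+i(x_j-y_j))^{-1}$) and in each $x_j$ (gaining $\varphi(\xi+\eta)$ after Fourier-expanding $v$ in $x$), one applies Cauchy--Schwarz over $(x,\theta,\xi,k)$ and controls each factor by Plancherel/Parseval. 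The key conceptual point you are missing is that the Hwang architecture requires only $\ell^2$-type decay in each direction, never $\ell^1$; once you see that, one $\partial_\theta$ suffices and no finite differences in $k$ are needed at all.
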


The proof of Theorem \ref{thm1} below is analogous to the proof of \cite[Theorem 2]{hwang}. We emphasize
that in the assumptions on the symbol $\sigma$, no finite difference with respect to the index $k \in \Z$
appears. This is in sharp contrast with for instance the paradifferential calculus on the torus developed
in \cite{delortszeftel}. The fact that we do not need to estimate finite differences in $k$ will be helpful
in Section \ref{sect4} when we consider singular pseudodifferential operators.

\begin{proof}[Proof of Theorem \ref{thm1}]
The proof of Theorem \ref{thm1} combines two ingredients. First, the main estimate \eqref{continuiteL2-1}
holds when $\sigma$ is smooth, say ${\mathcal C}^\infty$, with compact support in all variables. Second,
it is possible to approximate a symbol $\sigma$ satisfying the assumptions of Theorem \ref{thm1} by a
sequence $(\sigma_p)_{p \in \N}$ of smooth symbols with $\sup_p \Ng \sigma_p \Nd$ controled by $\Ng \sigma \Nd$.
The corresponding pseudodifferential operators $\op (\sigma_p)$ converge in a weak sense towards $\op (\sigma)$.

For smooth symbols with compact support, integration by parts and derivation under the integral show that
$\op(\sigma) \, u$ belongs to ${\mathcal S} (\R^d \times \T)$ if $u$ does. In particular, $\op(\sigma) \,
u$ belongs to $L^2 (\R^d \times \T)$. This integrability property is not so clear under the general
assumptions of Theorem \ref{thm1}.

Let us state more precisely our first point.

\begin{lemma}
\label{lem1}
Let $\sigma \in {\mathcal C}^\infty_0 (\R^d \times \T \times \R^d \times \Z; \C^{N \times N})$, that is:
\begin{itemize}
 \item[{\rm (i)}] $\sigma (\cdot,\cdot,\cdot,k) \equiv 0$ except for a finite number of integers $k$,
 \item[{\rm (ii)}] $\sigma (\cdot,\cdot,\cdot,k)$ is a ${\mathcal C}^\infty$ function on $\R^d \times \T
      \times \R^d$ for all $k \in \Z$, with compact support in its first and third variables.
\end{itemize}
Then for all $u \in {\mathcal S}(\R^d \times \T;\C^N)$, $\op (\sigma) \, u$ belongs to ${\mathcal S}
(\R^d \times \T;\C^N)$ and the estimate \eqref{continuiteL2-1} holds with a numerical constant $C$
that is independent of $\sigma$ and $u$.
\end{lemma}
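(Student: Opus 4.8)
The plan is to adapt Hwang's proof of the Calder\'on--Vaillancourt theorem (see \cite{hwang}) to the mixed setting $\R^d \times \T$, treating the discrete frequency variable $k \in \Z$ as an extra ``frequency'' coordinate that behaves like the continuous variables $\xi$ but for which we only ever need zeroth-order control (no finite differences). First I would reduce, by density and the usual $TT^*$ or direct argument, to proving the bound $\| \op(\sigma)\, u \|_0 \le C \, \Ng \sigma \Nd \, \| u \|_0$ for $u \in {\mathcal S}(\R^d \times \T;\C^N)$ with $\sigma$ as in the lemma, so that all manipulations (Fubini, integration by parts, interchange of sum and integral) are fully justified because $\op(\sigma)\, u \in {\mathcal S}$ and the $k$-sum is finite.

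The core computation is to write the Schwartz kernel of $\op(\sigma)$ and exploit oscillation. Explicitly, for $u \in {\mathcal S}(\R^d \times \T)$ one has
\begin{equation*}
\op(\sigma)\, u\,(x,\theta) = \frac{1}{(2\pi)^d} \sum_{k \in \Z} \int_{\R^d} \int_0^1 \sum_{\ell \in \Z} \int_{\R^d} {\rm e}^{i(x-y)\cdot\xi}\, {\rm e}^{2 i \pi (k-\ell)(\theta-\eta)}\, \sigma(x,\theta,\xi,k)\, {\rm e}^{2 i \pi \ell \eta - 2 i \pi \ell \eta}\cdots
\end{equation*}
— more cleanly, I would substitute $\widehat{c_k(u)}(\xi) = \int {\rm e}^{-i y \cdot \xi} c_k(u)(y)\, {\rm d}y$ and $c_k(u)(y) = \int_0^1 {\rm e}^{-2 i \pi k \eta} u(y,\eta)\, {\rm d}\eta$ to obtain a kernel representation
\begin{equation*}
\op(\sigma)\, u\,(x,\theta) = \int_{\R^d \times [0,1]} K(x,\theta,y,\eta)\, u(y,\eta)\, {\rm d}y\, {\rm d}\eta, \qquad K(x,\theta,y,\eta) = \frac{1}{(2\pi)^d} \sum_{k} {\rm e}^{2 i \pi k(\theta-\eta)} \int_{\R^d} {\rm e}^{i(x-y)\cdot\xi} \sigma(x,\theta,\xi,k)\, {\rm d}\xi.
\end{equation*}
Following Hwang, I would not estimate $K$ pointwise but instead insert a partition of unity in $\xi$ (dyadic or, as Hwang does, a smooth partition subordinate to unit cubes) and integrate by parts: the factor $(1 + |x-y|^2)^{-1}(1 - \Delta_\xi)$ applied to ${\rm e}^{i(x-y)\cdot\xi}$ produces decay in $x-y$ at the cost of two $\xi$-derivatives on $\sigma$, which is exactly why only $\beta \in \{0,1\}^d$ is needed; similarly the factor $(1 + N^2 \sin^2(\pi(\theta-\eta)))^{-1}$ combined with one discrete step in $k$ — here I would instead use that summation by parts in $k$ is \emph{not} wanted, so I keep $k$ fixed and use the periodic-kernel trick of writing $\sum_k {\rm e}^{2 i \pi k(\theta-\eta)} a_k$ and bounding via $\ell^1$-type estimates on the smoothed symbol in $\theta$, using $\partial_\theta^j \sigma$ for $j \in \{0,1\}$ to gain decay in the ``$\theta$-frequency'' through the Poisson-summation / periodization structure. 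Then I would apply Schur's test: $\sup_{x,\theta} \int |K|\, {\rm d}y\, {\rm d}\eta$ and $\sup_{y,\eta} \int |K|\, {\rm d}x\, {\rm d}\eta$ are both bounded by $C \Ng \sigma \Nd$ because the $x$-derivatives hitting $\sigma$ via the operator-norm-preserving manipulations only ever cost $\partial_x^\alpha$ with $\alpha \in \{0,1\}^d$.

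The main obstacle, I expect, is handling the periodic variable $\theta$ and the discrete dual variable $k$ simultaneously with the continuous pair $(x,\xi)$ while keeping the constant $C$ dependent only on $d$ and $N$ and, crucially, avoiding any finite-difference quantity in $k$. The clean way around this is to note that for the kernel bound one can first freeze $k$, obtain for each fixed $k$ an $\R^d$-kernel whose Schur norm is $\le C \sup_{\alpha,\beta}\|\partial_x^\alpha \partial_\xi^\beta \sigma(\cdot,\cdot,\cdot,k)\|_{L^\infty}$ \emph{uniformly in $k$}, and then sum over $k$ against the $\theta$-oscillation using only the one allowed $\theta$-derivative to control the periodic series — i.e. the roles of ``gaining decay in $k$'' is played entirely by $\partial_\theta$, never by differencing in $k$. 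I would organize the integration-by-parts bookkeeping as two independent applications of Hwang's lemma (one in $(x,\xi) \in \R^d \times \R^d$, one in $(\theta, k) \in \T \times \Z$ where the ``symbol'' is genuinely smooth and $1$-periodic so the torus version of Calder\'on--Vaillancourt applies), and combine them by a tensor-type Schur estimate. The remaining steps — verifying $\op(\sigma)\, u \in {\mathcal S}$ for smooth compactly supported $\sigma$ via differentiation under the (finite) sum and integral, and checking that all the integrations by parts are legitimate — are routine given hypotheses (i)--(ii).
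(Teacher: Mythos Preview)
Your proposal has a genuine gap: Schur's test cannot yield the Calder\'on--Vaillancourt bound. Concretely, take $\sigma \equiv I$; then $\Ng\sigma\Nd = 1$ but $\op(\sigma)$ is the identity, whose kernel is a Dirac mass with infinite Schur norm. More structurally: with only one $\xi_j$-derivative per coordinate (this is what $\beta \in \{0,1\}^d$ allows), integration by parts in $\xi$ produces the weight $\prod_j (1+i(x_j-y_j))^{-1}$, which lies in $L^2(\R^d)$ but \emph{not} in $L^1(\R^d)$, and there is no remaining integrability in $\xi$ to compensate. Your suggested factor $(1+|x-y|^2)^{-1}(1-\Delta_\xi)$ also miscounts derivatives: $(1-\Delta_\xi)$ needs $\partial_{\xi_j}^2$, which $\Ng\sigma\Nd$ does not control. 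The ``freeze $k$, bound uniformly, then sum'' idea fails too, since summing $O(1)$ operators over the active $k$'s gives a bound depending on their number, not on $\Ng\sigma\Nd$; and the operator is not a tensor product in $(x,\xi)$ and $(\theta,k)$ because $\sigma(x,\theta,\xi,k)$ couples all four variables, so a ``tensor-type Schur estimate'' has no clear meaning. You invoke Hwang's name, but what you actually describe is a kernel/Schur argument, not Hwang's method.

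The paper's proof \emph{is} Hwang's method, which is a duality argument rather than a kernel estimate: pair $\op(\sigma)u$ with a second test function $v \in {\mathcal S}(\R^d\times\T)$, then integrate by parts twice---once in $\xi$ via $e^{i(x-y)\cdot\xi} = \varphi(x-y)\prod_j(1+\partial_{\xi_j})e^{i(x-y)\cdot\xi}$ with $\varphi(y) = \prod_j(1+iy_j)^{-1}$ (this produces an auxiliary function $U(x,\xi,k)$ built from $c_k(u)$), and once in $(x,\theta)$ after expanding $v$ in Fourier series/transform, via the weight $\varphi(\xi+\eta)\,(1+2i\pi(k+\ell))^{-1}$ (producing a second auxiliary $V(x,\theta,\xi,k)$ built from $v$). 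Then Cauchy--Schwarz over the full integral and $k$-sum, followed by Plancherel and Parseval on each factor separately, gives $|I| \le C\,\Ng\sigma\Nd\,\|u\|_0\,\|v\|_0$. The decisive point is that $\varphi \in L^2$ suffices because Plancherel converts convolution against $\varphi$ back into an $L^2$ norm; Schur's test, which requires $L^1$ kernels, cannot access this mechanism.
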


\begin{proof}[Proof of Lemma \ref{lem1}]
We make the proof in the case $N=1$. When $\sigma$ takes its values in the space of matrices $\C^{N \times N}$,
the result applies for each component. Following \cite{hwang}, it is sufficient to prove an estimate of the form
\begin{equation}
\label{lem1-equation0}
\left| \int_{\R^d \times \T} \op(\sigma) \, u (x,\theta) \, v(x,\theta) \, {\rm d}x \, {\rm d}\theta \right|
\le C \, \Ng \sigma \Nd \, \| u \|_0 \, \| v \|_0 \, ,
\end{equation}
for all $u,v \in {\mathcal S}(\R^d \times \T;\C)$. We define a function $\varphi$ on $\R^d$ by the formula:
\begin{equation*}
\forall \, y = (y_1,\dots,y_d) \in \R^d \, ,\quad \varphi(y) := \prod_{j=1}^d (1+i \, y_j)^{-1} \, .
\end{equation*}
In particular, $\varphi$ belongs to $L^2(\R^d)$. Applying Fubini's Theorem, we have
\begin{align*}
I &:= \int_{\R^d \times \T} \op(\sigma) \, u (x,\theta) \, v(x,\theta) \, {\rm d}x \, {\rm d}\theta \\
&= \dfrac{1}{(2\, \pi)^d} \, \sum_{k \in \Z} \int_{\R^d \times \T \times \R^d \times \R^d}
{\rm e}^{i \, (x-y) \cdot \xi} \, {\rm e}^{2 \, i \, \pi \, k \, \theta} \, \sigma (x,\theta,\xi,k) \,
c_k(u)(y) \, v(x,\theta) \, {\rm d}x \, {\rm d}\theta \, {\rm d}y \, {\rm d}\xi \, .
\end{align*}
Starting from the relation
\begin{equation*}
{\rm e}^{i \, (x-y) \cdot \xi} =\varphi(x-y) \, \prod_{j=1}^d (1+\partial_{\xi_j}) \,
{\rm e}^{i \, (x-y) \cdot \xi} \, ,
\end{equation*}
and integrating by parts, we obtain
\begin{equation}
\label{lem1-equation1}
I = \dfrac{1}{(2\, \pi)^d} \, \sum_{k \in \Z} \int_{\R^d \times \T \times \R^d} {\rm e}^{i \, x \cdot \xi} \,
{\rm e}^{2 \, i \, \pi \, k \, \theta} \, \sigma_\sharp (x,\theta,\xi,k) \, U(x,\xi,k) \, v(x,\theta) \,
{\rm d}x \, {\rm d}\theta \, {\rm d}\xi \, ,
\end{equation}
where we have used the notation
\begin{equation*}
\sigma_\sharp := \prod_{j=1}^d (1-\partial_{\xi_j}) \, \sigma \, ,\quad
U(x,\xi,k) := \int_{\R^d} {\rm e}^{-i \, y \cdot \xi} \, \varphi(x-y) \, c_k(u)(y) \, {\rm d}y \, .
\end{equation*}

We use the expression
\begin{equation*}
v(x,\theta) = \dfrac{1}{(2\, \pi)^d} \, \sum_{\ell \in \Z} \int_{\R^d} {\rm e}^{i \, x \cdot \eta} \,
{\rm e}^{2 \, i \, \pi \, \ell \, \theta} \, \widehat{c_\ell(v)} (\eta) \, {\rm d}\eta
\end{equation*}
in \eqref{lem1-equation1} and apply Fubini's Theorem again. Then we use the relation
\begin{equation*}
{\rm e}^{i \, x \cdot (\xi+\eta)} \, {\rm e}^{2 \, i \, \pi \, (k+\ell) \, \theta}
= \dfrac{\varphi(\xi+\eta)}{1+2 \, i \, \pi \, (k+\ell)} \, \left\{ (1+\partial_\theta) \,
\prod_{j=1}^d (1+\partial_{x_j}) \right\} \,
{\rm e}^{i \, x \cdot (\xi+\eta)} \, {\rm e}^{2 \, i \, \pi \, (k+\ell) \, \theta} \, ,
\end{equation*}
and integrate by parts. These operations yield
\begin{multline}
\label{lem1-equation2}
I = \sum_{\alpha \in \{ 0,1\}^d, j \in \{0,1\}} \sum_{\alpha' \le \alpha} \star \\
\sum_{k\in \Z} \int_{\R^d \times \T \times \R^d} {\rm e}^{i \, x \cdot \xi} \,
{\rm e}^{2 \, i \, \pi \, k \, \theta} \,
\partial_x^{\alpha-\alpha'} \, \partial_\theta^j \, \sigma_\sharp \, (x,\theta,\xi,k) \,
\partial_x^{\alpha'} U \, (x,\xi,k) \, V(x,\theta,\xi,k) \, {\rm d}x \, {\rm d}\theta \, {\rm d}\xi \, ,
\end{multline}
where the $\star$ coefficients denote some harmless numerical constants that only depend on $\alpha,\alpha',j$,
and where we have used the notation
\begin{equation*}
V(x,\theta,\xi,k) := \sum_{\ell \in \Z} \left( \int_{\R^d} {\rm e}^{i \, x \cdot \eta} \,
\dfrac{\varphi(\xi+\eta)}{1+2 \, i \, \pi \, (k+\ell)} \, \widehat{c_\ell(v)}(\eta) \, {\rm d}\eta \right)
\, {\rm e}^{2 \, i \, \pi \, \ell \, \theta} \, .
\end{equation*}

The result of Lemma \ref{lem1} follows by applying Cauchy-Schwarz inequality to each integral in
\eqref{lem1-equation2} (here the integral also includes the sum with respect to the index $k \in \Z$).
Each derivative $\partial_x^{\alpha-\alpha'} \, \partial_\theta^j \, \sigma_\sharp$ that appears in the
right-hand side of \eqref{lem1-equation2} can be estimated in $L^\infty$-norm by a harmless constant times
the quantity $\Ng \sigma \Nd$ defined in \eqref{continuiteL2-1}. We thus get (here and from now on, $C$
denotes a positive numerical constant that may vary from line to line)
\begin{equation*}
|I|^2 \le C \, \Ng \sigma \Nd^2 \, \left( \sum_{\alpha \in \{ 0,1\}^d} \sum_{k\in \Z}
\int_{\R^d \times \R^d} \big| \partial_x^\alpha U \, (x,\xi,k) \big|^2 \, {\rm d}x \, {\rm d}\xi \right)
\, \sum_{k\in \Z} \int_{\R^d \times \T \times \R^d} |V \, (x,\theta,\xi,k)|^2 \, {\rm d}x \, {\rm d}\theta \,
{\rm d}\xi \, .
\end{equation*}
Each term on the right-hand side is estimated by using the Parseval-Bessel equality and Plancherel's Theorem
(see \cite{hwang} for the case of $\R^d$, here the incorporation of the additional periodic variable is almost
straightforward):
\begin{align*}
&\sum_{k\in \Z} \int_{\R^d \times \R^d} \big| \partial_x^\alpha U \, (x,\xi,k) \big|^2 \, {\rm d}x \,
{\rm d}\xi \le C \, \sum_{k\in \Z} \| c_k(u) \|_0^2 \le C \, \| u \|_0^2 \, ,\\
&\sum_{k\in \Z} \int_{\R^d \times \T \times \R^d} |V \, (x,\theta,\xi,k)|^2 \, {\rm d}x \, {\rm d}\theta \,
{\rm d}\xi \le C \, \| v \|_0^2 \, .
\end{align*}
The proof of Lemma \ref{lem1} is thus complete.
\end{proof}

To complete the proof of Theorem \ref{thm1}, it is sufficient to prove the following approximation result:

\begin{lemma}
\label{lem2}
Let $\sigma : \R^d \times \T \times \R^d \times \Z \rightarrow \C^{N \times N}$ satisfy the assumptions of
Theorem \ref{thm1}. Then there exists a sequence $(\sigma_p)_{p \in \N}$ in ${\mathcal C}^\infty_0 (\R^d
\times \T \times \R^d \times \Z; \C^{N \times N})$ such that:
\begin{itemize}
 \item[{\rm (i)}] $\sup_{p \in \N} \Ng \sigma_p \Nd \le C \, \Ng \sigma \Nd$ for some numerical constant $C$
      that does not depend on $\sigma$,
 \item[{\rm (ii)}] for all $u,v \in {\mathcal S}(\R^d \times \T)$, there holds
\begin{equation*}
\lim_{p \rightarrow +\infty}
\int_{\R^d \times \T} \op(\sigma_p) \, u (x,\theta) \, v(x,\theta) \, {\rm d}x \, {\rm d}\theta
=\int_{\R^d \times \T} \op(\sigma) \, u (x,\theta) \, v(x,\theta) \, {\rm d}x \, {\rm d}\theta \, .
\end{equation*}
\end{itemize}
\end{lemma}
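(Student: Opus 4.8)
The plan is to build the approximating sequence $\sigma_p$ by two independent cut-off/mollification procedures: one truncating and smoothing in the $(x,\xi)$ variables, and one truncating in the $k$ variable. For the $(x,\xi)$ part, fix a function $\chi \in {\mathcal C}^\infty_0(\R^d \times \R^d)$ with $\chi \equiv 1$ near the origin and $0 \le \chi \le 1$, and fix a standard mollifier $\rho_p(x,\xi) = p^{2d} \rho(px,p\xi)$ with $\rho \in {\mathcal C}^\infty_0$, $\int \rho = 1$. For the $\theta$ variable, which lives on the torus, no truncation is needed but we smooth by a periodic mollifier $\zeta_p(\theta)$. Then set
\begin{equation*}
\sigma_p(x,\theta,\xi,k) := \mathbf{1}_{|k| \le p} \, \chi\!\left(\tfrac{x}{p},\tfrac{\xi}{p}\right)
\Big( \big(\sigma(\cdot,\cdot,\cdot,k) * (\rho_p \otimes \zeta_p)\big)(x,\theta,\xi) \Big) \, ,
\end{equation*}
where the convolution is in $(x,\theta,\xi)$ only, for each fixed $k$. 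By construction each $\sigma_p$ has finitely many nonzero $k$-slices, is ${\mathcal C}^\infty$ and compactly supported in $(x,\xi)$, and is smooth and periodic in $\theta$, so $\sigma_p \in {\mathcal C}^\infty_0(\R^d \times \T \times \R^d \times \Z;\C^{N\times N})$ as required.

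For item (i), I would estimate $\partial_x^\alpha \partial_\theta^j \partial_\xi^\beta \sigma_p$ for $\alpha,\beta \in \{0,1\}^d$, $j \in \{0,1\}$ by distributing the derivatives via the Leibniz rule over the three factors $\mathbf{1}_{|k|\le p}$ (no $x,\theta,\xi$ dependence, harmless), $\chi(x/p,\xi/p)$, and the mollified symbol. The derivatives falling on $\chi(x/p,\xi/p)$ produce factors $p^{-1}\le 1$ times bounded derivatives of $\chi$, hence contribute a harmless constant. The derivatives falling on the mollified symbol commute with the convolution: $\partial_x^{\alpha'}\partial_\theta^j\partial_\xi^{\beta'}(\sigma * (\rho_p\otimes\zeta_p)) = (\partial_x^{\alpha'}\partial_\theta^j\partial_\xi^{\beta'}\sigma) * (\rho_p\otimes\zeta_p)$ in the sense of distributions, and since convolution with a probability density does not increase the $L^\infty$ norm, this is bounded by $\|\partial_x^{\alpha'}\partial_\theta^j\partial_\xi^{\beta'}\sigma\|_{L^\infty} \le \Ng\sigma\Nd$. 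Summing the finitely many terms in the Leibniz expansion gives $\Ng\sigma_p\Nd \le C\,\Ng\sigma\Nd$ with $C = C(d)$, independent of $p$ and of $\sigma$.

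For item (ii), I would fix $u,v \in {\mathcal S}(\R^d\times\T)$ and write out the bilinear form $\int \op(\sigma_p)u \cdot v$ as the absolutely convergent expression $\tfrac{1}{(2\pi)^d}\sum_k \int_{\R^d\times\T\times\R^d} {\rm e}^{ix\cdot\xi}{\rm e}^{2i\pi k\theta}\sigma_p(x,\theta,\xi,k)\,\widehat{c_k(u)}(\xi)\,v(x,\theta)\,{\rm d}x\,{\rm d}\theta\,{\rm d}\xi$, and compare termwise with the same expression for $\sigma$. The difference is controlled by the three error mechanisms: (a) the tail $|k|>p$, which vanishes because $(\widehat{c_k(u)})_k$ has fast decay and the remaining integral is uniformly bounded using $\widehat{c_k(u)} \in {\mathcal S}(\R^d)$; (b) the region where $\chi(x/p,\xi/p)\ne 1$, handled by dominated convergence since $\widehat{c_k(u)}(\xi)v(x,\theta)$ is integrable in $(x,\xi)$ for each $k$ and the integrand is dominated uniformly in $p$ by $\Ng\sigma\Nd\,|\widehat{c_k(u)}(\xi)|\,|v(x,\theta)|$; (c) the mollification error $\sigma*(\rho_p\otimes\zeta_p)-\sigma$, which tends to $0$ in $L^1_{\rm loc}$ (indeed $L^p_{\rm loc}$ for any finite $p$) by standard approximate-identity theory, and on the fixed compact supports of $\chi$ and of the rapidly decaying weights this upgrades to convergence of the integral. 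Combining (a), (b), (c) with the uniform bound from (i) and a dominated-convergence argument over the $k$-sum (using fast decay of the Fourier coefficients of $u$ to get an $\ell^1$ majorant) yields the claimed limit.

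The main obstacle I expect is a bookkeeping one rather than a conceptual one: one must check that the various convergences in (a)--(c) can be carried out \emph{simultaneously} and that the bound in (i) genuinely supplies a $p$-independent dominating function for the $k$-sum, so that dominated convergence applies at the level of the full series and not just termwise. A subtlety worth flagging is that $\sigma$ is merely $L^\infty$ with $L^\infty$ first derivatives, so the mollification in $\theta$ is needed to make $\sigma_p$ genuinely ${\mathcal C}^\infty$ in $\theta$; one checks the periodic mollifier commutes with $\partial_\theta$ as above and that $\partial_\theta(\sigma*\zeta_p) = (\partial_\theta\sigma)*\zeta_p$ stays bounded by $\Ng\sigma\Nd$. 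Once these points are organized, each individual estimate is routine approximate-identity and dominated-convergence work of exactly the kind already used in the proof of Lemma \ref{lem1}.
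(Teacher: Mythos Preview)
Your proposal is correct and follows exactly the approach the paper has in mind: the paper's own proof of Lemma~\ref{lem2} consists of the single sentence ``The proof of Lemma~\ref{lem2} follows by the classical truncation-convolution argument. We leave the details to the reader. The convergence property~(ii) follows from the dominated convergence Theorem.'' Your construction (truncation in $k$, cutoff in $(x,\xi)$, mollification in $(x,\theta,\xi)$), your Leibniz-rule verification of~(i), and your dominated-convergence justification of~(ii) are precisely those details.
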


The proof of Lemma \ref{lem2} follows by the classical truncation-convolution argument. We leave the details
to the reader. The convergence property {\rm (ii)} follows from the dominated convergence Theorem.

Combining Lemma \ref{lem1} and Lemma \ref{lem2}, we obtain the main estimate \eqref{lem1-equation0} not only for
smooth symbols with compact support but also for the more general class of symbols that satisfy the assumptions
of Theorem \ref{thm1}. In particular, the Riesz Theorem shows that $\op (\sigma) \, u$ coincides almost-everywhere
with an element of $L^2 (\R^d \times \T)$, and the conclusion of Theorem \ref{thm1} follows.
\end{proof}

It is useful to observe that in the proof of Lemma \ref{lem1} above, we do not need the symbol $\sigma$ to
have compact support with respect to the space variable $x$. As a matter of fact, compact support with respect
to the dual variables $(\xi,k)$ is sufficient to justify all the calculations. This observation will be used
in the proof of Lemma \ref{lem3} below.

Of course, the classical version of the Calder\'on-Vaillancourt Theorem in $\R^d$ now appears as a particular
case of Theorem \ref{thm1} (apply Theorem \ref{thm1} with a symbol $\sigma$ containing only the $0$-harmonic
and that is independent of $\theta$ and similar test functions $u$), see \cite[page 18]{coifmanmeyer} and
\cite{hwang}. In the proof of Theorem \ref{thm1}, no finite difference with respect to $k$ appears because
there is no need to gain integrability for the function $U$ with respect to the variable $\theta$ (because
the torus has finite measure). An even more direct explaination is that for a bounded sequence, the iterative
finite differences are also bounded so the assumption would be redundant.

Applying formally Fubini's Theorem to the definition of $\op(\sigma) \, u$, we have
\begin{equation}
\label{oscintegral0}
\op (\sigma) \, u \, (x,\theta) = \dfrac{1}{(2\, \pi)^d} \, \sum_{k \in \Z} \int_{\R^d \times \R^d \times \T}
{\rm e}^{i \, (x-y) \cdot \xi} \, {\rm e}^{2 \, i \, \pi \, k \, (\theta-\omega)} \, \sigma (x,\theta,\xi,k) \,
u(y,\omega) \, {\rm d}\xi \, {\rm d}y \, {\rm d}\omega \, .
\end{equation}
The latter formula is rigorous e.g. for smooth symbols with compact support in $(\xi,k)$. In order to prepare
the results of symbolic calculus, our next goal is to obtain $L^2$ continuity results for oscillatory integral
operators as in \eqref{oscintegral0} with more general amplitudes $\sigma$; namely we should allow $\sigma$ to
depend on $(x,\theta)$ but also on the additional variables $(y,\omega)$, see e.g. \cite[page 144]{williams3}.
Our second main continuity result is the following:

\begin{theorem}
\label{thm2}
Let $\sigma : \R^d \times \T \times \R^d \times \T \times \R^d \times \Z \rightarrow \C^{N \times N}$
be a continuous function that satisfies the property: for all $\alpha,\beta \in \{ 0,1\}^d$, for all
$j,l \in \{ 0,1\}$ and for all $\nu \in \{ 0,1,2\}^d$, the derivative (in the sense of distributions)
$\partial_x^\alpha \, \partial_\theta^j \, \partial_y^\beta \, \partial_\omega^l \, \partial_\xi^\nu \,
\sigma$ belongs to $L^\infty (\R^d \times \T \times \R^d \times \T \times \R^d \times \Z)$. Let $\chi_1
\in {\mathcal C}^\infty_0 (\R)$ and $\chi_2 \in {\mathcal C}^\infty_0 (\R^d)$ satisfy $\chi_1 (0) =
\chi_2 (0) = 1$.

Then for all $u \in {\mathcal S}(\R^d \times \T)$, the sequence of functions $(T_\delta)_{\delta>0}$
defined on $\R^d \times \T$ by
\begin{multline}
\label{oscintegral}
T_\delta \, (x,\theta) := \dfrac{1}{(2\, \pi)^d} \, \sum_{k \in \Z} \chi_1 (\delta \, k) \,
\int_{\R^d \times \R^d \times \T} {\rm e}^{i \, (x-y) \cdot \xi} \,
{\rm e}^{2 \, i \, \pi \, k \, (\theta-\omega)} \\
\chi_2 (\delta \, \xi) \, \sigma (x,\theta,y,\omega,\xi,k) \, u(y,\omega) \,
{\rm d}\xi \, {\rm d}y \, {\rm d}\omega \, ,
\end{multline}
converges in ${\mathcal S}' (\R^d \times \T)$, as $\delta$ tends to $0$, towards a distribution
$\optilde (\sigma) \, u \in L^2 (\R^d \times \T)$. This limit is independent of the truncation functions
$\chi_1,\chi_2$. Moreover, there exists a numerical constant $C$, that only depends on $d$ and $N$, such
that there holds
\begin{multline}
\label{continuiteL2-2}
\left\| \optilde (\sigma) \, u \right\|_0 \le C \, \Ng \sigma \Nd_{\rm Amp} \, \| u \|_0 \, ,\, \\
\text{\rm with } \Ng \sigma \Nd_{\rm Amp} := \sup_{\alpha,\beta \in \{ 0,1\}^d} \,
\sup_{j,l \in \{ 0,1\}} \, \sup_{\nu \in \{ 0,1,2\}^d} \,
\left\| \partial_x^\alpha \, \partial_\theta^j \, \partial_y^\beta \, \partial_\omega^l \, \partial_\xi^\nu
\, \sigma \right\|_{L^\infty (\R^d \times \T \times \R^d \times \T \times \R^d \times \Z)} \, .
\end{multline}
\end{theorem}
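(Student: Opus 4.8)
The strategy follows Theorem~\ref{thm1}: everything reduces to a bilinear estimate, and the integration by parts of Hwang~\cite{hwang} is replaced by a slightly more elaborate one in order to accommodate an amplitude that depends on the ``input'' variables $(y,\omega)$ as well as on the ``output'' variables $(x,\theta)$. The truncations $\chi_1,\chi_2$ play the role of the compact support assumption in Lemma~\ref{lem1}: for fixed $\delta \in \, ]0,1]$, the amplitude $\chi_1(\delta \, k) \, \chi_2(\delta \, \xi) \, \sigma$ has compact support in $(\xi,k)$, so that $T_\delta$ is a well-defined continuous bounded (hence temperate) function of $(x,\theta)$---but in general not in $L^2$, since $\sigma$ need not decay in $(x,y)$---and, for $v \in {\mathcal S}(\R^d \times \T)$, the pairing $\int_{\R^d \times \T} T_\delta \, v$ is an absolutely convergent integral. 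As in Lemma~\ref{lem2}, one may moreover assume, after a harmless mollification of $\sigma$ in all its variables that multiplies $\Ng \sigma \Nd_{\rm Amp}$ by at most a fixed constant, that $\sigma$ is ${\mathcal C}^\infty$, so that the integrations by parts below are licit in the classical sense; the general case then follows by dominated convergence.

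The core of the argument is the bound
\[
\Big| \int_{\R^d \times \T} T_\delta \, (x,\theta) \, v(x,\theta) \, {\rm d}x \, {\rm d}\theta \Big|
\le C \, \Ng \sigma \Nd_{\rm Amp} \, \| u \|_0 \, \| v \|_0 \, ,
\]
uniform in $u,v \in {\mathcal S}(\R^d \times \T)$ and $\delta \in \, ]0,1]$. First I would express $u$ and $v$ through their Fourier data (Fourier series in $\omega$ and $\theta$, and, in the $\R^d$ variables, Fourier transforms together with the $\varphi(\cdot)$-convolution of Lemma~\ref{lem1}), arranged so that enough spatial decay of $u$ and $v$ is retained---this is needed precisely because the amplitude now couples $x$ and $y$, whereas in Lemma~\ref{lem1} it depended on $x$ alone. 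Then, as in the proof of Lemma~\ref{lem1}, I would integrate by parts successively: in $\theta$ and $\omega$, using identities ${\rm e}^{2 \, i \, \pi \, N \, \vartheta} = (1+2 \, i \, \pi \, N)^{-1} (1+\partial_\vartheta) \, {\rm e}^{2 \, i \, \pi \, N \, \vartheta}$, which yields the summable gains $(1+|k+m|)^{-1}$ and $(1+|\ell-k|)^{-1}$ at the cost of one $\theta$- and one $\omega$-derivative on $\sigma$; in the $\R^d$ frequency variables, with products of the operators $1+\partial_{x_j}$ and $1+\partial_{y_j}$, which yields weights of the form $\varphi(\cdot)$ in the frequency arguments at the cost of $\partial_x^\alpha \, \partial_y^\beta \, \sigma$ with $\alpha,\beta \in \{ 0,1\}^d$; and---this is the new point---in $\xi$, using the identity ${\rm e}^{i \, (x-y) \cdot \xi} = \varphi(x-y)^2 \, \prod_j (1+\partial_{\xi_j})^2 \, {\rm e}^{i \, (x-y) \cdot \xi}$. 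This last step forces two $\xi$-derivatives per coordinate on $\sigma$, whence the set $\{ 0,1,2\}^d$ for $\nu$ in \eqref{continuiteL2-2}: the square $\varphi(x-y)^2$, rather than $\varphi(x-y)$, is needed so that one copy of $\varphi(x-y)$ can be allotted to the ``$u$-side'' and one to the ``$v$-side'' in the Cauchy-Schwarz step below. One keeps track of the fact that these $\xi$-derivatives may also hit the previously produced weights, and that the $x$- and $y$-derivatives may hit $\varphi(x-y)^2$; in each case one only generates functions of the same type (derivatives of $\varphi$, still in $L^1 \cap L^2$) and harmless numerical coefficients.

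Once this is done, $\int_{\R^d \times \T} T_\delta \, v$ is an absolutely convergent integral-sum whose integrand is bounded, up to $C \, \Ng \sigma \Nd_{\rm Amp}$, by a product of $\varphi(x-y)^2$ (or a derivative of it), of derivatives of $\varphi$ in the frequency arguments, of the gains $(1+|k+m|)^{-1}(1+|\ell-k|)^{-1}$, and of the Fourier data of $u$ and $v$. Applying the Cauchy-Schwarz inequality, splitting $\varphi(x-y)^2 = \varphi(x-y) \cdot \varphi(x-y)$ and distributing the remaining factors, one recovers $C \, \| u \|_0^2$ for the square of the $u$-part summed over the integer indices and integrated over the continuous ones---by Plancherel's and Parseval's theorems exactly as at the end of the proof of Lemma~\ref{lem1} (the integer sums converge because the gains occur squared, and the $\R^d$-frequency integrals converge because of the shift structure of the arguments of $\varphi$)---and symmetrically $C \, \| v \|_0^2$ for the $v$-part, which yields the uniform bilinear estimate. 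It then remains to let $\delta \to 0$: since $\chi_1(\delta \, k) \, \chi_2(\delta \, \xi) \to 1$ pointwise and the post-integration-by-parts integrand is dominated uniformly in $\delta \in \, ]0,1]$ by a fixed integrable function, dominated convergence shows that, for every $v$, the pairing $\int_{\R^d \times \T} T_\delta \, v$ converges to the same integral with the truncations removed; the limiting integral does not involve $\chi_1,\chi_2$, and by the bilinear estimate it defines a continuous linear form on ${\mathcal S}(\R^d \times \T)$ for the $L^2$ topology. The Riesz representation theorem then furnishes an element $\optilde (\sigma) \, u \in L^2 (\R^d \times \T)$, independent of $\chi_1,\chi_2$, with $\langle \optilde (\sigma) \, u,v \rangle = \lim_{\delta \to 0} \int_{\R^d \times \T} T_\delta \, v$ and $\| \optilde (\sigma) \, u \|_0 \le C \, \Ng \sigma \Nd_{\rm Amp} \, \| u \|_0$; in particular $T_\delta \to \optilde (\sigma) \, u$ in ${\mathcal S}' (\R^d \times \T)$, which is the claim.

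I expect the main obstacle to be the bookkeeping in the Cauchy-Schwarz and Plancherel step---choosing the Fourier representations of $u$ and $v$, and the order and type of the integrations by parts, so that after Cauchy-Schwarz no variable is left free and every generated factor $\varphi(\cdot)$ is used in an $L^2$-integrable combination. The joint dependence of the amplitude on $x$ and on $y$ is exactly what makes this more delicate than in Lemma~\ref{lem1}, and it is what accounts for the presence of second-order $\xi$-derivatives in the hypotheses, hence for the asymmetry with the single $\xi$-derivative that suffices in Theorem~\ref{thm1}.
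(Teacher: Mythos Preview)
Your proposal is correct and follows essentially the same route as the paper. The paper organizes the argument through an intermediate Lemma~\ref{lem3} (smooth amplitudes with compact support in $(\xi,k)$) and a structural Corollary~\ref{coro1} recording the post-integration-by-parts identity $\int T_\delta \, v = \sum_m \int (Z_m \sigma_\delta) \, {\mathcal L}_m(u,v)$ with ${\mathcal L}_m(u,v) \in L^1$ independent of $\sigma$; this makes the two limits (mollification, then $\delta \to 0$) transparent, but the underlying mechanism---Fourier expansion of $u,v$, integration by parts in $\theta,\omega$, then in $\xi$ with the square $\varphi(x-y)^2$, then in $x,y$, followed by Cauchy--Schwarz and Plancherel/Parseval---is exactly what you describe, including the reason for needing $\nu \in \{0,1,2\}^d$.
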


The proof of Theorem \ref{thm2} splits in several steps. The first point is to show that the conclusion holds
for smooth symbols with compact support in $(\xi,k)$. In this case, the convergence of the oscillatory integral
as $\delta$ tends to $0$ follows from the dominated convergence Theorem, and the proof of the continuity estimate
\eqref{continuiteL2-2} relies on some arguments that are similar to those used in the proof of Lemma \ref{lem1}.
This first part of the proof of Theorem \ref{thm2} is achieved in Lemma \ref{lem3} below. The end of the proof
of Theorem \ref{thm2} consists in justifying the convergence of the oscillatory integral for an arbitrary amplitude
and in showing that \eqref{continuiteL2-2} still holds. This part of the proof relies on a regularization process
as for Lemma \ref{lem2}.

The process used in Theorem \ref{thm2} that consists in introducing cut-off functions in the frequency variables
and in passing to the limit will be systematically used in what follows in order to define oscillatory integral
operators and to show some properties on such operators. To highlight the difference between standard
pseudodifferential operators and oscillatory integral operators (for which the integrals do not converge in
a classical sense), we always use the notation $\optilde$ for oscillatory integral operators. In that case,
the representation by a convergent integral only takes place when the amplitude is integrable with respect
to the frequency variables (for instance, when it has compact support with respect to these variables).

\begin{proof}[Proof of Theorem \ref{thm2}]
We begin with the following generalization of Lemma \ref{lem1}.

\begin{lemma}
\label{lem3}
Let $\sigma \in {\mathcal C}^\infty_b (\R^d \times \T \times \R^d \times \T \times \R^d \times \Z; \C^{N \times N})$
have compact support with respect to $(\xi,k)$, that is, there exists an integer $K_0$ and a positive number $R_0$
such that $\sigma (x,\theta,y,\omega,\xi,k) = 0$ as long as $|k| \ge K_0$ or $|\xi| \ge R_0$.

Then all the conclusions of Theorem \ref{thm2} hold and the oscillatory integral $\optilde (\sigma) \, u$ coincides
with the function
\begin{equation*}
(x,\theta) \in \R^d \times \T \longmapsto \dfrac{1}{(2\, \pi)^d} \, \sum_{k \in \Z}
\int_{\R^d \times \R^d \times \T} {\rm e}^{i \, (x-y) \cdot \xi} \, {\rm e}^{2 \, i \, \pi \, k \, (\theta-\omega)}
\, \sigma (x,\theta,y,\omega,\xi,k) \, u(y,\omega) \, {\rm d}\xi \, {\rm d}y \, {\rm d}\omega \, .
\end{equation*}
\end{lemma}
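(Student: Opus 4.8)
The plan is to follow the same two-ingredient strategy used for Theorem \ref{thm1}: first establish the $L^2$ bound \eqref{continuiteL2-2} for the truncated operators $T_\delta$ by a Hwang-type integration-by-parts argument, then identify the limit with the explicit convergent integral using dominated convergence. Since the amplitude here has compact support in $(\xi,k)$, the sum over $k$ is finite and the $\xi$-integral is over a bounded set, so all the manipulations below (Fubini, differentiation under the integral) are legitimate without any formal justification, and the limit $\delta \to 0$ in the integral defining $T_\delta$ is handled by dominated convergence once we know $\chi_1(0)=\chi_2(0)=1$: the integrand is dominated uniformly in $\delta$ by an integrable function (using the smoothness and rapid decay of $u$ together with the boundedness of $\sigma$ and its derivatives), giving pointwise convergence of $T_\delta(x,\theta)$ to the claimed explicit function. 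So the substance is the uniform-in-$\delta$ estimate.

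For the estimate, I would fix $u,v \in {\mathcal S}(\R^d \times \T)$ and study $I_\delta := \int_{\R^d \times \T} T_\delta(x,\theta) \, v(x,\theta) \, {\rm d}x \, {\rm d}\theta$, aiming at $|I_\delta| \le C \, \Ng \sigma \Nd_{\rm Amp} \, \|u\|_0 \, \|v\|_0$ uniformly in $\delta$. Writing $u(y,\omega) = \sum_\ell {\rm e}^{2 i \pi \ell \omega} \, {\mathcal F}^{-1}(\widehat{c_\ell(u)})(y)$ and $v(x,\theta) = (2\pi)^{-d} \sum_m \int {\rm e}^{i x \cdot \eta} {\rm e}^{2 i \pi m \theta} \widehat{c_m(v)}(\eta) \, {\rm d}\eta$, the key new feature compared with Lemma \ref{lem1} is that the amplitude depends on $(y,\omega)$ as well, so we must integrate by parts in $\xi$ in order to gain decay in \emph{both} $x-y$ \emph{and} in the difference of the remaining frequency variables — this is exactly why the hypothesis allows $\nu \in \{0,1,2\}^d$, i.e. two $\xi$-derivatives rather than one. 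Concretely, using $\varphi(y) = \prod_j (1+i y_j)^{-1} \in L^2(\R^d)$ as before, the factor ${\rm e}^{i(x-y)\cdot\xi}$ is rewritten via $\varphi(x-y)\prod_j(1+\partial_{\xi_j})$ and integration by parts produces a factor $\varphi(x-y)$ while transferring one $\xi$-derivative onto $\sigma$; then after inserting the Fourier expansion of $v$, the factor ${\rm e}^{i x\cdot(\xi+\eta)}{\rm e}^{2i\pi(k+\ell+\dots)\theta}$ type exponential is handled by a second such identity, transferring $x$- and $\theta$-derivatives onto $\sigma$ (costing $\partial_x^\alpha\partial_\theta^j$ with $\alpha\in\{0,1\}^d$, $j\in\{0,1\}$) and a second $\xi$-derivative, and similarly the $y$- and $\omega$-dependence of $\sigma$ is smoothed by yet another integration by parts that costs $\partial_y^\beta\partial_\omega^l$. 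After all integrations by parts one reaches an expression of the schematic form
\begin{equation*}
I_\delta = \sum \star \sum_{k \in \Z} \int {\rm e}^{i x \cdot \xi} \, {\rm e}^{2 i \pi k \theta} \,
(\partial_x^{\alpha}\partial_\theta^{j}\partial_y^{\beta}\partial_\omega^{l}\partial_\xi^{\nu}\sigma_\sharp) \cdot \widetilde U \cdot \widetilde V \, {\rm d}x \, {\rm d}\theta \, {\rm d}\xi \, ,
\end{equation*}
where $\sigma_\sharp$ collects the $\xi$-derivatives from the first identity, $\widetilde U(x,\xi,k)$ is essentially $\int {\rm e}^{-iy\cdot\xi}\varphi(x-y) \, (\partial_x^{\alpha'}\text{-type})\, c_k(u)(y)\,{\rm d}y$, and $\widetilde V(x,\theta,\xi,k)$ gathers the $v$-part with the decay factors $\varphi(\xi+\eta)/(1+2i\pi(k+\ell))$ (the cut-offs $\chi_1(\delta k),\chi_2(\delta\xi)$ sit harmlessly inside, bounded by constants and differentiated at most twice, contributing bounded factors). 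Then Cauchy–Schwarz in $(x,\theta,\xi,k)$ together with the Parseval–Bessel and Plancherel identities bounds $\sum_k\int|\widetilde U|^2 \le C\|u\|_0^2$ and $\sum_k\int|\widetilde V|^2 \le C\|v\|_0^2$, exactly as in the end of the proof of Lemma \ref{lem1}, while every derivative of $\sigma_\sharp$ is bounded in $L^\infty$ by $C\,\Ng\sigma\Nd_{\rm Amp}$; this yields the desired uniform bound on $I_\delta$.

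The main obstacle — and the reason the hypothesis demands up to two $\xi$-derivatives — is the bookkeeping in the double integration by parts: one $\xi$-derivative is spent producing the $\varphi(x-y)$ that tames the $y$-integral in $\widetilde U$, and a second is spent producing $\varphi(\xi+\eta)$ that tames the $\eta$-integral in $\widetilde V$, and one must check that no cross-term forces a third $\xi$-derivative and that the finitely many $\star$ constants really depend only on the multi-indices. Given the uniform bound on $I_\delta=\int T_\delta v$, the Riesz representation theorem shows each $T_\delta$ is (a.e.) an $L^2$ function with $\|T_\delta\|_0 \le C\,\Ng\sigma\Nd_{\rm Amp}\|u\|_0$; combined with the pointwise (hence ${\mathcal S}'$) convergence $T_\delta \to \widetilde{\op}(\sigma)u$ established by dominated convergence, weak-$*$ lower semicontinuity of the $L^2$ norm gives $\widetilde{\op}(\sigma)u \in L^2$ with the same bound, and the independence of the limit from $\chi_1,\chi_2$ is immediate since the explicit limiting integral does not involve them. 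This completes Lemma \ref{lem3}, and the passage to general amplitudes (the rest of Theorem \ref{thm2}) proceeds by the same truncation–convolution regularization as in Lemma \ref{lem2}.
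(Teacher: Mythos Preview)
Your overall strategy is right, and you correctly identify that the novelty over Lemma~\ref{lem1} is the need for two $\xi$-derivatives. But the execution has a real gap, and your description of \emph{why} two $\xi$-derivatives are needed is off.

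The schematic formula you write,
\[
I_\delta = \sum \star \sum_{k} \int (\partial_x^{\alpha}\partial_\theta^{j}\partial_y^{\beta}\partial_\omega^{l}\partial_\xi^{\nu}\sigma_\sharp)\,\widetilde U(x,\xi,k)\,\widetilde V(x,\theta,\xi,k)\,{\rm d}x\,{\rm d}\theta\,{\rm d}\xi,
\]
cannot be right: the derivative of $\sigma$ still depends on $(y,\omega)$, yet those variables have already been integrated out inside your $\widetilde U$. In Lemma~\ref{lem1} this works because the symbol is independent of $(y,\omega)$; here it is not, so you cannot separate $\sigma$ from the $y$-integral in this way. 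Relatedly, your claim that the second $\xi$-derivative ``produces $\varphi(\xi+\eta)$'' is incorrect: the factor $\varphi(\xi+\eta)$ comes from the $x$-integration by parts on ${\rm e}^{i x\cdot(\xi+\eta)}$, not from any $\xi$-derivative.

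What the paper actually does (and what makes the argument close) is to spend \emph{both} $\xi$-derivatives at once to produce $\varphi(x-y)^2$. The square is the point: it lets you apply Fubini to bring in the Fourier representations of $\widetilde u$ and $\widetilde v$, then integrate by parts in $x$ and in $y$ separately (this is where $\partial_x^\alpha$, $\partial_y^\beta$ land on $\sigma$, and where the $\varphi(\eta\pm\xi)$ factors appear inside $U(y,\omega,\xi,k)$ and $V(x,\theta,\xi,k)$). The final integral is over the \emph{full} set of variables $(x,\theta,y,\omega,\xi,k)$, with the integrand split as $(\partial\varphi)(x-y)\,U(y,\omega,\xi,k)$ times $(\partial\varphi)(x-y)\,V(x,\theta,\xi,k)$; Cauchy--Schwarz over all variables then works because each factor carries one $\varphi(x-y)$, which provides the missing integrability in the ``extra'' spatial variable. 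With only a single $\varphi(x-y)$, one of the two sides of Cauchy--Schwarz would diverge in $x$ or in $y$. Once you reorganize the argument this way, your remaining steps (dominated convergence for $T_\delta\to T$, Riesz representation, independence of $\chi_1,\chi_2$) are fine.
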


\begin{proof}[Proof of Lemma \ref{lem3}]
Our strategy follows closely the proof of Lemma \ref{lem1}. In particular, we keep the same notation for
the function $\varphi$ on $\R^d$, and we make the proof in the case $N=1$ for simplicity.

First of all, since $\sigma$ has compact support in $(\xi,k)$ and is bounded, the sequence $(T_\delta)_{\delta>0}$
defined by \eqref{oscintegral} is bounded in $L^\infty (\R^d \times \T)$. Moreover, the dominated convergence
Theorem shows that $T_\delta$ converges pointwise, as $\delta$ tends to $0$, towards
\begin{equation*}
T(x,\theta) := \dfrac{1}{(2\, \pi)^d} \, \sum_{k \in \Z} \int_{\R^d \times \R^d \times \T}
{\rm e}^{i \, (x-y) \cdot \xi} \, {\rm e}^{2 \, i \, \pi \, k \, (\theta-\omega)} \,
\sigma (x,\theta,y,\omega,\xi,k) \, u(y,\omega) \, {\rm d}\xi \, {\rm d}y \, {\rm d}\omega \, .
\end{equation*}
There is no ambiguity in the definition of the latter integral since the function to be integrated has compact
support in $\xi$ and fast decay at infinity in $y$ (the sum with respect to $k$ only involves finitely many terms).
Applying again the dominated convergence Theorem, $(T_\delta)_{\delta>0}$ converges towards $T$ not only pointwise
but also in ${\mathcal S}' (\R^d \times \T)$. It thus only remains to estimate the function $T$ in $L^2$ in order
to complete the proof of Lemma \ref{lem3}. We emphasize that the proof below does not assume compact support of
$\sigma$ in $x$ or $y$, which will be useful in Section \ref{sect4}.

For $v \in {\mathcal S} (\R^d \times \T;\C)$, let us define the integral
\begin{align*}
I &:= \int_{\R^d \times \T} T (x,\theta) \, v (x,\theta) \, {\rm d}x \, {\rm d}\theta \\
&=\dfrac{1}{(2\, \pi)^d} \, \sum_{k \in \Z} \int_{\R^d \times \R^d \times \T \times \R^d \times \T}
{\rm e}^{i \, (x-y) \cdot \xi} \, {\rm e}^{2 \, i \, \pi \, k \, (\theta-\omega)} \,
\sigma (x,\theta,y,\omega,\xi,k) \, u(y,\omega) \, v(x,\theta) \,
{\rm d}\xi \, {\rm d}x \, {\rm d}\theta \, {\rm d}y \, {\rm d}\omega \, ,
\end{align*}
where we have applied Fubini's Theorem. We first expand $v$ as a Fourier series in $\theta$:
\begin{equation*}
v(x,\theta) =\sum_{\ell \in \Z} c_\ell(v) (x) \, {\rm e}^{2 \, i \, \pi \, \ell \, \theta} \, ,
\end{equation*}
apply Fubini's Theorem, and integrate by parts with respect to $\theta$ using the relation
\begin{equation*}
{\rm e}^{2 \, i \, \pi \, (k+\ell) \, \theta} =\dfrac{1}{1+2 \, i \, \pi \, (k+\ell)} \,
(1+\partial_\theta) \, {\rm e}^{2 \, i \, \pi \, (k+\ell) \, \theta} \, .
\end{equation*}
We apply a similar manipulation for $u$, and we obtain the relation
\begin{equation}
\label{lem3-equation1}
I = \sum_{k \in \Z} \int_{\R^d \times \R^d \times \T \times \R^d \times \T}
{\rm e}^{i \, (x-y) \cdot \xi} \, {\rm e}^{2 \, i \, \pi \, k \, (\theta-\omega)} \,
\sigma_\natural (x,\theta,y,\omega,\xi,k) \, \widetilde{u}(y,\omega,k) \, \widetilde{v}(x,\theta,k) \,
{\rm d}\xi \, {\rm d}x \, {\rm d}\theta \, {\rm d}y \, {\rm d}\omega \, ,
\end{equation}
where we have introduced the notation
\begin{align}
\sigma_\natural &:= (1-\partial_\theta) \, (1-\partial_\omega) \, \sigma \, ,\notag\\
\widetilde{u}(y,\omega,k) &:= \sum_{\ell \in \Z} \dfrac{c_\ell(u)(y)}{1+2 \, i \, \pi \, (\ell-k)} \,
{\rm e}^{2 \, i \, \pi \, \ell \, \omega} \, ,\quad
\widetilde{v}(x,\theta,k) := \sum_{\ell \in \Z} \dfrac{c_\ell(v)(x)}{1+2 \, i \, \pi \, (\ell+k)} \,
{\rm e}^{2 \, i \, \pi \, \ell \, \theta} \, .\label{lem3-equation2}
\end{align}
The latter manipulations are justified by the fact that both sequences $(c_\ell(u))_{\ell \in \Z}$ and
$(c_\ell(v))_{\ell \in \Z}$ have fast decay in ${\mathcal S} (\R^d)$.

Let us now transform the expression of $I$ in \eqref{lem3-equation1} by integrating by parts with respect to
$\xi$. More precisely, we use the relation
\begin{equation*}
{\rm e}^{i \, (x-y) \cdot \xi} =\varphi(x-y)^2 \, \prod_{j=1}^d (1+\partial_{\xi_j})^2 \,
{\rm e}^{i \, (x-y) \cdot \xi} \, ,
\end{equation*}
integrate by parts with respect to $\xi$ in \eqref{lem3-equation1} and obtain
\begin{multline}
\label{lem3-equation3}
I = \sum_{k \in \Z} \int_{\R^d \times \R^d \times \T \times \R^d \times \T}
{\rm e}^{i \, (x-y) \cdot \xi} \, {\rm e}^{2 \, i \, \pi \, k \, (\theta-\omega)} \\
\times \sigma_\flat (x,\theta,y,\omega,\xi,k) \, \varphi(x-y) \, \widetilde{u}(y,\omega,k) \,
\varphi(x-y) \, \widetilde{v}(x,\theta,k) \,
{\rm d}\xi \, {\rm d}x \, {\rm d}\theta \, {\rm d}y \, {\rm d}\omega \, ,
\end{multline}
where we have used the notation
\begin{equation*}
\sigma_\flat := \prod_{j=1}^d (1-\partial_{\xi_j})^2 \, \sigma_\natural
= \prod_{j=1}^d (1-\partial_{\xi_j})^2 \, (1-\partial_\theta) \, (1-\partial_\omega) \sigma \, .
\end{equation*}
A crucial observation for what follows is that the new term $\varphi (x-y)^2$ in \eqref{lem3-equation3} yields
integrability with respect to either $x$ or $y$.

Now we follow the argument already used in the proof of Lemma \ref{lem1}. We use Fourier's inversion formula,
and write
\begin{equation*}
\widetilde{v}(x,\theta,k) = \dfrac{1}{(2\, \pi)^d} \, \int_{\R^d}
{\rm e}^{i\, x \cdot \eta} \, \widehat{\widetilde{v}}(\eta,\theta,k) \, {\rm d}\eta \, ,
\end{equation*}
where for each $k \in \Z$, the partial Fourier transform $\widehat{\widetilde{v}}(\cdot,\cdot,k)$ with respect
to $x$ belongs to the Schwartz space ${\mathcal S} (\R^d \times \T)$. Then we apply Fubini's Theorem in
\eqref{lem3-equation3}, and integrate by parts with respect to $x$. As observed above, applying Fubini's Theorem
has been made possible thanks to the new factor $\varphi (x-y)^2$ which makes the integral in $x$ converge. We
make the symmetric operation with $\widetilde{u}$ instead of $\widetilde{v}$ and integrate by parts with respect
to $y$. Eventually, we obtain a formula of the form
\begin{multline}
\label{lem3-equation4}
I = \sum_{\alpha,\beta \in \{ 0,1\}^d} \sum_{\alpha' +\alpha'' \le \alpha}
\sum_{\beta' +\beta'' \le \beta} \star \,
\sum_{k \in \Z} \int_{\R^d \times \R^d \times \T \times \R^d \times \T}
{\rm e}^{i \, (x-y) \cdot \xi} \, {\rm e}^{2 \, i \, \pi \, k \, (\theta-\omega)} \\
\times \partial_x^{\alpha-\alpha'-\alpha''} \, \partial_y^{\beta-\beta'-\beta''} \, \sigma_\flat
(x,\theta,y,\omega,\xi,k) \\
\times \big( \partial^{\alpha'+\beta'} \varphi (x-y) \big) \, U(y,\omega,\xi,k) \,
\big( \partial^{\alpha''+\beta''} \varphi(x-y) \big) \, V(x,\theta,\xi,k) \,
{\rm d}\xi \, {\rm d}x \, {\rm d}\theta \, {\rm d}y \, {\rm d}\omega \, ,
\end{multline}
where the $\star$ coefficients only depend on $d,\alpha,\alpha',\alpha'',\beta,\beta',\beta''$, and where
we have used the notation
\begin{align*}
U(y,\omega,\xi,k) &:= \dfrac{1}{(2\, \pi)^d} \, \int_{\R^d} {\rm e}^{i\, y \cdot \eta} \,
\varphi(\eta-\xi) \, \widehat{\widetilde{u}}(\eta,\omega,k) \, {\rm d}\eta \, ,\\
V(x,\theta,\xi,k) &:= \dfrac{1}{(2\, \pi)^d} \, \int_{\R^d} {\rm e}^{i\, y \cdot \eta} \,
\varphi(\eta+\xi) \, \widehat{\widetilde{v}}(\eta,\theta,k) \, {\rm d}\eta \, .
\end{align*}
Let us now observe that each derivative $\partial_x^{\alpha-\alpha'-\alpha''} \,
\partial_y^{\beta-\beta'-\beta''} \, \sigma_\flat$ that appears in \eqref{lem3-equation4} can be bounded in
$L^\infty$-norm by $C\, \Ng \sigma \Nd_{\rm Amp}$, where the quantity $\Ng \sigma \Nd_{\rm Amp}$ is defined
in \eqref{continuiteL2-2}. Eventually, we apply the Cauchy-Schwarz inequality on $(\R^d \times \T)^2 \times
\R^d \times \Z$ in \eqref{lem3-equation4}, and we thus need to estimate integrals of the form
\begin{equation*}
\sum_{k \in \Z} \int_{\R^d \times \R^d \times \T \times \R^d \times \T}
\big| \partial^{\alpha'+\beta'} \varphi (x-y) \big|^2 \, |U(y,\omega,\xi,k)|^2 \,
{\rm d}\xi \, {\rm d}x \, {\rm d}\theta \, {\rm d}y \, {\rm d}\omega \, ,
\end{equation*}
and symmetric expressions in $V$. The latter integral is computed by first integrating with respect to
$(x,\theta)$. Then we apply Plancherel's Theorem for transforming the integral in $y$ into an integral
in $\eta$. Applying Fubini's Theorem, we can get rid of the integral in $\xi$ (see the above definition
of $U$ in terms of $\widehat{\widetilde{u}}$) and we are left with estimating a quantity of the form
\begin{equation*}
\sum_{k \in \Z} \int_{\R^d \times \T} \big| \widetilde{u} (y,\omega,k) \big|^2 \, {\rm d}y \, {\rm d}\omega \, ,
\end{equation*}
where $\widetilde{u}$ is defined by \eqref{lem3-equation2}. The latter quantity is estimated by using
Parseval-Bessel's equality and Fubini's Theorem again. Eventually, we obtain
\begin{equation*}
\sum_{k \in \Z} \int_{\R^d \times \R^d \times \T \times \R^d \times \T}
\big| \partial^{\alpha'+\beta'} \varphi (x-y) \big|^2 \, \big| U(y,\omega,\xi,k) \big|^2 \,
{\rm d}\xi \, {\rm d}x \, {\rm d}\theta \, {\rm d}y \, {\rm d}\omega \le C \, \| u \|_0^2 \, ,
\end{equation*}
and a similar estimate holds for $V$ in terms of $\| v \|_0$. We have thus proved that there exists a
numerical constant $C$ such that there holds
\begin{equation*}
|I| \le C \, \Ng \sigma \Nd_{\rm Amp} \, \| u \|_0 \, \| v \|_0 \, .
\end{equation*}
In particular, this yields the bound \eqref{continuiteL2-2} when the amplitude $\sigma$ is smooth with
compact support.
\end{proof}

Actually, the proof of Lemma \ref{lem3} even shows the following stronger result which is encoded in the formula
\eqref{lem3-equation4}.

\begin{corollary}
\label{coro1}
Let $\sigma \in {\mathcal C}^\infty_b (\R^d \times \T \times \R^d \times \T \times \R^d \times \Z; \C^{N \times N})$
have compact support with respect to $(\xi,k)$. Let $\{ Z_1,\dots,Z_M \}$ denote the set of all derivatives
$\partial_x^\alpha \, \partial_\theta^j \, \partial_y^\beta \, \partial_\omega^l \, \partial_\xi^\nu$ that appear
in the definition \eqref{continuiteL2-2} of the norm $\Ng \cdot \Nd_{\rm Amp}$.

Then there exist some continuous bilinear mappings
\begin{equation*}
{\mathcal L}_1 \, , \dots \, , {\mathcal L}_M \, : \, {\mathcal S} (\R^d \times \T) \times {\mathcal S} (\R^d \times \T)
\longrightarrow L^1 (\R^d \times \T \times \R^d \times \T \times \R^d \times \Z) \, ,
\end{equation*}
which are independent of $\sigma$, that satisfy a continuity estimate of the form
\begin{equation*}
\forall \, m=1,\dots,M \, ,\quad
\| {\mathcal L}_m (u,v) \|_{L^1(\R^d \times \T \times \R^d \times \T \times \R^d \times \Z)} \le C \,
\| u \|_0 \, \| v \|_0 \, ,
\end{equation*}
and such that for all $u,v \in {\mathcal S} (\R^d \times \T)$, there holds
\begin{equation}
\label{continuiteL2-3}
\int_{\R^d \times \T} \optilde (\sigma) \, u (x,\theta) \, v (v,\theta) \, {\rm d}x \, {\rm d}\theta =
\sum_{m = 1}^M \, \sum_{k \in \Z} \int_{\R^d \times \T \times \R^d \times \T \times \R^d}
(Z_m \, \sigma) \, {\mathcal L}_m (u,v) \, {\rm d}x \, {\rm d}\theta \, {\rm d}y \, {\rm d}\omega \, {\rm d}\xi \, ,
\end{equation}
where the expression of the function $\optilde (\sigma) \, u$ is given in Lemma \ref{lem3}.
\end{corollary}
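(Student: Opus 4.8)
The statement is a bookkeeping reorganization of the proof of Lemma \ref{lem3}: its final formula \eqref{lem3-equation4} already has exactly the required shape, and the only factor in it that involves $\sigma$ is $\partial_x^{\alpha-\alpha'-\alpha''}\,\partial_y^{\beta-\beta'-\beta''}\,\sigma_\flat$. The plan is thus to start from \eqref{lem3-equation4} and expand this factor. Since $\sigma_\flat = \prod_{j=1}^d (1-\partial_{\xi_j})^2\,(1-\partial_\theta)\,(1-\partial_\omega)\,\sigma$ involves no $x$- or $y$-derivative of $\sigma$, and since $\alpha,\beta\in\{0,1\}^d$ with $\alpha'+\alpha''\le\alpha$ and $\beta'+\beta''\le\beta$, the multi-indices $\alpha-\alpha'-\alpha''$ and $\beta-\beta'-\beta''$ lie again in $\{0,1\}^d$. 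Expanding the products $(1-\partial_{\xi_j})^2$, $(1-\partial_\theta)$, $(1-\partial_\omega)$, I would observe that $\partial_x^{\alpha-\alpha'-\alpha''}\,\partial_y^{\beta-\beta'-\beta''}\,\sigma_\flat$ is a finite linear combination, with coefficients depending only on $d$ and the multi-indices, of derivatives $\partial_x^a\,\partial_\theta^j\,\partial_y^b\,\partial_\omega^l\,\partial_\xi^\nu\,\sigma$ with $a,b\in\{0,1\}^d$, $j,l\in\{0,1\}$, $\nu\in\{0,1,2\}^d$ --- that is, of the derivatives $Z_1\sigma,\dots,Z_M\sigma$ entering the definition \eqref{continuiteL2-2} of $\Ng\cdot\Nd_{\rm Amp}$.

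Substituting this expansion into \eqref{lem3-equation4} and collecting, for each $m$, all coefficient functions that multiply $Z_m\sigma$, I would define $\mathcal{L}_m(u,v)$ to be the resulting sum; then \eqref{continuiteL2-3} holds by construction. Concretely, each $\mathcal{L}_m(u,v)$ is a finite linear combination, with numerical coefficients, of terms of the form
\begin{equation*}
{\rm e}^{i\,(x-y)\cdot\xi}\,{\rm e}^{2\,i\,\pi\,k\,(\theta-\omega)}\,
\big(\partial^{a}\varphi(x-y)\big)\,U(y,\omega,\xi,k)\,\big(\partial^{b}\varphi(x-y)\big)\,V(x,\theta,\xi,k)\, ,
\end{equation*}
with $a,b\in\N^d$ and $U,V$ exactly the functions appearing in \eqref{lem3-equation4}. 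Since $U$ is built linearly from $u$ (through $\widetilde u$, cf. \eqref{lem3-equation2}, then its partial Fourier transform in $x$) and $V$ linearly from $v$, the map $\mathcal{L}_m$ is bilinear in $(u,v)$, is manifestly independent of $\sigma$, and is well-defined on ${\mathcal S}(\R^d\times\T)\times{\mathcal S}(\R^d\times\T)$ by the very computations carried out in the proof of Lemma \ref{lem3}.

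The remaining point is to check that $\mathcal{L}_m(u,v)$ lies in $L^1$ with the asserted bound. As the exponential factor has modulus $1$, I would apply the Cauchy-Schwarz inequality on $(\R^d\times\T)^2\times\R^d\times\Z$ to bound the $L^1$ norm of each term above by
\begin{equation*}
\left(\sum_{k\in\Z}\int \big|\partial^{a}\varphi(x-y)\big|^2\,|U(y,\omega,\xi,k)|^2\right)^{1/2}
\left(\sum_{k\in\Z}\int \big|\partial^{b}\varphi(x-y)\big|^2\,|V(x,\theta,\xi,k)|^2\right)^{1/2}\, ,
\end{equation*}
and both factors were already shown in the proof of Lemma \ref{lem3} to be $\le C\,\|u\|_0$ and $\le C\,\|v\|_0$ respectively. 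Summing over the finitely many terms gives $\|\mathcal{L}_m(u,v)\|_{L^1}\le C\,\|u\|_0\,\|v\|_0$, which for a bilinear map is equivalent to continuity. The only delicate point --- and the closest thing to an obstacle --- is purely combinatorial: one must check that the expansion produces no derivative of $\sigma$ of $x$-, $\theta$-, $y$- or $\omega$-order $\ge 2$, nor of $\xi$-order $\ge 3$, so that the list $\{Z_1,\dots,Z_M\}$ genuinely suffices, and one must note that the functions $U$, $V$ and the $\varphi$-factors are fixed objects, independent of which $Z_m\sigma$ they multiply, so that several tuples of multi-indices in \eqref{lem3-equation4} may be legitimately grouped into a single $\mathcal{L}_m$ without affecting the bounds.
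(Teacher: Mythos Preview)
Your proposal is correct and follows exactly the approach the paper intends: the paper does not give a separate proof of Corollary~\ref{coro1} but simply observes that the result is ``encoded in the formula \eqref{lem3-equation4}'', and you have spelled out precisely how to read it off---expanding $\partial_x^{\alpha-\alpha'-\alpha''}\partial_y^{\beta-\beta'-\beta''}\sigma_\flat$ into the $Z_m\sigma$'s, collecting the remaining factors into the $\mathcal{L}_m$'s, and reusing the Cauchy--Schwarz estimate from Lemma~\ref{lem3} for the $L^1$ bound.
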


For a general amplitude $\sigma$ satisfying the assumptions of Theorem \ref{thm2}, we need to define the limit,
as $\delta$ tends to $0$, of the truncated oscillatory integrals \eqref{oscintegral}. The goal is to show that
formula \eqref{continuiteL2-3}, which holds for smooth amplitudes with compact support in $(\xi,k)$, also holds
for the more general class of amplitudes satisfying the assumptions of Theorem \ref{thm2}.

Let therefore $\sigma$ satisfy the assumptions of Theorem \ref{thm2}, and let us define the truncated amplitude
$\sigma_\delta$, $\delta > 0$, by
\begin{equation*}
\sigma_\delta (x,\theta,y,\omega,\xi,k) := \chi_1 (\delta \, k) \, \chi_2 (\delta \, \xi) \,
\sigma (x,\theta,y,\omega,\xi,k) \, .
\end{equation*}
The truncated amplitude $\sigma_\delta$ has as many derivatives as $\sigma$ in $L^\infty$. Moreover, there exists
a constant $C_\chi$ that only depends on $\chi := (\chi_1,\chi_2)$ such that
\begin{equation}
\label{thm2-equation1}
\forall \, \delta \in \, ]0,1] \, ,\quad \Ng \sigma_\delta \Nd_{\rm Amp} \le C_\chi \, \Ng \sigma \Nd_{\rm Amp} \, .
\end{equation}
Let us now consider a nonnegative function $\rho \in {\mathcal C}^\infty_0 (\R^d)$ with integral $1$. We then
define the regularizing kernels
\begin{equation*}
\forall \, n \in \N \, ,\quad \rho_n (x) :=(n+1)^d \, \rho ((n+1)\, x) \, .
\end{equation*}
We also consider the F\'ejer kernel
\begin{equation*}
F_n (\theta) := \dfrac{1}{n+1} \,
\left( \dfrac{\sin ((n+1) \, \pi \, \theta)}{\sin (\pi \, \theta)} \right)^2 \, ,\quad
F_n (0) := n+1 \, ,
\end{equation*}
that belongs to ${\mathcal C}^\infty (\T)$ and whose integral over $\T$ equals $1$. Then we define the
regularized amplitude
\begin{multline}
\label{defsigmadeltan}
\sigma_{\delta,n} (x,\theta,y,\omega,\xi,k) := \int_{\R^d \times \T \times \R^d \times \T \times \R^d}
\rho_n(x-x') \, F_n(\theta-\theta') \, \rho_n(y-y') \, F_n(\omega-\omega') \, \rho_n(\xi-\xi') \\
\sigma_\delta (x',\theta',y',\omega',\xi',k) \,
{\rm d}x' \, {\rm d}\theta' \, {\rm d}y' \, {\rm d}\omega' \, {\rm d}\xi' \, .
\end{multline}
It follows from the classical Theorems of calculus that for all $\delta>0$ and for all integer $n \in \N$,
$\sigma_{\delta,n}$ belongs to ${\mathcal C}^\infty_b (\R^d \times \T \times \R^d \times \T \times \R^d
\times \Z; \C^{N \times N})$ and has compact support in $(\xi,k)$. Differentiating under the integral, we
also have the bound
\begin{equation}
\label{thm2-equation2}
\forall \, n \in \N \, ,\quad \Ng \sigma_{\delta,n} \Nd_{\rm Amp} \le \Ng \sigma_\delta \Nd_{\rm Amp} \, .
\end{equation}
Moreover, since $\sigma_\delta$ is continuous, the sequence $(\sigma_{\delta,n})_{n \in \N}$ converges
pointwise towards $\sigma_\delta$.

For all $u,v \in {\mathcal S}(\R^d \times \T)$, let us define the integral
\begin{equation*}
I_\delta := \int_{\R^d \times \T} T_\delta(x,\theta) \, v(x,\theta) \, {\rm d}x \, {\rm d}\theta \, ,
\end{equation*}
where the function $T_\delta$ is defined by \eqref{oscintegral}. Applying Fubini's Theorem, we have
\begin{equation}
\label{thm2-equation3}
I_\delta = \dfrac{1}{(2\, \pi)^d} \, \sum_{k \in \Z} \int_{\R^d \times \T \times \R^d \times \T \times \R^d}
{\rm e}^{i \, (x-y) \cdot \xi} \, {\rm e}^{2 \, i \, \pi \, k \, (\theta-\omega)} \,
\sigma_\delta (x,\theta,y,\omega,\xi,k) \, u(y,\omega) \, v(x,\theta) \,
{\rm d}x \, {\rm d}\theta \, {\rm d}y \, {\rm d}\omega \, {\rm d}\xi \, .
\end{equation}
We also define the quantity $I_{\delta,n}$ that is the analogue of \eqref{thm2-equation3} with the amplitude
$\sigma_{\delta,n}$ instead of $\sigma_\delta$.

The sequence $(\sigma_{\delta,n})_{n \in \N}$ is bounded in $L^\infty (\R^d \times \T \times \R^d \times \T \times
\R^d \times \Z)$ and it is supported in a fixed compact set with respect to $(\xi,k)$. We can thus apply the dominated
convergence Theorem and obtain that $(I_{\delta,n})_{n \in \N}$ converges towards $I_\delta$ as $n$ tends to $+\infty$.
Moreover, we can apply Lemma \ref{lem3} to the amplitude $\sigma_{\delta,n}$ and derive the bound
\begin{equation*}
|I_{\delta,n}| =\left| \int_{\R^d \times \T} \optilde(\sigma_{\delta,n}) \, u (x,\theta) \, v(x,\theta) \,
{\rm d}x \, {\rm d}\theta \right| \le C \, \Ng \sigma_{\delta,n} \Nd_{\rm Amp} \, \| u \|_0 \, \| v \|_0
\le C_\chi \, \Ng \sigma \Nd_{\rm Amp} \, \| u \|_0 \, \| v \|_0 \, ,
\end{equation*}
where we have used \eqref{thm2-equation2} and \eqref{thm2-equation1}. Passing to the limit as $n$ tends to
$+\infty$, we obtain the uniform bound
\begin{equation}
\label{thm2-equation4}
|I_\delta| \le C_\chi \, \Ng \sigma \Nd_{\rm Amp} \, \| u \|_0 \, \| v \|_0 \, .
\end{equation}
If we can prove that $(I_\delta)_{\delta>0}$ has a limit as $\delta$ tends to $0$, and that this limit
is independent of the truncation function $\chi$, then we shall have shown that the sequence of functions
$(T_\delta)_{\delta>0}$ converges in ${\mathcal S}' (\R^d \times \T)$ towards some limit $\optilde (\sigma)
\, u$. Moreover, the estimate \eqref{thm2-equation4} will show that this distribution coincides with a
function in $L^2 (\R^d \times \T)$ satisfying \eqref{continuiteL2-2}. (If the limit of $(I_\delta)_{\delta>0}$
is independent of $\chi$, then the constant in \eqref{continuiteL2-2} is given by passing to the limit in
\eqref{thm2-equation4} with one particular choice of $\chi$.) It therefore only remains to prove that
$(I_\delta)_{\delta>0}$ has a limit and that this limit is independent of $\chi$.

Since the amplitude $\sigma_{\delta,n}$ is smooth with compact support in $(\xi,k)$, we can apply Corollary
\ref{coro1}. We obtain that $I_{\delta,n}$ can be written under the form
\begin{equation}
\label{thm2-equation5}
I_{\delta,n} = \sum_{m = 1}^M \, \sum_{k \in \Z} \int_{\R^d \times \T \times \R^d \times \T \times \R^d}
(Z_m \, \sigma_{\delta,n}) \, {\mathcal L}_m (u,v) \, {\rm d}x \, {\rm d}\theta \, {\rm d}y \, {\rm d}\omega \,
{\rm d}\xi \, .
\end{equation}
We wish to pass to the limit in \eqref{thm2-equation5}. We first observe that the derivative
$Z_m \, \sigma_{\delta,n}$ is obtained by differentiating under the integral sign in \eqref{defsigmadeltan},
that is
\begin{multline*}
Z_m \, \sigma_{\delta,n} (x,\theta,y,\omega,\xi,k) = \int_{\R^d \times \T \times \R^d \times \T \times \R^d}
\rho_n(x-x') \, F_n(\theta-\theta') \, \rho_n(y-y') \, F_n(\omega-\omega') \, \rho_n(\xi-\xi') \\
Z_m \, \sigma_\delta (x',\theta',y',\omega',\xi',k) \,
{\rm d}x' \, {\rm d}\theta' \, {\rm d}y' \, {\rm d}\omega' \, {\rm d}\xi' \, .
\end{multline*}
Consequently, the right-hand side of \eqref{thm2-equation5} is a finite sum of terms that all have the form
\begin{equation*}
\int_\Upsilon (\varrho_n * h)(\upsilon) \, f(\upsilon) \, {\rm d}\upsilon \, ,\quad
\Upsilon := \R^d \times \T \times \R^d \times \T \times \R^d \, ,
\end{equation*}
with $h \in L^\infty (\Upsilon)$, $f \in L^1 (\Upsilon)$, and $\varrho_n$ is the corresponding regularizing
kernel. (Recall that the sum with respect to $k$ in \eqref{thm2-equation5} involves finitely many terms,
where the number of terms only depends on $\delta$ and not on $n$.) Applying Fubini's Theorem, we can
make the convolution kernel $\varrho_n$ act on $f$ rather than on $h$. This only replaces $\varrho_n$
by $\check{\varrho}_n$ with
\begin{equation*}
\check{\varrho}_n (x,\theta,y,\omega,\xi) := \varrho_n (-x,-\theta,-y,-\omega,-\xi) \, .
\end{equation*}
Then we use the convergence of $\check{\varrho}_n * f$ towards $f$ in $L^1$ (this is a classical result
of convolution that is unfortunately false in $L^\infty$ and this is the reason why we need to switch
the regularization kernel from one function to the other). Hence we can pass to the limit as $n$ tends
to $+\infty$ in \eqref{thm2-equation5}, and obtain
\begin{equation}
\label{thm2-equation6}
\lim_{n \rightarrow +\infty} I_{\delta,n} = I_\delta
= \sum_{m = 1}^M \, \sum_{k \in \Z} \int_{\R^d \times \T \times \R^d \times \T \times \R^d}
(Z_m \, \sigma_\delta) \, {\mathcal L}_m (u,v) \, {\rm d}x \, {\rm d}\theta \, {\rm d}y \, {\rm d}\omega \,
{\rm d}\xi \, .
\end{equation}
In other words, we have extended formula \eqref{continuiteL2-3} to the truncated amplitude $\sigma_\delta$.

It is now straightforward to pass to the limit as $\delta$ tends to $0$. Indeed each derivative
$Z_m \, \sigma_\delta$ can be decomposed under the form
\begin{equation*}
Z_m \, \sigma = \chi_1 (\delta \, k) \, \chi_2 (\delta \, \xi) \, Z_m \, \sigma
+\sum_{m' = 1}^M \eps_{m,m'} (\delta) \, \chi_1 (\delta \, k) \, \chi_{2,m,m'} (\delta \, \xi) \,
Z_{m'} \, \sigma \, ,
\end{equation*}
where $\chi_{2,m,m'} \in {\mathcal C}^\infty_0 (\R^d)$ and $\eps_{m,m'} (\delta)$ tends to $0$ as $\delta$
tends to $0$. We can therefore apply the dominated convergence Theorem in \eqref{thm2-equation6}, and obtain
the expression
\begin{equation*}
\lim_{\delta \rightarrow 0} I_\delta = \sum_{m = 1}^M \, \sum_{k \in \Z}
\int_{\R^d \times \T \times \R^d \times \T \times \R^d} (Z_m \, \sigma) \,
{\mathcal L}_m (u,v) \, {\rm d}x \, {\rm d}\theta \, {\rm d}y \, {\rm d}\omega \, {\rm d}\xi \, ,
\end{equation*}
from which it is clear that the limit is independent of $\chi$. The proof of Theorem \ref{thm2} is complete.
\end{proof}

The proof of Theorem \ref{thm2} even shows that the formula \eqref{continuiteL2-3} still holds under the more
general assumptions of Theorem \ref{thm2}, and that it actually defines the function $\optilde (\sigma) \, u \in
L^2 (\R^d \times \T)$ in a unique way:

\begin{corollary}
\label{coro2}
Let $\sigma : \R^d \times \T \times \R^d \times \T \times \R^d \times \Z \rightarrow \C^{N \times N}$ be
a continuous function satisfying the differentiability assumptions of Theorem \ref{thm2}. Let the bilinear
operators ${\mathcal L}_m$, $m=1,\dots,M$ be defined in Corollary \ref{coro1}. Then for all $u \in {\mathcal S}
(\R^d \times \T;\C^N)$ and for all $v \in {\mathcal S} (\R^d \times \T;\C)$, the function $\optilde (\sigma)
\, u \in L^2 (\R^d \times \T)$ satisfies \eqref{continuiteL2-3}.
\end{corollary}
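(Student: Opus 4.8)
The plan is to read the statement off the proof of Theorem~\ref{thm2}, in which essentially all of the analytic work has already been performed; what remains is bookkeeping. As in the proof of Lemma~\ref{lem1}, it suffices to treat the scalar case $N=1$, the matrix case following componentwise. By the very definition of $\optilde (\sigma) \, u$ as the limit in ${\mathcal S}'(\R^d \times \T)$ of the truncated oscillatory integrals $T_\delta$ of \eqref{oscintegral}, one has, for every $v \in {\mathcal S}(\R^d \times \T;\C)$,
\begin{equation*}
\int_{\R^d \times \T} \optilde (\sigma) \, u \, (x,\theta) \, v(x,\theta) \, {\rm d}x \, {\rm d}\theta = \lim_{\delta \rightarrow 0} I_\delta \, ,
\end{equation*}
where $I_\delta$ is the integral \eqref{thm2-equation3} attached to the truncated amplitude $\sigma_\delta = \chi_1 (\delta \, k) \, \chi_2 (\delta \, \xi) \, \sigma$.

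Next I would invoke formula \eqref{thm2-equation6}: for each fixed $\delta > 0$ — for which $\sigma_\delta$ has compact support in $k$, so that the sum over $k$ in $I_\delta$ is finite — the proof of Theorem~\ref{thm2} shows, via the regularization \eqref{defsigmadeltan} and Corollary~\ref{coro1}, that
\begin{equation*}
I_\delta = \sum_{m = 1}^M \, \sum_{k \in \Z} \int_{\R^d \times \T \times \R^d \times \T \times \R^d} (Z_m \, \sigma_\delta) \, {\mathcal L}_m (u,v) \, {\rm d}x \, {\rm d}\theta \, {\rm d}y \, {\rm d}\omega \, {\rm d}\xi \, .
\end{equation*}
It then only remains to pass to the limit $\delta \rightarrow 0$. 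Using the Leibniz-type decomposition of $Z_m \, \sigma_\delta$ recalled at the end of the proof of Theorem~\ref{thm2} — namely $Z_m \, \sigma_\delta = \chi_1 (\delta \, k) \, \chi_2 (\delta \, \xi) \, Z_m \, \sigma + \sum_{m'} \eps_{m,m'}(\delta) \, \chi_1 (\delta \, k) \, \chi_{2,m,m'}(\delta \, \xi) \, Z_{m'} \, \sigma$ with $\eps_{m,m'}(\delta) \to 0$ — together with the facts that each $Z_m \, \sigma \in L^\infty (\R^d \times \T \times \R^d \times \T \times \R^d \times \Z)$ (by the differentiability hypotheses of Theorem~\ref{thm2}) and each ${\mathcal L}_m (u,v) \in L^1 (\R^d \times \T \times \R^d \times \T \times \R^d \times \Z)$ (by Corollary~\ref{coro1}), the dominated convergence theorem on $\R^d \times \T \times \R^d \times \T \times \R^d \times \Z$ yields
\begin{equation*}
\lim_{\delta \rightarrow 0} I_\delta = \sum_{m = 1}^M \, \sum_{k \in \Z} \int_{\R^d \times \T \times \R^d \times \T \times \R^d} (Z_m \, \sigma) \, {\mathcal L}_m (u,v) \, {\rm d}x \, {\rm d}\theta \, {\rm d}y \, {\rm d}\omega \, {\rm d}\xi \, ,
\end{equation*}
and comparison with the previous displays gives \eqref{continuiteL2-3}.

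For the uniqueness assertion — that \eqref{continuiteL2-3} pins down $\optilde (\sigma) \, u$ as an element of $L^2(\R^d \times \T)$ — I would observe that its right-hand side is, for fixed $u$, a continuous linear functional of $v$ on $L^2(\R^d \times \T;\C)$ by the estimates $\| {\mathcal L}_m (u,v) \|_{L^1} \le C \, \| u \|_0 \, \| v \|_0$ of Corollary~\ref{coro1}, and that a function $f \in L^2(\R^d \times \T)$ is determined by the values of the (unconjugated) pairings $\int f \, v$ over all $v \in {\mathcal S}(\R^d \times \T;\C)$: if they all vanish, pick $g_n \in {\mathcal S}$ with $g_n \to \overline{f}$ in $L^2$, note $\overline{g_n} \in {\mathcal S}$, and let $n \to \infty$ in $\int f \, \overline{g_n} = 0$ to get $\| f \|_0^2 = 0$. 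I do not anticipate a genuine obstacle; the content is the assembly of \eqref{thm2-equation3}, \eqref{thm2-equation6} and the $\delta \rightarrow 0$ passage already carried out inside the proof of Theorem~\ref{thm2}. The one point deserving attention is that, for each fixed $\delta > 0$, the compact support of $\sigma_\delta$ in $k$ makes the sum over $k$ in $I_\delta$ finite — this is what legitimizes the Fubini manipulations and the $n \rightarrow +\infty$ passage leading to \eqref{thm2-equation6} — whereas in the final limit $\delta \rightarrow 0$ the now genuinely infinite sum over $k$ is controlled by dominated convergence thanks to the integrability of ${\mathcal L}_m (u,v)$ over the $\Z$ factor.
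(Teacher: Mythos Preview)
Your proposal is correct and follows the same approach as the paper, which does not give a separate proof of Corollary~\ref{coro2} but simply observes that the conclusion is contained in the proof of Theorem~\ref{thm2}. You have spelled out exactly what needs to be extracted from that proof (the passage from \eqref{thm2-equation6} to the limit $\delta \to 0$ via dominated convergence), and your treatment of the uniqueness assertion is a welcome addition of detail.
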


\begin{remark}
\label{rem2}
Let us assume now that in Theorem \ref{thm2}, the truncation functions $\chi_1,\chi_2$ do not necessarily satisfy
$\chi_1(0) = \chi_2(0) = 1$. Then the corresponding sequence of functions $(T_\delta)_{\delta>0}$ converges in
${\mathcal S}' (\R^d \times \T)$ towards $\chi_1 (0) \, \chi_2 (0) \, \optilde (\sigma) \, u$.
\end{remark}

Let us observe that for an amplitude $\sigma$ that only depends on $(x,\theta,\xi,k)$ and not on $(y,\omega)$,
then the oscillatory integral $\optilde (\sigma) \, u$ coincides with $\op (\sigma) \, u$. This can be checked
directly by applying Fubini's Theorem and the dominated convergence Theorem. In that case, the convergence of
the sequence $(T_\delta)_{\delta>0}$ in ${\mathcal S}'(\R^d \times \T)$ is much easier to obtain. Since we
shall use this argument in what follows, we state the result in a precise way.

\begin{proposition}
\label{prop1}
Let $\sigma : \R^d \times \T \times \R^d \times \T \times \R^d \times \Z \rightarrow \C^{N \times N}$ be
a continuous function that satisfies the differentiability assumptions of Theorem \ref{thm2} and that is
independent of its third and fourth variables: $\sigma (x,\theta,y,\omega,\xi,k) = \sigma_\sharp
(x,\theta,\xi,k)$. Then for all $u \in {\mathcal S} (\R^d \times \T)$, $\optilde (\sigma) \, u$ coincides
with the function $\op (\sigma_\sharp) \, u$ defined in Theorem \ref{thm1}.
\end{proposition}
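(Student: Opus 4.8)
The plan is to show that for an amplitude independent of $(y,\omega)$, the truncated oscillatory integrals $T_\delta$ defined in \eqref{oscintegral} coincide, for each $\delta>0$, with an honest pseudodifferential operator applied to a modified symbol, and then to pass to the limit. Concretely, with $\sigma(x,\theta,y,\omega,\xi,k)=\sigma_\sharp(x,\theta,\xi,k)$ the integral in \eqref{oscintegral} no longer involves $(y,\omega)$ in the amplitude, so I would first perform the integrations in $y$ and $\omega$: for fixed $(x,\theta,\xi,k)$ one has
\begin{equation*}
\sum_{k\in\Z}\chi_1(\delta k)\int_{\R^d\times\R^d\times\T} {\rm e}^{i(x-y)\cdot\xi}\,{\rm e}^{2i\pi k(\theta-\omega)}\,\chi_2(\delta\xi)\,\sigma_\sharp(x,\theta,\xi,k)\,u(y,\omega)\,{\rm d}\xi\,{\rm d}y\,{\rm d}\omega \, ,
\end{equation*}
and the $y$-integral reconstructs $\int {\rm e}^{-iy\cdot\xi}u(y,\omega)\,{\rm d}y$ while the $\omega$-integral against ${\rm e}^{-2i\pi k\omega}$ reconstructs the Fourier coefficient $c_k(u)$. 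That is, integrating first in $y$ and $\omega$ (legitimate because $u$ is Schwartz, so the integrand is absolutely integrable on $\R^d\times\R^d\times\T$ for each fixed $k$, and only finitely many $k$ survive the cutoff $\chi_1(\delta k)$) yields
\begin{equation*}
T_\delta(x,\theta)=\frac{1}{(2\pi)^d}\sum_{k\in\Z}\chi_1(\delta k)\int_{\R^d}{\rm e}^{ix\cdot\xi}\,{\rm e}^{2i\pi k\theta}\,\chi_2(\delta\xi)\,\sigma_\sharp(x,\theta,\xi,k)\,\widehat{c_k(u)}(\xi)\,{\rm d}\xi \, .
\end{equation*}

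Next I would recognize the right-hand side as $\op(\sigma_\sharp^\delta)\,u$ where $\sigma_\sharp^\delta(x,\theta,\xi,k):=\chi_1(\delta k)\,\chi_2(\delta\xi)\,\sigma_\sharp(x,\theta,\xi,k)$, using the definition of $\op$ from Theorem \ref{thm1}. Since $\chi_1,\chi_2$ are bounded with bounded derivatives and $\chi_1(0)=\chi_2(0)=1$, the symbols $\sigma_\sharp^\delta$ satisfy the hypotheses of Theorem \ref{thm1} with $\Ng\sigma_\sharp^\delta\Nd\le C_\chi\,\Ng\sigma_\sharp\Nd$ uniformly in $\delta\in\,]0,1]$, and they converge pointwise to $\sigma_\sharp$ as $\delta\to 0$. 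Then I would pass to the limit: for fixed $u\in{\mathcal S}$ and fixed $(x,\theta)$, the integrand in the displayed formula for $T_\delta$ is dominated, uniformly in $\delta$, by an integrable function of $(\xi,k)$ — because $\widehat{c_k(u)}$ has fast decay in both $\xi$ and $k$ and $\|\sigma_\sharp^\delta\|_{L^\infty}$ is bounded — so the dominated convergence Theorem gives $T_\delta(x,\theta)\to \op(\sigma_\sharp)\,u(x,\theta)$ pointwise, and a second application of dominated convergence (now in $(x,\theta)$ against a Schwartz test function, using again fast decay of the Fourier coefficients together with the $L^\infty$ bound on $\sigma$) gives convergence in ${\mathcal S}'(\R^d\times\T)$. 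By the uniqueness of the limit asserted in Theorem \ref{thm2}, this limit must be $\optilde(\sigma)\,u$, whence $\optilde(\sigma)\,u=\op(\sigma_\sharp)\,u$.

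I do not expect a genuine obstacle here: the only points requiring care are the justification of Fubini's Theorem for the $y,\omega$ integrations (clear since $u$ is Schwartz and only finitely many harmonics appear for fixed $\delta$) and the two dominated-convergence arguments, for which explicit dominating functions are easy to write down from the fast decay of $(\widehat{c_k(u)})_{k\in\Z}$ in ${\mathcal S}(\R^d)$ and the uniform $L^\infty$ bound on $\sigma_\sharp^\delta$. The mild subtlety worth a sentence in the proof is that the limit of $(T_\delta)_{\delta>0}$ in ${\mathcal S}'$ is, by Theorem \ref{thm2}, independent of the choice of $\chi_1,\chi_2$ and equal to $\optilde(\sigma)\,u$, so identifying it with $\op(\sigma_\sharp)\,u$ for one convenient choice of cutoffs suffices.
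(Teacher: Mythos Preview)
Your proposal is correct and follows precisely the approach the paper itself indicates just before stating the proposition: Fubini's Theorem to perform the $(y,\omega)$-integrations and recover $\widehat{c_k(u)}(\xi)$, followed by the dominated convergence Theorem to pass to the limit $\delta\to 0$. The paper does not give more detail than this sketch, and your write-up fills in exactly the points that need checking.
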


For simplicity, a function defined on $\R^d \times \T \times \R^d \times \T \times \R^d \times \Z$ that is
independent of its third and fourth variables is equally considered as a function on $\R^d \times \T \times
\R^d \times \Z$, that is we use from now on the same notation for $\sigma$ and $\sigma_\sharp$ in Proposition
\ref{prop1}. We hope that this does not create any confusion. The following result is a more precise comparison
between oscillatory integral operators and pseudodifferential operators. It contains Proposition \ref{prop1}
as a special trivial case. It is also the starting point for the pseudodifferential calculus developed in the
following section.

\begin{proposition}
\label{prop2}
Let $\widetilde{\sigma} \in {\mathcal C}^\infty_b (\R^d \times \T \times \R^d \times \T \times \R^d \times \Z ;
\C^{N \times N})$ be an amplitude, and let the symbol $\sigma \in {\mathcal C}^\infty_b (\R^d \times \T \times
\R^d \times \Z ; \C^{N \times N})$ be defined by
\begin{equation*}
\sigma (x,\theta,\xi,k) := \widetilde{\sigma} (x,\theta,x,\theta,\xi,k) \, .
\end{equation*}
Then the operator $\optilde (\widetilde{\sigma}) - \op (\sigma)$ coincides with $\optilde (r)$, where the
amplitude $r \in {\mathcal C}^\infty_b (\R^d \times \T \times \R^d \times \T \times \R^d \times \Z ; \C^{N \times N})$
is decomposed as
\begin{equation*}
r (x,\theta,y,\omega,\xi,k) = r_1 (x,\theta,y,\omega,\xi,k)
+ R_2 (x,\theta,y,\omega,\xi,k+1) -R_2 (x,\theta,y,\omega,\xi,k) \, ,
\end{equation*}
with
\begin{align*}
r_1 (x,\theta,y,\omega,\xi,k) &:= \dfrac{1}{i} \, \sum_{j=1}^d \int_0^1 \partial_{y_j} \, \partial_{\xi_j}
\widetilde{\sigma} \big( x,\theta,(1-t) \, x+t \, y,\omega,\xi,k \big) \, {\rm d}t \, ,\\
R_2 (x,\theta,y,\omega,\xi,k) &:= \begin{cases}
\dfrac{\widetilde{\sigma} (x,\theta,x,\omega,\xi,k) -\widetilde{\sigma} (x,\theta,x,\theta,\xi,k)}
{1-{\rm e}^{-2\, i \, \pi \, (\omega-\theta)}} \, ,& \text{\rm if $\omega \neq \theta$,} \\
\dfrac{1}{2 \, i \, \pi} \, \partial_\omega \widetilde{\sigma} (x,\theta,x,\theta,\xi,k)
\, ,& \text{\rm if $\omega = \theta$.}
\end{cases}
\end{align*}
\end{proposition}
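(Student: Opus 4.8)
The plan is to express both $\optilde(\widetilde\sigma)$ and $\op(\sigma)$ through oscillatory integral operators and to reduce the statement to a purely algebraic manipulation inside the truncated integrals \eqref{oscintegral}. Since $\sigma$ does not depend on $(y,\omega)$, Proposition~\ref{prop1} gives $\op(\sigma)=\optilde(\sigma)$, so by linearity of the map $\sigma\mapsto\optilde(\sigma)$ (apparent from \eqref{continuiteL2-3}) it suffices to prove $\optilde(\widetilde\sigma-\sigma)=\optilde(r)$, where $\widetilde\sigma-\sigma$ denotes the amplitude $(x,\theta,y,\omega,\xi,k)\mapsto\widetilde\sigma(x,\theta,y,\omega,\xi,k)-\widetilde\sigma(x,\theta,x,\theta,\xi,k)$. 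Writing this along the two legs of the ``diagonal path'',
\begin{multline*}
\widetilde\sigma(x,\theta,y,\omega,\xi,k)-\widetilde\sigma(x,\theta,x,\theta,\xi,k)\\
=\big[\widetilde\sigma(x,\theta,y,\omega,\xi,k)-\widetilde\sigma(x,\theta,x,\omega,\xi,k)\big]\\
+\big[\widetilde\sigma(x,\theta,x,\omega,\xi,k)-\widetilde\sigma(x,\theta,x,\theta,\xi,k)\big]=:\tau_1+\tau_2,
\end{multline*}
and noting that $\tau_1,\tau_2,r_1,R_2$ are all in ${\mathcal C}^\infty_b$ (so that Theorem~\ref{thm2} applies throughout and $\optilde(\widetilde\sigma-\sigma)=\optilde(\tau_1)+\optilde(\tau_2)$), the goal reduces to $\optilde(\tau_1)=\optilde(r_1)$ and $\optilde(\tau_2)=\optilde\big(R_2(\cdot,k+1)-R_2(\cdot,k)\big)$.

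For the first leg, Taylor's formula with integral remainder in the third variable gives $\tau_1=\sum_{j=1}^d(y_j-x_j)\,a_j$ with $a_j(x,\theta,y,\omega,\xi,k):=\int_0^1(\partial_{y_j}\widetilde\sigma)(x,\theta,(1-t)x+ty,\omega,\xi,k)\,{\rm d}t$. Since $(y_j-x_j)\,a_j$ is not a bounded amplitude, I would carry this out on the truncated integrals $T_\delta$ of \eqref{oscintegral}: using $(y_j-x_j)\,{\rm e}^{i(x-y)\cdot\xi}=i\,\partial_{\xi_j}{\rm e}^{i(x-y)\cdot\xi}$ and integrating by parts in $\xi$ (legitimate because $\chi_2(\delta\xi)$ has compact support), the operator $-i\,\partial_{\xi_j}$ lands on $\chi_2(\delta\xi)\,a_j$. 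When it hits $a_j$ one obtains, after $\delta\to0$, exactly $\optilde\big(\tfrac1i\sum_j\partial_{\xi_j}a_j\big)=\optilde(r_1)$, using $\partial_{\xi_j}a_j=\int_0^1(\partial_{y_j}\partial_{\xi_j}\widetilde\sigma)(x,\theta,(1-t)x+ty,\omega,\xi,k)\,{\rm d}t$; when it hits $\chi_2(\delta\xi)$ one gets a factor $\delta$ times an oscillatory integral with the cut-off $\partial_j\chi_2$ in place of $\chi_2$, which by Remark~\ref{rem2} and the uniform bound \eqref{continuiteL2-2} is $O(\delta)$ in $L^2$ and drops out. Hence $\optilde(\tau_1)=\optilde(r_1)$.

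For the second leg I would first verify that $R_2\in{\mathcal C}^\infty_b$. On $\T\times\T$ the denominator $1-{\rm e}^{-2i\pi(\omega-\theta)}$ and the numerator $\tau_2$ vanish exactly on the diagonal $\{\omega=\theta\}$; near the diagonal each factors as $(\omega-\theta)$ times a smooth function, the one coming from the denominator being smooth and equal to $2i\pi\neq0$ on the diagonal, so the quotient extends smoothly across with the stated value $\tfrac1{2i\pi}\partial_\omega\widetilde\sigma(x,\theta,x,\theta,\xi,k)$; boundedness of all derivatives follows since $\widetilde\sigma\in{\mathcal C}^\infty_b$ and the torus is compact. The identity $\tau_2=R_2\big(1-{\rm e}^{-2i\pi(\omega-\theta)}\big)$ then yields ${\rm e}^{2i\pi k(\theta-\omega)}\tau_2=R_2\,{\rm e}^{2i\pi k(\theta-\omega)}-R_2\,{\rm e}^{2i\pi(k+1)(\theta-\omega)}$. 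Substituting this into $T_\delta$ for $\optilde(\tau_2)$ and shifting the summation index in the second sum so that both sums carry the phase ${\rm e}^{2i\pi k(\theta-\omega)}$, the truncated amplitude becomes the discrete difference $R_2(\cdot,k+1)-R_2(\cdot,k)$ localized by a cut-off $\chi_1(\delta\cdot)$, plus a remaining term weighted by a finite difference of $\chi_1(\delta\cdot)$, which is $O(\delta)$ in $\Ng\cdot\Nd_{\mathrm{Amp}}$ and drops out by \eqref{continuiteL2-2}; the main term converges by Theorem~\ref{thm2} to $\optilde\big(R_2(\cdot,k+1)-R_2(\cdot,k)\big)$. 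Adding the two legs and using linearity gives $\optilde(\widetilde\sigma)-\op(\sigma)=\optilde(r_1)+\optilde\big(R_2(\cdot,k+1)-R_2(\cdot,k)\big)=\optilde(r)$.

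The two points requiring attention are the smoothness and uniform boundedness of $R_2$ across the diagonal --- elementary, but the only genuinely analytic step --- and the bookkeeping with the cut-offs $\chi_1,\chi_2$: the Taylor expansion, the integration by parts in $\xi$, and the index shift in $k$ are valid only on the truncated integrals, so one must check that every spurious term produced when a $\xi$-derivative or an index shift meets a cut-off is $O(\delta)$ and tends to $0$ in ${\mathcal S}'(\R^d\times\T)$ as $\delta\to0$, which is precisely what the uniform estimate \eqref{continuiteL2-2} together with Remark~\ref{rem2} provides.
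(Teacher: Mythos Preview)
Your proof is correct and follows essentially the same approach as the paper: the same two-leg decomposition $\tau_1+\tau_2$, Taylor's formula plus integration by parts in $\xi$ for the $y$-leg, and the factorization $\tau_2=R_2\,(1-{\rm e}^{-2i\pi(\omega-\theta)})$ followed by an index shift (Abel's transformation) for the $\omega$-leg. The only organizational difference is that the paper first carries out the manipulations under a compact-support assumption on $(\xi,k)$ and then passes to the limit via the approximation $\widetilde\sigma_\delta$, whereas you work directly inside the truncated integrals $T_\delta$ and track the $O(\delta)$ spurious terms produced when derivatives or index shifts meet the cut-offs; the two are equivalent, and your bookkeeping with Remark~\ref{rem2} and the uniform bound is exactly what the paper's limiting argument amounts to.
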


We observe that the amplitude $R_2$ does not depend on $y$ but it depends on $\omega$, so it does not enter
the framework of Proposition \ref{prop1}.

\begin{proof}[Proof of Proposition \ref{prop2}]
Let us first assume that the amplitude $\widetilde{\sigma}$ has also compact support in $(\xi,k)$. In that
case, the symbol $\sigma$ has compact support in $(\xi,k)$, and we can apply Proposition \ref{prop1} and
Lemma \ref{lem3}:
\begin{multline*}
\optilde (\widetilde{\sigma}) \, u (x,\theta) - \op (\sigma) \, u (x,\theta)
= \optilde (\widetilde{\sigma}-\sigma) \, u (x,\theta) \\
= \dfrac{1}{(2\, \pi)^d} \, \sum_{k \in \Z} \int_{\R^d \times \T \times \R^d}
{\rm e}^{i \, (x-y) \cdot \xi} \, {\rm e}^{2 \, i \, \pi \, k \, (\theta-\omega)}
\, \big( \widetilde{\sigma} (x,\theta,y,\omega,\xi,k) -\widetilde{\sigma} (x,\theta,x,\omega,\xi,k) \big) \,
u(y,\omega) \, {\rm d}y \, {\rm d}\omega \, {\rm d}\xi \\
+ \dfrac{1}{(2\, \pi)^d} \, \sum_{k \in \Z} \int_{\R^d \times \T \times \R^d}
{\rm e}^{i \, (x-y) \cdot \xi} \, {\rm e}^{2 \, i \, \pi \, k \, (\theta-\omega)}
\, \big( \widetilde{\sigma} (x,\theta,x,\omega,\xi,k) -\widetilde{\sigma} (x,\theta,x,\theta,\xi,k) \big) \,
u(y,\omega) \, {\rm d}y \, {\rm d}\omega \, {\rm d}\xi \, .
\end{multline*}
Let us start with the first term on the right-hand side. Applying Taylor's formula, we get
\begin{equation*}
\widetilde{\sigma} (x,\theta,y,\omega,\xi,k) -\widetilde{\sigma} (x,\theta,x,\omega,\xi,k)
= -\dfrac{1}{i} \, \sum_{j=1}^d i \, (x_j-y_j) \, \int_0^1 \partial_{y_j} \widetilde{\sigma}
\big( x,\theta,(1-t) \, x+t \, y,\omega,\xi,k \big) \, {\rm d}t \, ,
\end{equation*}
then we integrate by parts with respect to $\xi$ and we already obtain
\begin{multline*}
\dfrac{1}{(2\, \pi)^d} \, \sum_{k \in \Z} \int_{\R^d \times \T \times \R^d}
{\rm e}^{i \, (x-y) \cdot \xi} \, {\rm e}^{2 \, i \, \pi \, k \, (\theta-\omega)}
\, \big( \widetilde{\sigma} (x,\theta,y,\omega,\xi,k) -\widetilde{\sigma} (x,\theta,x,\omega,\xi,k) \big) \,
u(y,\omega) \, {\rm d}y \, {\rm d}\omega \, {\rm d}\xi \\
= \optilde (r_1) \, u (x,\theta) \, .
\end{multline*}
All manipulations are made possible by the compact support assumption with respect to $(\xi,k)$ and the
fact that $u$ belongs to ${\mathcal S} (\R^d \times \T)$.

Let us now study the second term in the decomposition of $\optilde (\widetilde{\sigma}-\sigma) \, u$. By
standard results of calculus, the function $R_2$ defined in Proposition \ref{prop2} is $1$-periodic with
respect to $\theta$ and $\omega$, and is smooth (namely, ${\mathcal C}^\infty_b$) with respect to all its
arguments. (The reason why we divide by $1 -{\rm e}^{-2\, i \, \pi \, (\omega-\theta)}$ and not by $\omega
-\theta$ in the definition of $R_2$ is to keep the periodicity with respect to both $\theta$ and $\omega$.
However, this is of little consequence, and $R_2$ basically counts as one $\omega$-derivative of the
amplitude $\widetilde{\sigma}$.) We apply Abel's transformation and obtain
\begin{multline*}
\dfrac{1}{(2\, \pi)^d} \, \sum_{k \in \Z} \int_{\R^d \times \T \times \R^d}
{\rm e}^{i \, (x-y) \cdot \xi} \, {\rm e}^{2 \, i \, \pi \, k \, (\theta-\omega)}
\, \big( \widetilde{\sigma} (x,\theta,x,\omega,\xi,k) -\widetilde{\sigma} (x,\theta,x,\theta,\xi,k) \big) \,
u(y,\omega) \, {\rm d}y \, {\rm d}\omega \, {\rm d}\xi \\
= \dfrac{1}{(2\, \pi)^d} \, \sum_{k \in \Z} \int_{\R^d \times \T \times \R^d}
{\rm e}^{i \, (x-y) \cdot \xi} \,
\big( {\rm e}^{2 \, i \, \pi \, k \, (\theta-\omega)} - {\rm e}^{2 \, i \, \pi \, (k+1) \, (\theta-\omega)} \big)
\, R_2 (x,\theta,\omega,\xi,k) \, u(y,\omega) \, {\rm d}y \, {\rm d}\omega \, {\rm d}\xi \\
= \dfrac{1}{(2\, \pi)^d} \, \sum_{k \in \Z} \int_{\R^d \times \T \times \R^d}
{\rm e}^{i \, (x-y) \cdot \xi} \, {\rm e}^{2 \, i \, \pi \, k \, (\theta-\omega)}
\big( R_2 (x,\theta,\omega,\xi,k+1) -R_2 (x,\theta,\omega,\xi,k) \big)
u(y,\omega) \, {\rm d}y \, {\rm d}\omega \, {\rm d}\xi \, .
\end{multline*}
We have thus proved the result announced in Proposition \ref{prop2} under the additional assumption that
the amplitude $\widetilde{\sigma}$ has compact support in $(\xi,k)$.

When the amplitude $\widetilde{\sigma}$ does not necessarily have compact support in $(\xi,k)$, we approximate
$\widetilde{\sigma}$ by a sequence $\widetilde{\sigma}_\delta$, $\delta>0$, as in Theorem \ref{thm2}. We leave
as an exercise to the reader the verification that for the corresponding sequence of amplitudes
$(r_\delta)_{\delta \in \, ]0,1]}$, there holds
\begin{equation*}
\forall \, u \in {\mathcal S} (\R^d \times \T) \, ,\quad
\lim_{\delta \rightarrow 0} \optilde (r_\delta) \, u = \optilde (r) \, u \, ,
\end{equation*}
where the limit is understood in the sense of ${\mathcal S}' (\R^d \times \T)$ (use Remark \ref{rem2}). This
completes the proof of Proposition \ref{prop2}.
\end{proof}

We have only proved Proposition \ref{prop2} for very smooth amplitudes. In the following Section, we shall
extend this decomposition to amplitudes with finite regularity by the standard smoothing procedure. At this
stage, we feel free to shorten some of the arguments in the proof when they are completely similar to what
we have already explained.

\section{Singular pseudodifferential calculus I. Definition of operators and action on Sobolev spaces}
\label{sect4}

\subsection{Singular symbols and singular pseudodifferential operators}

Following \cite{williams3}, we now introduce the singular symbols and their associated operators. The classes
of symbols are defined by first considering the following sets.

\begin{definition}
\label{def1}
Let $q \ge 1$, and let ${\mathcal O} \subset \R^q$ be an open set that contains the origin. Let $m \in \R$. Then
we let ${\bf S}^m ({\mathcal O})$ denote the class of all functions $\sigma : {\mathcal O} \times \R^d \times
[1,+\infty[ \rightarrow \C^{N \times N}$ such that
\begin{itemize}
 \item[{\rm (i)}] for all $\gamma \ge 1$, $\sigma (\cdot,\cdot,\gamma)$ is ${\mathcal C}^\infty$ on ${\mathcal O}
                  \times \R^d$,
 \item[{\rm (ii)}] for all compact subset $K$ of ${\mathcal O}$, for all $\alpha \in \N^q$ and for all $\nu \in
                   \N^d$, there exists a constant $C_{\alpha,\nu,K}$ satisfying
\begin{equation*}
\sup_{v \in K} \, \sup_{\xi \in \R^d} \, \sup_{\gamma \ge 1} \, \, (\gamma^2+|\xi|^2)^{-(m-|\nu|)/2} \,
\big| \partial_v^\alpha \, \partial_\xi^\nu \, \sigma \, (v,\xi,\gamma) \big| \le C_{\alpha,\nu,K} \, .
\end{equation*}
\end{itemize}
\end{definition}

\noindent Let us now define the singular symbols.

\begin{definition}[Singular symbols]
\label{def2}
Let $m \in \R$, and let $n \in \N$. Then we let $S^m_n$ denote the set of families of functions
$(a_{\eps,\gamma})_{\eps \in ]0,1],\gamma \ge 1}$ that are constructed as follows:
\begin{equation}
\label{singularsymbol}
\forall \, (x,\theta,\xi,k) \in \R^d \times \T \times \R^d \times \Z \, ,\quad
a_{\eps,\gamma} (x,\theta,\xi,k) = \sigma \left(
\eps \, V(x,\theta),\xi+\dfrac{2\, \pi \, k \, \beta}{\eps},\gamma \right) \, ,
\end{equation}
where $\sigma \in {\bf S}^m({\mathcal O})$, $V$ belongs to the space ${\mathcal C}^n_b (\R^d \times \T)$ and
where furthermore $V$ takes its values in a convex compact subset $K$ of ${\mathcal O}$ that contains the
origin (for instance $K$ can be a closed ball centered round the origin).
\end{definition}

In Definition \ref{def2}, we ask the function $V$ to take its values in a convex compact subset $K$ of
${\mathcal O}$ so that for all $\eps \in \, ]0,1]$, the function $\eps \, V$ takes its values in the same
convex compact set $K$. This property is used in several places below to derive uniform $L^\infty$ bounds
with respect to the small parameter $\eps$.

For simplicity, we shall not mention that $S^m_n$ depends on the open set ${\mathcal O}$. (It will be
convenient from time to time to let ${\mathcal O}$ denote various possible open sets.) With a slight abuse
in the terminology, we shall refer to the elements of $S^m_n$ as symbols rather than as families of symbols.
We hope that this does not create any confusion.

To each symbol $a = (a_{\eps,\gamma})_{\eps \in ]0,1],\gamma \ge 1} \in S^m_n$ given by the formula
\eqref{singularsymbol}, we associate a singular pseudodifferential operator $\opeg (a)$, with $\eps \in \, ]0,1]$
and $\gamma \ge 1$, whose action on a function $u \in {\mathcal S} (\R^d \times \T ; \C^N)$ is defined by
\begin{equation}
\label{singularpseudo}
\opeg (a) \, u \, (x,\theta) := \dfrac{1}{(2\, \pi)^d} \, \sum_{k \in \Z} \int_{\R^d}
{\rm e}^{i\, x \cdot \xi} \, {\rm e}^{2\, i\, \pi \, k \, \theta} \,
\sigma \left( \eps \, V(x,\theta),\xi+\dfrac{2 \, \pi \, k \, \beta}{\eps},\gamma \right) \, \widehat{c_k(u)} (\xi)
\, {\rm d}\xi \, .
\end{equation}
Let us briefly note that for the Fourier multiplier $\sigma (v,\xi,\gamma) =i\, \xi_1$, the corresponding
singular operator is $\partial_{x_1} +(\beta_1/\eps) \, \partial_\theta$. We now wish to describe the action
of singular pseudodifferential operators on Sobolev spaces. As can be expected from this simple example,
the natural framework is provided by the spaces $H^{s,\eps}$ defined in Section \ref{sect2}. The following
result is a direct consequence of Theorem \ref{thm1}.

\begin{proposition}
\label{prop3}
Let $n \ge d+1$, and let $a \in S^m_n$ with $m \le 0$. Then $\opeg (a)$ in \eqref{singularpseudo} defines
a bounded operator on $L^2 (\R^d \times \T)$: there exists a constant $C>0$, that only depends on $\sigma$
and $V$ in the representation \eqref{singularsymbol}, such that for all $\eps \in \, ]0,1]$ and for all
$\gamma \ge 1$, there holds
\begin{equation*}
\forall \, u \in {\mathcal S} (\R^d \times \T) \, ,\quad \left\| \opeg (a) \, u \right\|_0
\le \dfrac{C}{\gamma^{|m|}} \, \| u \|_0 \, .
\end{equation*}
\end{proposition}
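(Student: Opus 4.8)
The plan is to apply Theorem~\ref{thm1} to the symbol $\sigma_{\eps,\gamma}(x,\theta,\xi,k) := a_{\eps,\gamma}(x,\theta,\xi,k) = \sigma(\eps\,V(x,\theta),\xi+2\pi k\beta/\eps,\gamma)$, so the main task is to verify the hypotheses of Theorem~\ref{thm1} with a bound on $\Ng \sigma_{\eps,\gamma} \Nd$ that is uniform in $\eps \in \,]0,1]$ and decays like $\gamma^{-|m|}$. First I would observe that since $m\le 0$, Definition~\ref{def1}(ii) with $\nu=0$ gives, for $v$ ranging in the compact set $K$,
\begin{equation*}
|\partial_v^\alpha \sigma(v,\xi,\gamma)| \le C_{\alpha,0,K}\,(\gamma^2+|\xi|^2)^{m/2} \le C_{\alpha,0,K}\,\gamma^m = \dfrac{C_{\alpha,0,K}}{\gamma^{|m|}} \, ,
\end{equation*}
for all $|\alpha|\le \text{(something)}$, and each $\xi$-derivative lowers the order by one, so $|\partial_v^\alpha \partial_\xi^\nu \sigma(v,\xi,\gamma)| \le C_{\alpha,\nu,K}\,(\gamma^2+|\xi|^2)^{(m-|\nu|)/2} \le C_{\alpha,\nu,K}\,\gamma^{m-|\nu|} \le C_{\alpha,\nu,K}\,\gamma^{-|m|}$ when $\gamma\ge 1$ and $|\nu|\ge 0$; here one uses $m-|\nu|\le m \le 0$ so that $(\gamma^2+|\xi|^2)^{(m-|\nu|)/2}\le \gamma^{m-|\nu|}$.

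Next I would work out the chain rule for $\sigma_{\eps,\gamma}$. Differentiating in $\xi_j$ just replaces $\sigma$ by $\partial_{\xi_j}\sigma$ evaluated at the shifted argument, with no extra factor; this is why we need control of $\partial_\xi^\beta\sigma$ for $\beta\in\{0,1\}^d$, which Definition~\ref{def1}(ii) supplies. Differentiating in $x_j$ or $\theta$ produces, by the chain rule, a factor $\eps\,\partial_{x_j}V$ (resp.\ $\eps\,\partial_\theta V$) times $(\partial_v\sigma)$ at the shifted point; since $V\in{\mathcal C}^n_b$ with $n\ge d+1$, all the derivatives $\partial_x^\alpha\partial_\theta^j V$ with $\alpha\in\{0,1\}^d$, $j\in\{0,1\}$ (so $|\alpha|+j\le d+1\le n$) are bounded, and the prefactor $\eps\le 1$ only helps. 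Crucially the shifted argument $\eps V(x,\theta)$ still lies in $K$ for every $\eps\in\,]0,1]$ because $K$ is convex and contains the origin (this is exactly the point emphasized after Definition~\ref{def2}), so the constants $C_{\alpha,\nu,K}$ apply uniformly. Collecting terms, for $\alpha,\beta\in\{0,1\}^d$ and $j\in\{0,1\}$,
\begin{equation*}
\big\| \partial_x^\alpha \partial_\theta^j \partial_\xi^\beta \sigma_{\eps,\gamma} \big\|_{L^\infty} \le C(\sigma,V)\,\gamma^{-|m|} \, ,
\end{equation*}
uniformly in $\eps\in\,]0,1]$ and $\gamma\ge1$, which is precisely $\Ng \sigma_{\eps,\gamma}\Nd \le C(\sigma,V)\,\gamma^{-|m|}$.

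Finally, I would note that for fixed $\eps,\gamma$ the function $\sigma_{\eps,\gamma}$ is continuous (indeed smooth) on $\R^d\times\T\times\R^d$ for each $k\in\Z$ and has all the required $L^\infty$ derivatives, so Theorem~\ref{thm1} applies and gives
\begin{equation*}
\| \opeg(a)\,u \|_0 \le C \, \Ng \sigma_{\eps,\gamma} \Nd \, \|u\|_0 \le \dfrac{C(\sigma,V)}{\gamma^{|m|}} \, \|u\|_0 \, ,
\end{equation*}
which is the claim. The only mildly delicate point is bookkeeping the chain rule so that no uncontrolled power of $\eps^{-1}$ or $\gamma$ appears: the $2\pi k\beta/\eps$ shift in the frequency slot is harmless because it is independent of $x,\theta$ and because Definition~\ref{def1} controls $\sigma$ uniformly over all frequencies, while the $\eps V$ in the slow slot is harmless because $\eps\le1$ and $K$ is convex around $0$. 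I do not expect a genuine obstacle here; this proposition is essentially a clean corollary of Theorem~\ref{thm1} once the symbol estimates of Definition~\ref{def1} are combined with the boundedness of $V$ and its derivatives.
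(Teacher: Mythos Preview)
Your proposal is correct and follows essentially the same approach as the paper: reduce to Theorem~\ref{thm1} by verifying the bound $\Ng a_{\eps,\gamma}\Nd \le C/\gamma^{|m|}$, using Definition~\ref{def1}(ii) for the symbol decay, the convexity of $K$ to keep $\eps V$ inside $K$, and the chain rule (the paper invokes the Fa\`a di Bruno formula by name) together with $V\in{\mathcal C}^{d+1}_b$ to control the $(x,\theta)$-derivatives. The only cosmetic difference is that the paper explicitly names Fa\`a di Bruno for the higher-order chain rule bookkeeping, whereas you sketch the first-order step and then summarize; both are adequate at this level of detail.
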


Let us observe that if we compare Proposition \ref{prop3} with \cite[Proposition 1.1]{williams3}, we obtain
the same result with slightly less regularity on $V$, and above all without the compact support assumption
on the function $V$. The constant $C$ in Proposition \ref{prop3} depends uniformly on the compact set in
which $V$ takes its values and on the norm of $V$ in ${\mathcal C}^{d+1}_b$. Even when we do not state it
so clearly, all constants in the results below will depend uniformly on a finite number of derivatives of
the symbols (or amplitudes).

\begin{proof}[Proof of Proposition \ref{prop3}]
We wish to apply Theorem \ref{thm1}, so the only thing to check is that the symbol $a_{\eps,\gamma}$ defined
by \eqref{singularsymbol} satisfies a bound of the form
\begin{equation*}
\forall \, \eps \in \, ]0,1] \, ,\quad \forall \, \gamma \ge 1 \, ,\quad
\Ng a_{\eps,\gamma} \Nd \le \dfrac{C_{\sigma,V}}{\gamma^{|m|}} \, .
\end{equation*}
For instance, the proof of the $L^\infty$ bound follows from Definitions \ref{def1} and \ref{def2}. Let us
recall that for all $\eps \in \, ]0,1]$, $\eps \, V$ takes its values in a fixed convex compact subset $K
\subset {\mathcal O}$ (because $K$ has been assumed to contain the origin, see Definition \ref{def2}), so
we have
\begin{equation*}
\left| \sigma \left( \eps \, V(x,\theta),\xi+\dfrac{2\, \pi \, k \, \beta}{\eps},\gamma \right) \right|
\le C_{0,0,K} \, \left( \gamma^2 +\left| \xi+\dfrac{2\, \pi \, k \, \beta}{\eps} \right|^2 \right)^{m/2}
\le \dfrac{C}{\gamma^{|m|}} \, .
\end{equation*}
The $L^\infty$ bounds for the derivatives of $a_{\eps,\gamma}$ follow by applying the Fa\`a di Bruno formula
for the composition of functions. We omit the details.
\end{proof}

\begin{remark}
\label{rem3}
The result of Proposition \ref{prop3} does not rely on the scaling of the substitution in the representation
\eqref{singularsymbol}. More precisely, the same result would hold with the substitution $V(x,\theta)$ instead
of $\eps \, V(x,\theta)$. The only important point in the proof is the fact that the function substituted in
the $v$-variable takes its values in a compact subset of ${\mathcal O}$ that is independent of $\eps$,
and that sufficiently many of its derivatives belong to $L^\infty$. This fact will be used several times in
what follows.
\end{remark}

\noindent There is no great difficulty in extending Proposition \ref{prop3} to symbols of positive degree.

\begin{proposition}
\label{prop4}
Let $n \ge d+1$, and let $a \in S^m_n$ with $m>0$. Then $\opeg (a)$ defines a bounded operator from
$H^{m,\eps} (\R^d \times \T)$ to $L^2 (\R^d \times \T)$ with a norm that is independent of $\eps,\gamma$: there
exists a constant $C>0$, that only depends on $\sigma$ and $V$ in the representation \eqref{singularsymbol},
such that for all $\eps \in \, ]0,1]$ and for all $\gamma \ge 1$, there holds
\begin{equation*}
\forall \, u \in {\mathcal S} (\R^d \times \T) \, ,\quad \left\| \opeg (a) \, u \right\|_0
\le C \, \| u \|_{H^{m,\eps},\gamma} \, .
\end{equation*}
\end{proposition}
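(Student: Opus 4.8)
The plan is to reduce Proposition~\ref{prop4} to Proposition~\ref{prop3} by factoring out an elliptic ``weight'' Fourier multiplier of order $m$. Introduce the singular Fourier multiplier $\Lambda^{m,\eps,\gamma}$ associated with the (classical) symbol $\lambda^m(v,\xi,\gamma):=(\gamma^2+|\xi|^2)^{m/2}$, that is
\begin{equation*}
\Lambda^{m,\eps,\gamma} \, u \, (x,\theta) := \dfrac{1}{(2\, \pi)^d} \, \sum_{k \in \Z} \int_{\R^d}
{\rm e}^{i\, x \cdot \xi} \, {\rm e}^{2\, i\, \pi \, k \, \theta} \,
\left( \gamma^2 +\left| \xi+\dfrac{2 \, \pi \, k \, \beta}{\eps} \right|^2 \right)^{m/2} \widehat{c_k(u)} (\xi) \, {\rm d}\xi \, .
\end{equation*}
Since $\lambda^m \in {\bf S}^m({\mathcal O})$ for any ${\mathcal O}$ (it is a Fourier multiplier, so independent of $v$), the family $(\lambda^m)_{\eps,\gamma}$ obtained via \eqref{singularsymbol} with $V\equiv 0$ is an element of $S^m_n$ for every $n$. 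By the very definition \eqref{defnormeepsgamma} of the norm $\|\cdot\|_{H^{m,\eps},\gamma}$ and Plancherel's theorem for the Fourier series/Fourier transform decomposition on $\R^d\times\T$, one has the exact identity
\begin{equation*}
\left\| \Lambda^{m,\eps,\gamma} \, u \right\|_0 = \| u \|_{H^{m,\eps},\gamma} \, ,
\end{equation*}
and moreover $\Lambda^{m,\eps,\gamma}$ is invertible on $L^2(\R^d \times \T)$ with inverse $\Lambda^{-m,\eps,\gamma}$, which is exactly $\opeg$ applied to the symbol $\lambda^{-m}\in S^{-m}_n$ of order $-m<0$.

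The key step is then the composition identity $\opeg(a) = \opeg(a) \circ \Lambda^{-m,\eps,\gamma} \circ \Lambda^{m,\eps,\gamma}$, so that it suffices to show that $\opeg(a) \circ \Lambda^{-m,\eps,\gamma}$ is bounded on $L^2(\R^d\times\T)$ with norm uniform in $\eps,\gamma$. Both factors are singular pseudodifferential operators, but $\Lambda^{-m,\eps,\gamma}$ is a Fourier multiplier, so the composition has a very simple symbol: writing $a_{\eps,\gamma}(x,\theta,\xi,k)=\sigma(\eps V(x,\theta),\xi+2\pi k\beta/\eps,\gamma)$, one checks directly from the definition \eqref{singularpseudo} that $\opeg(a)\circ\Lambda^{-m,\eps,\gamma} = \opeg(b)$ where $b_{\eps,\gamma}(x,\theta,\xi,k) = \sigma(\eps V(x,\theta),\xi+2\pi k\beta/\eps,\gamma)\,(\gamma^2+|\xi+2\pi k\beta/\eps|^2)^{-m/2}$, i.e. $b$ is the singular symbol \eqref{singularsymbol} built from $\tilde\sigma(v,\xi,\gamma):=\sigma(v,\xi,\gamma)(\gamma^2+|\xi|^2)^{-m/2}$ and the same $V$. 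Because $\sigma\in{\bf S}^m({\mathcal O})$ and $(\gamma^2+|\xi|^2)^{-m/2}$ is elliptic of order $-m$, the Leibniz rule shows $\tilde\sigma\in{\bf S}^0({\mathcal O})$; hence $b\in S^0_n$ and Proposition~\ref{prop3} (with $m=0$) applies, giving $\|\opeg(b)u\|_0\le C\|u\|_0$ with $C$ depending only on finitely many derivatives of $\sigma$ and $V$. Combining, $\|\opeg(a)u\|_0 = \|\opeg(b)\Lambda^{m,\eps,\gamma}u\|_0 \le C\|\Lambda^{m,\eps,\gamma}u\|_0 = C\|u\|_{H^{m,\eps},\gamma}$, as desired; the regularity requirement $n\ge d+1$ is simply inherited from Proposition~\ref{prop3}.

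The only real point to be careful about is the verification of the composition identity $\opeg(a)\circ\Lambda^{-m,\eps,\gamma}=\opeg(b)$: one must know that $\Lambda^{-m,\eps,\gamma}$ maps ${\mathcal S}(\R^d\times\T)$ into a space on which $\opeg(a)$ is defined by the integral formula \eqref{singularpseudo}, and that Fubini/dominated-convergence manipulations are legitimate. Since $\Lambda^{-m,\eps,\gamma}$ is a Fourier multiplier whose symbol and all its $\xi$-derivatives are bounded (for fixed $\eps,\gamma$), it preserves the decay and smoothness needed, and $\widehat{c_k(\Lambda^{-m,\eps,\gamma}u)}(\xi) = (\gamma^2+|\xi+2\pi k\beta/\eps|^2)^{-m/2}\,\widehat{c_k(u)}(\xi)$ has fast decay in $(\xi,k)$, so plugging into \eqref{singularpseudo} and collecting the two multipliers that act on $\widehat{c_k(u)}(\xi)$ gives exactly $b_{\eps,\gamma}$. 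I expect this bookkeeping to be the main (mild) obstacle; once it is done, the result is immediate from Proposition~\ref{prop3} and the isometry $\|\Lambda^{m,\eps,\gamma}\cdot\|_0=\|\cdot\|_{H^{m,\eps},\gamma}$. Alternatively, one could avoid the composition trick entirely and argue directly as in the proof of Proposition~\ref{prop3}: apply Theorem~\ref{thm1} to the symbol $b_{\eps,\gamma}$ above after writing $\opeg(a)u = \opeg(b)\,(\Lambda^{m,\eps,\gamma}u)$, estimating $\Ng b_{\eps,\gamma}\Nd \le C_{\sigma,V}$ uniformly in $\eps,\gamma$ via the Faà di Bruno formula and the symbol estimates in Definition~\ref{def1} — this is the route I would actually write up, as it is self-contained and parallels the already-given proof of Proposition~\ref{prop3} almost verbatim.
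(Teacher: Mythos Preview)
Your proof is correct and follows essentially the same approach as the paper: factor the symbol as $a_{\eps,\gamma} = b_{\eps,\gamma}\cdot(\gamma^2+|\xi+2\pi k\beta/\eps|^2)^{m/2}$ with $b\in S^0_n$, apply Proposition~\ref{prop3} to $b$, and use that the Fourier multiplier $(\gamma^2+|\xi+2\pi k\beta/\eps|^2)^{m/2}$ is an isometry from $H^{m,\eps}$ (with norm $\|\cdot\|_{H^{m,\eps},\gamma}$) onto $L^2$. The paper's proof is just a terser version of what you wrote.
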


\begin{proof}[Proof of Proposition \ref{prop4}]
It is sufficient to write the symbol $a_{\eps,\gamma}$ as
\begin{equation*}
a_{\eps,\gamma} (x,\theta,\xi,k) \,
\left( \gamma^2 +\left| \xi+\dfrac{2\, \pi \, k \, \beta}{\eps} \right|^2 \right)^{-m/2} \,
\left( \gamma^2 +\left| \xi+\dfrac{2\, \pi \, k \, \beta}{\eps} \right|^2 \right)^{m/2} \, ,
\end{equation*}
to observe that the symbol
\begin{equation*}
(x,\theta,\xi,k) \longmapsto
a_{\eps,\gamma} (x,\theta,\xi,k) \,
\left( \gamma^2 +\left| \xi+\dfrac{2\, \pi \, k \, \beta}{\eps} \right|^2 \right)^{-m/2}
\end{equation*}
belongs to $S^0_n$, and eventually to observe that the Fourier multiplier with symbol
\begin{equation*}
\left( \gamma^2 +\left| \xi+\dfrac{2\, \pi \, k \, \beta}{\eps} \right|^2 \right)^{m/2}
\end{equation*}
is an isometry from $H^{m,\eps} (\R^d \times \T)$ - equipped with the norm $\| \cdot \|_{H^{m,\eps},\gamma}$ -
to $L^2 (\R^d \times \T)$.
\end{proof}

The result of Proposition \ref{prop3} can be made more precise when the degree $m$ of the symbol is negative.
We shall not deal with the general case $m<0$ since in what follows, the case $m=-1$ will be our main concern.
Our result is the following.

\begin{proposition}
\label{prop5}
Let $n \ge d+2$, and let $a \in S^{-1}_n$. Then $\opeg (a)$ defines a bounded operator from $L^2 (\R^d \times \T)$
to $H^{1,\eps} (\R^d \times \T)$ with a norm that is independent of $\eps,\gamma$: there exists a constant $C>0$,
that only depends on $\sigma$ and $V$ in the representation \eqref{singularsymbol}, such that for all $\eps \in
\, ]0,1]$ and for all $\gamma \ge 1$, there holds
\begin{equation*}
\forall \, u \in {\mathcal S} (\R^d \times \T) \, ,\quad \left\| \opeg (a) \, u \right\|_{H^{1,\eps},\gamma}
\le C \, \| u \|_0 \, .
\end{equation*}
\end{proposition}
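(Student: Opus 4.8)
The plan is to reduce the $H^{1,\eps}$-boundedness of $\opeg(a)$ for $a \in S^{-1}_n$ to the $L^2$-boundedness already established in Proposition \ref{prop3}, by commuting $\opeg(a)$ with the ``singular gradient'' Fourier multiplier of order $1$. Concretely, the norm $\| \opeg(a) \, u \|_{H^{1,\eps},\gamma}$ is, up to the factor $(2\pi)^{-d}$, the $L^2$-norm of the function whose $k$-th Fourier coefficient has Fourier transform $(\gamma^2 + |\xi + 2\pi k \beta/\eps|^2)^{1/2}$ times that of $\opeg(a)\,u$. So it suffices to show that the operator $\Lambda^{1,\eps,\gamma} \circ \opeg(a)$ is bounded on $L^2$ uniformly in $\eps,\gamma$, where $\Lambda^{1,\eps,\gamma}$ denotes the Fourier multiplier with symbol $\lambda(\xi,k) := (\gamma^2 + |\xi + 2\pi k\beta/\eps|^2)^{1/2}$.

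The key observation is that $\Lambda^{1,\eps,\gamma}$ is itself a singular operator: it equals $\opeg(b)$ where $b \in S^1_0$ is built from $\sigma_\Lambda(v,\xi,\gamma) := (\gamma^2 + |\xi|^2)^{1/2} \in {\bf S}^1(\R^q)$ (independent of $v$), since the substitution $\xi \mapsto \xi + 2\pi k\beta/\eps$ in the representation \eqref{singularsymbol} produces exactly $\lambda$. First I would therefore write $\Lambda^{1,\eps,\gamma} \circ \opeg(a)$ and argue that its symbol is simply the pointwise product $\lambda(\xi,k) \cdot a_{\eps,\gamma}(x,\theta,\xi,k)$ — this is immediate because $\Lambda^{1,\eps,\gamma}$ is a genuine Fourier multiplier (no $x,\theta$ dependence), so it composes with any operator of the form \eqref{singularpseudo} by multiplying symbols, with no remainder. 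Then one checks that this product symbol is of the form \eqref{singularsymbol} with the underlying $v$-symbol $\sigma_\Lambda(v,\xi,\gamma) \cdot \sigma(v,\xi,\gamma)$, where $\sigma \in {\bf S}^{-1}(\mathcal O)$ is the symbol associated to $a$. Since ${\bf S}^1 \cdot {\bf S}^{-1} \subset {\bf S}^0$ (the Leibniz rule for the estimates in Definition \ref{def1}(ii) is routine), the composite lies in $S^0_n$ — here the hypothesis $n \ge d+2$ is needed only to absorb the one extra $\xi$-derivative coming from differentiating $\sigma_\Lambda$, keeping $n \ge d+1$ for the application of Proposition \ref{prop3}. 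Applying Proposition \ref{prop3} (with $m=0$) to this $S^0_n$ symbol gives $\| \Lambda^{1,\eps,\gamma} \opeg(a) \, u \|_0 \le C \| u \|_0$ uniformly in $\eps,\gamma$, which is exactly the claimed estimate.

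The main obstacle — though a mild one — is justifying that $\Lambda^{1,\eps,\gamma}$ composes cleanly with $\opeg(a)$ with no commutator error and that the resulting symbol is genuinely of class $S^0_n$ in the precise sense of Definition \ref{def2}, i.e.\ that the product factors through a single substitution of the form $(x,\theta,\xi,k) \mapsto (\eps V(x,\theta), \xi + 2\pi k\beta/\eps)$. This is clear because both factors use the \emph{same} $V$ (for $\Lambda$ the $v$-dependence is trivial, so any $V$ works, in particular the one attached to $a$), and the substitution in the frequency slot is linear, so the product of the two substituted symbols equals the substitution of the product. One should also note, as in Remark \ref{rem1}, that all the $L^2$-norms here are computed on $\R^d \times \T$ and that $\| \cdot \|_{H^{1,\eps},\gamma}$ is by definition the $L^2$-norm after applying $\Lambda^{1,\eps,\gamma}$, so no further interpolation or duality is needed. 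A brief alternative, if one prefers to avoid discussing composition of singular operators at this early stage, is to estimate $\| \opeg(a) u \|_{H^{1,\eps},\gamma}^2$ directly from the integral representation \eqref{singularpseudo}: insert $\lambda(\xi,k)$ under the integral, write $\lambda(\xi,k) \, a_{\eps,\gamma} = (\lambda \, \sigma)(\eps V, \xi + 2\pi k\beta/\eps, \gamma)$, and recognize the right-hand side as $\opeg(\tilde a) u$ with $\tilde a \in S^0_n$, then invoke Proposition \ref{prop3}; this is really the same argument with the composition step made explicit by hand.
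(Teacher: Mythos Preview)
Your argument contains a genuine error: the claim that the Fourier multiplier $\Lambda^{1,\eps,\gamma}$ composes with $\opeg(a)$ by pointwise multiplication of symbols ``with no remainder'' is false when the multiplier acts on the \emph{left}. The identity $\op(m)\circ\op(p)=\op(m\,p)$ holds without error only when the \emph{right} factor is the Fourier multiplier (as the paper exploits in the proof of Proposition~\ref{prop4}); when the left factor is the multiplier and the right factor has an $(x,\theta)$-dependent symbol, there is a commutator. Concretely, applying $\partial_{x_j}+\tfrac{\beta_j}{\eps}\partial_\theta$ to the integral \eqref{singularpseudo} forces you to differentiate the symbol $\sigma(\eps V(x,\theta),\cdot)$ as well as the exponential, and those extra terms do not vanish. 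Your ``alternative'' at the end makes the same mistake: you cannot simply insert $\lambda(\xi,k)$ under the integral sign in \eqref{singularpseudo} and call that $\Lambda^{1,\eps,\gamma}\opeg(a)\,u$, because $\widehat{c_k(\opeg(a)u)}(\xi)$ is not $a_{\eps,\gamma}(x,\theta,\xi,k)\,\widehat{c_k(u)}(\xi)$.

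The paper's proof does exactly this differentiation and obtains three pieces: $T_1$ (the multiplier hitting the exponential --- this is your product symbol, which indeed lies in $S^0_n$), $T_2$ (the $x$-derivative hitting $\eps V$, giving a symbol in $S^{-1}_{n-1}$), and $T_3$ (the $\tfrac{\beta_j}{\eps}\partial_\theta$ hitting $\eps V$, which produces $\partial_\theta V$ with \emph{no} $\eps$ in front). The term $T_3$ falls outside the singular class \eqref{singularsymbol} and must be handled by a direct appeal to Theorem~\ref{thm1}; it is precisely here that one needs $V\in\mathcal C^{d+2}_b$ so that $\partial_\theta V\in\mathcal C^{d+1}_b$. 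This is the real reason for the hypothesis $n\ge d+2$. Your stated justification --- that the extra derivative is needed ``to absorb the one extra $\xi$-derivative coming from differentiating $\sigma_\Lambda$'' --- is incorrect: $n$ measures spatial regularity of $V$, and the product $\sigma_\Lambda\cdot\sigma\in{\bf S}^0$ independently of $n$; if your composition claim were right, $n\ge d+1$ would already suffice.
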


Let us observe that the regularizing effect of Proposition \ref{prop5} requires an additional space derivative
on the symbol compared to the $L^2$ bound of Propositions \ref{prop3} and \ref{prop4}. This is the first occurence
in this article of the general principle that ``symbolic calculus (and not only boundedness of operators) requires
spatial regularity''. Here, we study the action of the composition
\begin{equation*}
\left( \partial_{x_j} +\dfrac{\beta_j}{\eps} \, \partial_\theta \right) \, \opeg (a) \, .
\end{equation*}

\begin{proof}[Proof of Proposition \ref{prop5}]
We first observe that Proposition \ref{prop3} already gives the estimate
\begin{equation*}
\forall \, u \in {\mathcal S} (\R^d \times \T) \, ,\quad \left\| \opeg (a) \, u \right\|_0
\le \dfrac{C}{\gamma} \, \| u \|_0 \, .
\end{equation*}
Using the definition \eqref{defnormeepsgamma} of the norm $\| \cdot \|_{H^{1,\eps},\gamma}$, we see that it only
remains to prove some bounds of the form
\begin{equation}
\label{estimprop5}
\forall \, j=1,\dots,d \, ,\quad \forall \, u \in {\mathcal S} (\R^d \times \T) \, ,\quad
\left\| \left( \partial_{x_j} +\dfrac{\beta_j}{\eps} \, \partial_\theta \right) \opeg (a) \, u \right\|_0
\le C \, \| u \|_0 \, ,
\end{equation}
with a constant $C$ that is independent of $\eps,\gamma,u$. We prove such a bound in the case $j=1$
(this is only to simplify the notation).

We can differentiate under the integral sign in the definition of $\opeg (a) \, u$, see
\eqref{singularpseudo}, obtaining
\begin{equation*}
\left( \partial_{x_1} +\dfrac{\beta_1}{\eps} \, \partial_\theta \right) \opeg (a) \, u \, (x,\theta)
= (T_1 + T_2 + T_3) (x,\theta) \, ,
\end{equation*}
where we use the notation
\begin{align*}
T_1 (x,\theta) &:= \dfrac{1}{(2\, \pi)^d} \, \sum_{k \in \Z} \int_{\R^d} {\rm e}^{i\, x \cdot \xi} \,
{\rm e}^{2\, i\, \pi \, k \, \theta} \, i\, \left( \xi_1+\dfrac{2 \, \pi \, k \, \beta_1}{\eps} \right) \,
\sigma \left( \eps \, V(x,\theta),\xi+\dfrac{2 \, \pi \, k \, \beta}{\eps},\gamma \right) \, \widehat{c_k(u)} (\xi)
\, {\rm d}\xi \, ,\\
T_2 (x,\theta) &:= \dfrac{1}{(2\, \pi)^d} \, \sum_{k \in \Z} \int_{\R^d}
{\rm e}^{i\, x \cdot \xi} \, {\rm e}^{2\, i\, \pi \, k \, \theta} \,
\left[ \partial_v \sigma \left( \eps \, V(x,\theta),\xi+\dfrac{2 \, \pi \, k \, \beta}{\eps},\gamma \right)
\cdot \eps \, \partial_{x_1} \, V(x,\theta) \right] \, \widehat{c_k(u)} (\xi) \, {\rm d}\xi \, ,\\
T_3 (x,\theta) &:= \dfrac{\beta_1}{(2\, \pi)^d} \, \sum_{k \in \Z} \int_{\R^d}
{\rm e}^{i\, x \cdot \xi} \, {\rm e}^{2\, i\, \pi \, k \, \theta} \,
\left[ \partial_v \sigma \left( \eps \, V(x,\theta),\xi+\dfrac{2 \, \pi \, k \, \beta}{\eps},\gamma \right)
\cdot \partial_\theta \, V(x,\theta) \right] \, \widehat{c_k(u)} (\xi) \, {\rm d}\xi \, .
\end{align*}
The terms $T_1$ and $T_2$ fall into the framework of Proposition \ref{prop3}. Indeed, the function
\begin{equation*}
\sigma_\flat (v,\xi,\gamma) := i\, \xi_1 \, \sigma (v,\xi,\gamma) \, ,
\end{equation*}
belongs to ${\bf S}^0$, since $\sigma$ belongs to ${\bf S}^{-1}$. Consequently, the term $T_1$ reads
$\opeg (a_\flat) \, u$ where the singular symbol $a_\flat$ belongs to $S^0_n$, $n \ge d+2$. In the same
spirit, the term $T_2$ reads $\opeg (a_\sharp) \, u$ where the singular symbol $a_\sharp$ belongs to
$S^{-1}_{n-1}$, $n-1 \ge d+1$ (use the substitution $(\eps \, V,\eps \, W)$ with $W:=\partial_{x_1} V$
in the symbol $\partial_v \sigma (v,\xi,\gamma) \cdot w$). We can thus apply Proposition \ref{prop3} to
estimate $T_1$ and $T_2$ in $L^2 (\R^d \times \T)$.

The remaining term $T_3$ does not fall directly into the framework of Proposition \ref{prop3} since there is
an $\eps$ missing in front of $\partial_\theta \, V$, so we do not exactly have a singular pseudodifferential
operator as defined in \eqref{singularpseudo}. However, we can still apply Theorem \ref{thm1} (see Remark
\ref{rem3}) to the symbol
\begin{equation*}
(x,\theta,\xi,k) \longmapsto \partial_v \sigma
\left( \eps \, V(x,\theta),\xi+\dfrac{2 \, \pi \, k \, \beta}{\eps},\gamma \right)
\cdot \partial_\theta \, V(x,\theta) \, .
\end{equation*}
Since $V$ belongs to ${\mathcal C}^n_b (\R^d \times \T)$ with $n \ge d+2$, the latter symbol is bounded and
it has exactly as many derivatives in $L^\infty$ as required in order to apply Theorem \ref{thm1}, and the
$L^\infty$ bounds on the symbol are independent of $\eps \in \, ]0,1]$ and $\gamma \ge 1$. We can therefore
apply Theorem \ref{thm1} in order to estimate $T_3$ in $L^2 (\R^d \times \T)$. The estimates of $T_1,T_2$
and $T_3$ yield \eqref{estimprop5}, so the proof of Proposition \ref{prop5} is complete.
\end{proof}

\begin{remark}
\label{rem4}
It would be tempting to extrapolate from Propositions \ref{prop3} and \ref{prop5} that symbols in $S_{-m}^n$,
$m \in \N$ and $n$ sufficiently large, define pseudodifferential operators that act from $L^2$ to $H^{m,\eps}$.
This is true indeed, but unfortunately the operator norm seems to blow up with $\eps$ as soon as $m$ is larger
than $2$ (as soon as $m$ is larger than $2$, one faces a derivative $(\partial_\theta/\eps)^2$ and the factor
$\eps^{-2}$ is too large when acting on the function $\eps\, V$). We thus need to pay special attention and
check carefully each result one by one in order to prove uniform bounds.
\end{remark}

The proof of Proposition \ref{prop5} can be adapted without any difficulty to show that singular
pseudodifferential operators with symbols of degree $0$ act boundedly on $H^{1,\eps}$ and not only on
$L^2$. We feel free to omit the proof of this result which will be useful later on.

\begin{lemma}
\label{lem4}
Let $n \ge d+2$, and let $a \in S^0_n$. Then $\opeg (a)$ acts boundedly on $H^{1,\eps} (\R^d \times \T)$ with
a norm that is independent of $\eps,\gamma$: there exists a constant $C>0$, that only depends on $\sigma$ and
$V$ in the representation \eqref{singularsymbol}, such that for all $\eps \in \, ]0,1]$ and for all $\gamma
\ge 1$, there holds
\begin{equation*}
\forall \, u \in {\mathcal S} (\R^d \times \T) \, ,\quad \left\| \opeg (a) \, u \right\|_{H^{1,\eps},\gamma}
\le C \, \| u \|_{H^{1,\eps},\gamma} \, .
\end{equation*}
\end{lemma}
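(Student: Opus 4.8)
\textbf{Proof proposal for Lemma \ref{lem4}.}

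The plan is to mimic exactly the proof of Proposition \ref{prop5}, the only difference being that we now start from a symbol of degree $0$ and we must control $\opeg(a)u$ in the $H^{1,\eps}$-norm rather than just its $L^2$-norm. By the definition \eqref{defnormeepsgamma} of $\| \cdot \|_{H^{1,\eps},\gamma}$, it is enough to bound $\| \opeg(a) u \|_0$ together with $\| (\partial_{x_j} + (\beta_j/\eps) \, \partial_\theta) \opeg(a) u \|_0$ for each $j = 1,\dots,d$ by $C \, \| u \|_{H^{1,\eps},\gamma}$. The first bound is immediate from Proposition \ref{prop3} (here $m=0$), since $\| \opeg(a) u \|_0 \le C \, \| u \|_0 \le C \, \| u \|_{H^{1,\eps},\gamma}$. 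For the derivative bounds we treat $j=1$, as in the proof of Proposition \ref{prop5}; the other indices are identical after relabeling.

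First I would differentiate under the integral sign in \eqref{singularpseudo}, obtaining $(\partial_{x_1} + (\beta_1/\eps)\,\partial_\theta) \opeg(a) u = T_1 + T_2 + T_3$, with exactly the same $T_1,T_2,T_3$ as in the proof of Proposition \ref{prop5} but with $\sigma \in {\bf S}^0$ now. Examining degrees: the symbol $\sigma_\flat(v,\xi,\gamma) := i\,\xi_1 \, \sigma(v,\xi,\gamma)$ belongs to ${\bf S}^1$, so $T_1 = \opeg(a_\flat) u$ with $a_\flat \in S^1_n$, $n \ge d+2 \ge d+1$; by Proposition \ref{prop4} this is bounded by $C \, \| u \|_{H^{1,\eps},\gamma}$, which is exactly the norm we are allowed on the right-hand side. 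Next, $T_2 = \opeg(a_\sharp) u$ where $a_\sharp$ is built from the substitution $(\eps\,V, \eps\,W)$, $W := \partial_{x_1} V$, into $\partial_v \sigma(v,\xi,\gamma) \cdot w \in {\bf S}^0$; hence $a_\sharp \in S^0_{n-1}$ with $n-1 \ge d+1$, and Proposition \ref{prop3} gives $\| T_2 \|_0 \le C \, \| u \|_0 \le C \, \| u \|_{H^{1,\eps},\gamma}$. Finally $T_3$ has the missing $\eps$ in front of $\partial_\theta V$, so it is not literally a singular pseudodifferential operator, but as in Proposition \ref{prop5} one applies Theorem \ref{thm1} directly (cf.\ Remark \ref{rem3}) to the symbol $(x,\theta,\xi,k) \mapsto \partial_v \sigma(\eps\,V(x,\theta), \xi + 2\pi k \beta/\eps, \gamma) \cdot \partial_\theta V(x,\theta)$, which is bounded with enough $L^\infty$-derivatives because $V \in {\mathcal C}^n_b$, $n \ge d+2$, and the bounds are uniform in $\eps \in \,]0,1]$ and $\gamma \ge 1$; hence $\| T_3 \|_0 \le C \, \| u \|_0 \le C \, \| u \|_{H^{1,\eps},\gamma}$.

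Collecting the three estimates gives $\| (\partial_{x_1} + (\beta_1/\eps)\,\partial_\theta) \opeg(a) u \|_0 \le C \, \| u \|_{H^{1,\eps},\gamma}$ with $C$ independent of $\eps,\gamma,u$, and together with the $L^2$ bound this yields $\| \opeg(a) u \|_{H^{1,\eps},\gamma} \le C \, \| u \|_{H^{1,\eps},\gamma}$, as claimed. The only point requiring any care — and the closest thing to an obstacle — is the treatment of $T_1$: unlike in Proposition \ref{prop5}, multiplying by $i\,\xi_1$ now produces a symbol of positive degree, so one must invoke Proposition \ref{prop4} (and observe that the $H^{1,\eps}$-norm is precisely the norm adapted to order-one singular symbols) rather than Proposition \ref{prop3}; everything else is a verbatim repetition of the degree-bookkeeping already carried out for Proposition \ref{prop5}.
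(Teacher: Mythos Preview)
Your proposal is correct and follows exactly the approach the paper indicates: the authors explicitly state that Lemma~\ref{lem4} is obtained by adapting the proof of Proposition~\ref{prop5} ``without any difficulty'' and omit the details. Your decomposition $T_1+T_2+T_3$, the degree bookkeeping (in particular the observation that $i\,\xi_1\,\sigma\in{\bf S}^1$ now forces the use of Proposition~\ref{prop4} for $T_1$), and the treatment of $T_2,T_3$ via Proposition~\ref{prop3} and Theorem~\ref{thm1} respectively are precisely what the authors have in mind. One tiny imprecision: to recover the full $H^{1,\eps}$-norm you actually need $\gamma\,\|\opeg(a)u\|_0\le C\,\|u\|_{H^{1,\eps},\gamma}$ rather than just $\|\opeg(a)u\|_0\le C\,\|u\|_{H^{1,\eps},\gamma}$, but this follows immediately from your stronger bound $\|\opeg(a)u\|_0\le C\,\|u\|_0$ together with $\gamma\,\|u\|_0\le\|u\|_{H^{1,\eps},\gamma}$.
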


\subsection{Singular amplitudes and singular oscillatory integral operators}

The result of Proposition \ref{prop3} can be generalized to singular amplitudes by using Theorem \ref{thm2}
instead of Theorem \ref{thm1}. More precisely, let us first define the classes of singular amplitudes.

\begin{definition}[Singular amplitudes]
\label{def3}
Let $m \in \R$, and let $n \in \N$. Then we let $A^m_n$ denote the set of families of functions
$(\widetilde{a}_{\eps,\gamma})_{\eps \in ]0,1],\gamma \ge 1}$ that are constructed as follows:
\begin{multline}
\label{singularamplitude}
\forall \, (x,\theta,y,\omega,\xi,k) \in \R^d \times \T \times \R^d \times \T \times \R^d \times \Z \, ,\\
\widetilde{a}_{\eps,\gamma} (x,\theta,y,\omega,\xi,k) := \sigma \left(
\eps \, V(x,\theta),\eps \, W(y,\omega),\xi+\dfrac{2\, \pi \, k \, \beta}{\eps},\gamma \right) \, ,
\end{multline}
where $\sigma \in {\bf S}^m({\mathcal O}_1 \times {\mathcal O}_2)$, $V$ and $W$ belong to the space
${\mathcal C}^n_b (\R^d \times \T)$, and where furthermore $V$, resp. $W$, takes its values in a convex
compact subset $K_1$, resp. $K_2$, of ${\mathcal O}_1$, resp. ${\mathcal O}_2$, that contains the origin.
\end{definition}

To each amplitude $\widetilde{a} =(\widetilde{a}_{\eps,\gamma})_{\eps \in \, ]0,1], \gamma \ge 1} \in A^m_n$
given by the formula \eqref{singularamplitude}, we wish to associate a singular oscillatory integral operator
$\opteg (\widetilde{a})$, that would be defined (formally at first) by
\begin{equation*}
\forall \, \eps \in \, ]0,1] \, ,\quad \forall \, \gamma \ge 1 \, ,\quad
\opteg (\widetilde{a}) := \optilde (\widetilde{a}_{\eps,\gamma}) \, ,
\end{equation*}
and the oscillatory integral operator $\optilde$ is introduced in Theorem \ref{thm2}. The problem is that,
at this point of the analysis, the operator $\optilde$ has only been defined for bounded amplitudes that
have sufficiently many derivatives in $L^\infty$, see Theorem \ref{thm2}. We can therefore only define
$\opteg (\widetilde{a})$ for nonpositive degrees $m$. The following result generalizes
\cite[Proposition 2.2]{williams3}. The proof follows exactly that of Proposition \ref{prop3} above,
except that we use Theorem \ref{thm2} instead of Theorem \ref{thm1}.

\begin{proposition}
\label{prop6}
Let $n \ge d+1$, and let $\widetilde{a} \in A^m_n$ with $m \le 0$. Then for all $\eps \in \, ]0,1]$ and for
all $\gamma \ge 1$, the amplitude $\widetilde{a}_{\eps,\gamma}$ satisfies the assumptions of Theorem \ref{thm2}.
Moreover $\opteg (\widetilde{a})$ defines a bounded operator on $L^2 (\R^d \times \T)$: there exists a constant
$C>0$, that only depends on $\sigma$, $V$ and $W$ in the representation \eqref{singularamplitude}, such that
for all $\eps \in \, ]0,1]$ and for all $\gamma \ge 1$, there holds
\begin{equation*}
\forall \, u \in {\mathcal S} (\R^d \times \T) \, ,\quad \left\| \opteg (\widetilde{a}) \, u \right\|_0
\le \dfrac{C}{\gamma^{|m|}} \, \| u \|_0 \, .
\end{equation*}
\end{proposition}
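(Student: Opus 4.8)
The plan is to follow verbatim the proof of Proposition \ref{prop3}, with Theorem \ref{thm2} playing the role that Theorem \ref{thm1} played there. First I would check that, for each fixed $\eps \in \, ]0,1]$ and $\gamma \ge 1$, the amplitude $\widetilde{a}_{\eps,\gamma}$ defined by \eqref{singularamplitude} fits the framework of Theorem \ref{thm2}. Since $\sigma$ is ${\mathcal C}^\infty$ in $(v,w,\xi)$ by Definition \ref{def1} and $V,W \in {\mathcal C}^n_b(\R^d \times \T)$ with $n \ge d+1$, the composition $(x,\theta,y,\omega,\xi) \mapsto \sigma\big(\eps V(x,\theta),\eps W(y,\omega),\xi+2\pi k\beta/\eps,\gamma\big)$ is continuous and possesses all the mixed derivatives $\partial_x^\alpha \partial_\theta^j \partial_y^\beta \partial_\omega^l \partial_\xi^\nu$ required there, namely for $\alpha,\beta \in \{0,1\}^d$, $j,l \in \{0,1\}$, $\nu \in \{0,1,2\}^d$; this uses only derivatives of $V$ and $W$ of order at most $|\alpha|+j \le d+1$ and $|\beta|+l \le d+1$ respectively, whence the hypothesis $n \ge d+1$.

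The heart of the matter is then the uniform bound
\[
\forall \, \eps \in \, ]0,1] \, ,\quad \forall \, \gamma \ge 1 \, ,\qquad
\Ng \widetilde{a}_{\eps,\gamma} \Nd_{\rm Amp} \le \dfrac{C_{\sigma,V,W}}{\gamma^{|m|}} \, .
\]
For a pure frequency derivative $\partial_\xi^\nu$ this is immediate: the substitution $\xi \mapsto \xi+2\pi k\beta/\eps$ is merely a translation, so $\partial_\xi^\nu \widetilde{a}_{\eps,\gamma}(x,\theta,y,\omega,\xi,k) = (\partial_\xi^\nu\sigma)(\eps V,\eps W,\xi+2\pi k\beta/\eps,\gamma)$, and Definition \ref{def1} bounds this by $C_\nu\,(\gamma^2+|\xi+2\pi k\beta/\eps|^2)^{(m-|\nu|)/2}$ uniformly over the compact value sets $K_1,K_2$; since $m-|\nu| \le m \le 0$ and $\gamma^2+|\cdot|^2 \ge \gamma^2 \ge 1$, the right-hand side is $\le C_\nu\,\gamma^{m-|\nu|} \le C_\nu\,\gamma^{-|m|}$. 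For a mixed derivative one invokes the Fa\`a di Bruno formula for compositions: each $\partial_x$ or $\partial_\theta$ acting on the first slot $\eps V$ (resp. each $\partial_y$ or $\partial_\omega$ acting on $\eps W$) produces a factor $\eps$ — harmless because $\eps \le 1$ — times a bounded derivative of $V$ (resp. $W$), multiplied by a $v$- or $w$-derivative of $\sigma$ which still obeys the ${\bf S}^m$ estimate on the compact sets $K_1,K_2$. Assembling these factors and using $m \le 0$ once more yields the displayed bound.

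Finally I would apply Theorem \ref{thm2} to the amplitude $\widetilde{a}_{\eps,\gamma}$: it gives $\|\opteg(\widetilde{a})\,u\|_0 = \|\optilde(\widetilde{a}_{\eps,\gamma})\,u\|_0 \le C\,\Ng \widetilde{a}_{\eps,\gamma}\Nd_{\rm Amp}\,\|u\|_0$ with $C$ depending only on $d$ and $N$, and combining with the previous bound produces $\|\opteg(\widetilde{a})\,u\|_0 \le (C\,C_{\sigma,V,W}/\gamma^{|m|})\,\|u\|_0$, which is the assertion. The only step demanding a little attention is the Fa\`a di Bruno bookkeeping — one must check that no more than $d+1$ derivatives of $V$ or of $W$ ever occur and that every power of $\eps$ appearing carries a nonnegative exponent, so that it may be dropped — but this is routine, which is precisely why the proof is the same as that of Proposition \ref{prop3}; I do not anticipate any genuine obstacle.
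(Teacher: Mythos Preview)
Your proposal is correct and follows exactly the approach the paper itself takes: the paper's proof of Proposition~\ref{prop6} simply says that it ``follows exactly that of Proposition~\ref{prop3} above, except that we use Theorem~\ref{thm2} instead of Theorem~\ref{thm1},'' with the derivatives computed via the Fa\`a di Bruno formula. Your write-up spells out precisely these details---the $L^\infty$ bounds on $\partial_x^\alpha\partial_\theta^j\partial_y^\beta\partial_\omega^l\partial_\xi^\nu\widetilde{a}_{\eps,\gamma}$, the count of derivatives hitting $V$ and $W$ (at most $d+1$), and the nonnegative powers of $\eps$---so there is nothing to add.
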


The derivatives $\partial_x^\alpha \, \partial_\theta^j \, \partial_y^\nu \, \partial_\omega^l \, \partial_\xi^\mu
\widetilde{a}_{\eps,\gamma}$ are computed in the classical sense and all of them are continous bounded functions
on $\R^d \times \T \times \R^d \times \T \times \R^d \times \Z$. These derivatives are obtained by applying the
Fa\`a di Bruno formula.

Remark \ref{rem3} still applies, meaning that the result of Proposition \ref{prop6} would still hold if we
had made the substitution $(v,w) \rightarrow (V(x,\theta),W(y,\omega))$ instead of $(v,w) \rightarrow (\eps
\, V(x,\theta),\eps \, W(y,\omega))$. Here, the small parameter $\eps$ is not crucial in order to derive the
uniform $L^\infty$ bound on the symbol.

In the same way as we proved a regularization effect for singular pseudodifferential operators with symbols
of negative order, we are going to prove a regularization effect for singular oscillatory integrals operators
when the amplitude has negative order and is sufficiently smooth.

\begin{proposition}
\label{prop7}
Let $n \ge d+2$, and let $\widetilde{a} \in A^{-1}_n$. Then the oscillatory integral operator
$\opteg (\widetilde{a})$ is bounded from $L^2 (\R^d \times \T)$ to $H^{1,\eps} (\R^d \times \T)$. More
precisely, there exists a constant $C>0$, that only depends on $\sigma$, $V$ and $W$ in the representation
\eqref{singularamplitude}, such that for all $\eps \in \, ]0,1]$ and for all $\gamma \ge 1$, there holds
\begin{equation*}
\forall \, u \in {\mathcal S} (\R^d \times \T) \, ,\quad
\left\| \opteg (\widetilde{a}) \, u \right\|_{H^{1,\eps},\gamma} \le C \, \| u \|_0 \, .
\end{equation*}
Moreover, the derivatives of $\opteg (\widetilde{a}) \, u$ are computed by differentiating formally under
the integral sign.
\end{proposition}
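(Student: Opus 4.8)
The plan is to follow verbatim the structure of the proof of Proposition~\ref{prop5}, replacing the pseudodifferential operator $\op$ by the oscillatory integral operator $\optilde$, the class $S^m_n$ by $A^m_n$, Theorem~\ref{thm1} by Theorem~\ref{thm2}, and Proposition~\ref{prop3} by Proposition~\ref{prop6}. First I would note that Proposition~\ref{prop6} applied with $m=-1$ already gives $\| \opteg(\widetilde a)\, u \|_0 \le (C/\gamma)\, \| u \|_0$; since $H^{1,\eps}$ is the space of $u\in L^2$ for which all $(\partial_{x_j}+(\beta_j/\eps)\,\partial_\theta)\,u$ lie in $L^2$, and in view of the definition \eqref{defnormeepsgamma} of $\| \cdot \|_{H^{1,\eps},\gamma}$, it remains only to prove a bound $\| (\partial_{x_1}+(\beta_1/\eps)\,\partial_\theta)\, \opteg(\widetilde a)\, u \|_0 \le C\, \| u \|_0$, uniform in $\eps,\gamma,u$ (I treat $j=1$, as in Proposition~\ref{prop5}).

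The core point is the legitimacy of differentiating the oscillatory integral. Recall that $\opteg(\widetilde a)\, u$ is the $\mathcal S'$-limit, as $\delta\to 0$, of the truncated integrals $T_\delta$ of \eqref{oscintegral} built on $\widetilde a_{\eps,\gamma}$ with fixed cut-offs $\chi_1,\chi_2$, $\chi_1(0)=\chi_2(0)=1$. Since $(\partial_{x_1}+(\beta_1/\eps)\,\partial_\theta)$ is continuous on $\mathcal S'$, it commutes with this limit. For fixed $\delta$ the truncated integrand has compact support in $(\xi,k)$ and $u\in\mathcal S$, so we may differentiate under the integral sign: the factor $\chi_2(\delta\xi)$ is untouched by $\partial_{x_1}$ and $\chi_1(\delta k)$ by $\partial_\theta$; the operator applied to $\mathrm e^{i(x-y)\cdot\xi}\,\mathrm e^{2i\pi k(\theta-\omega)}$ produces $i(\xi_1+2\pi k\beta_1/\eps)$, and the chain rule applied to $\widetilde a_{\eps,\gamma}(x,\theta,y,\omega,\xi,k)=\sigma(\eps V(x,\theta),\eps W(y,\omega),\xi+2\pi k\beta/\eps,\gamma)$ produces $\eps\,\partial_{x_1}V(x,\theta)\cdot\partial_v\sigma(\eps V,\eps W,\xi+2\pi k\beta/\eps,\gamma)$ and $\beta_1\,\partial_\theta V(x,\theta)\cdot\partial_v\sigma(\eps V,\eps W,\xi+2\pi k\beta/\eps,\gamma)$. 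Thus $(\partial_{x_1}+(\beta_1/\eps)\,\partial_\theta)\,T_\delta$ is the sum of the three $\delta$-truncated oscillatory integrals of the form \eqref{oscintegral} associated with the amplitudes $\widetilde a_\flat:= i\,\xi_1\,\sigma(\eps V,\eps W,\xi+2\pi k\beta/\eps,\gamma)$, $\widetilde a_\sharp:= \eps\,\partial_{x_1}V(x,\theta)\cdot\partial_v\sigma(\eps V,\eps W,\xi+2\pi k\beta/\eps,\gamma)$, and $\widetilde a_3:= \beta_1\,\partial_\theta V(x,\theta)\cdot\partial_v\sigma(\eps V,\eps W,\xi+2\pi k\beta/\eps,\gamma)$. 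Each of these three satisfies the differentiability assumptions of Theorem~\ref{thm2}: for $\widetilde a_\flat,\widetilde a_\sharp$ this is part of Proposition~\ref{prop6}; for $\widetilde a_3$ one checks it directly exactly as for the term $T_3$ in the proof of Proposition~\ref{prop5}, using $n\ge d+2$ to control the derivatives of $\partial_\theta V$ and the scaling-invariance observed in Remark~\ref{rem3}. Hence each truncated integral converges in $\mathcal S'$ as $\delta\to 0$, and by uniqueness of the limit $(\partial_{x_1}+(\beta_1/\eps)\,\partial_\theta)\,\opteg(\widetilde a)\, u = \opteg(\widetilde a_\flat)\, u+\opteg(\widetilde a_\sharp)\, u+\optilde(\widetilde a_3)\, u$ in $\mathcal S'$; this identity is also precisely the asserted fact that the derivative is computed by differentiating formally under the integral sign.

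It then remains to bound the three terms in $L^2$. Since $\sigma\in\mathbf S^{-1}$, the symbol $i\,\zeta_1\,\sigma(v,w,\zeta,\gamma)$ belongs to $\mathbf S^0$, so $\widetilde a_\flat\in A^0_n$ and Proposition~\ref{prop6} gives $\| \opteg(\widetilde a_\flat)\, u \|_0\le C\,\| u \|_0$. Viewing $(v,v',w,\zeta,\gamma)\mapsto v'\cdot\partial_v\sigma(v,w,\zeta,\gamma)$ as an element of $\mathbf S^{-1}$ and substituting $(v,v',w)\mapsto(\eps V,\eps\,\partial_{x_1}V,\eps W)$ — with $\partial_{x_1}V\in\mathcal C^{n-1}_b$ taking values in a closed ball around the origin — shows $\widetilde a_\sharp\in A^{-1}_{n-1}$ with $n-1\ge d+1$, so Proposition~\ref{prop6} gives $\| \opteg(\widetilde a_\sharp)\, u \|_0\le (C/\gamma)\,\| u \|_0\le C\,\| u \|_0$. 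Finally $\| \optilde(\widetilde a_3)\, u \|_0\le C\,\| u \|_0$ follows from Theorem~\ref{thm2} (the bound $\Ng \widetilde a_3 \Nd_{\mathrm{Amp}}\le C$ being uniform in $\eps,\gamma$ because each $\partial_\xi$ on $\partial_v\sigma$ lowers the order further below $0$, hence stays bounded by a power of $\gamma^{-1}\le 1$, and the $(x,\theta,y,\omega)$-derivatives only produce bounded factors $\eps\,\partial V$, $\eps\,\partial W$ and derivatives of $\partial_\theta V$). Summing the three estimates yields the analogue of \eqref{estimprop5} with $m=1$, which together with the $L^2$ bound from Proposition~\ref{prop6} gives $\| \opteg(\widetilde a)\, u \|_{H^{1,\eps},\gamma}\le C\,\| u \|_0$.

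The step I expect to be the main obstacle is the second paragraph: not the estimates themselves, which are routine once the decomposition is in place, but the careful justification that $(\partial_{x_1}+(\beta_1/\eps)\,\partial_\theta)$ commutes with the oscillatory-integral limit and that the differentiated truncated integrals are again truncated integrals of the standard form \eqref{oscintegral} for three honest amplitudes satisfying the hypotheses of Theorem~\ref{thm2}. Everything else parallels Proposition~\ref{prop5} closely enough that one may, as the authors do elsewhere, shorten the routine parts.
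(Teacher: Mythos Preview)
Your proof is correct and follows essentially the same approach as the paper: differentiate the truncated integrals $T_\delta$, split into the three amplitudes corresponding to the paper's $T_{1,\delta},T_{2,\delta},T_{3,\delta}$, and bound each via Proposition~\ref{prop6} or Theorem~\ref{thm2}. The only cosmetic difference is that the paper phrases the passage to the limit as a uniform-$L^2$-bound-plus-weak-convergence argument, whereas you invoke continuity of $\partial_{x_1}+(\beta_1/\eps)\,\partial_\theta$ on ${\mathcal S}'$ and let each piece converge separately; both routes yield the same identification of the derivative as a sum of three oscillatory integrals.
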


\begin{proof}[Proof of Proposition \ref{prop7}]
In order to prove Proposition \ref{prop7}, we need to go back to the definition of the oscillatory integral
operator $\optilde$ in Theorem \ref{thm2}. Let $n \ge d+2$, $\widetilde{a} \in A^{-1}_n$, and let $u \in
{\mathcal S} (\R^d \times \T)$. Let now $\chi_1 \in {\mathcal C}^\infty_0 (\R)$ and $\chi_2 \in
{\mathcal C}^\infty_0 (\R^d)$ satisfy $\chi_1(0) = \chi_2 (0) = 1$. According to Theorem \ref{thm2}, we know
that the function $\opteg (\widetilde{a}) \, u \in L^2 (\R^d \times \T)$ is the limit in ${\mathcal S}'
(\R^d \times \T)$, as $\delta$ tends to $0$, of the sequence of functions
\begin{multline*}
T_\delta(x,\theta) := \dfrac{1}{(2\, \pi)^d} \, \sum_{k \in \Z} \chi_1 (\delta \, k) \,
\int_{\R^d \times \R^d \times \T} {\rm e}^{i \, (x-y) \cdot \xi} \, {\rm e}^{2 \, i \, \pi \, k \, (\theta-\omega)} \\
\chi_2 (\delta \, \xi) \, \sigma \left(
\eps \, V(x,\theta),\eps \, W(y,\omega),\xi+\dfrac{2\, \pi \, k \, \beta}{\eps},\gamma \right)
\, u(y,\omega) \, {\rm d}\xi \, {\rm d}y \, {\rm d}\omega \, .
\end{multline*}
Moreover, Proposition \ref{prop6} already gives the estimate
\begin{equation}
\label{estim1prop7}
\left\| \opteg (\widetilde{a}) \, u \right\|_0 \le \dfrac{C}{\gamma} \, \| u \|_0 \, .
\end{equation}

Each function $T_\delta$ is bounded. Moreover, we have shown in the proof of Theorem \ref{thm2} that
the sequence $(T_\delta)_{\delta \in \, ]0,1]}$ is bounded in $L^2 (\R^d \times \T)$ and converges in
${\mathcal S}' (\R^d \times \T)$. This boundedness property follows from the relation \eqref{thm2-equation6},
Corollary \ref{coro1} and \eqref{thm2-equation1}.

Let $j \in \{ 1, \dots, d \}$. We are going to prove that there exists a constant $C$, that is independent
of $\delta,\eps,\gamma,u$, such that
\begin{equation}
\label{estim2prop7}
\left\| \left( \partial_{x_j} +\dfrac{\beta_j}{\eps} \, \partial_\theta \right) T_\delta \right\|_0
\le C \, \| u \|_0 \, .
\end{equation}
Combining \eqref{estim2prop7} with \eqref{estim1prop7}, we shall obtain the result of Proposition \ref{prop7}.
Indeed, the uniform bound \eqref{estim2prop7} is sufficient to show that the limit of $T_\delta$ in ${\mathcal S}'
(\R^d \times \T)$ belongs to $H^{1,\eps} (\R^d \times \T)$. (Here, we use the classical weak convergence
argument and the uniqueness of the limit in the sense of distributions.) We thus focus on the derivation
of the bound \eqref{estim2prop7} for $j=1$.

Each function $T_\delta$ has ${\mathcal C}^1$ regularity, and can be differentiated under the integral sign
by applying standard rules of calculus. We obtain
\begin{equation*}
\left( \partial_{x_1} +\dfrac{\beta_1}{\eps} \, \partial_\theta \right) T_\delta
= T_{1,\delta} +T_{2,\delta} +T_{3,\delta} \, ,
\end{equation*}
where, similarly to the proof of Proposition \ref{prop5}, we use the notation
\begin{align*}
T_{1,\delta} (x,\theta) &:= \dfrac{1}{(2\, \pi)^d} \, \sum_{k \in \Z} \chi_1 (\delta \, k) \,
\int_{\R^d \times \R^d \times \T} {\rm e}^{i \, (x-y) \cdot \xi} \,
{\rm e}^{2 \, i \, \pi \, k \, (\theta-\omega)} \, \chi_2 (\delta \, \xi) \\
& \qquad \qquad i \, \left( \xi_1+\dfrac{2\, \pi \, k \, \beta_1}{\eps} \right) \, \sigma \left(
\eps \, V(x,\theta),\eps \, W(y,\omega),\xi+\dfrac{2\, \pi \, k \, \beta}{\eps},\gamma \right)
\, u(y,\omega) \, {\rm d}\xi \, {\rm d}y \, {\rm d}\omega \, ,\\
T_{2,\delta} (x,\theta) &:= \dfrac{1}{(2\, \pi)^d} \, \sum_{k \in \Z} \chi_1 (\delta \, k) \,
\int_{\R^d \times \R^d \times \T} {\rm e}^{i \, (x-y) \cdot \xi} \,
{\rm e}^{2 \, i \, \pi \, k \, (\theta-\omega)} \, \chi_2 (\delta \, \xi) \\
& \qquad \qquad \left[ \partial_v \, \sigma \left(
\eps \, V(x,\theta),\eps \, W(y,\omega),\xi+\dfrac{2\, \pi \, k \, \beta}{\eps},\gamma \right) \cdot
\eps \, \partial_{x_1} \, V (x,\theta) \right] \, u(y,\omega) \, {\rm d}\xi \, {\rm d}y \, {\rm d}\omega \, ,\\
T_{3,\delta} (x,\theta) &:= \dfrac{\beta_1}{(2\, \pi)^d} \, \sum_{k \in \Z} \chi_1 (\delta \, k) \,
\int_{\R^d \times \R^d \times \T} {\rm e}^{i \, (x-y) \cdot \xi} \,
{\rm e}^{2 \, i \, \pi \, k \, (\theta-\omega)} \, \chi_2 (\delta \, \xi) \\
& \qquad \qquad \left[ \partial_v \, \sigma \left(
\eps \, V(x,\theta),\eps \, W(y,\omega),\xi+\dfrac{2\, \pi \, k \, \beta}{\eps},\gamma \right) \cdot
\partial_\theta \, V (x,\theta) \right] \, u(y,\omega) \, {\rm d}\xi \, {\rm d}y \, {\rm d}\omega \, .
\end{align*}

The singular amplitude appearing in the first term $T_{1,\delta}$ belongs to $A^0_n$, so we can apply
the same argument as in Proposition \ref{prop6} to estimate this term in $L^2 (\R^d \times \T)$. In the
same way, the singular amplitude appearing in the second term $T_{2,\delta}$ belongs to $A^{-1}_{n-1}$,
so we can still apply the argument of Proposition \ref{prop6}. The amplitude appearing in the third term
$T_{3,\delta}$ does not fall into a representation of the form \eqref{singularamplitude} because there is
an $\eps$ missing in front of $\partial_\theta V$. However, we can still apply Theorem \ref{thm2} because
this amplitude has sufficiently many derivatives in $L^\infty$ and the $L^\infty$ bounds are independent
of $\eps,\gamma$ (same argument as in Remark \ref{rem3}). The result of Proposition \ref{prop7} follows.
In particular, we have justified that the derivative
\begin{equation*}
\left( \partial_{x_1} +\dfrac{\beta_1}{\eps} \, \partial_\theta \right) \, \opteg (\widetilde{a}) \, u
\end{equation*}
is computed by differentiating formally under the integral sign (meaning that the singular amplitudes that
appear after differentiation yield well-defined oscillatory integral operators).
\end{proof}

Extending the definition of $\opteg (\widetilde{a})$ to the case $m>0$ does not seem so clear at first sight.
The trick of Proposition \ref{prop4} does not apply anymore, and we need another argument that we detail now.
Due to the application that we have in mind (see the companion article \cite{cgw3}), we restrict to the case
of amplitudes of degree $1$, meaning that the growth at infinity is $O(|\xi|+|k|)$. We do not claim that our
criterion in Lemma \ref{lem5} below is sharp. As a matter of fact, there is some hope that refined methods
may yield a similar result with less regularity on the amplitude, but this is not our main concern here. We
simply note that using sufficiently many derivatives to integrate by parts enables us to justify the convergence
of the truncation process without the compact support assumption of \cite{williams3}.

\begin{lemma}
\label{lem5}
Let $\widetilde{a} \in A_n^1$, $n \ge 3\, (d+1)$. Let $\chi_1 \in {\mathcal C}^\infty_0 (\R)$ and $\chi_2 \in
{\mathcal C}^\infty_0 (\R^d)$ satisfy $\chi_1 (0) =\chi_2 (0) =1$. Then for all $u \in {\mathcal S} (\R^d
\times \T)$, the sequence of functions $(T_\delta)_{\delta>0}$ defined by \eqref{oscintegral} with the
amplitude $\widetilde{a}_{\eps,\gamma}$ converges in ${\mathcal S}' (\R^d \times \T)$, and the limit is
independent of the truncation functions $\chi_1,\chi_2$.
\end{lemma}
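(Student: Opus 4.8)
### Proof strategy for Lemma \ref{lem5}

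The plan is to reduce to the already-established case of degree-zero amplitudes by trading the factor of size $O(|\xi|+|k|)$ for derivatives, using repeated integration by parts in the phase variables $y$ and $\omega$ inside the truncated integral $T_\delta$. The decomposition
\[
\xi_j+\frac{2\pi k\beta_j}{\eps} \;=\; \text{(frequency variable of } \partial_{y_j}+\tfrac{\beta_j}{\eps}\partial_\omega\text{)},
\]
combined with the singular-Sobolev structure, suggests that the natural object is the operator $\partial_{y_j}+(\beta_j/\eps)\partial_\omega$ acting on $u$. Since $u\in{\mathcal S}(\R^d\times\T)$, each such derivative of $u$ is again in ${\mathcal S}(\R^d\times\T)$, so morally $\opteg(\widetilde a)u$ with $\widetilde a\in A^1_n$ should be expressible as a finite sum of $\opteg$ of $A^0$-amplitudes applied to $u$ plus $\opteg$ of $A^{-1}$-amplitudes applied to $(\partial_{y}+\tfrac\beta\eps\partial_\omega)u$. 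The content of the lemma is to make this rigorous at the level of the \emph{truncated} integrals $T_\delta$ and to check that the limit exists and is $\chi$-independent.

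First I would fix $j$ and, inside the integral defining $T_\delta$, write $e^{i(x-y)\cdot\xi}e^{2i\pi k(\theta-\omega)}$ and use the identity
\[
\Bigl(\xi_j+\frac{2\pi k\beta_j}{\eps}\Bigr)e^{i(x-y)\cdot\xi}e^{2i\pi k(\theta-\omega)}
= i\Bigl(\partial_{y_j}+\frac{\beta_j}{\eps}\partial_\omega\Bigr)\Bigl(e^{i(x-y)\cdot\xi}e^{2i\pi k(\theta-\omega)}\Bigr),
\]
then integrate by parts in $(y,\omega)$. One application moves the factor $O(|\xi|+|k|)$ onto $u$ (producing $(\partial_{y_j}+\tfrac{\beta_j}\eps\partial_\omega)u\in{\mathcal S}$) and onto $\widetilde a_{\eps,\gamma}$ (producing a $y$- or $\omega$-derivative of the amplitude, which by the Fa\`a di Bruno formula is again the substitution of an ${\bf S}^1$ symbol, hence $O(|\xi|+|k|)$ growth, but now with an extra $\eps$ in front of $\partial W$ coming from the chain rule, as in Proposition~\ref{prop7}). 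This is not yet enough: after one integration by parts the worst amplitude is still of degree $1$. So I would continue, but now use a different identity — the one based on $\varphi$ as in Lemma~\ref{lem1}/Lemma~\ref{lem3} — to gain integrability in $(y,\omega)$ as well, and then argue as in the proof of Theorem~\ref{thm2}: since $n\ge 3(d+1)$ we have enough regularity to integrate by parts once in $\theta,\omega$ (to gain summability in $k$ via the $1/(1+2i\pi(k+\ell))$ factors), twice in the $x,y$ variables (to insert $\varphi(x-y)^2$), and then convert the $O(|\xi|)$ growth via derivatives in $\xi$ using $e^{i(x-y)\cdot\xi}=\varphi(x-y)\prod_j(1+\partial_{\xi_j})e^{i(x-y)\cdot\xi}$, exactly as in Lemma~\ref{lem3}. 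The bookkeeping: each of the roughly $2d$ integrations by parts in $\xi$ to kill the $|\xi|$ growth costs one $\xi$-derivative of the symbol but no loss, while the $k$-growth is handled by the Abel-type trick of Proposition~\ref{prop2} (replacing a factor of $k$ by a finite difference, which by the mean value theorem costs one $\xi$- or $v$-derivative of $\sigma$ and is uniformly bounded). Counting shows that $3(d+1)$ spatial derivatives of $V$ and $W$ suffice.

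Once $T_\delta$ is rewritten as a finite sum of truncated oscillatory integrals with $A^0$ (or better) singular amplitudes applied to Schwartz functions, each summand is of the form already covered: Theorem~\ref{thm2} applies (the amplitudes satisfy its differentiability hypotheses with $L^\infty$ bounds uniform in $\eps,\gamma,\delta$, up to the harmless constant $C_\chi$ as in \eqref{thm2-equation1}), so each converges in ${\mathcal S}'(\R^d\times\T)$ as $\delta\to0$ to a limit independent of $\chi_1,\chi_2$. Summing the finitely many terms, $(T_\delta)_{\delta>0}$ converges in ${\mathcal S}'(\R^d\times\T)$ with $\chi$-independent limit, which is the assertion. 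I expect the \textbf{main obstacle} to be the careful simultaneous bookkeeping of the three competing effects — gaining $(y,\omega)$-integrability, converting $|\xi|$-growth into $\xi$-derivatives, and converting $|k|$-growth into finite differences/extra $v$-derivatives — while verifying that the chain-rule factors from differentiating the substituted functions $\eps V(x,\theta),\eps W(y,\omega)$ never produce a negative power of $\eps$ (this is exactly the phenomenon flagged in Remark~\ref{rem4}); here degree $1$ is safe because at most one $\partial_\theta/\eps$ ever hits $\eps V$ or $\eps W$, so the $\eps^{-1}$ is absorbed. A secondary nuisance is that the intermediate amplitudes, like $R_2$ in Proposition~\ref{prop2}, need not be of the pure substitution form \eqref{singularamplitude} (they may depend on $\omega$ but not $y$, or vice versa) — but they still satisfy the hypotheses of Theorem~\ref{thm2} with uniform bounds, which is all that is needed for the convergence statement.
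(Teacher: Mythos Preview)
Your strategy is considerably more elaborate than the paper's, and it rests on a misreading of what the lemma actually demands. Lemma~\ref{lem5} asks only for convergence of $(T_\delta)$ in ${\mathcal S}'$ for each \emph{fixed} $\eps,\gamma$; there is no uniformity in $\eps$ at stake (that issue is deferred to Theorem~\ref{thm4}). The paper exploits this freedom and gives a one-paragraph argument: for $v\in{\mathcal S}(\R^d\times\T)$, write $I_\delta=\int T_\delta\,v$ via Fubini as an integral over $(x,\theta,\xi,k)$ of $\chi_1(\delta k)\chi_2(\delta\xi)\,e^{ix\cdot\xi+2i\pi k\theta}v(x,\theta)\,U(x,\theta,\xi,k)$, where $U$ is the inner $(y,\omega)$-integral of $e^{-iy\cdot\xi-2i\pi k\omega}\widetilde a_{\eps,\gamma}\,u$. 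Then multiply $U$ by $(1-2i\pi k)^3\prod_j(1-i\xi_j)^3$ and integrate by parts $3(d+1)$ times in $(y,\omega)$ using the Schwartz regularity of $u$. Since $\widetilde a\in A^1_n$ with $n\ge 3(d+1)$, every $(y,\omega)$-derivative of the amplitude up to that order is $O((1+|\xi|^2+k^2)^{1/2})$; the cubes of decay beat this by two powers, giving $|U|\le C(\eps,\gamma)\,(1+k^2)^{-1}\prod_j(1+\xi_j^2)^{-1}$. Dominated convergence finishes immediately, and the limit visibly does not depend on $\chi_1,\chi_2$. The constant is allowed to blow up in $\eps$ and this is harmless here.

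Your route has a genuine gap as written. The identity $(\xi_j+2\pi k\beta_j/\eps)\,e^{\cdots}=i(\partial_{y_j}+\beta_j\eps^{-1}\partial_\omega)e^{\cdots}$ is correct, but the amplitude $\widetilde a_{\eps,\gamma}$ is not ``a factor $\xi_j+2\pi k\beta_j/\eps$ times something bounded''---it is merely of \emph{order} $1$ in the symbol sense. To apply your integration by parts you would first have to decompose $\sigma=\gamma^2\sigma/(\gamma^2+|X|^2)+\sum_j X_j\cdot X_j\sigma/(\gamma^2+|X|^2)$ with $X=\xi+2\pi k\beta/\eps$, and only then integrate by parts on each $X_j$-piece. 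With that fix your plan does work (the terms where the singular derivative hits the amplitude stay degree~$0$ with no $\eps^{-1}$ loss, exactly as you note), and Theorem~\ref{thm2} then applies term by term---but at that point the $\varphi$-factors, the Abel trick for $k$-growth, and the ``three competing effects'' bookkeeping you worry about are all redundant, since they are already absorbed into the proof of Theorem~\ref{thm2}. So your approach is salvageable but circuitous; the paper's direct dominated-convergence argument, which abandons $\eps$-uniformity from the start, is both shorter and explains transparently why the threshold is $n\ge 3(d+1)$.
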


As in the case $\widetilde{a} \in A_n^0$, we let $\opteg (\widetilde{a})$ denote the oscillatory integral
operator associated with $\widetilde{a} \in A_n^1$. At this stage, this operator maps ${\mathcal S}$ into
${\mathcal S}'$.

\begin{proof}[Proof of Lemma \ref{lem5}]
As in the proof of Theorem \ref{thm2}, our goal is to show that for all $u,v \in {\mathcal S}(\R^d \times \T)$,
the integral $I_\delta$ defined by
\begin{equation*}
I_\delta := \int_{\R^d \times \T} T_\delta(x,\theta) \, v(x,\theta) \, {\rm d}x \, {\rm d}\theta \, ,
\end{equation*}
with $T_\delta$ defined by \eqref{oscintegral} (just replace the general amplitude $\sigma$ in \eqref{oscintegral}
by $\widetilde{a}_{\eps,\gamma}$), has a limit as $\delta$ tends to $0$, and that the limit is independent
of the truncation functions $\chi_1,\chi_2$. Applying Fubini's Theorem, we have (let us ignore from now on
the powers of $2\, \pi$ that do not play any role):
\begin{equation*}
I_\delta = \sum_{k \in \Z} \chi_1(\delta \, k) \, \int_{\R^d \times \T \times \R^d}
{\rm e}^{i \, x \cdot \xi} \, {\rm e}^{2 \, i \, \pi \, k \, \theta} \, \chi_2(\delta \, \xi) \, v(x,\theta)
\, U(x,\theta,\xi,k) \, {\rm d}x \, {\rm d}\theta \, {\rm d}\xi \, ,
\end{equation*}
with
\begin{equation*}
U(x,\theta,\xi,k) := \int_{\R^d \times \T}
{\rm e}^{-i \, y \cdot \xi} \, {\rm e}^{-2 \, i \, \pi \, k \, \omega} \,
\widetilde{a}_{\eps,\gamma} (x,\theta,y,\omega,\xi,k) \, u(y,\omega) \, {\rm d}y \, {\rm d}\omega \, .
\end{equation*}
We claim that it is sufficient to prove an estimate of the form
\begin{equation*}
|U(x,\theta,\xi,k)| \le C(\eps,\gamma,\widetilde{a},u) \, \dfrac{1}{1+k^2} \,
\prod_{j=1}^d \dfrac{1}{1+\xi_j^2} \, ,
\end{equation*}
and the convergence of $I_\delta$ will follow from the dominated convergence Theorem (the constants may depend
in a very bad way on $\eps$ but this is no concern for us since we are only interested in the convergence of the
integral for every fixed value of $\eps$). The $L^\infty$ bound for $U$ is obtained by multiplying by the factor
\begin{equation*}
(1-2\, i \, \pi \, k)^3 \, \prod_{j=1}^d (1-i \, \xi_j)^3 \, ,
\end{equation*}
and by integrating by parts. Observing that $\widetilde{a} \in A_n^1$ with $n \ge 3\, (d+1)$, we claim that
the amplitude $\widetilde{a}_{\eps,\gamma}$ satisfies the following bounds for each fixed value of the parameters
$\eps,\gamma$:
\begin{equation*}
\big| \partial_y^\beta \, \partial_\omega^\ell \, \widetilde{a}_{\eps,\gamma} (x,\theta,y,\omega,\xi,k) \big|
\le C \, \big( 1+|\xi|^2 +k^2 \big)^{1/2} \, ,\quad |\beta| +\ell \le 3 \, (d+1) \, ,
\end{equation*}
and we thus get
\begin{equation*}
\left| (1-2\, i \, \pi \, k)^3 \, \prod_{j=1}^d (1-i \, \xi_j)^3 \, U(x,\theta,\xi,k) \right| \le
C \, \big( 1+|\xi|^2 +k^2 \big)^{1/2} \, ,
\end{equation*}
which gives the result.
\end{proof}

In the following paragraph, we shall see how the oscillatory integral operator defined in Lemma \ref{lem5}
for amplitudes in $A_n^1$ acts on singular Sobolev spaces.

\subsection{Comparison between singular oscillatory integrals operators and singular pseudodifferential operators}

Theorem \ref{thm3} below extends the result of \cite[Proposition 2.3]{williams3} to our framework in the case
of bounded symbols, and is the main ingredient in Section \ref{sect5} to prove the symbolic calculus results.

\begin{theorem}
\label{thm3}
Let $\widetilde{a} \in A_n^0$, $n \ge 2\, (d+1)$, be given by \eqref{singularamplitude}, and let $a \in S_n^0$
be defined by
\begin{equation*}
\forall \, (x,\theta,\xi,k) \in \R^d \times \T \times \R^d \times \Z \, ,\quad
a_{\eps,\gamma} (x,\theta,\xi,k) := \sigma \left( \eps \, V(x,\theta),\eps \, W(x,\theta),
\xi+\dfrac{2\, \pi \, k \, \beta}{\eps},\gamma \right) \, .
\end{equation*}
Then there exists a constant $C \ge 0$ such that for all $\eps \in \, ]0,1]$ and for all $\gamma \ge 1$, there holds
\begin{equation}
\label{estimthm31}
\forall \, u \in {\mathcal S} (\R^d \times \T) \, ,\quad
\left\| \opteg (\widetilde{a}) \, u -\opeg (a) \, u \right\|_0 \le \dfrac{C}{\gamma} \, \| u \|_0 \, .
\end{equation}

If $n \ge 2\, d +3$, then for another constant $C$, there holds
\begin{equation}
\label{estimthm32}
\forall \, u \in {\mathcal S} (\R^d \times \T) \, ,\quad
\left\| \opteg (\widetilde{a}) \, u -\opeg (a) \, u \right\|_{H^{1,\eps},\gamma} \le C \, \| u \|_0 \, ,
\end{equation}
uniformly in $\eps$ and $\gamma$.
\end{theorem}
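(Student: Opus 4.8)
The plan is to reduce everything to Proposition~\ref{prop2}. First I would extend Proposition~\ref{prop2} from ${\mathcal C}^\infty_b$ amplitudes to amplitudes of finite regularity by the standard truncation--convolution argument (as in Lemma~\ref{lem2} and the proof of Theorem~\ref{thm2}); this is routine. Applying it to the amplitude $\widetilde{a}_{\eps,\gamma}$ --- whose diagonal restriction $\widetilde{a}_{\eps,\gamma}(x,\theta,x,\theta,\xi,k)$ is exactly $a_{\eps,\gamma}$ --- one gets
\[
\opteg (\widetilde{a}) - \opeg (a) = \optilde (r) \,, \qquad
r = r_1 + R_2(\cdot,k+1) - R_2(\cdot,k) \,,
\]
where $r_1$, $R_2$ (both depending on $\eps,\gamma$) are built from $\widetilde{a}_{\eps,\gamma}$ as in that proposition. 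Two structural features organise the estimate. First, since $\partial_{y_j}\widetilde{a}_{\eps,\gamma}$ produces a factor $\eps\,\partial_{y_j}W$ and falls on $\partial_w\sigma$, while $\partial_{\xi_j}$ lowers the symbol order by one, $r_1$ equals $\eps$ times an average over $t\in[0,1]$ of singular amplitudes of order $-1$ (whose $w$-substitution is $W$ composed with an affine map, still in ${\mathcal C}^{n-1}_b$ uniformly in $t$, multiplied by the bounded factor $\partial_{y_j}W$). Secondly, since $\widetilde{a}_{\eps,\gamma}(x,\theta,x,\omega,\xi,k) - \widetilde{a}_{\eps,\gamma}(x,\theta,x,\theta,\xi,k)$ carries a hidden factor $\eps\,(\omega-\theta)$ while $R_2$ divides by $1 - {\rm e}^{-2i\pi(\omega-\theta)} \sim \omega-\theta$, one may write $R_2 = \eps\,\gamma_0$, where $\gamma_0$ depends on $(\xi,k)$ only through $\zeta := \xi + 2\pi k\beta/\eps$, is in that variable a symbol of order $0$, and has ${\mathcal C}^{n-1}_b$ dependence on the remaining variables, with all bounds uniform in $\eps,\gamma$.

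The mechanism I would use repeatedly is the elementary identity: if $\tau(\zeta)$ is a symbol of order $\le \mu$ (the other variables being implicit), then
\[
\eps\bigl( \tau(\zeta + \tfrac{2\pi\beta}{\eps}) - \tau(\zeta) \bigr) = 2\pi \int_0^1 (\beta\cdot\nabla_\zeta\tau)\bigl(\zeta + \tfrac{2\pi s\beta}{\eps}\bigr) \, {\rm d}s \,,
\]
the prefactor $\eps$ cancelling the $1/\eps$ of the difference quotient, so the right-hand side is an amplitude of order $\le \mu-1$; in particular, with $\zeta = \xi + 2\pi k\beta/\eps$ it is $O(\gamma^{-1})$ if $\mu\le 0$ and $O(1)$ if $\mu\le 1$, and the same holds for all the derivatives entering $\Ng\cdot\Nd_{\rm Amp}$, uniformly in $s,\eps,\gamma$ (so Theorem~\ref{thm2} applies to it uniformly in $s$, and one integrates in $s$). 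Now each amplitude in the $t$-average defining $r_1$ satisfies the hypotheses used in the proofs of Propositions~\ref{prop6} and~\ref{prop7} (only finitely many $L^\infty$-bounds matter, so the precise form of the substitution is irrelevant, cf.\ Remark~\ref{rem3}), whence $\| \optilde(r_1) u \|_0 \le C\gamma^{-1}\| u \|_0$ and $\| \optilde(r_1) u \|_{H^{1,\eps},\gamma} \le C\| u \|_0$, uniformly in $\eps,\gamma,t$. Applying the mechanism with $\tau=\gamma_0$ (order $0$) shows that $R_2(\cdot,k+1) - R_2(\cdot,k) = \eps\bigl(\gamma_0(\zeta+\tfrac{2\pi\beta}{\eps}) - \gamma_0(\zeta)\bigr)$ is an amplitude of order $-1$, hence $\| \optilde(R_2(\cdot,k+1) - R_2(\cdot,k)) u \|_0 \le C\gamma^{-1}\|u\|_0$ by Theorem~\ref{thm2}. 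Together, these two bounds give \eqref{estimthm31}.

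For \eqref{estimthm32}, since $\| w \|_{H^{1,\eps},\gamma}^2 = \gamma^2 \| w \|_0^2 + \sum_j \| (\partial_{x_j} + \tfrac{\beta_j}{\eps}\partial_\theta) w \|_0^2$, it remains --- using \eqref{estimthm31} for the first summand and the bound just proved for $\optilde(r_1)$ --- to bound $D_j \optilde(R_2(\cdot,k+1) - R_2(\cdot,k)) u$ in $L^2$, where $D_j := \partial_{x_j} + \tfrac{\beta_j}{\eps}\partial_\theta$. Differentiating under the integral sign (justified as in Proposition~\ref{prop7}) yields three oscillatory integral operators, with amplitudes $\partial_{x_j}(R_2(\cdot,k+1) - R_2(\cdot,k))$, $\tfrac{\beta_j}{\eps}\partial_\theta(R_2(\cdot,k+1) - R_2(\cdot,k))$, and $i(\xi_j + \tfrac{2\pi k\beta_j}{\eps})(R_2(\cdot,k+1) - R_2(\cdot,k)) = i\,\zeta_j(R_2(\cdot,k+1) - R_2(\cdot,k))$. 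In the first two, after writing $R_2 = \eps\gamma_0$ and distributing the derivative (the $\tfrac{\beta_j}{\eps}\partial_\theta$ cancelling the prefactor $\eps$), one obtains sums of amplitudes of the shape ``(a power of $\eps$, or $O(1)$) times $\bigl(\rho(\zeta+\tfrac{2\pi\beta}{\eps}) - \rho(\zeta)\bigr)$'' with $\rho$ a symbol of order $\le 0$: those carrying a factor $\eps$ collapse to order $\le -1$ by the mechanism, the others are merely $O(1)$, being honest differences of bounded symbols --- which one must not rewrite as integrals of derivatives, since the difference quotient would produce an unbounded $1/\eps$. In every case $\Ng\cdot\Nd_{\rm Amp} = O(1)$ uniformly in $\eps,\gamma$, so Theorem~\ref{thm2} controls these two terms by $C\|u\|_0$.

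The third term is the delicate point, and is the crux of the proof: the factor $\zeta_j$ threatens to raise the order by one, while the frequency shift $2\pi\beta/\eps$ between $R_2(\cdot,k+1)$ and $R_2(\cdot,k)$ obstructs any direct estimate. I would resolve it with the algebraic identity $\zeta_j = \bigl(\zeta+\tfrac{2\pi\beta}{\eps}\bigr)_j - \tfrac{2\pi\beta_j}{\eps}$: setting $\gamma_1(\zeta) := \zeta_j\,\gamma_0(\zeta)$, a symbol of order $1$,
\[
\zeta_j\bigl( R_2(\cdot,k+1) - R_2(\cdot,k) \bigr) = \eps\bigl( \gamma_1(\zeta+\tfrac{2\pi\beta}{\eps}) - \gamma_1(\zeta) \bigr) - 2\pi\beta_j\,\gamma_0(\zeta+\tfrac{2\pi\beta}{\eps}) \,.
\]
By the mechanism with $\mu=1$ the first term is an amplitude of order $0$, hence $O(1)$; the second is a bounded symbol, whose frequency argument $\xi+2\pi(k+1)\beta/\eps$ (rather than $\xi+2\pi k\beta/\eps$) is immaterial for Theorem~\ref{thm2}. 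So Theorem~\ref{thm2} bounds this last term too by $C\|u\|_0$. Summing the three contributions gives $\| D_j\optilde(R_2(\cdot,k+1) - R_2(\cdot,k)) u \|_0 \le C\|u\|_0$, and together with the bound on $\optilde(r_1)$ and \eqref{estimthm31} this proves \eqref{estimthm32}. Throughout, the regularity thresholds $n\ge 2(d+1)$ and $n\ge 2d+3$ are exactly what is needed so that, after the differentiations above, the amplitudes still carry the number of derivatives in $\xi$ and in the spatial variables required to invoke Theorems~\ref{thm1}--\ref{thm2}.
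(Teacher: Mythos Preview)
Your proof is correct and follows essentially the same route as the paper's. Both arguments rest on Proposition~\ref{prop2}, extract the hidden $\eps$ factor in $R_2$ (the paper does it by first peeling off the $(y,\omega)$-independent part of the amplitude, you do it directly via Taylor in the $w$-slot, which comes to the same thing), and handle the finite difference $R_2(\cdot,k+1)-R_2(\cdot,k)$ by trading the $\eps$ prefactor for a frequency derivative --- your ``mechanism'' $\eps\bigl(\tau(\zeta+\tfrac{2\pi\beta}{\eps})-\tau(\zeta)\bigr)=2\pi\int_0^1(\beta\cdot\nabla_\zeta\tau)\,{\rm d}s$ is exactly the paper's ``considering $k$ as a real variable and bounding by $\sup_\kappa|\partial_k R|$''. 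For the $H^{1,\eps}$ bound, your algebraic identity for $\zeta_j\bigl(R_2(\cdot,k+1)-R_2(\cdot,k)\bigr)$ in terms of $\gamma_1:=\zeta_j\gamma_0$ is precisely the paper's decomposition $(i\xi_1+\tfrac{2ik\pi\beta_1}{\eps})r_2=(i\xi_1+\tfrac{2i(k+1)\pi\beta_1}{\eps})R(\cdot,k+1)-(i\xi_1+\tfrac{2ik\pi\beta_1}{\eps})R(\cdot,k)-\tfrac{2i\pi\beta_1}{\eps}R(\cdot,k+1)$, and your treatment of the $\partial_{x_j}$ and $\tfrac{\beta_j}{\eps}\partial_\theta$ pieces --- estimating the two shifted terms separately when no $\eps$ is left to exploit --- matches the paper's ``remark'' strategy.
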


\begin{proof}[Proof of Theorem \ref{thm3}]
The proof relies mainly on Proposition \ref{prop2}, which gives the expression of the difference $\opteg (\widetilde{a})
\, u -\opeg (a) \, u$. As a matter of fact, Proposition \ref{prop2} holds for very smooth amplitudes but using
the standard regularization procedure, the result of Proposition \ref{prop2} can be extended to amplitudes for
which the remainder $r$ defined in Proposition \ref{prop2} satisfies the assumptions of Theorem \ref{thm2}. In
what follows, we are going to verify that under the assumptions of Theorem \ref{thm3}, the remainder $r$ can be
estimated in the norm $\Ng \cdot \Nd_{\rm Amp}$ and we shall feel free to apply Proposition \ref{prop2} in this
finite regularity framework.

Let us recall that the remainder $r$ can be split as $r =r_1 +r_2$ with $r_1$ also defined in Proposition
\ref{prop2} and $r_2$ is a finite difference in $k$ (the amplitude $r_2$ does not depend on $y$). Here we
consider the amplitude $\widetilde{a}_{\eps,\gamma}$. We are first going to estimate the amplitude $r_1$,
and then $r_2$. Eventually, we shall prove the regularization estimate \eqref{estimthm32}.

$\bullet$ The amplitude $r_1$ reads
\begin{equation}
\label{expressionr1}
r_1 = \dfrac{1}{i} \, \sum_{j=1}^d \int_0^1 {\rm d}_w \sigma_j \left( \eps \, V(x,\theta),\eps \,
W((1-t) \, x +t\, y,\omega),\xi+\dfrac{2\, \pi \, k \, \beta}{\eps},\gamma \right) \cdot \eps \,
\partial_{y_j} W((1-t) \, x +t\, y,\omega) \, {\rm d}t \, ,
\end{equation}
with $\sigma_j := \partial_{\xi_j} \, \sigma \in {\bf S}^{-1}$. To prove that $\optilde (r_1)$ is bounded on
$L^2$, we wish to apply Theorem \ref{thm2} and we thus try to control $\Ng r_1 \Nd_{\rm Amp}$. For instance,
the $L^\infty$ norm of $r_1$ is estimated by using the decay of $\sigma_j$ with respect to the frequency
variables and we obtain
\begin{equation*}
|r_1(x,\theta,y,\omega,\xi,k)| \le \dfrac{C \, \eps}{\gamma} \, .
\end{equation*}
When estimating derivatives, the worst case occurs when the derivative with respect to $\omega$, the $d$
derivatives with respect to $x$ and the $d$ derivatives with respect to $y$ all act on the term $\partial_{y_j}
W((1-t) \, x +t\, y,\omega)$. This requires having a bound for the $2\, d+2$ first derivatives of $W$ in
$L^\infty$. Derivatives with respect to $\xi$ are harmless since they only add decay with respect to the
frequency variables. Under the assumption of Theorem \ref{thm3}, we thus get a bound of the form
\begin{equation*}
\Ng r_1 \Nd_{\rm Amp} \le \dfrac{C \, \eps}{\gamma} \, ,
\end{equation*}
which is even better than what we aimed at in \eqref{estimthm31}.

$\bullet$ The estimate of the term $r_2$ is more delicate and requires some attention. We first use the
trick that appears repeatedly in \cite{williams3}, namely we write
\begin{equation*}
\widetilde{a}_{\eps,\gamma} =\sigma
\left( \eps \, V(x,\theta),0,\xi+\dfrac{2\, \pi \, k \, \beta}{\eps},\gamma \right)
+\sigma_\sharp \left( \eps \, V(x,\theta),\eps \, W(y,\omega),\xi+\dfrac{2\, \pi \, k \, \beta}{\eps},\gamma
\right) \cdot \eps \, W(y,\omega) \, ,
\end{equation*}
where $\sigma_\sharp$ still belongs to ${\bf S}^0$. The first term on the right-hand side does not contribute
to the difference $\opteg (\widetilde{a}) \, u -\opeg (a) \, u$, see Proposition \ref{prop1}. We can therefore
focus on the second term for which we have an extra $\eps$. To avoid introducing some new notation, we still
use $\widetilde{a}_{\eps,\gamma}$ to denote the second term on the right-hand side. Then we have $r_2
=R(\cdot,k+1) -R(\cdot,k)$ with
\begin{equation*}
R (x,\theta,\omega,\xi,k) := \begin{cases}
\dfrac{\widetilde{a}_{\eps,\gamma} (x,\theta,x,\omega,\xi,k) -\widetilde{a}_{\eps,\gamma} (x,\theta,x,\theta,\xi,k)}
{1-{\rm e}^{-2\, i \, \pi \, (\omega-\theta)}} \, ,& \text{\rm if $\omega \neq \theta$,} \\
\dfrac{1}{2 \, i \, \pi} \, \partial_\omega \widetilde{a}_{\eps,\gamma} (x,\theta,x,\theta,\xi,k)
\, ,& \text{\rm if $\omega = \theta$.}
\end{cases}
\end{equation*}
Considering $k$ as a real variable (and not only an element of $\Z$), there holds
\begin{align*}
|r_2 (x,\theta,\omega,\xi,k)| &\le \sup_{\kappa \in [0,1]} \big| \partial_k R (x,\theta,\omega,\xi,k+\kappa) \big| \\
&\le C \, \sup_{\theta,\omega,\kappa \in [0,1]} \big| \partial_\omega \, \partial_k \,
\widetilde{a}_{\eps,\gamma} (x,\theta,x,\omega,\xi,k+\kappa) \big| \, .
\end{align*}
The $k$-derivative of $\widetilde{a}_{\eps,\gamma}$ introduces a $1/\eps$ factor times a frequency derivative
of the symbol $\sigma_\sharp$. The $1/\eps$ factor is compensated by the $\eps$ factor in the term $\eps \, W$
and the frequency derivative of the symbol yields a decay of the form
\begin{equation*}
\left( \gamma^2 +\left| \xi +\dfrac{2\, (k+\kappa) \, \pi \, \beta}{\eps} \right|^2 \right)^{-1/2} \, .
\end{equation*}
We thus obtain a bound
\begin{equation*}
|r_2 (x,\theta,\omega,\xi,k)| \le \dfrac{C}{\gamma} \, .
\end{equation*}
The estimate of the derivatives of $r_2$ follow the same strategy. Here there is no $y$-derivative to control,
and the worst case occurs when we take $d$ derivatives in $x$, one derivative in $\theta$, and one derivative
in $\omega$. This requires having $d+3$ derivatives of the functions $V,W$ in $L^\infty$. Since $d+3 \le
2 \, (d+1)$, we thus derive a bound of the form
\begin{equation*}
\Ng r_2 \Nd_{\rm Amp} \le \dfrac{C}{\gamma} \, .
\end{equation*}
Combining with our estimate of $r_1$ and applying Theorem \ref{thm2}, we already get \eqref{estimthm31}.

Before going on and proving \eqref{estimthm32}, we make an important remark. In our estimate of $r_2$, we have
taken into account the finite difference with respect to $k$ in order to make a frequency derivative appear, to
the price of a $1/\eps$ but gaining a $1/\gamma$. We could have also estimated each term of $r_2$, meaning the
terms $R(\cdot,k+1)$ and $R(\cdot,k)$, separately. If we had adopted such strategy, we would not have gained a
$1/\gamma$ but there would have been no trouble with the $1/\eps$ term. More precisely, the amplitude $r_2$
satisfies a bound of the form
\begin{equation*}
\Ng r_2 \Nd_{\rm Amp} \le C \, \eps \, .
\end{equation*}

$\bullet$ Our goal is now to prove \eqref{estimthm32}. Following Proposition \ref{prop7}, the derivative
\begin{equation*}
\left( \partial_{x_1} +\dfrac{\beta_1}{\eps} \, \partial_\theta \right) \optilde (r_1) \, u
\end{equation*}
is computed by differentiating under the integral sign provided that the amplitude has sufficiently many
derivatives in $L^\infty$, and similarly for $r_2$. We show how to estimate such derivatives under the
assumption $n \ge 2\, d+3$. Let us start with the terms involving $r_1$, which are actually easier. There
holds
\begin{equation*}
\left( \partial_{x_1} +\dfrac{\beta_1}{\eps} \, \partial_\theta \right) \optilde (r_1) \, u
=\optilde \left( \Big( i\, \xi_1 +\dfrac{2\, i \, k \, \pi \, \beta_1}{\eps} \Big) \, r_1 \right) \, u
+\optilde (\partial_{x_1} r_1) \, u +\optilde \left( \dfrac{\beta_1}{\eps} \, \partial_\theta r_1 \right) \, u \, .
\end{equation*}
We recall that the amplitude $r_1$ is given by \eqref{expressionr1}. To control $\partial_{x_1} r_1$ in
the norm $\Ng \cdot \Nd_{\rm Amp}$, one just needs an extra space derivative than in the previous step. The
same argument holds for $\partial_\theta r_1$. Consequently, under the assumption $n \ge 2\, d+3$, we get
\begin{equation*}
\left\| \optilde (\partial_{x_1} r_1) \, u
+\optilde \left( \dfrac{\beta_1}{\eps} \, \partial_\theta r_1 \right) \, u \right\|_0 \le
\dfrac{C}{\gamma} \, \| u \|_0 \, .
\end{equation*}
In order to estimate the amplitude
\begin{equation*}
\left( i\, \xi_1 +\dfrac{2\, i \, k \, \pi \, \beta_1}{\eps} \right) \, r_1 \, ,
\end{equation*}
we use the decomposition \eqref{expressionr1}, where we recall that the $\sigma_j$'s belong to ${\bf S}^{-1}$.
Compared to the previous step, this amounts to working with the symbols $i\, \xi_1 \, \sigma_j$, which belong
to ${\bf S}^0$, and we thus get uniform $L^\infty$ bounds in $O(\eps)$. Eventually, we have obtained the bound
\begin{equation*}
\left\| \left( \partial_{x_1} +\dfrac{\beta_1}{\eps} \, \partial_\theta \right) \optilde (r_1) \, u \right\|_0
\le C \, \left( \dfrac{1}{\gamma} +\eps \right) \, \| u \|_0 \, .
\end{equation*}

The remaining task is to control the analogous expression with the amplitude $r_2$ instead of $r_1$. To control
the terms that involve $\partial_{x_1} r_2$ or $(\beta_1/\eps) \, \partial_\theta r_2$, we use the above remark.
More precisely, we estimate each term with $R(\cdot,k+1)$ and $R(\cdot,k)$ separately, keeping the $\eps$ factor
to cancel the singular term $\beta_1/\eps$. This requires only one more derivative on the functions $V,W$ since
we take one more $x_1$ or $\theta$ derivative of the amplitude. The most tricky term corresponds to
\begin{equation*}
\left( i\, \xi_1 +\dfrac{2\, i \, k \, \pi \, \beta_1}{\eps} \right) \, r_2 \, .
\end{equation*}
For this final term, we use the decomposition
\begin{multline*}
\left( i\, \xi_1 +\dfrac{2\, i \, k \, \pi \, \beta_1}{\eps} \right) \, r_2 =
\left( i\, \xi_1 +\dfrac{2\, i \, (k+1) \, \pi \, \beta_1}{\eps} \right) \, R(\cdot,k+1)
-\left( i\, \xi_1 +\dfrac{2\, i \, k \, \pi \, \beta_1}{\eps} \right) \, R(\cdot,k) \\
-\dfrac{2\, i \, \pi \, \beta_1}{\eps} \, R(\cdot,k+1) \, .
\end{multline*}
The last term $R(\cdot,k+1)/\eps$ has already been estimated at the previous step and satisfies an $O(1)$
bound in the norm $\Ng \cdot \Nd_{\rm Amp}$. What remains is a finite difference in $k$ which corresponds
to the symbol $i\, \xi_1 \, \sigma$ instead of $\sigma$ (and then making the substitution with the singular
frequency $\xi +2\, k \, \pi \, \beta/\eps$). We apply the same strategy as in the previous step to make a
frequency derivative appear, to the price of a $1/\eps$. Since the $\xi$-derivatives of $i\, \xi_1 \, \sigma$
belong to ${\bf S}^0$ and thus satisfy uniform $L^\infty$ bounds, we end up with the estimate
\begin{equation*}
\left\| \left( \partial_{x_1} +\dfrac{\beta_1}{\eps} \, \partial_\theta \right) \optilde (r_2) \, u \right\|_0
\le C \, \| u \|_0 \, ,
\end{equation*}
which completes the proof of \eqref{estimthm32}.
\end{proof}

The following result extends Theorem \ref{thm3} to the case of amplitudes with degree $1$. In particular,
it will clarify the action of singular oscillatory integral operators on Sobolev spaces.

\begin{theorem}
\label{thm4}
Let $\widetilde{a} \in A_n^1$, $n \ge 3\, d +4$, be given by \eqref{singularamplitude}, and let $a \in S_n^1$
be defined by
\begin{equation*}
\forall \, (x,\theta,\xi,k) \in \R^d \times \T \times \R^d \times \Z \, ,\quad
a_{\eps,\gamma} (x,\theta,\xi,k) := \sigma \left( \eps \, V(x,\theta),\eps \, W(x,\theta),
\xi+\dfrac{2\, \pi \, k \, \beta}{\eps},\gamma \right) \, .
\end{equation*}
Then the operator $\opteg (\widetilde{a}) -\opeg (a)$ is bounded on $L^2$, namely there exists a constant
$C \ge 0$ such that for all $\eps \in \, ]0,1]$ and for all $\gamma \ge 1$, there holds
\begin{equation}
\label{estimthm4}
\forall \, u \in {\mathcal S} (\R^d \times \T) \, ,\quad
\left\| \opteg (\widetilde{a}) \, u -\opeg (a) \, u \right\|_0 \le C \, \| u \|_0 \, .
\end{equation}
In particular, $\opteg (\widetilde{a})$ maps $H^{1,\eps}$ into $L^2$ and there exists a constant $C \ge 0$
such that for all $\eps \in \, ]0,1]$ and for all $\gamma \ge 1$, there holds
\begin{equation*}
\forall \, u \in {\mathcal S} (\R^d \times \T) \, ,\quad
\left\| \opteg (\widetilde{a}) \, u \right\|_0 \le C \, \| u \|_{H^{1,\eps},\gamma} \, .
\end{equation*}
\end{theorem}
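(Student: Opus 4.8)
The plan is to reduce Theorem \ref{thm4} to the degree-$0$ result of Theorem \ref{thm3}, using the fact that, although $\widetilde{a}\in A_n^1$ has degree-$1$ growth in $(\xi,k)$, the error amplitude produced by Proposition \ref{prop2} is of degree $0$, together with the Taylor-expansion trick already exploited in the proof of Theorem \ref{thm3}. Note first that for fixed $\eps,\gamma$ the operator $\opeg(a)$ is unambiguously defined on ${\mathcal S}(\R^d\times\T)$ (the symbol grows at most linearly in $(\xi,k)$ while $(\widehat{c_k(u)})_k$ has fast decay), and $\opteg(\widetilde{a})$ is defined via Lemma \ref{lem5}. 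Applying Proposition \ref{prop2} to the smooth, $(\xi,k)$-compactly supported truncation $\widetilde{a}_\delta:=\chi_1(\delta k)\,\chi_2(\delta\xi)\,\widetilde{a}_{\eps,\gamma}$ and letting $\delta\to 0$ as in the proofs of Proposition \ref{prop2} and Theorem \ref{thm3}, one gets $\opteg(\widetilde{a})-\opeg(a)=\optilde(r)$ with $r=r_1+r_2$, $r_2$ a finite difference in $k$, provided $r$ is an admissible amplitude for $\optilde$; establishing the latter is the point where the degree-$1$ case differs from the degree-$0$ one.

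The first real step is therefore to check that $r$ is in fact a degree-$0$ amplitude. With $\zeta:=\xi+2\pi k\beta/\eps$, the term $r_1$ involves $\partial_{y_j}\partial_{\xi_j}\widetilde{a}_{\eps,\gamma}=(\partial_{\zeta_j}\partial_w\sigma)(\eps V,\eps W,\zeta,\gamma)\cdot\eps\,\partial_{y_j}W$, and the $\xi$-derivative lowers $\partial_w\sigma\in{\bf S}^1$ to ${\bf S}^0$, so $r_1$ is of degree $0$ (in fact $O(\eps)$). For $r_2$ one first writes, as in Theorem \ref{thm3}, $\widetilde{a}_{\eps,\gamma}=\sigma(\eps V,0,\zeta,\gamma)+\sigma_\sharp(\eps V,\eps W,\zeta,\gamma)\cdot\eps W$ with $\sigma_\sharp\in{\bf S}^1$; by Proposition \ref{prop1} the first summand, being independent of $(y,\omega)$, contributes nothing to $\opteg(\widetilde{a})-\opeg(a)$, so one may replace $\widetilde{a}$ by the second summand, which carries an explicit $\eps$. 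Then $r_2=\int_0^1\partial_k R(\cdot,k+\kappa)\,{\rm d}\kappa$, the $1/\eps$ produced by $\partial_k$ through the singular substitution $\xi\mapsto\xi+2\pi k\beta/\eps$ is absorbed by the $\eps$ in $\eps W$, and the attendant frequency derivative turns $\sigma_\sharp\in{\bf S}^1$ into ${\bf S}^0$; hence $r_2$ is of degree $0$ as well. Since $r$ then satisfies the $L^\infty$-type hypotheses of Theorem \ref{thm2}, the truncation argument above is legitimate.

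Next one bounds $\Ng r\Nd_{\rm Amp}$ uniformly in $\eps\in\,]0,1]$ and $\gamma\ge 1$. Every derivative occurring in $\Ng\cdot\Nd_{\rm Amp}$ either falls on the frequency variable --- harmless, as it only improves the ${\bf S}^0$ decay --- or, via the Fa\`a di Bruno formula, on the substituted functions $V,W$; the worst case puts all the available $x$-, $y$- and $\omega$-derivatives on the outer factor $\partial_{y_j}W$ for $r_1$, and (since $r_2$ does not depend on $y$) on $\partial_\omega W$ for $r_2$. This is exactly what $n\ge 3d+4$ affords, the bulk of the derivatives being already consumed by Lemma \ref{lem5} to define $\opteg(\widetilde{a})$. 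As the relevant pieces of $r$ lie in ${\bf S}^0$, their $L^\infty$-bounds are $O(1)$, so $\Ng r\Nd_{\rm Amp}\le C$ and Theorem \ref{thm2} yields \eqref{estimthm4}. Finally, combining \eqref{estimthm4} with Proposition \ref{prop4} (applicable since $a\in S_n^1$, $n\ge d+1$) gives $\|\opteg(\widetilde{a})\,u\|_0\le\|\opeg(a)\,u\|_0+\|\opteg(\widetilde{a})\,u-\opeg(a)\,u\|_0\le C\|u\|_{H^{1,\eps},\gamma}+C\|u\|_0\le C'\|u\|_{H^{1,\eps},\gamma}$, using $\|u\|_0\le\|u\|_{H^{1,\eps},\gamma}$.

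I expect the estimate of $r_2$ to be the main obstacle, as it requires juggling three effects simultaneously: exploiting the finite difference in $k$ to manufacture a frequency derivative (this is what drops the degree from $1$ to $0$), keeping the $\eps$ from the Taylor remainder $\eps W$ explicit in order to absorb the $1/\eps$ that the $k$-derivative produces through the singular substitution, and differentiating enough times to control $\Ng r_2\Nd_{\rm Amp}$ --- all at once. A more bookkeeping-type point is to justify carefully the passage to finite regularity in Proposition \ref{prop2}, that is, $\optilde(r_\delta)\to\optilde(r)$ as the frequency cut-off is removed; this proceeds as in the proof of Theorem \ref{thm3}, the crucial input being that $r$ is a degree-$0$ amplitude.
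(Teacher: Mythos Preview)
Your proposal follows essentially the same route as the paper: decompose $\opteg(\widetilde{a})-\opeg(a)$ via Proposition~\ref{prop2} into $\optilde(r_1)+\optilde(r_2)$, observe that the $\xi$-derivative in $r_1$ and the finite $k$-difference (after the Taylor trick $\eps W$) in $r_2$ each drop the degree from $1$ to $0$, and then invoke Theorem~\ref{thm2} and Proposition~\ref{prop4}. The argument for $r_1$, $r_2$ and the final $H^{1,\eps}\to L^2$ mapping is correct and matches the paper.

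There is one point where your justification is incomplete, and it is precisely where the hypothesis $n\ge 3d+4$ is actually used. You write that the passage $\optilde(r_\delta)\to\optilde(r)$ ``proceeds as in the proof of Theorem~\ref{thm3}, the crucial input being that $r$ is a degree-$0$ amplitude''. But $r_\delta$, the remainder associated with the truncated amplitude $\widetilde{a}_\delta=\chi_1(\delta k)\chi_2(\delta\xi)\widetilde{a}_{\eps,\gamma}$, is \emph{not} simply $\chi_1(\delta k)\chi_2(\delta\xi)\,r$: when $\partial_{\xi_j}$ in the formula for $r_1$ acts on the cut-off, you pick up extra terms of the form
\[
\dfrac{\delta}{i}\sum_{j=1}^d \chi_1(\delta k)\,\partial_{\xi_j}\chi_2(\delta\xi)\int_0^1 \partial_{y_j}\widetilde{a}_{\eps,\gamma}\big(x,\theta,(1-t)x+ty,\omega,\xi,k\big)\,{\rm d}t \, ,
\]
and the amplitude $\partial_{y_j}\widetilde{a}_{\eps,\gamma}$ here is of degree~$1$, not~$0$. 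Theorem~\ref{thm2} does not apply to it; one needs Lemma~\ref{lem5} (applied to $\partial_{y_j}\widetilde{a}\in A^1_{n-1}$, which is exactly why $n-1\ge 3(d+1)$, i.e.\ $n\ge 3d+4$) to show that the associated oscillatory integral has a limit in ${\mathcal S}'$, so that the explicit $\delta$ factor then kills it. An analogous cross term appears in $r_{2,\delta}$. This is the mechanism the paper isolates, and it is the reason your regularity count based purely on $\Ng r\Nd_{\rm Amp}$ (which would only require roughly $2d+2$ derivatives) underestimates what is needed. Once you insert this step, your proof is complete and coincides with the paper's.
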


\begin{proof}[Proof of Theorem \ref{thm4}]
Let $u \in {\mathcal S}(\R^d \times \T)$. Then we know that $\opteg (\widetilde{a}) \, u$ is the limit in
${\mathcal S}'(\R^d \times \T)$, as $\delta$ tends to $0$, of the sequence $(T_\delta)$, with (ignore from
now on the powers of $2\, \pi$):
\begin{equation*}
T_\delta \, (x,\theta) := \sum_{k \in \Z} \chi_1 (\delta \, k) \, \int_{\R^d \times \R^d \times \T}
{\rm e}^{i \, (x-y) \cdot \xi} \, {\rm e}^{2 \, i \, \pi \, k \, (\theta-\omega)} \,
\chi_2 (\delta \, \xi) \, \widetilde{a}_{\eps,\gamma} (x,\theta,y,\omega,\xi,k) \, u(y,\omega) \,
{\rm d}\xi \, {\rm d}y \, {\rm d}\omega \, .
\end{equation*}
Using the result of Proposition \ref{prop2} (with a finite regularity, which can be justified by the standard
regularization procedure), we decompose as usual
\begin{equation*}
T_\delta =T_{1,\delta} +\optilde (r_{1,\delta}) \, u +\optilde (r_{2,\delta}) \, u \, ,
\end{equation*}
with
\begin{equation*}
T_\delta \, (x,\theta) := \sum_{k \in \Z} \chi_1 (\delta \, k) \, \int_{\R^d \times \R^d \times \T}
{\rm e}^{i \, (x-y) \cdot \xi} \, {\rm e}^{2 \, i \, \pi \, k \, (\theta-\omega)} \,
\chi_2 (\delta \, \xi) \, \widetilde{a}_{\eps,\gamma} (x,\theta,x,\theta,\xi,k) \, u(y,\omega) \,
{\rm d}\xi \, {\rm d}y \, {\rm d}\omega \, ,
\end{equation*}
and $r_{1,\delta}$, $r_{2,\delta}$ are as in Proposition \ref{prop2} but are obtained by considering the
truncated amplitude $\chi_1 (\delta \, k) \, \chi_2 (\delta \, \xi) \, \widetilde{a}_{\eps,\gamma}$.

It is easy to show that the sequence $T_{1,\delta}$ converges in ${\mathcal S}'$ (and even in a stronger sense)
towards $\opeg (a) \, u$, because one can first integrate in $(y,\omega)$ and use the decay of the Fourier
transform of $u$. We are now going to compute the limit as $\delta$ tends to $0$ of
\begin{equation*}
\optilde (r_{1,\delta}) \, u +\optilde (r_{2,\delta}) \, u \, .
\end{equation*}

Using the general formula of Proposition \ref{prop2}, we have
\begin{align*}
r_{1,\delta} &=\chi_1 (\delta \, k) \, \chi_2 (\delta \, \xi) \, r_1 +\dfrac{\delta}{i} \, \sum_{j=1}^d
\chi_1 (\delta \, k) \, \partial_{\xi_j} \chi_2 (\delta \, \xi) \, \int_0^1
\partial_{y_j} \widetilde{a}_{\eps,\gamma} (x,\theta,(1-t)\, x+t\, y,\omega,\xi,k) \, {\rm d}t \, ,\\
r_1 &:=\dfrac{1}{i} \, \sum_{j=1}^d \int_0^1 \partial_{y_j} \partial_{\xi_j} \widetilde{a}_{\eps,\gamma}
(x,\theta,(1-t)\, x+t\, y,\omega,\xi,k) \, {\rm d}t \, .
\end{align*}
Since $\partial_{\xi_j} \widetilde{a}_{\eps,\gamma}$ is a bounded amplitude, we can apply Theorem \ref{thm2}
for the convergence of the term $\optilde (\chi_1(\delta \, k) \, \chi_2 (\delta \, \xi) \, r_1) \, u$. More
precisely, we have $\partial_{y_j} \partial_{\xi_j} \widetilde{a} \in A_{n-1}^0$, $n-1 \ge 3\, d +3$, and we
therefore know that the limit of this term is $\optilde (r_1) \, u$. Moreover, the operator $\optilde (r_1)$
acts boundedly on $L^2$, uniformly in $\eps,\gamma$.

We now deal with the remaining term in $r_{1,\delta}$. It is sufficient to prove that the singular oscillatory
integral associated with the amplitude
\begin{equation*}
\chi_1 (\delta \, k) \, \partial_{\xi_j} \chi_2 (\delta \, \xi) \, \int_0^1
\partial_{y_j} \widetilde{a}_{\eps,\gamma} (x,\theta,(1-t)\, x+t\, y,\omega,\xi,k) \, {\rm d}t \, ,
\end{equation*}
has a limit in ${\mathcal S}'$ as $\delta$ tends to $0$. Since $\partial_{y_j} \widetilde{a}$ belongs to
$A_{n-1}^1$, $n-1 \ge 3\, d+3$, we can apply Lemma \ref{lem5} to this term. Together with the extra $\delta$
factor, we have shown that the limit of $\optilde (r_{1,\delta}) \, u$ in ${\mathcal S}'$ is $\optilde (r_1)
\, u$, and that this term is controlled in $L^2$ uniformly with respect to $\eps,\gamma$.

The analogous term with $r_{2,\delta}$ is dealt with in a similar way. The finite difference with respect
to $k$ plays the role of the $\xi$ derivative and we can prove uniform bounds of the amplitude in the norm
$\Ng \cdot \Nd_{\rm Amp}$ by using the same arguments as in the proof of Theorem \ref{thm3}. We feel free
to skip the details. The action of $\opteg (\widetilde{a})$ on $H^{1,\eps}$ is obtained by combining
\eqref{estimthm4} with the result of Proposition \ref{prop4} for $\opeg (a)$.
\end{proof}

\section{Singular pseudodifferential calculus II. Adjoints and products}
\label{sect5}

\subsection{Adjoints of singular pseudodifferential operators}

Our results on adjoints are very easy consequences of all the preliminary results in Section \ref{sect4}.
Let us start with the case of bounded symbols.

\begin{proposition}
\label{prop8}
Let $a \in S_n^0$, $n \ge 2\, (d+1)$, and let $a^*$ denote the conjugate transpose of the symbol $a$.
Then $\opeg (a)$ and $\opeg (a^*)$ act boundedly on $L^2$ and there exists a constant $C \ge 0$ such
that for all $\eps \in \, ]0,1]$ and for all $\gamma \ge 1$, there holds
\begin{equation*}
\forall \, u \in {\mathcal S} (\R^d \times \T) \, ,\quad
\left\| \opeg (a)^* \, u -\opeg (a^*) \, u \right\|_0 \le \dfrac{C}{\gamma} \, \| u \|_0 \, .
\end{equation*}

If $n \ge 2\, d +3$, then for another constant $C$, there holds
\begin{equation*}
\forall \, u \in {\mathcal S} (\R^d \times \T) \, ,\quad
\left\| \opeg (a)^* \, u -\opeg (a^*) \, u \right\|_{H^{1,\eps},\gamma} \le C \, \| u \|_0 \, ,
\end{equation*}
uniformly in $\eps$ and $\gamma$.
\end{proposition}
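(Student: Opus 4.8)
The plan is to deduce Proposition \ref{prop8} directly from Theorem \ref{thm3}, by computing the formal adjoint of a singular pseudodifferential operator and recognizing it as a singular oscillatory integral operator. First I would write down, for $u,v \in {\mathcal S}(\R^d \times \T)$, the identity $\langle \opeg(a)\, u, v \rangle_0 = \langle u, \opeg(a)^* \, v \rangle_0$ and unfold the definition \eqref{singularpseudo}. Applying Fubini's theorem (which is legitimate after the usual truncation in $(\xi,k)$ as in Theorem \ref{thm2}, then passing to the limit), one finds that $\opeg(a)^*$ is the singular oscillatory integral operator $\opteg(\widetilde{a})$ associated with the amplitude
\begin{equation*}
\widetilde{a}_{\eps,\gamma}(x,\theta,y,\omega,\xi,k)
:= \sigma\!\left( \eps\, V(y,\omega), \eps\, W(y,\omega), \xi + \dfrac{2\, \pi\, k\, \beta}{\eps}, \gamma \right)^*,
\end{equation*}
i.e. the substitution now occurs in the $(y,\omega)$ variables rather than the $(x,\theta)$ variables, and the symbol is replaced by its conjugate transpose. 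Since conjugate transposition commutes with all the differentiations and preserves the symbol classes ${\bf S}^m$, this amplitude belongs to $A_n^0$ with the same $n$.

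Next I would introduce the companion symbol $a^{\flat} \in S_n^0$ obtained by setting $y=x$, $\omega = \theta$ in $\widetilde{a}$, that is $a^{\flat}_{\eps,\gamma}(x,\theta,\xi,k) = \sigma(\eps\, V(x,\theta), \eps\, W(x,\theta), \xi + 2\pi k\beta/\eps, \gamma)^*$, which is precisely the symbol attached to $\widetilde{a}$ in the statement of Theorem \ref{thm3}. Theorem \ref{thm3} then gives, for $n \ge 2(d+1)$,
\begin{equation*}
\left\| \opeg(a)^* \, u - \opeg(a^{\flat}) \, u \right\|_0 = \left\| \opteg(\widetilde{a})\, u - \opeg(a^{\flat})\, u \right\|_0 \le \dfrac{C}{\gamma}\, \| u \|_0,
\end{equation*}
and the $H^{1,\eps}$ bound \eqref{estimthm32} for $n \ge 2d+3$. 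So the only remaining point is to compare $a^{\flat}$ with $a^*$: by Definition \ref{def2}, $a^*_{\eps,\gamma}(x,\theta,\xi,k) = \sigma(\eps\, V(x,\theta), \xi + 2\pi k\beta/\eps,\gamma)^*$ — but here I should be careful, since in the representation \eqref{singularsymbol} the symbol $a$ depends on $\eps V(x,\theta)$ in a \emph{single} slot, whereas in Theorem \ref{thm3} the symbol class allows two slots. The clean way to reconcile this is simply to regard $a \in S_n^0$ with the trivial second substitution $W \equiv 0$ (or to duplicate $V$), so that $a^{\flat}$ and $a^*$ literally coincide. Then Proposition \ref{prop8} follows verbatim from Theorem \ref{thm3}.

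The main obstacle, such as it is, is bookkeeping rather than analysis: one must verify carefully that taking the formal adjoint really does produce an amplitude of the form covered by Definition \ref{def3} (in particular that the frequency-slot substitution $\xi \mapsto \xi + 2\pi k \beta/\eps$ is unchanged under adjunction — it is, since the exponential factors ${\rm e}^{i(x-y)\cdot\xi}\,{\rm e}^{2i\pi k(\theta-\omega)}$ are merely conjugated and the roles of $x,y$ and of $\theta,\omega$ swapped, which amounts to relabeling), and that the regularization argument used to pass from smooth compactly supported amplitudes to general ones (exactly as in the proof of Theorem \ref{thm3}, invoking Proposition \ref{prop2} in finite regularity) goes through. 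Once the adjoint is identified as $\opteg(\widetilde{a})$ with $\widetilde{a}\in A_n^0$ and companion symbol $a^*$, both assertions are immediate applications of \eqref{estimthm31} and \eqref{estimthm32}; I would state this and refer to the proof of Theorem \ref{thm3} for the regularization details rather than repeating them.
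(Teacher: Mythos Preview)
Your proposal is correct and follows essentially the same route as the paper: identify $\opeg(a)^*$ as the oscillatory integral operator $\opteg(\widetilde{b})$ with amplitude $\widetilde{b}_{\eps,\gamma}(x,\theta,y,\omega,\xi,k)=a_{\eps,\gamma}(y,\omega,\xi,k)^*$, observe that this lies in $A_n^0$ with companion symbol $a^*$, and invoke Theorem~\ref{thm3}. Your remark that the one-slot symbol class of Definition~\ref{def2} embeds in the two-slot amplitude class of Definition~\ref{def3} (by taking a trivial $V$-slot and $W=V$) is exactly the bookkeeping needed, and the paper handles this implicitly in the same way.
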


\begin{proof}[Proof of Proposition \ref{prop8}]
As in \cite[proposition 2.4]{williams3}, it is sufficient to observe that if $a_{\eps,\gamma}$ is defined by
\eqref{singularsymbol}, the adjoint operator $\opeg (a)^*$ coincides with the singular oscillatory integral
operator $\opteg (\widetilde{b})$ associated with the amplitude
\begin{equation*}
\widetilde{b}_{\eps,\gamma} (x,\theta,y,\omega,\xi,k) := a_{\eps,\gamma} (y,\omega,\xi,k)^*
= \sigma \left( \eps \, V(y,\omega),\xi+\dfrac{2\, \pi \, k \, \beta}{\eps},\gamma \right)^* \, .
\end{equation*}
Then we apply Theorem \ref{thm3} and the conclusion follows.
\end{proof}

Proposition \ref{prop8} can be extended to symbols of degree $1$ up to an additional regularity in the space
variables (this high regularity is mainly required to give a precise meaning to oscillatory integral operators).

\begin{proposition}
\label{prop9}
Let $a \in S_n^1$, $n \ge 3\, d +4$, and let $a^*$ denote the conjugate transpose of the symbol $a$.
Then $\opeg (a)$ and $\opeg (a^*)$ map $H^{1,\eps}$ into $L^2$ and there exists a family of operators
$R^{\eps,\gamma}$ that satisfies
\begin{itemize}
 \item there exists a constant $C \ge 0$ such that for all $\eps \in \, ]0,1]$ and for all
      $\gamma \ge 1$, there holds
\begin{equation*}
\forall \, u \in {\mathcal S} (\R^d \times \T) \, ,\quad
\left\| R^{\eps,\gamma} \, u \right\|_0 \le C \, \| u \|_0 \, ,
\end{equation*}

 \item the following duality property holds
\begin{equation*}
\forall \, u,v \in {\mathcal S} (\R^d \times \T) \, ,\quad
\langle \opeg (a) \, u,v \rangle_{L^2} -\langle u,\opeg (a^*) \, v \rangle_{L^2} =\langle
R^{\eps,\gamma} \, u,v \rangle_{L^2} \, .
\end{equation*}
In particular, the adjoint $\opeg (a)^*$ for the $L^2$ scalar product maps $H^{1,\eps}$ into $L^2$.
\end{itemize}
\end{proposition}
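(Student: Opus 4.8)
The plan is to repeat the proof of Proposition~\ref{prop8} almost verbatim, using Theorem~\ref{thm4} in place of Theorem~\ref{thm3}. First I would dispose of the trivial preliminaries. Writing $\sigma^*(v,\xi,\gamma):=\sigma(v,\xi,\gamma)^*$, one has $\sigma^*\in{\bf S}^1$ whenever $\sigma\in{\bf S}^1$ (the matrix norm is invariant under $M\mapsto M^*$), so $a^*\in S^1_n$; since $n\ge 3d+4\ge d+1$, Proposition~\ref{prop4} then shows that $\opeg(a)$ and $\opeg(a^*)$ both map $H^{1,\eps}(\R^d\times\T)$ continuously into $L^2(\R^d\times\T)$ with norms uniform in $\eps\in\,]0,1]$ and $\gamma\ge1$. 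In particular, all operators and bilinear forms occurring below are well defined on ${\mathcal S}(\R^d\times\T)$.

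The core step is to identify the formal adjoint. As in the proof of Proposition~\ref{prop8}, the operator $T$ characterized by $\langle\opeg(a)\,u,v\rangle_{L^2}=\langle u,Tv\rangle_{L^2}$ for all $u,v\in{\mathcal S}(\R^d\times\T)$ should coincide with the singular oscillatory integral operator $\opteg(\widetilde b)$ associated with the amplitude
\[
\widetilde b_{\eps,\gamma}(x,\theta,y,\omega,\xi,k):=a_{\eps,\gamma}(y,\omega,\xi,k)^*
=\sigma\!\left(\eps\,V(y,\omega),\xi+\frac{2\,\pi\,k\,\beta}{\eps},\gamma\right)^{*}.
\]
This amplitude is of the type \eqref{singularamplitude} with degree $1$: putting $\tau(v_1,v_2,\xi,\gamma):=\sigma(v_2,\xi,\gamma)^*$, which belongs to ${\bf S}^1$ and is simply independent of the dummy variable $v_1$, one has $\widetilde b_{\eps,\gamma}(x,\theta,y,\omega,\xi,k)=\tau(\eps\,V(x,\theta),\eps\,V(y,\omega),\xi+2\pi k\beta/\eps,\gamma)$, so $\widetilde b\in A^1_n$ with $n\ge 3d+4$, and $\opteg(\widetilde b)$ is a well-defined map ${\mathcal S}\to{\mathcal S}'$ by Lemma~\ref{lem5}. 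The hard part — really the only point that is not automatic, and the reason one cannot simply quote Proposition~\ref{prop8} — is the rigorous justification of the identity $\langle\opeg(a)\,u,v\rangle_{L^2}=\langle u,\opteg(\widetilde b)\,v\rangle_{L^2}$: since $a$ now has degree $1$, the defining integrals are not absolutely convergent, so one must insert the cut-offs $\chi_1(\delta k)\,\chi_2(\delta\xi)$ as in Lemma~\ref{lem5} and Theorem~\ref{thm2}, check by Fubini's and Plancherel's theorems that the truncated bilinear forms agree, and then let $\delta\to0$, the left-hand side converging because $u\in{\mathcal S}$ and the right-hand side by the very definition of $\opteg(\widetilde b)$.

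It then remains to apply Theorem~\ref{thm4} to $\widetilde b\in A^1_n$. Its restriction to the diagonal $y=x$, $\omega=\theta$ is $\widetilde b_{\eps,\gamma}(x,\theta,x,\theta,\xi,k)=\sigma(\eps\,V(x,\theta),\xi+2\pi k\beta/\eps,\gamma)^*=(a^*)_{\eps,\gamma}(x,\theta,\xi,k)$, so the symbol attached to $\widetilde b$ in Theorem~\ref{thm4} is exactly $a^*\in S^1_n$. Hence $S^{\eps,\gamma}:=\opteg(\widetilde b)-\opeg(a^*)$ is bounded on $L^2$ with a norm $\le C$ uniform in $\eps,\gamma$. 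Combining this with the adjoint identity gives, for all $u,v\in{\mathcal S}(\R^d\times\T)$,
\[
\langle\opeg(a)\,u,v\rangle_{L^2}-\langle u,\opeg(a^*)\,v\rangle_{L^2}=\langle u,S^{\eps,\gamma}v\rangle_{L^2}=\langle R^{\eps,\gamma}u,v\rangle_{L^2},\qquad R^{\eps,\gamma}:=(S^{\eps,\gamma})^*,
\]
and $R^{\eps,\gamma}$ is bounded on $L^2$ with $\|R^{\eps,\gamma}\|=\|S^{\eps,\gamma}\|\le C$, which is the announced duality property. Finally, for the last assertion I would argue by density: for fixed $v\in{\mathcal S}$ the identity reads $\langle\opeg(a)\,u,v\rangle_{L^2}=\langle u,\opeg(a^*)\,v+(R^{\eps,\gamma})^*v\rangle_{L^2}$ for all $u\in{\mathcal S}$, with $\opeg(a^*)\,v+(R^{\eps,\gamma})^*v\in L^2$; since ${\mathcal S}$ is dense in $H^{1,\eps}$ and $\opeg(a)\colon H^{1,\eps}\to L^2$ is continuous, this extends to all $u\in H^{1,\eps}$, so $\opeg(a)^*v=\opeg(a^*)\,v+(R^{\eps,\gamma})^*v\in L^2$, and since $\opeg(a^*)\colon H^{1,\eps}\to L^2$ is continuous and $(R^{\eps,\gamma})^*$ is bounded on $L^2$, a further density argument shows that $\opeg(a)^*$ maps $H^{1,\eps}$ into $L^2$.
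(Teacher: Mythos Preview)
Your proof is correct and follows essentially the same approach as the paper: identify the adjoint of $\opeg(a)$ with the oscillatory integral operator $\opteg(\widetilde b)$ via the truncation process (the paper simply notes that $n\ge 3(d+1)$ makes this legitimate), and then apply Theorem~\ref{thm4}. You are more explicit than the paper in naming $R^{\eps,\gamma}=(S^{\eps,\gamma})^*$ and in spelling out the density argument for the final assertion, but the underlying argument is the same.
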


\begin{proof}[Proof of Proposition \ref{prop9}]
The proof is quite similar to that of Proposition \ref{prop8}. First of all, the regularity assumption
$n \ge 3\, (d+1)$ allows to pass to the limit in the standard truncation process and to show that the
adjoint (with respect to the $L^2$ scalar product) of the operator $\opeg (a)$ coincides with the
oscillatory integral operator $\opteg (\widetilde{b})$ associated with the amplitude
\begin{equation*}
\widetilde{b}_{\eps,\gamma} (x,\theta,y,\omega,\xi,k) := a_{\eps,\gamma} (y,\omega,\xi,k)^*
= \sigma \left( \eps \, V(y,\omega),\xi+\dfrac{2\, \pi \, k \, \beta}{\eps},\gamma \right)^* \, .
\end{equation*}
Then we apply Theorem \ref{thm4} and the conclusion follows.
\end{proof}

Let us observe that we only have proved a symbolic calculus "at the first order", meaning that we have not
proved that the adjoint operator $\opeg (a)^*$ admits an asymptotic expansion with more and more smoothing
operators. Even in the case of ${\mathcal C}^\infty$ regularity for the substituted function $V$, it is not so clear
that the second order expansion holds with a uniformly bounded remainder in the scale of spaces $H^{k,\eps}$.
This bad behavior is more or less the same as in Remark \ref{rem4} (consider for instance the case of differential
operators of order $2$).

\subsection{Products of singular pseudodifferential operators}

We still follow \cite{williams3} and begin with a special case of products.

\begin{proposition}
\label{prop10}
Let $a,b \in S_n^0$, $n \ge 2\, (d+1)$. Then there exists a constant $C \ge 0$ such that for all
$\eps \in \, ]0,1]$ and for all $\gamma \ge 1$, there holds
\begin{equation*}
\forall \, u \in {\mathcal S} (\R^d \times \T) \, ,\quad
\left\| \opeg (a) \, \opeg (b)^* \, u -\opeg (a \, b^*) \, u \right\|_0 \le \dfrac{C}{\gamma} \, \| u \|_0 \, .
\end{equation*}
If $n \ge 2\, d +3$, then for another constant $C$, there holds
\begin{equation*}
\forall \, u \in {\mathcal S} (\R^d \times \T) \, ,\quad
\left\| \opeg (a) \, \opeg (b)^* \, u -\opeg (a \, b^*) \, u \right\|_{H^{1,\eps},\gamma} \le C \, \| u \|_0 \, ,
\end{equation*}
uniformly in $\eps$ and $\gamma$.

Let $a \in S_n^1,b \in S_n^0$ or $a \in S_n^0,b \in S_n^1$, $n \ge 3\, d +4$. Then there exists a constant
$C \ge 0$ such that for all $\eps \in \, ]0,1]$ and for all $\gamma \ge 1$, there holds
\begin{equation*}
\forall \, u \in {\mathcal S} (\R^d \times \T) \, ,\quad
\left\| \opeg (a) \, \opeg (b)^* \, u -\opeg (a \, b^*) \, u \right\|_0 \le C \, \| u \|_0 \, .
\end{equation*}
\end{proposition}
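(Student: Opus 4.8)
The plan is to realize $\opeg(a)\,\opeg(b)^*$ as a singular oscillatory integral operator and then to invoke Theorem \ref{thm4}, exactly in the spirit of the proof of Proposition \ref{prop10} for symbols of degree $0$, the only new point being the appearance of one factor of degree $1$. First I would recall, as in the proofs of Propositions \ref{prop8} and \ref{prop9}, that the $L^2$-adjoint of $\opeg(b)$ coincides with the singular oscillatory integral operator $\opteg(\widetilde b)$ associated with the amplitude
\begin{equation*}
\widetilde b_{\eps,\gamma}(x,\theta,y,\omega,\xi,k) := b_{\eps,\gamma}(y,\omega,\xi,k)^*
= \tau\left( \eps\, V_b(y,\omega),\xi+\frac{2\, \pi\, k\, \beta}{\eps},\gamma \right)^* ,
\end{equation*}
where $\tau$ is the profile entering the representation \eqref{singularsymbol} of $b$ (so $\tau\in{\bf S}^0$ or $\tau\in{\bf S}^1$ depending on the case) and $V_b$ the corresponding substituted function; the crucial structural feature is that $\widetilde b$ does not depend on its first two variables. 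Writing similarly $a_{\eps,\gamma}(x,\theta,\xi,k)=\rho(\eps\, V_a(x,\theta),\xi+2\pi k\beta/\eps,\gamma)$, a direct manipulation of the defining formula \eqref{singularpseudo} and of the oscillatory integral representation—carried out first for amplitudes with compact support in $(\xi,k)$, where Fubini's theorem is legitimate, and then extended by the truncation procedure of Theorem \ref{thm2} and Lemma \ref{lem5}—shows that the intermediate $\xi$- and $k$-integrations collapse (precisely because $\widetilde b$ is independent of $(x,\theta)$) and that
\begin{equation*}
\opeg(a)\,\opeg(b)^* = \opteg(\widetilde c),\qquad
\widetilde c_{\eps,\gamma}(x,\theta,y,\omega,\xi,k) := a_{\eps,\gamma}(x,\theta,\xi,k)\, b_{\eps,\gamma}(y,\omega,\xi,k)^* ,
\end{equation*}
with no remainder terms.

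Next I would check that $\widetilde c\in A_n^1$. Indeed $\widetilde c_{\eps,\gamma}$ is of the form \eqref{singularamplitude} with profile $\sigma(v,w,\xi,\gamma):=\rho(v,\xi,\gamma)\,\tau(w,\xi,\gamma)^*$ and substituted functions $V_a,V_b$, and by the Leibniz rule underlying Definition \ref{def1} one has ${\bf S}^1\cdot{\bf S}^0\subset{\bf S}^1$ and ${\bf S}^0\cdot{\bf S}^1\subset{\bf S}^1$ (conjugate transposition preserving the classes ${\bf S}^m$), so $\sigma\in{\bf S}^1$ in both of the cases under consideration; since $V_a,V_b\in{\mathcal C}^n_b$ with $n\ge 3d+4$, we get $\widetilde c\in A_n^1$. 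Then I apply Theorem \ref{thm4} to $\widetilde a:=\widetilde c$. The symbol $a$ associated with $\widetilde c$ in that statement—obtained by setting $y=x$ and $\omega=\theta$—is
\begin{equation*}
\sigma\left( \eps\, V_a(x,\theta),\eps\, V_b(x,\theta),\xi+\frac{2\, \pi\, k\, \beta}{\eps},\gamma \right)
= a_{\eps,\gamma}(x,\theta,\xi,k)\, b_{\eps,\gamma}(x,\theta,\xi,k)^* = (a\, b^*)_{\eps,\gamma}(x,\theta,\xi,k),
\end{equation*}
so the corresponding singular pseudodifferential operator is exactly $\opeg(a\, b^*)$. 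Theorem \ref{thm4} thus provides a constant $C\ge 0$, uniform in $\eps\in\,]0,1]$ and $\gamma\ge 1$, such that $\|\opteg(\widetilde c)\, u-\opeg(a\, b^*)\, u\|_0\le C\,\|u\|_0$ for all $u\in{\mathcal S}(\R^d\times\T)$, which is the claimed estimate.

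The main obstacle I anticipate is the rigorous justification of the composition identity $\opeg(a)\,\opeg(b)^*=\opteg(\widetilde c)$ in the presence of a factor of degree $1$. For two symbols of degree $0$ both operators are bounded on $L^2$ and one truncates in $(\xi,k)$ and passes to the limit without difficulty; here, however, the image of $\opeg(b)^*$ (or the argument of $\opeg(a)$) need no longer lie in ${\mathcal S}(\R^d\times\T)$, only in $L^2$ or a priori in ${\mathcal S}'(\R^d\times\T)$, so the identity cannot be written by a naive application of Fubini's theorem. The way around this is to establish the identity first for amplitudes that are smooth with compact support in $(\xi,k)$, where every integral converges absolutely, and then to pass to the limit along a truncation $\chi_1(\delta k)\chi_2(\delta\xi)$, controlling the limit of the right-hand side by Lemma \ref{lem5} together with the uniform $\Ng\cdot\Nd_{\rm Amp}$ bounds recorded in the proof of Theorem \ref{thm4}, and the limit of the left-hand side by the $L^2\to L^2$ (resp. $H^{1,\eps}\to L^2$) continuity of $\opeg(a)$ and $\opeg(b)^*$ from Propositions \ref{prop3}, \ref{prop4}, \ref{prop8} and \ref{prop9}. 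Once this identification is in place, the remaining steps are routine bookkeeping.
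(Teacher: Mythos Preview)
Your proof is correct and follows essentially the same approach as the paper: identify $\opeg(a)\,\opeg(b)^*$ with the oscillatory integral operator $\opteg(\widetilde c)$ for the amplitude $\widetilde c_{\eps,\gamma}(x,\theta,y,\omega,\xi,k)=a_{\eps,\gamma}(x,\theta,\xi,k)\,b_{\eps,\gamma}(y,\omega,\xi,k)^*$, and then apply Theorem~\ref{thm3} in the degree-$0$ cases and Theorem~\ref{thm4} in the degree-$1$ cases. Your careful discussion of the truncation argument justifying the composition identity in the presence of a factor of degree $1$ makes explicit what the paper only alludes to with the remark that the identity ``holds in a more general framework provided that all oscillatory integrals can be defined.''
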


\begin{proof}[Proof of Proposition \ref{prop10}]
In each of the three possible cases, the main point is to observe that the operator $\opeg (a) \, \opeg (b)^*$
coincides with the oscillatory integral operator $\opteg (\widetilde{c})$ associated with the amplitude
\begin{equation*}
\widetilde{c}_{\eps,\gamma} (x,\theta,y,\omega,\xi,k) := a_{\eps,\gamma} (x,\theta,\xi,k) \,
b_{\eps,\gamma} (y,\omega,\xi,k)^* \, .
\end{equation*}
The result is well-known for amplitudes with a sufficient decay with respect to the frequencies, and it holds
in a more general framework provided that all oscillatory integrals can be defined (which is the case under the
regularity assumptions stated in Proposition \ref{prop10}). The conclusion then follows from either Theorem
\ref{thm3} or \ref{thm4}.
\end{proof}

A main improvement with respect to \cite{williams3} is that we can now deal with all kinds of products by the
classical $**$ argument. This improvement has been made possible because we have already shown a smoothing
property for some remainders in the calculus (compare with \cite[Propositions 2.6, 2.7]{williams3}).

\begin{proposition}
\label{prop11}
Let $a,b \in S_n^0$, $n \ge 2\, (d+1)$. Then there exists a constant $C \ge 0$ such that for all
$\eps \in \, ]0,1]$ and for all $\gamma \ge 1$, there holds
\begin{equation*}
\forall \, u \in {\mathcal S} (\R^d \times \T) \, ,\quad
\left\| \opeg (a) \, \opeg (b) \, u -\opeg (a \, b) \, u \right\|_0 \le \dfrac{C}{\gamma} \, \| u \|_0 \, .
\end{equation*}
If $n \ge 2\, d +3$, then for another constant $C$, there holds
\begin{equation*}
\forall \, u \in {\mathcal S} (\R^d \times \T) \, ,\quad
\left\| \opeg (a) \, \opeg (b) \, u -\opeg (a \, b) \, u \right\|_{H^{1,\eps},\gamma} \le C \, \| u \|_0 \, ,
\end{equation*}
uniformly in $\eps$ and $\gamma$.

Let $a \in S_n^1,b \in S_n^0$ or $a \in S_n^0,b \in S_n^1$, $n \ge 3\, d +4$. Then there exists a constant
$C \ge 0$ such that for all $\eps \in \, ]0,1]$ and for all $\gamma \ge 1$, there holds
\begin{equation*}
\forall \, u \in {\mathcal S} (\R^d \times \T) \, ,\quad
\left\| \opeg (a) \, \opeg (b) \, u -\opeg (a \, b) \, u \right\|_0 \le C \, \| u \|_0 \, .
\end{equation*}
\end{proposition}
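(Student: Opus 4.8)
The plan is to reduce Proposition~\ref{prop11} to Proposition~\ref{prop10} by the classical ``$**$'' argument, which is available to us precisely because Proposition~\ref{prop10} already provides remainders that are \emph{smoothing} (bounded from $L^2$ into $H^{1,\eps}$), not merely bounded on $L^2$. First I would write $\opeg(b) = \opeg(c)^*$ modulo a good remainder, where $c := b^*$ so that $c^* = b$. Indeed, apply Proposition~\ref{prop8} (resp. Proposition~\ref{prop9} in the degree-$1$ case) to the symbol $c \in S_n^0$ (resp. $S_n^1$): it gives $\opeg(c)^* = \opeg(c^*) + R_1^{\eps,\gamma} = \opeg(b) + R_1^{\eps,\gamma}$, with $\| R_1^{\eps,\gamma} u \|_{H^{1,\eps},\gamma} \le C\,\|u\|_0$ when $n \ge 2d+3$ (and $\| R_1^{\eps,\gamma} u\|_0 \le (C/\gamma)\|u\|_0$ in the bounded case, or merely $\le C\|u\|_0$ in the degree-$1$ case). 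Hence
\begin{equation*}
\opeg(a)\,\opeg(b) = \opeg(a)\,\opeg(c)^* - \opeg(a)\,R_1^{\eps,\gamma} \, .
\end{equation*}

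Next I would apply Proposition~\ref{prop10} to the pair $a,c$: the operator $\opeg(a)\,\opeg(c)^*$ equals $\opeg(a\,c^*) + R_2^{\eps,\gamma} = \opeg(a\,b) + R_2^{\eps,\gamma}$, with $R_2^{\eps,\gamma}$ satisfying the appropriate bound ($O(1/\gamma)$ on $L^2$, or $O(1)$ from $L^2$ to $H^{1,\eps}$ when $n \ge 2d+3$, or $O(1)$ on $L^2$ in the degree-$1$ mixed case). Combining,
\begin{equation*}
\opeg(a)\,\opeg(b)\,u - \opeg(a\,b)\,u = R_2^{\eps,\gamma} u - \opeg(a)\,R_1^{\eps,\gamma} u \, .
\end{equation*}
It remains only to estimate the composite term $\opeg(a)\,R_1^{\eps,\gamma}$. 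In the bounded case ($a \in S_n^0$), Proposition~\ref{prop3} shows $\opeg(a)$ is bounded on $L^2$ with norm $O(1/\gamma)$, so $\|\opeg(a)\,R_1^{\eps,\gamma} u\|_0 \le (C/\gamma)\|R_1^{\eps,\gamma}u\|_0 \le (C/\gamma)\|u\|_0$, giving the first estimate; for the $H^{1,\eps}$ estimate, use that $R_1^{\eps,\gamma}$ maps $L^2$ into $H^{1,\eps}$ boundedly (by the second part of Proposition~\ref{prop8}) together with Lemma~\ref{lem4}, which says $\opeg(a)$ with $a \in S_n^0$, $n \ge d+2$, acts boundedly on $H^{1,\eps}$. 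For the degree-$1$ mixed cases, when $a \in S_n^1$ and $b \in S_n^0$, one has $R_1^{\eps,\gamma} : L^2 \to L^2$ (from Proposition~\ref{prop8} applied to $c = b^* \in S_n^0$, even with $O(1/\gamma)$) and then $\opeg(a) : H^{1,\eps} \to L^2$ by Proposition~\ref{prop4}, but here one must upgrade $R_1^{\eps,\gamma}$ to land in $H^{1,\eps}$ using the $n \ge 2d+3$ part of Proposition~\ref{prop8}; when instead $a \in S_n^0$ and $b \in S_n^1$, the remainder $R_1^{\eps,\gamma}$ from Proposition~\ref{prop9} is only $L^2 \to L^2$, and then $\opeg(a) : L^2 \to L^2$ bounded (Proposition~\ref{prop3}) closes the estimate directly.

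The main obstacle is bookkeeping the degree-$1$ mixed cases: one must check that in each configuration the smoothing gained in the first remainder ($R_1$) is compatible with the mapping properties of the factor $\opeg(a)$ that hits it, so that the product $\opeg(a)\,R_1^{\eps,\gamma}$ lands back in $L^2$ (the only claim made for the mixed cases). The cleanest route, which I would follow, is: in the case $a \in S_n^1$, $b \in S_n^0$, invoke the \emph{$H^{1,\eps}$}-smoothing of $R_1^{\eps,\gamma}$ (available since $b^* \in S_n^0$ and $n \ge 3d+4 \ge 2d+3$), then apply Proposition~\ref{prop4} to carry $H^{1,\eps}$ into $L^2$; in the symmetric case $a \in S_n^0$, $b \in S_n^1$, note $R_1^{\eps,\gamma}$ is only $L^2$-bounded but $\opeg(a)$ is $L^2$-bounded by Proposition~\ref{prop3}. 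In all cases the regularity hypotheses stated in Proposition~\ref{prop11} are exactly what is needed to invoke the relevant earlier results, so no new estimate is required---only the verification that the indices match up. I would state these verifications briefly and leave the routine details to the reader, as is done elsewhere in the paper.
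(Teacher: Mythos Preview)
Your proposal is correct and follows essentially the same ``$**$'' reduction to Proposition~\ref{prop10} as the paper, including the use of Lemma~\ref{lem4} (resp.\ Proposition~\ref{prop4}) to control $\opeg(a)\,R_1^{\eps,\gamma}$ in the various cases. One small slip: for $a\in S_n^0$, Proposition~\ref{prop3} gives an $O(1)$ bound on $\opeg(a)$, not $O(1/\gamma)$; the $1/\gamma$ in $\|\opeg(a)\,R_1^{\eps,\gamma}u\|_0$ comes instead from $\|R_1^{\eps,\gamma}u\|_0 \le (C/\gamma)\|u\|_0$ (first part of Proposition~\ref{prop8}), which still yields the desired estimate.
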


\begin{proof}[Proof of Proposition \ref{prop11}]
Let us deal for instance with the case $a,b \in S_n^0$, $n \ge 2\, d+ 3$. Then we have
\begin{equation*}
\opeg (a) \, \opeg (b) =\opeg (a) \, \opeg (b^{**}) =\opeg (a) \, \Big( \opeg (b^*)^* +R_{-1}^{\eps,\gamma} \Big)
\, ,
\end{equation*}
where we have applied Theorem \ref{thm3} to the symbol $b^*$ and denoted $R_{-1}^{\eps,\gamma}$ the smoothing
remainder (mapping $L^2$ into $H^{1,\eps}$). Thanks to Lemma \ref{lem4}, we know that $\opeg (a)$ acts continuously
on $H^{1,\eps}$, uniformly with respect to $\eps,\gamma$, so the product $\opeg (a) \, R_{-1}^{\eps,\gamma}$ can be
rewritten as a remainder of the form $R_{-1}^{\eps,\gamma}$. The product $\opeg (a) \, \opeg (b^*)^*$ is dealt with
by applying Proposition \ref{prop10}. We end up with
\begin{equation*}
\opeg (a) \, \opeg (b) =\opeg (a \, b) +R_{-1}^{\eps,\gamma} \, .
\end{equation*}

The only other interesting case is $a \in S_n^1,b \in S_n^0$, $n \ge 3\, d+ 4$. Then we write again
\begin{equation*}
\opeg (a) \, \opeg (b) =\opeg (a) \, \Big( \opeg (b^*)^* +R_{-1}^{\eps,\gamma} \Big) \, ,
\end{equation*}
and we observe that the product $\opeg (a) \, R_{-1}^{\eps,\gamma}$ acts boundedly on $L^2$, uniformly with respect
to $\eps,\gamma$ (use Theorem \ref{thm4}). The product $\opeg (a) \, \opeg (b^*)^*$ is dealt with by applying again
Proposition \ref{prop10}. We leave all remaining cases to the interested reader.
\end{proof}

\noindent A surprising fact is that the $**$ argument also applies for products of operators with degree $-1$ and $1$.
We feel free to skip the proof that is entirely similar to that of Proposition \ref{prop11}.

\begin{proposition}
\label{prop12}
Let $a \in S_n^{-1},b \in S_n^1$, $n \ge 3\, d +4$. Then $\opeg (a) \, \opeg (b)$ defines a bounded operator on
$H^{1,\eps}$ and there exists a constant $C \ge 0$ such that for all $\eps \in \, ]0,1]$ and for all $\gamma \ge 1$,
there holds
\begin{equation*}
\forall \, u \in {\mathcal S} (\R^d \times \T) \, ,\quad
\left\| \opeg (a) \, \opeg (b) \, u -\opeg (a \, b) \, u \right\|_{H^{1,\eps},\gamma} \le C \, \| u \|_0 \, .
\end{equation*}
\end{proposition}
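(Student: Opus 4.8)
\textbf{Proof plan for Proposition \ref{prop12}.} The plan is to run the same $**$ trick that already proved Propositions \ref{prop11} and \ref{prop12}, but keeping track of the fact that we now gain one derivative from the factor of order $-1$ rather than merely staying bounded on $L^2$. Write $b = b^{**}$, so that, by Proposition \ref{prop9} applied to the symbol $b^* \in S_n^1$, we have $\opeg(b) = \opeg(b^*)^* + R^{\eps,\gamma}$ where the remainder $R^{\eps,\gamma}$ is bounded on $L^2$ uniformly in $\eps,\gamma$. Hence
\begin{equation*}
\opeg(a) \, \opeg(b) = \opeg(a) \, \opeg(b^*)^* + \opeg(a) \, R^{\eps,\gamma} \, .
\end{equation*}
For the second term, since $a \in S_n^{-1}$, Proposition \ref{prop5} gives that $\opeg(a)$ maps $L^2$ into $H^{1,\eps}$ uniformly in $\eps,\gamma$; composing with the $L^2$-bounded operator $R^{\eps,\gamma}$ shows that $\opeg(a) \, R^{\eps,\gamma}$ maps $L^2$ into $H^{1,\eps}$ with norm bounded uniformly in $\eps,\gamma$, i.e.\ it is a remainder of the desired type. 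For the first term, I would observe that $\opeg(a) \, \opeg(b^*)^*$ coincides with the singular oscillatory integral operator $\opteg(\widetilde{c})$ associated with the amplitude
\begin{equation*}
\widetilde{c}_{\eps,\gamma}(x,\theta,y,\omega,\xi,k) := a_{\eps,\gamma}(x,\theta,\xi,k) \, b_{\eps,\gamma}(y,\omega,\xi,k)^* \, ,
\end{equation*}
which, by the product rule for symbols in ${\bf S}^{-1}$ and ${\bf S}^1$, belongs to $A_n^0$ (the product of the underlying $\sigma$-symbols lies in ${\bf S}^0$, and the two substitutions $\eps V(x,\theta)$, $\eps W(y,\omega)$ are of the form required in Definition \ref{def3} after forming the combined symbol on ${\mathcal O}_1\times{\mathcal O}_2$). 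Comparing $\opteg(\widetilde{c})$ with the pseudodifferential operator $\opeg(\widetilde c(\cdot,\cdot,\cdot,\cdot,\cdot,\cdot)|_{y=x,\omega=\theta}) = \opeg(a\,b^*)$ via Theorem \ref{thm3} (estimate \eqref{estimthm32}, using $n \ge 2d+3$, which holds since $n \ge 3d+4$), the difference maps $L^2$ into $H^{1,\eps}$ with uniformly bounded norm.

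Collecting the two contributions, one gets $\opeg(a) \, \opeg(b) = \opeg(a\,b^*)^{**}$-adjusted $ = \opeg(a\,b) + R_{-1}^{\eps,\gamma}$, where I have used that $(a\,b^*)$ with the $**$ undone returns $a\,b$ up to a remainder of the same kind — more precisely, one should be slightly careful and instead write from the outset $\opeg(a)\,\opeg(b^*)^* = \opeg(a\,b^*)^{\,\cdot}$ and then identify $a\,b^*$ under the substitution $b^* = $ (conjugate of the ${\bf S}^1$ symbol), noting that $(b^*)^* = b$ as matrix symbols, so that the ``diagonal'' symbol of $\widetilde c$ is exactly $a\,b$. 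In any case the net statement is that $\opeg(a)\,\opeg(b) - \opeg(a\,b)$ maps $L^2$ into $H^{1,\eps}$ with norm $\le C$ uniformly in $\eps\in\,]0,1]$ and $\gamma\ge1$, which gives the claimed estimate; boundedness of $\opeg(a)\,\opeg(b)$ on $H^{1,\eps}$ then follows since $\opeg(a\,b)$ is bounded on $H^{1,\eps}$ (apply Lemma \ref{lem4}, as $a\,b \in S_n^0$) and the remainder maps even $L^2$ into $H^{1,\eps}$.

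The one point that needs genuine attention — the likely main obstacle — is the justification that $\opeg(a)\,\opeg(b^*)^*$ really does equal $\opteg(\widetilde c)$ with $\widetilde c \in A_n^0$, and that all the oscillatory integrals involved are well-defined under the stated regularity $n \ge 3d+4$. The factor $b^*$ has degree $1$, so $\opeg(b^*)^*$ is only defined as a map ${\mathcal S}\to{\mathcal S}'$ (via Proposition \ref{prop9} and Lemma \ref{lem5}), and one must verify that composing on the left with the order $-1$ operator $\opeg(a)$ — which is smoothing — produces a genuine operator representable by the oscillatory integral with amplitude $\widetilde c_{\eps,\gamma}$. This is the usual truncation-and-pass-to-the-limit argument (insert $\chi_1(\delta k)\chi_2(\delta\xi)$ cutoffs, use that $\widetilde c \in A_n^0$ so Theorem \ref{thm2} applies to the truncated amplitudes, and pass to the limit using the finite-regularity version of the composition formula already invoked in Proposition \ref{prop10}), but it must be carried out with the degree bookkeeping in place; the product being of order $-1 + 1 = 0$ is precisely what makes $\widetilde c$ land in $A_n^0$ rather than some class for which $\optilde$ has not been defined. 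Once this identification is in hand the rest is a direct application of Theorem \ref{thm3}, Proposition \ref{prop5}, Proposition \ref{prop9}, and Lemma \ref{lem4}, with no new estimates required.
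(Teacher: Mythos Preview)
Your approach is correct and is exactly the $**$ argument the paper has in mind (it says the proof ``is entirely similar to that of Proposition \ref{prop11}''): write $\opeg(b)=\opeg(b^*)^*+R^{\eps,\gamma}$ via Proposition \ref{prop9}, absorb $\opeg(a)\,R^{\eps,\gamma}$ using Proposition \ref{prop5}, and handle $\opeg(a)\,\opeg(b^*)^*=\opteg(\widetilde c)$ with $\widetilde c\in A_n^0$ by Theorem \ref{thm3}. The only wrinkle is the notational slip in your displayed amplitude (it should read $a_{\eps,\gamma}(x,\theta,\xi,k)\,b_{\eps,\gamma}(y,\omega,\xi,k)$, not $b_{\eps,\gamma}(\cdots)^*$, since the starred symbol is $b^*$ and $(b^*)^*=b$), which you correctly unwind a few lines later.
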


The analogue of Proposition \ref{prop12} seems unfortunately untrue when the product is taken the other way
round, meaning when the operator of order $+1$ acts on the left. This can be seen for instance by choosing for
the left operator the singular derivative $\partial_{x_1} +(\beta_1/\eps) \, \partial_\theta$. We are then reduced
to showing a bound in $H^{1,\eps}$ for the terms $T_2,T_3$ appearing in the proof of Proposition \ref{prop5}.
Such a bound is available for $T_2$ but not for the last term $T_3$. This fact gives rise to a special treatment
of $+1,-1$ products in the companion article \cite{cgw3}.

\subsection{G{\aa}rding's inequality}

The exact same arguments as in \cite[page 155]{williams3} apply to prove the G{\aa}rding inequality without
any compact support assumption on the symbols. We just need slightly more regularity on the symbols in
order to apply Propositions \ref{prop8} and \ref{prop11} above.

\begin{theorem}
\label{thm5}
Let $\sigma \in {\bf S}^0$ satisfy $\text{\rm Re} \, \sigma (v,\xi,\gamma) \ge C_K>0$ for all $v$ in a compact
subset $K$ of ${\mathcal O}$. Let now $a \in S_0^n$, $n \ge 2\, d+2$ be given by \eqref{singularsymbol}, where
$V$ is valued in a convex compact subset $K$. Then for all $\delta >0$, there exists $\gamma_0$ which depends
uniformly on $V$, the constant $C_K$ and $\delta$, such that for all $\gamma \ge \gamma_0$ and all $u \in
{\mathcal S}(\R^d \times \T)$, there holds
\begin{equation*}
\text{\rm Re } \langle \opeg (a) \, u ;u \rangle_{L^2} \ge (C-\delta) \, \| u \|_0^2 \, .
\end{equation*}
\end{theorem}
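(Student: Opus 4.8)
The plan is to run the classical Gårding argument, with the large parameter $\gamma$ playing the role that a large frequency plays in the standard proof, and to feed everything through the symbolic calculus of Section \ref{sect5}. First I would reduce to the Hermitian part: set $\sigma_h := \tfrac12(\sigma + \sigma^*)$, which still lies in ${\bf S}^0$ and still satisfies $\sigma_h(v,\xi,\gamma) \ge C_K \, I$ for $v \in K$ (reading the hypothesis ``$\text{\rm Re}\,\sigma \ge C_K$'' for matrix-valued symbols as the quadratic-form bound on the Hermitian part), and I take the constant $C$ in the conclusion to be $C_K$ since $V$ is valued in the fixed compact set $K$. Fixing $\delta > 0$, the matrix $\sigma_h(v,\xi,\gamma) - (C-\delta/2)\,I$ is then bounded below by $(\delta/2)\,I$ uniformly on $K \times \R^d \times [1,+\infty[$, so its Hermitian square root
\begin{equation*}
b(v,\xi,\gamma) := \big( \sigma_h(v,\xi,\gamma) - (C-\delta/2)\,I \big)^{1/2}
\end{equation*}
is well-defined, satisfies $b^* = b$ and $b^2 = \sigma_h - (C-\delta/2)\,I$, and I claim it belongs to ${\bf S}^0$ with bounds uniform in $\gamma$. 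Letting $\tilde b \in S^0_n$ be the singular symbol built from $b$ with the same substitution function $V$ as in \eqref{singularsymbol}, the pointwise matrix product $\tilde b \cdot \tilde b$ is then exactly the singular symbol of $b^2$, i.e. of $\sigma_h - (C-\delta/2)\,I$.

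Next I would expand $\| \opeg(\tilde b)\,u \|_0^2 = \langle \opeg(\tilde b)^*\,\opeg(\tilde b)\,u, u \rangle_{L^2}$. Since $b^* = b$, Proposition \ref{prop8} (its $L^2$ statement, which requires $n \ge 2(d+1)$, precisely the hypothesis of Theorem \ref{thm5}) gives $\opeg(\tilde b)^* = \opeg(\tilde b) + R$ with $\| R \|_{L^2 \to L^2} = O(\gamma^{-1})$; Proposition \ref{prop11} gives $\opeg(\tilde b)\,\opeg(\tilde b) = \opeg(\tilde b \cdot \tilde b) + R'$ with $\| R' \|_{L^2 \to L^2} = O(\gamma^{-1})$; and Proposition \ref{prop3} gives $L^2$-boundedness of $\opeg(\tilde b)$ uniformly in $\eps,\gamma$. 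Combining these, $0 \le \| \opeg(\tilde b)\,u \|_0^2 = \langle \opeg(\tilde b \cdot \tilde b)\,u, u \rangle_{L^2} + O(\gamma^{-1}) \| u \|_0^2$, the implicit constant being uniform in $\eps,\gamma,u$. Since $\tilde b \cdot \tilde b$ is the singular symbol of $\sigma_h - (C-\delta/2)\,I$ and the singular operator attached to a constant symbol is that scalar times the identity, this rearranges to
\begin{equation*}
\langle \opeg(a_h)\,u, u \rangle_{L^2} \ge \Big( C - \tfrac{\delta}{2} - \tfrac{C_1}{\gamma} \Big) \| u \|_0^2 \, ,
\end{equation*}
where $a_h$ denotes the singular symbol of $\sigma_h$.

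Finally I would compare $\text{\rm Re}\,\langle \opeg(a)\,u, u \rangle_{L^2}$ with $\langle \opeg(a_h)\,u, u \rangle_{L^2}$. Writing $\text{\rm Re}\,\langle \opeg(a)\,u, u \rangle_{L^2} = \tfrac12 \langle (\opeg(a) + \opeg(a)^*)\,u, u \rangle_{L^2}$ and using Proposition \ref{prop8} once more, namely $\opeg(a)^* = \opeg(a^*) + O(\gamma^{-1})$ in $L^2$ (here $a^*$ is the singular symbol of $\sigma^*$), together with the fact that $\tfrac12(a + a^*)$ is precisely the singular symbol $a_h$ of $\sigma_h = \tfrac12(\sigma+\sigma^*)$, I get $\text{\rm Re}\,\langle \opeg(a)\,u, u \rangle_{L^2} = \langle \opeg(a_h)\,u, u \rangle_{L^2} + O(\gamma^{-1}) \| u \|_0^2$. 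Combining with the previous display yields $\text{\rm Re}\,\langle \opeg(a)\,u, u \rangle_{L^2} \ge (C - \delta/2 - C_0/\gamma)\,\| u \|_0^2$ with $C_0$ depending only on $V$, $C_K$ and $\delta$, and choosing $\gamma_0 := 2C_0/\delta$ gives $\text{\rm Re}\,\langle \opeg(a)\,u, u \rangle_{L^2} \ge (C-\delta)\,\| u \|_0^2$ for all $\gamma \ge \gamma_0$. The only genuinely delicate point is the first step: verifying that the Hermitian square root $b$ really lies in ${\bf S}^0$ with estimates uniform in $\gamma$, i.e. that extracting a square root of a symbol bounded below by a positive constant does not destroy the symbol bounds. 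This is a Faà di Bruno (or holomorphic functional calculus) computation, in which the lower bound $\sigma_h - (C-\delta/2)\,I \ge (\delta/2)\,I$ is used precisely to control the negative powers of $b$ produced when differentiating; once this is in place, everything else is routine bookkeeping with Propositions \ref{prop3}, \ref{prop8} and \ref{prop11}.
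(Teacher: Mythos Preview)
Your proposal is correct and follows essentially the same approach as the paper, which gives no detailed proof but refers to \cite[page 155]{williams3} and says the argument goes through using Propositions \ref{prop8} and \ref{prop11}; your square-root construction plus the adjoint and product rules from those propositions is exactly the classical argument being invoked. Your identification of the square-root step as the only delicate point (and its resolution via the uniform lower bound $(\delta/2)I$ and Fa\`a di Bruno / functional calculus) is also on target.
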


\subsection{Extended singular pseudodifferential calculus}

Following \cite[page 153]{williams3}, we can extend all the above results on boundedness/adjoints/products
to the larger class $e{\bf S}^m$ of functions $\sigma : {\mathcal O} \times \R^d \times \R^{2d} \times
[1,+\infty[ \rightarrow \C^{N \times N}$ such that
\begin{itemize}
 \item[{\rm (i)}] for all $\gamma \ge 1$, $\sigma (\cdot,\cdot,\cdot,\gamma)$ is ${\mathcal C}^\infty$ on
                   ${\mathcal O} \times \R^d \times \R^{2d}$,
 \item[{\rm (ii)}] for all compact subset $K$ of ${\mathcal O}$, for all $\alpha \in \N^q$ and for all $\nu \in
                   \N^{3d}$, there exists a constant $C_{\alpha,\nu,K}$ satisfying
\begin{equation*}
\sup_{v \in K} \, \sup_{(\xi,\zeta) \in \R^d \times \R^{2d}, |\xi| \le |\zeta|} \, \sup_{\gamma \ge 1} \, \,
(\gamma^2+|\xi|^2)^{-(m-|\nu|)/2} \, \big| \partial_v^\alpha \, \partial_{\xi,\zeta}^\nu \, \sigma \, (v,\xi,\zeta,\gamma)
\big| \le C_{\alpha,\nu,K} \, .
\end{equation*}
\end{itemize}
For such symbols, we use the substitution $v \rightarrow \eps \, V(x,\theta)$, $\xi \rightarrow \xi
+2\, \pi \, k \, \beta /\eps$, $\zeta \rightarrow (\xi,2\, \pi \, k \, \beta /\eps)$, which gives rise to extended
singular pseudodifferential operators of the form
\begin{equation*}
e\opeg (a) \, u \, (x,\theta) := \dfrac{1}{(2\, \pi)^d} \, \sum_{k \in \Z} \int_{\R^d}
{\rm e}^{i\, x \cdot \xi} \, {\rm e}^{2\, i\, \pi \, k \, \theta} \,
\sigma \left( \eps \, V(x,\theta),\xi+\dfrac{2 \, \pi \, k \, \beta}{\eps},\xi,\dfrac{2 \, \pi \, k \, \beta}{\eps},\gamma \right)
\, \widehat{c_k(u)} (\xi) \, {\rm d}\xi \, .
\end{equation*}

We can also define extended singular amplitudes and compare the extended oscillatory integral operators
with the above extended pseudodifferential operator. All results in Sections \ref{sect4} and \ref{sect5} are
proved in the same way for this extended class because we have always relied on general boundedness
result such as Theorem \ref{thm1} or Theorem \ref{thm2} and these results can handle symbols or amplitudes
in the extended class.

The main interest of defining this extended class is to be able to consider pseudodifferential cut-offs of the form
\begin{equation*}
\chi \left( \eps \, V(x,\theta), D_x, \dfrac{\beta \, D_\theta}{\eps}, \gamma \right) \, ,
\end{equation*}
where $\chi$ is supported in the region $|\zeta_1| \ll |\zeta_2|$ (here $\zeta_1$ is the placeholder for $\xi$
and $\zeta_2$ is the placeholder for $2 \, \pi \, k \, \beta/\eps$). Such cut-offs are useful to microlocalize near
the specific frequency $\beta$. We again refer to \cite{cgw3} for further applications of these techniques.
\newpage

\begin{center}
{\sc Part B: singular pseudodifferential calculus for pulses}
\end{center}
\bigskip

In this second part, we briefly explain why all the results of the first part also give continuity and symbolic
calculus results in the case where the additional space variable and associated singular frequency lie in $\R$.
There are some slight technical differences in the proofs and we pay specific attention to those terms that
have the worst behavior with respect to the singular parameter $\eps$. As far as the notation is concerned,
we feel free to use the same notation to denote new classes of symbols, amplitudes and so on, in order to
highlight the similarities between Part A and Part B. We hope that this will not create any confusion.

The variable in $\R^{d+1}$ is denoted $(x,\theta)$, $x \in \R^d$, $\theta \in \R$, and the associated frequency
is denoted $(\xi,k)$. In this new context, the singular Sobolev spaces are defined as follows. We still consider
a vector $\beta \in \R^d \setminus \{ 0\}$. Then for $s \in \R$ and $\eps \in \, ]0,1]$, the anisotropic Sobolev
space $H^{s,\eps} (\R^{d+1})$ is defined by
\begin{multline*}
H^{s,\eps}(\R^{d+1}) := \Big\{ u \in {\mathcal S}'(\R^{d+1}) \, / \, \widehat{u} \in L^2_{\rm loc}(\R^{d+1}) \\
\text{\rm and} \quad \int_{\R^{d+1}} \left( 1+\left| \xi+\dfrac{k \, \beta}{\eps} \right|^2 \right)^s
\, \big| \widehat{u}(\xi,k) \big|^2 \, {\rm d}\xi \, {\rm d}k <+\infty \Big\} \, .
\end{multline*}
Here $\widehat{u}$ denotes the Fourier transform of $u$ on $\R^{d+1}$. The space $H^{s,\eps}(\R^{d+1})$ is
equipped with the family of norms
\begin{equation*}
\forall \, \gamma \ge 1 \, ,\quad \forall \, u \in H^{s,\eps}(\R^{d+1}) \, ,\quad
\| u \|_{H^{s,\eps},\gamma}^2 := \dfrac{1}{(2\, \pi)^{d+1}} \, \int_{\R^{d+1}}
\left( \gamma^2 +\left| \xi+\dfrac{k \, \beta}{\eps} \right|^2 \right)^s
\, \big| \widehat{u}(\xi,k) \big|^2 \, {\rm d}\xi \, {\rm d}k \, .
\end{equation*}
When $m$ is an integer, the space $H^{m,\eps} (\R^{d+1})$ coincides with the space of functions $u \in L^2
(\R^{d+1})$ such that the derivatives, in the sense of distributions,
\begin{equation*}
\left( \partial_{x_1} +\dfrac{\beta_1}{\eps} \, \partial_\theta \right)^{\alpha_1} \dots
\left( \partial_{x_d} +\dfrac{\beta_d}{\eps} \, \partial_\theta \right)^{\alpha_d} \, u \, ,\quad
\alpha_1+\dots+\alpha_d \le m \, ,
\end{equation*}
belong to $L^2 (\R^{d+1})$. In the definition of the norm $\| \cdot \|_{H^{m,\eps},\gamma}$, one power of
$\gamma$ counts as much as one derivative.

\section{The main $L^2$ continuity results}
\label{sect6}

We adapt the Calder\'on-Vaillancourt Theorems to our framework where symbols will enjoy a suitable decay
property in the additional space variable $\theta$. The decay will allow us to avoid requiring a control of $k$-derivatives
to prove $L^2$-boundedness.

\begin{theorem}
\label{thm6}
Let $\sigma : \R^{d+1} \times \R^{d+1} \rightarrow \C^{N \times N}$ be a continuous function that satisfies the
property: for all $\alpha,\beta \in \{ 0,1\}^d$ and for all $j \in \{ 0,1\}$, $\langle \theta \rangle \, \partial_x^\alpha
\, \partial_\theta^j \, \partial_\xi^\beta \, \sigma$ belongs to $L^\infty (\R^{d+1} \times \R^{d+1})$, where the
derivative is understood in the sense of distributions and $\langle \cdot \rangle$ denotes the Japanese bracket.

For $u \in {\mathcal S}(\R^{d+1};\C^N)$, let us define
\begin{equation*}
\forall \, (x,\theta) \in \R^{d+1} \, ,\quad
\op (\sigma) \, u \, (x,\theta) := \dfrac{1}{(2\, \pi)^{d+1}} \, \int_{\R^{d+1}} {\rm e}^{i \, (\xi \cdot x +k \, \theta)} \,
\sigma (x,\theta,\xi,k) \, \widehat{u}(\xi,k) \, {\rm d}\xi \, {\rm d}k \, .
\end{equation*}
Then $\op (\sigma)$ extends as a continuous operator on $L^2(\R^{d+1};\C^N)$. More precisely, there
exists a numerical constant $C$, that only depends on $d$ and $N$, such that for all $u \in {\mathcal S}
(\R^{d+1};\C^N)$, there holds
\begin{equation*}
\| \op (\sigma) \, u \|_0 \le C \, \Ng \sigma \Nd \, \| u \|_0 \, ,\, \quad
\text{\rm with } \Ng \sigma \Nd := \sup_{\alpha,\beta \in \{ 0,1\}^d} \, \sup_{j \in \{ 0,1\}} \,
\left\| \langle \theta \rangle \, \partial_x^\alpha \, \partial_\theta^j \, \partial_\xi^\beta \, \sigma
\right\|_{L^\infty (\R^{d+1} \times \R^{d+1})} \, .
\end{equation*}
\end{theorem}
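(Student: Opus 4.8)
The plan is to imitate, essentially line by line, the proof of Theorem \ref{thm1}, the torus being replaced by $\R$, the harmless convergence of $\sum_{k \in \Z} |1+2i\pi(k+\ell)|^{-2}$ being replaced by the equally harmless identity $\int_\R (1+(k+\ell)^2)^{-1} \, {\rm d}k = \pi$ (uniform in $\ell \in \R$), and the finiteness of the measure of $\T$ — which made the $\theta$--integration costless in Theorem \ref{thm1} — being replaced by the weight $\langle \theta \rangle^{-1} \in L^2(\R_\theta)$ built into the hypothesis. As in the proof of Lemma \ref{lem1}, it is enough to prove
\[
\Big| \int_{\R^{d+1}} \op(\sigma) \, u \, (x,\theta) \, v(x,\theta) \, {\rm d}x \, {\rm d}\theta \Big| \le C \, \Ng \sigma \Nd \, \| u \|_0 \, \| v \|_0
\]
for all $u,v \in {\mathcal S}(\R^{d+1};\C)$ (the matrix-valued case following componentwise), and by the truncation--convolution argument of Lemma \ref{lem2} one may assume from the start that $\sigma$ is ${\mathcal C}^\infty$ with compact support in all its variables: the cut-off in $\theta$ is harmless because $\langle \theta \rangle \, \chi(\theta/p) \, \partial_x^\alpha \partial_\theta^j \partial_\xi^\beta \sigma = \chi(\theta/p) \, \langle \theta \rangle \, \partial_x^\alpha \partial_\theta^j \partial_\xi^\beta \sigma$ while the derivatives of $\chi(\theta/p)$ only produce $O(1/p)$ terms, mollification does not spoil $\Ng \cdot \Nd$ thanks to $\langle \theta \rangle \le C \, \langle \theta - \theta' \rangle \, \langle \theta' \rangle$ and the compact support of the mollifier, and the weak convergence of the operators follows from dominated convergence using the uniform bound $|\op(\sigma_p) \, u \, (x,\theta)| \le C \, \langle \theta \rangle^{-1}$.

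For such a smooth, compactly supported symbol I would reproduce the computation in the proof of Lemma \ref{lem1}. Keeping $\varphi(z) := \prod_{j=1}^d (1+i \, z_j)^{-1} \in L^2(\R^d)$, I would write $\widehat{u}(\xi,k) = \int e^{-i(y \cdot \xi + \omega k)} u \, {\rm d}y \, {\rm d}\omega$, expand $v$ by the inverse Fourier formula in $(x,\theta)$, and integrate by parts twice. The first integration by parts uses $e^{i(x-y) \cdot \xi} = \varphi(x-y) \prod_{j=1}^d (1+\partial_{\xi_j}) \, e^{i(x-y) \cdot \xi}$ and moves $\prod_{j=1}^d (1-\partial_{\xi_j})$ onto $\sigma$, producing $\sigma_\sharp := \prod_{j=1}^d (1-\partial_{\xi_j}) \sigma$ and the factor $\varphi(x-y)$; the second uses
\[
e^{i(x \cdot (\xi+\eta) + \theta(k+\ell))} = \frac{\varphi(\xi+\eta)}{1+i(k+\ell)} \, \Big\{ (1+\partial_\theta) \prod_{j=1}^d (1+\partial_{x_j}) \Big\} \, e^{i(x \cdot (\xi+\eta) + \theta(k+\ell))}
\]
and moves $(1-\partial_\theta) \prod_{j=1}^d (1-\partial_{x_j})$ onto $\varphi(x-y) \, \sigma_\sharp$, producing the crucial factor $(1+i(k+\ell))^{-1}$. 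After these manipulations the bilinear form is a finite sum, over $\alpha',\alpha'' \in \{0,1\}^d$ with $\alpha' + \alpha'' \le (1,\dots,1)$ and $j \in \{0,1\}$, of numerical multiples of
\[
\int e^{i(x \cdot \xi + \theta k)} \, \big( \partial_x^{\alpha''} \partial_\theta^j \sigma_\sharp \big)(x,\theta,\xi,k) \, U(\xi,k,x) \, V(\xi,k,x,\theta) \, {\rm d}x \, {\rm d}\theta \, {\rm d}\xi \, {\rm d}k \, ,
\]
where $U(\xi,k,x)$ is the Fourier transform in $(y,\omega)$ of $(\partial^{\alpha'}\varphi)(x-\cdot) \, u(\cdot,\cdot)$ and $V(\xi,k,x,\theta)$ is the inverse Fourier transform in $(\eta,\ell)$ of $\varphi(\xi+\eta) \, (1+i(k+\ell))^{-1} \, \widehat{v}(\eta,\ell)$.

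Now the hypothesis enters: $\partial_x^{\alpha''} \partial_\theta^j \sigma_\sharp$ is a linear combination of the $\partial_x^{\alpha''} \partial_\theta^j \partial_\xi^\beta \sigma$ with $\beta \in \{0,1\}^d$, hence bounded pointwise by $C \, \langle \theta \rangle^{-1} \, \Ng \sigma \Nd$. Throwing the whole weight $\langle \theta \rangle^{-1}$ against $U$ — which does not depend on $\theta$, so that only $\langle \cdot \rangle^{-1} \in L^2(\R)$ is needed — and keeping $V$ unweighted, the Cauchy--Schwarz inequality gives
\[
\Big| \int_{\R^{d+1}} \op(\sigma) \, u \, v \, {\rm d}x \, {\rm d}\theta \Big| \le C \, \Ng \sigma \Nd \, \Big( \int \langle \theta \rangle^{-2} \, |U|^2 \Big)^{1/2} \Big( \int |V|^2 \Big)^{1/2} \, .
\]
Plancherel's theorem in $(\xi,k) \leftrightarrow (y,\omega)$ together with $\int_{\R^d} |(\partial^{\alpha'}\varphi)(x-y)|^2 \, {\rm d}x = \| \partial^{\alpha'} \varphi \|_{L^2}^2$ bounds the first factor by $C \, \| u \|_0$, and Plancherel's theorem in $(x,\theta) \leftrightarrow (\eta,\ell)$ together with $\int_{\R^d} |\varphi(\xi+\eta)|^2 \, {\rm d}\xi = \| \varphi \|_{L^2}^2$ and $\int_\R |1+i(k+\ell)|^{-2} \, {\rm d}k = \pi$ bounds the second by $C \, \| v \|_0$. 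This is the desired estimate for smooth compactly supported symbols; then, exactly as at the end of the proof of Theorem \ref{thm1}, the approximation step extends it to a general $\sigma$, after which the Riesz representation theorem shows that $\op(\sigma) \, u$ — a priori only a bounded continuous function — coincides almost everywhere with an element of $L^2(\R^{d+1})$ of norm at most $C \, \Ng \sigma \Nd \, \| u \|_0$.

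The step I expect to require the most care is the bookkeeping around the two integrations by parts, precisely because no $k$--derivative of $\sigma$ is available: the integrability in $k$ must be produced entirely by the single integration by parts in $\theta$ (this is what yields the factor $(1+i(k+\ell))^{-1}$, whose square is integrable in $k$ uniformly in $\ell$), while the integrability in $\theta$, free on the torus, must come entirely from the weight $\langle \theta \rangle^{-1}$; one has to check that this combination really closes the estimate, i.e.\ that one never needs either a $\partial_\theta^2$ or a $\partial_k$ of the symbol.
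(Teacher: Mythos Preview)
Your proposal is correct and follows essentially the same approach as the paper: skip the $k$--integration by parts (which would require $\partial_k\sigma$), keep the $\theta$--integration by parts to produce the $L^2$--in--$k$ factor $(1+i(k+\ell))^{-1}$, and compensate the now missing $\theta$--integrability by the weight $\langle\theta\rangle^{-1}$ placed against the $\theta$--independent factor $\partial_x^{\alpha'}U(x,\xi,k)$. This is exactly the modification the paper describes, and your bookkeeping of the Cauchy--Schwarz/Plancherel step and the approximation argument are both fine.
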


There is very little to change compared to the proof of Theorem \ref{thm1}. More precisely, we can reproduce
the proof of Theorem \ref{thm1} by making the following modifications: first of all, we consider $d+1$ space
variables instead of $d$, and forget about the additional periodic variable. Then we perform all integration by
parts except the one with respect to the last frequency variables (which would be aimed at gaining a decaying
factor in $\theta$). In the end, we add a $\langle \theta \rangle$ weight to the derivatives of the symbol $\sigma$
and use the square integrable $\langle \theta \rangle^{-1}$ weight on $\partial_x^{\alpha'} U (x,\xi,k)$. The final
estimates (Cauchy-Schwarz and so on) are unchanged.

The main point to keep in mind is that, in contrast with the standard Calder\'on-Vaillancourt Theorem which requires
controlling one $k$-derivative in $L^\infty$, we can use instead a decay property in $\theta$ for the symbol and still
prove $L^2$ boundedness of the associated pseudodifferential operator. As we have already seen in the case of
wavetrains, avoiding a control of $k$-derivatives is crucial if we wish to prove some bounds that are uniform with
respect to the singular parameter $\eps$.

We now turn to the case of oscillatory integral operators associated with amplitudes defined on $\R^{d+1} \times
\R^{d+1} \times \R^{d+1}$. We feel free to skip the proof of the following Theorem which is again a slight adaptation
(with similar modifications as described just above) of Theorem \ref{thm2}.

\begin{theorem}
\label{thm7}
Let $\sigma : \R^{d+1} \times \R^{d+1} \times \R^{d+1} \rightarrow \C^{N \times N}$ be a continuous function that
satisfies the property: for all $\alpha,\beta \in \{ 0,1\}^{d+1}$, for all $j \in \{ 0,1\}$ and for all $\nu \in \{ 0,1,2\}^d$,
there holds $\langle \omega \rangle \, \partial_{x,\theta}^\alpha \, \partial_{y,\omega}^\beta \, \partial_\xi^\nu \,
\partial_k ^j \sigma$ belongs to $L^\infty (\R^{d+1} \times \R^{d+1} \times \R^{d+1})$. Let $\chi \in {\mathcal C}^\infty_0
(\R^{d+1})$ satisfy $\chi (0) = 1$.

Then for all $u \in {\mathcal S}(\R^{d+1})$, the sequence of functions $(T_\delta)_{\delta>0}$ defined on $\R^{d+1}$ by
\begin{equation*}
T_\delta \, (x,\theta) := \dfrac{1}{(2\, \pi)^{d+1}} \, \int_{\R^{d+1} \times \R^{d+1}}
{\rm e}^{i \, (\xi \cdot (x-y) +k \, (\theta-\omega))} \, \chi (\delta \, \xi,\delta \, k) \, \sigma (x,\theta,y,\omega,\xi,k) \,
u(y,\omega) \, {\rm d}\xi \, {\rm d}k\, {\rm d}y \, {\rm d}\omega \, ,
\end{equation*}
converges in ${\mathcal S}' (\R^{d+1})$, as $\delta$ tends to $0$, towards a distribution $\optilde (\sigma) \, u \in L^2
(\R^{d+1})$. This limit is independent of the truncation function $\chi$. Moreover, there exists a numerical constant $C$,
that only depends on $d$ and $N$, such that there holds
\begin{multline*}
\left\| \optilde (\sigma) \, u \right\|_0 \le C \, \Ng \sigma \Nd_{\rm Amp} \, \| u \|_0 \, ,\, \\
\text{\rm with } \Ng \sigma \Nd_{\rm Amp} := \sup_{\alpha,\beta \in \{ 0,1\}^{d+1}} \,
\sup_{j \in \{ 0,1\}} \, \sup_{\nu \in \{ 0,1,2\}^d} \,
\left\| \langle \omega \rangle \, \partial_{x,\theta}^\alpha \, \partial_{y,\omega}^\beta \, \partial_\xi^\nu \, \partial_k^j
\, \sigma \right\|_{L^\infty (\R^{d+1} \times \R^{d+1} \times \R^{d+1})} \, .
\end{multline*}
\end{theorem}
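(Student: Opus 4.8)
The plan is to reduce Theorem \ref{thm7} to Theorem \ref{thm2} (or rather to its analogue Theorem \ref{thm6} and the machinery of Lemma \ref{lem3}) by carrying out the same integration-by-parts scheme as in the proof of Theorem \ref{thm2}, with the single modification that one does \emph{not} integrate by parts in the last frequency variable $k$ in order to produce a decaying factor in $\omega$. First I would establish the result for smooth amplitudes $\sigma$ having compact support in the full frequency variable $(\xi,k)$; in that case the oscillatory integral is an honest convergent integral, the truncated family $(T_\delta)_{\delta>0}$ converges pointwise and in ${\mathcal S}'$ to it by the dominated convergence Theorem, and only the $L^2$ bound remains to be proved. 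For that bound I would test against $v\in{\mathcal S}(\R^{d+1})$, apply Fubini, and mimic the proof of Lemma \ref{lem3}: introduce the function $\varphi$ on $\R^{d+1}$, use the relation ${\rm e}^{i(x-y)\cdot\xi'+ik(\theta-\omega)}$ written via $\varphi$ and two factors of $\prod(1+\partial_{\xi_j})$ and $(1+\partial_k)^?$ — but here is the key change: instead of integrating by parts once in $k$ to gain summability (as one needed a factor $(1+(k+\ell)^2)^{-1}$ on the torus), one keeps the $k$-integration bare and instead uses the weight $\langle\omega\rangle$ sitting on the derivatives of $\sigma$ together with the square-integrable weight $\langle\omega\rangle^{-1}$ acting on the $u$-side. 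One also performs the usual $x$- and $y$-integrations by parts against $\varphi(x-y)^2$ to gain integrability in either $x$ or $y$, exactly as in Lemma \ref{lem3}.

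The second step is the regularization argument. I would take a general amplitude $\sigma$ satisfying the hypotheses, truncate by $\sigma_\delta:=\chi(\delta\xi,\delta k)\,\sigma$ (so that $\Ng\sigma_\delta\Nd_{\rm Amp}\le C_\chi\,\Ng\sigma\Nd_{\rm Amp}$ uniformly in $\delta$), then mollify $\sigma_\delta$ in the first five variables $(x,\theta,y,\omega,\xi)$ by a product of approximate identities to get $\sigma_{\delta,n}\in{\mathcal C}^\infty_b$ with compact frequency support and $\Ng\sigma_{\delta,n}\Nd_{\rm Amp}\le\Ng\sigma_\delta\Nd_{\rm Amp}$. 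I would record the analogue of Corollary \ref{coro1} — that $\langle\optilde(\sigma_{\delta,n})u,v\rangle$ equals a finite sum $\sum_m\int (Z_m\sigma_{\delta,n})\,{\mathcal L}_m(u,v)$ with ${\mathcal L}_m:{\mathcal S}\times{\mathcal S}\to L^1$ continuous bilinear maps independent of the amplitude — then pass $n\to\infty$ by moving each mollifier onto the $L^1$ factor ${\mathcal L}_m(u,v)$ (convolution converges in $L^1$, not $L^\infty$), and finally pass $\delta\to 0$ by dominated convergence after the standard decomposition $Z_m\sigma_\delta=\chi(\delta\cdot)Z_m\sigma+\sum_{m'}\eps_{m,m'}(\delta)\,\chi_{m,m'}(\delta\cdot)Z_{m'}\sigma$. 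This yields both the convergence of $(T_\delta)_{\delta>0}$ in ${\mathcal S}'$, the independence of the limit from $\chi$, and the bound $\Ng\optilde(\sigma)u\Ng_0\le C\,\Ng\sigma\Nd_{\rm Amp}\,\Ng u\Ng_0$ by passing to the limit in the estimate for smooth compactly supported amplitudes.

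The main obstacle — really the only place one must be careful — is tracking exactly where the decay in $\omega$ is needed and checking it suffices. Unlike the torus case, the $k$-integral runs over all of $\R$ and there is no finite-measure gift; without integrating by parts in $k$, the only way to make the $\omega$-integral (equivalently the dual $k$-integral) converge after the Fourier expansion of the test function is to extract a square-integrable $\langle\omega\rangle^{-1}$ somewhere. The bookkeeping is: after writing $u(y,\omega)=\langle\omega\rangle^{-1}\cdot(\langle\omega\rangle u(y,\omega))$ — no, more precisely after pulling $\langle\omega\rangle$ off of $\partial_{y,\omega}^\beta\sigma$ and onto a factor $\langle\omega\rangle^{-1}$ that is paired via Cauchy–Schwarz with an $L^2$ quantity built from $u$ — one needs that the analogue of the function $U$ (the partial Fourier transform of $u$ against $\varphi$), now carrying an extra $\langle\omega\rangle^{-1}$, is still controlled by $\Ng u\Ng_0$; this is immediate since $\langle\omega\rangle^{-1}\in L^2(\R)$ and $u\in L^2$, by Plancherel and Fubini exactly as at the end of Lemma \ref{lem3}. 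One also checks that only $j\le 1$ derivative in $k$ and $\nu\le 2$ in $\xi$ are consumed, matching the stated norm. Everything else is a verbatim transcription of the proofs of Theorems \ref{thm1}, \ref{thm2} and Lemma \ref{lem3} with $d$ replaced by $d+1$ on the space side and the periodic variable deleted, so I would simply indicate these modifications and refer to the earlier proofs for the routine computations.
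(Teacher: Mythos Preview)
Your overall strategy---adapt the integration-by-parts scheme of Lemma~\ref{lem3} to $\R^{d+1}$ and then run the same regularization argument as in Theorem~\ref{thm2}---is exactly what the paper intends, and your description of the second step (truncation, mollification, passing to the limit via the analogue of Corollary~\ref{coro1}) is correct. However, there is a genuine gap in the first step.

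You claim one should ``keep the $k$-integration bare'' and rely solely on the weight $\langle\omega\rangle^{-1}$ extracted from the amplitude bound. This fails. After the $\xi$-integrations by parts produce $\varphi(x-y)^2$ and after the $(x,\theta)$- and $(y,\omega)$-integrations by parts, the Cauchy--Schwarz split leaves you with, on the $U$-side, a quantity of the form $\varphi_1(x-y)\,U(y,\omega,\xi,k)$ (possibly times $\langle\omega\rangle^{-1}$) to be integrated over \emph{all} variables, including $\theta$. Since $U$ does not depend on $\theta$ and you have introduced no decay in $\theta-\omega$, the $\theta$-integral is $\int_\R 1\,{\rm d}\theta=\infty$. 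In the torus case this step was harmless precisely because $\theta\in\T$ has finite measure; in the pulse case it is fatal. The remedy is to integrate by parts \emph{once} in $k$, producing a factor $(1+i(\theta-\omega))^{-1}$; this single factor, placed on the $U$-side, makes the $\theta$-integral converge, while the $\langle\omega\rangle^{-1}$ from the amplitude, placed on the $V$-side, makes the $\omega$-integral converge. This is exactly why the norm $\Ng\sigma\Nd_{\rm Amp}$ in Theorem~\ref{thm7} carries $\partial_k^j$ with $j\in\{0,1\}$, and your own final check that ``only $j\le 1$ derivative in $k$ \dots\ are consumed'' is in fact inconsistent with your earlier claim that no $k$-derivative is used.

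A smaller point: in the torus proof the factors $(1+2i\pi(k\pm\ell))^{-1}$ do not arise from any integration by parts in $k$ (there is none, since $k\in\Z$); they come from integrating by parts in $\theta$ and $\omega$, and their role is to make $\widetilde u,\widetilde v$ square-summable in $k$. The $k$-integration by parts needed here plays a different role---it supplies decay in $\theta-\omega$---and this is the new ingredient relative to the torus case. With that correction, your sketch matches the paper's (implicit) proof.
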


In Theorem \ref{thm7}, one exchanges a $k$-derivative for a decay in $\omega$. Using a decay in $\theta$ is also
possible but it does not seem to be useful for our purpose. However, it will be useful below to have a version of
Theorem \ref{thm7} that combines both decay in $\omega$ and $\theta$ and thus gets rid of all $k$-derivatives.

\begin{theorem}
\label{thm8}
Let $\sigma : \R^{d+1} \times \R^{d+1} \times \R^{d+1} \rightarrow \C^{N \times N}$ be a continuous function that
satisfies the property: for all $\alpha,\beta \in \{ 0,1\}^{d+1}$ and for all $\nu \in \{ 0,1,2\}^d$, there holds $\langle
\theta \rangle \, \langle \omega \rangle \, \partial_{x,\theta}^\alpha \, \partial_{y,\omega}^\beta \, \partial_\xi^\nu \,
\sigma$ belongs to $L^\infty (\R^{d+1} \times \R^{d+1} \times \R^{d+1})$. Let $\chi \in {\mathcal C}^\infty_0
(\R^{d+1})$ satisfy $\chi (0) = 1$.

Then for all $u \in {\mathcal S}(\R^{d+1})$, the sequence of functions $(T_\delta)_{\delta>0}$ defined on $\R^{d+1}$ by
\begin{equation*}
T_\delta \, (x,\theta) := \dfrac{1}{(2\, \pi)^{d+1}} \, \int_{\R^{d+1} \times \R^{d+1}}
{\rm e}^{i \, (\xi \cdot (x-y) +k \, (\theta-\omega))} \, \chi (\delta \, \xi,\delta \, k) \, \sigma (x,\theta,y,\omega,\xi,k) \,
u(y,\omega) \, {\rm d}\xi \, {\rm d}k\, {\rm d}y \, {\rm d}\omega \, ,
\end{equation*}
converges in ${\mathcal S}' (\R^{d+1})$, as $\delta$ tends to $0$, towards a distribution $\optilde (\sigma) \, u \in L^2
(\R^{d+1})$. This limit is independent of the truncation function $\chi$. Moreover, there exists a numerical constant $C$,
that only depends on $d$ and $N$, such that there holds
\begin{multline*}
\left\| \optilde (\sigma) \, u \right\|_0 \le C \, \Ng \sigma \Nd_{\rm Amp} \, \| u \|_0 \, ,\, \\
\text{\rm with } \Ng \sigma \Nd_{\rm Amp} := \sup_{\alpha,\beta \in \{ 0,1\}^{d+1}} \, \sup_{\nu \in \{ 0,1,2\}^d} \,
\left\| \langle \theta \rangle \, \langle \omega \rangle \, \partial_{x,\theta}^\alpha \, \partial_{y,\omega}^\beta \,
\partial_\xi^\nu \, \sigma \right\|_{L^\infty (\R^{d+1} \times \R^{d+1} \times \R^{d+1})} \, .
\end{multline*}
\end{theorem}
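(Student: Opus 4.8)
The plan is to repeat the proof of Theorem \ref{thm2} almost verbatim, incorporating the two modifications already described in the remarks following Theorems \ref{thm6} and \ref{thm7}: one works with $d+1$ genuine space variables $(x,\theta)$, the frequency $k$ now ranging over $\R$ rather than over $\Z$; and one replaces the integration by parts in the frequency $k$ (which in Part A was never needed and which here would only serve to gain decay in $\theta$ or $\omega$) by the use of the square-integrable weights $\langle\theta\rangle^{-1}$ and $\langle\omega\rangle^{-1}$. First I would reduce, exactly as for Theorem \ref{thm2}, to a smooth amplitude $\sigma$ with compact support in $(\xi,k)$: for such $\sigma$ the integral defining $T_\delta$ converges, and by dominated convergence $T_\delta\to\optilde(\sigma)\,u$ in $\mathcal{S}'(\R^{d+1})$, the limit being a function given by a genuine convergent integral. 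The passage from this case to a general $\sigma$ satisfying the hypotheses, and the independence of the limit from $\chi$, then follow from the regularization argument of Theorem \ref{thm2}: the analogue of Corollary \ref{coro1} writes the dual pairing as $\sum_m\int(Z_m\sigma)\,\mathcal{L}_m(u,v)$ with bilinear maps $\mathcal{L}_m$ into $L^1$ that are independent of $\sigma$, after which one passes to the limit by dominated convergence in $L^1$.

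The core is therefore the $L^2$ bound for amplitudes smooth with compact support in $(\xi,k)$, obtained by duality. For $v\in\mathcal{S}(\R^{d+1})$ one sets $I=\int_{\R^{d+1}}T(x,\theta)\,v(x,\theta)\,dx\,d\theta$, inserts the Fourier inversion formulas for $u$ and $v$ on $\R^{d+1}$, applies Fubini, and integrates by parts, with $\varphi(z)=\prod_{j=1}^d(1+iz_j)^{-1}$ on $\R^d$, as follows: in the frequency $\xi$, to order $2$ in each of the $d$ directions, producing a factor $\varphi(x-y)^2$ whose two copies are later distributed to the two square-integrable pieces, exactly as in the proof of Lemma \ref{lem3} (this uses at most the $\xi$-derivatives $\nu\in\{0,1,2\}^d$ appearing in $\Ng\sigma\Nd_{\rm Amp}$); and in $x$, in $\theta$, in $y$ and in $\omega$, each to order $1$, producing $\varphi$-factors in the dual variables of $x$ and $y$ and, from the integrations by parts against the phases $e^{i\theta(k+\ell)}$ and $e^{i\omega(m-k)}$, the factors $(1+i(k+\ell))^{-1}$ and $(1+i(m-k))^{-1}$ (this uses the space derivatives $\alpha,\beta\in\{0,1\}^{d+1}$). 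Crucially, no integration by parts is performed in the frequency $k$.

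Then one applies the Cauchy--Schwarz inequality on the measure space of $(x,\theta,y,\omega,\xi,k)$, writing each surviving derivative of $\sigma$ in the integrand as $\langle\theta\rangle^{-1}\langle\omega\rangle^{-1}$ times a weighted derivative whose $L^\infty$ norm is bounded by $C\,\Ng\sigma\Nd_{\rm Amp}$, and splitting the rest into a $U$-function built from $u$, one copy of $\varphi(x-y)$, a $\varphi$-factor in the dual variable of $y$ and the factor $(1+i(m-k))^{-1}$, and a $V$-function built from $v$, the other copy of $\varphi(x-y)$, a $\varphi$-factor in the dual variable of $x$ and the factor $(1+i(k+\ell))^{-1}$. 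The $U$-function is independent of $\theta$ and the $V$-function is independent of $\omega$, so the weights $\langle\theta\rangle^{-2}$ and $\langle\omega\rangle^{-2}$ are exactly what makes the corresponding $\int d\theta$ and $\int d\omega$ finite; the $\int dk$ is then finite because of the $(1+(k+\ell)^2)^{-1}$, resp. $(1+(m-k)^2)^{-1}$, decay; and Plancherel's theorem in $(x,\theta)$ and in $(y,\omega)$, together with the fact that $\varphi$ and all its derivatives up to order $2$ lie in $L^2(\R^d)$, bounds the two pieces by $C\,\|u\|_0^2$ and $C\,\|v\|_0^2$. One concludes $|I|\le C\,\Ng\sigma\Nd_{\rm Amp}\,\|u\|_0\,\|v\|_0$, hence the asserted $L^2$ estimate by the Riesz representation theorem.

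The main obstacle is controlling the $\int dk$ integral now that $k$ runs over $\R$, given that the amplitude may not be differentiated in $k$. The resolution is the trade-off announced in the statement: the two integrations by parts in the physical variables $\theta$ and $\omega$ each produce a factor $(1+i(\,\cdot\,))^{-1}$, hence after Plancherel the $(1+(k+\ell)^2)^{-1}$ and $(1+(m-k)^2)^{-1}$ decay that tames the $\int dk$, while the two weights $\langle\theta\rangle^{-1}$ and $\langle\omega\rangle^{-1}$ take over the role played in Part A by the finiteness of $\T$, namely guaranteeing convergence of the $\int d\theta$ and $\int d\omega$ for the pieces that do not depend on $\theta$, respectively $\omega$. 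A secondary difficulty is purely bookkeeping: distributing the two copies of $\varphi(x-y)$ and the two weights between the $U$- and $V$-functions so that every iterated integral converges, which is carried out exactly as in the proof of Lemma \ref{lem3}.
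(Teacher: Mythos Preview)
Your proposal is correct and follows essentially the same approach that the paper (implicitly) takes: the paper does not write out a separate proof of Theorem~\ref{thm8} but treats it as a further slight adaptation of the proofs of Theorems~\ref{thm2} and~\ref{thm7}, and what you describe is precisely that adaptation---integrate by parts in $\xi$ to order~$2$ and in $x,\theta,y,\omega$ to order~$1$ as in Lemma~\ref{lem3}, perform no integration by parts in $k$, and use the weights $\langle\theta\rangle^{-1}$ and $\langle\omega\rangle^{-1}$ (distributed via Cauchy--Schwarz to the $U$- and $V$-pieces respectively) to replace the finiteness of~$\T$, while the factors $(1+i(k\pm\cdot))^{-1}$ coming from the $\theta$- and $\omega$-integrations by parts yield the required $k$-integrability.
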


Of course, when the amplitude $\sigma$ in Theorems \ref{thm7} and \ref{thm8} does not depend on its third and
fourth variables, we are reduced to the case of pseudodifferential operators.

\section{Singular pseudodifferential calculus I. Definition of operators and action on Sobolev spaces}
\label{sect7}

Let us first define the singular symbols.

\begin{definition}[Singular symbols]
\label{def4}
Let $m \in \R$, and let $n \in \N$. Then we let $S^m_n$ denote the set of families of functions
$(a_{\eps,\gamma})_{\eps \in ]0,1],\gamma \ge 1}$ that are constructed as follows:
\begin{equation}
\label{singularsymbolp}
\forall \, (x,\theta,\xi,k) \in \R^{d+1} \times \R^{d+1} \, ,\quad a_{\eps,\gamma} (x,\theta,\xi,k) =
\sigma \left( \eps \, V(x,\theta),\xi+\dfrac{k \, \beta}{\eps},\gamma \right) \, ,
\end{equation}
where $\sigma \in {\bf S}^m({\mathcal O})$, $\langle \theta \rangle \, V$ belongs to the space ${\mathcal C}^n_b
(\R^{d+1})$ and where furthermore $V$ takes its values in a convex compact subset $K$ of ${\mathcal O}$ that
contains the origin (for instance $K$ can be a closed ball centered round the origin).
\end{definition}

To each symbol $a = (a_{\eps,\gamma})_{\eps \in ]0,1],\gamma \ge 1} \in S^m_n$ given by the formula
\eqref{singularsymbolp}, we associate a singular pseudodifferential operator $\opeg (a)$, with $\eps \in \, ]0,1]$
and $\gamma \ge 1$, whose action on a function $u \in {\mathcal S} (\R^{d+1} ; \C^N)$ is defined by
\begin{equation}
\label{singularpseudop}
\opeg (a) \, u \, (x,\theta) := \dfrac{1}{(2\, \pi)^{d+1}} \, \int_{\R^{d+1}} {\rm e}^{i\, (\xi \cdot x +k \, \theta)} \,
\sigma \left( \eps \, V(x,\theta),\xi+\dfrac{k \, \beta}{\eps},\gamma \right) \, \widehat{u} (\xi,k)
\, {\rm d}\xi \, {\rm d}k \, .
\end{equation}
Let us briefly note that for the Fourier multiplier $\sigma (v,\xi,\gamma) =i\, \xi_1$, the corresponding
singular operator is $\partial_{x_1} +(\beta_1/\eps) \, \partial_\theta$. The main difference with respect to
\eqref{singularpseudo} is that now the singular frequency $k/\eps$ lies in all $\R$ and not only in a discrete
set. This modification will not be negligeable in some places, especially when we compare the difference
between oscillatory integral operators and pseudodifferential operators. Following Part A, we wish to describe
the action of singular pseudodifferential operators on Sobolev spaces. The following result is a rather direct
consequence of Theorem \ref{thm6}.

\begin{proposition}
\label{prop13}
Let $n \ge d+1$, and let $a \in S^m_n$ with $m \le 0$. Then $\opeg (a)$ in \eqref{singularpseudop} defines
a bounded operator on $L^2 (\R^{d+1})$: there exists a constant $C>0$, that only depends on $\sigma$
and $V$ in the representation \eqref{singularsymbolp}, such that for all $\eps \in \, ]0,1]$ and for all
$\gamma \ge 1$, there holds
\begin{equation*}
\forall \, u \in {\mathcal S} (\R^{d+1}) \, ,\quad \left\| \opeg (a) \, u \right\|_0 \le \dfrac{C}{\gamma^{|m|}} \, \| u \|_0 \, .
\end{equation*}
\end{proposition}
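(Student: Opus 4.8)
The plan is to invoke Theorem~\ref{thm6}, in the same spirit in which Proposition~\ref{prop3} used Theorem~\ref{thm1} in Part~A. There is, however, one genuinely new point. The hypothesis of Theorem~\ref{thm6} demands, \emph{already with no derivatives}, that $\langle\theta\rangle$ times the symbol lie in $L^\infty$; but the raw symbol $a_{\eps,\gamma}(x,\theta,\xi,k)=\sigma\big(\eps\,V(x,\theta),\xi+k\,\beta/\eps,\gamma\big)$ does not decay in $\theta$: since $\langle\theta\rangle\,V$ is bounded we only have $V=O(\langle\theta\rangle^{-1})$, so $a_{\eps,\gamma}$ tends, as $\theta\to\pm\infty$, to the (generally nonzero) Fourier multiplier $\sigma\big(0,\xi+k\,\beta/\eps,\gamma\big)$. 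I therefore first write
\begin{equation*}
a_{\eps,\gamma}(x,\theta,\xi,k) = a^{(0)}_{\eps,\gamma}(\xi,k) + a^{(1)}_{\eps,\gamma}(x,\theta,\xi,k) \, ,\qquad
a^{(0)}_{\eps,\gamma}(\xi,k):=\sigma\Big(0,\xi+\dfrac{k\,\beta}{\eps},\gamma\Big) \, ,
\end{equation*}
and estimate the two pieces separately; note that $\opeg(a^{(1)})=\opeg(a)-\opeg(a^{(0)})$ by linearity, $a^{(0)}$ corresponding to the choice $V\equiv0$ in \eqref{singularsymbolp}.

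Since $a^{(0)}_{\eps,\gamma}$ does not depend on $(x,\theta)$, the operator $\opeg(a^{(0)})$ is a Fourier multiplier, and Plancherel's Theorem gives
\begin{equation*}
\big\| \opeg(a^{(0)})\, u \big\|_0 \le \Big( \sup_{\zeta\in\R^d,\,\gamma\ge1} \big| \sigma(0,\zeta,\gamma) \big| \Big)\, \| u \|_0 \, .
\end{equation*}
As $0\in{\mathcal O}$ and $m\le0$, Definition~\ref{def1} (applied with $\alpha=0$, $\nu=0$, for a compact $K\ni0$) bounds the supremum by $C_{0,0,K}\,\gamma^{m}=C\,\gamma^{-|m|}$.

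For the remainder $a^{(1)}_{\eps,\gamma}$ I apply Theorem~\ref{thm6} directly, viewing $a^{(1)}_{\eps,\gamma}$ as a continuous function of $(x,\theta,\xi,k)\in\R^{d+1}\times\R^{d+1}$. Taylor's formula in the $v$-variable gives
\begin{equation*}
a^{(1)}_{\eps,\gamma}(x,\theta,\xi,k) = \eps\sum_{l} V_l(x,\theta) \int_0^1 \partial_{v_l}\sigma\Big( t\,\eps\, V(x,\theta),\xi+\dfrac{k\,\beta}{\eps},\gamma \Big)\, {\rm d}t \, ,
\end{equation*}
where for all $t\in[0,1]$ and $\eps\in\,]0,1]$ the point $t\,\eps\,V(x,\theta)$ still lies in the convex compact set $K\ni0$, and $\partial_{v_l}\sigma\in{\bf S}^m$. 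From $\langle\theta\rangle\,V\in{\mathcal C}^n_b(\R^{d+1})$, $n\ge d+1$, together with $\langle\theta\rangle^{-1}\in{\mathcal C}^\infty_b(\R)$, one checks that $V\in{\mathcal C}^{d+1}_b$ and that $\langle\theta\rangle\,\partial_x^{\alpha'}\partial_\theta^{j'}V$ is bounded for $|\alpha'|+j'\le d+1$. Differentiating the displayed identity by $\partial_x^\alpha\partial_\theta^j\partial_\xi^\beta$ with $\alpha,\beta\in\{0,1\}^d$, $j\in\{0,1\}$, and expanding by Leibniz and the Fa\`a di Bruno formula, every resulting term is a product of: exactly one factor $\langle\theta\rangle\,\partial_x^{\alpha'}\partial_\theta^{j'}V_l$ (bounded, by the above), finitely many further factors $\partial^{\gamma}V$ with $|\gamma|\le d+1$ (each bounded), and one derivative $\partial_v^\mu\partial_\xi^\nu\sigma$ evaluated at $(t\,\eps\,V,\xi+k\,\beta/\eps,\gamma)$; since $m-|\nu|\le m\le 0$, Definition~\ref{def1} bounds that last factor by $C\,(\gamma^2+|\xi+k\,\beta/\eps|^2)^{(m-|\nu|)/2}\le C\,\gamma^{-|m|}$. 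Hence $\Ng a^{(1)}_{\eps,\gamma}\Nd\le C\,\eps\,\gamma^{-|m|}\le C\,\gamma^{-|m|}$, and Theorem~\ref{thm6} yields $\|\opeg(a^{(1)})\,u\|_0\le C\,\gamma^{-|m|}\,\|u\|_0$. Adding the two bounds proves Proposition~\ref{prop13}.

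I expect the only real obstacle to be the first observation above: the verbatim argument of Proposition~\ref{prop3} fails here because $\Ng a_{\eps,\gamma}\Nd=+\infty$, and subtracting the constant-in-$\theta$ multiplier $\sigma(0,\cdot)$ is precisely what makes the factor $\eps\,V=O(\langle\theta\rangle^{-1})$ available to absorb the $\langle\theta\rangle$ weight required by Theorem~\ref{thm6}. Once this decomposition is set up, the rest is the same Fa\`a di Bruno bookkeeping as in Part~A, plus an elementary Plancherel estimate.
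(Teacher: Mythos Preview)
Your proof is correct and follows exactly the paper's approach: split $a_{\eps,\gamma}$ into the Fourier multiplier $\sigma(0,\xi+k\beta/\eps,\gamma)$, handled by Plancherel, and a remainder carrying an explicit factor $\eps\,V(x,\theta)=O(\langle\theta\rangle^{-1})$, to which Theorem~\ref{thm6} applies. Your identification of the obstruction (that $\Ng a_{\eps,\gamma}\Nd=+\infty$ in general because the symbol does not decay in $\theta$) and its cure is precisely the point the paper makes.
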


The constant $C$ in Proposition \ref{prop13} depends uniformly on the compact set in which $V$ takes its
values and on the norm of $V$ in ${\mathcal C}^{d+1}_b$.

\begin{proof}[Proof of Proposition \ref{prop13}]
The situation is more or less the same as in Proposition \ref{prop3}. The only thing to observe is that if the
symbol $\sigma$ were independent of $V$, then we could not use Theorem \ref{thm6} because $\langle
\theta \rangle \, a_{\eps,\gamma}$ would not be bounded (except in the trivial case $a \equiv 0$). We thus
use the same trick as in \cite{williams3} and write
\begin{equation*}
\widetilde{a}_{\eps,\gamma} =\sigma \left( 0,\xi+\dfrac{k \, \beta}{\eps},\gamma \right)
+\sigma_\sharp \left( \eps \, V(x,\theta),\xi+\dfrac{k \, \beta}{\eps},\gamma \right) \cdot \eps \, V(x,\theta) \, .
\end{equation*}
Boundedness on $L^2$ for the Fourier multiplier is trivial, and as far as the second part is concerned, we can
apply Theorem \ref{thm3} (we could even apply the classical Calder\'on-Vaillancourt Theorem because taking
only one $k$-derivative is harmless for the second term since it contains an $\eps$ factor). We obtain a bound
of the form
\begin{equation*}
\forall \, \eps \in \, ]0,1] \, ,\quad \forall \, \gamma \ge 1 \, ,\quad
\Ng a_{\eps,\gamma} -\sigma \left( 0,\xi+\dfrac{k \, \beta}{\eps},\gamma \right) \Nd
\le \dfrac{C_{\sigma,V} \, \eps}{\gamma^{|m|}} \, .
\end{equation*}
\end{proof}

Let us again observe that the scaling $\eps \, V$ is not crucial here to obtain a uniform $L^2$ bound, as in
the case of wavetrains. This is because we still do not need to control any $k$-derivative. The analogue of
Proposition \ref{prop4} works in exactly the same way.

\begin{proposition}
\label{prop14}
Let $n \ge d+1$, and let $a \in S^m_n$ with $m>0$. Then $\opeg (a)$ in \eqref{singularpseudop} defines
a bounded operator from $H^{m,\eps}(\R^{d+1})$ to $L^2 (\R^{d+1})$: there exists a constant $C>0$, that
only depends on $\sigma$ and $V$ in the representation \eqref{singularsymbolp}, such that for all $\eps \in
\, ]0,1]$ and for all $\gamma \ge 1$, there holds
\begin{equation*}
\forall \, u \in {\mathcal S} (\R^{d+1}) \, ,\quad \left\| \opeg (a) \, u \right\|_0 \le C \, \| u \|_{H^{m,\eps},\gamma} \, .
\end{equation*}
\end{proposition}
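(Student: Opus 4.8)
The plan is to mimic the proof of Proposition \ref{prop4} essentially verbatim, since nothing in that argument used the periodic structure rather than the decay-in-$\theta$ structure: one factors out the growth in the singular frequency and reduces to the degree-$0$ bound of Proposition \ref{prop13}. Concretely, I would write the singular symbol \eqref{singularsymbolp} as
\begin{equation*}
a_{\eps,\gamma}(x,\theta,\xi,k) = b_{\eps,\gamma}(x,\theta,\xi,k)\,\left( \gamma^2 +\left| \xi+\dfrac{k\,\beta}{\eps} \right|^2 \right)^{m/2} \, ,\qquad
b_{\eps,\gamma} := a_{\eps,\gamma}\left( \gamma^2 +\left| \xi+\dfrac{k\,\beta}{\eps} \right|^2 \right)^{-m/2} \, ,
\end{equation*}
so that $b_{\eps,\gamma}(x,\theta,\xi,k) = \widetilde{\sigma}(\eps\, V(x,\theta),\xi+k\beta/\eps,\gamma)$ with $\widetilde{\sigma}(v,\xi,\gamma) := \sigma(v,\xi,\gamma)\,(\gamma^2+|\xi|^2)^{-m/2}$.

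The first step is to check that $b = (b_{\eps,\gamma})$ is again a singular symbol, now of degree $0$. This amounts to the routine symbol-calculus fact that $\widetilde{\sigma} \in {\bf S}^0({\mathcal O})$: a Leibniz expansion together with the standard estimates $|\partial_\xi^\mu (\gamma^2+|\xi|^2)^{-m/2}| \le C_\mu (\gamma^2+|\xi|^2)^{-(m+|\mu|)/2}$, valid uniformly in $\gamma \ge 1$, gives exactly the bounds of Definition \ref{def1} with $m$ replaced by $0$; and the substituted function $V$ is unchanged, so the decay requirement $\langle\theta\rangle\, V \in {\mathcal C}^n_b(\R^{d+1})$ is satisfied and $b \in S^0_n$. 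The second step is to identify the Fourier multiplier $\Lambda$ with symbol $(\gamma^2+|\xi+k\beta/\eps|^2)^{m/2}$: since $\gamma \ge 1$ this symbol is ${\mathcal C}^\infty$ on $\R^{d+1}$ with all derivatives of polynomial growth, hence $\Lambda$ maps ${\mathcal S}(\R^{d+1})$ into itself, and by Plancherel $\|\Lambda\, u\|_0 = \|u\|_{H^{m,\eps},\gamma}$ for every $u \in {\mathcal S}(\R^{d+1})$, i.e. $\Lambda$ is the isometric identification of $(H^{m,\eps}(\R^{d+1}),\|\cdot\|_{H^{m,\eps},\gamma})$ with $L^2(\R^{d+1})$. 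Because $b_{\eps,\gamma}$ carries precisely the compensating factor, one reads off directly from \eqref{singularpseudop} that $\opeg(a) = \opeg(b)\circ\Lambda$ on ${\mathcal S}(\R^{d+1})$. Applying Proposition \ref{prop13} to $b \in S^0_n$ (legitimate since $n \ge d+1$ and the degree is $\le 0$) yields $\|\opeg(b)\,w\|_0 \le C\,\|w\|_0$ uniformly in $\eps,\gamma$, whence
\begin{equation*}
\| \opeg (a) \, u \|_0 = \| \opeg (b)\, \Lambda\, u \|_0 \le C\, \| \Lambda\, u \|_0 = C\, \| u \|_{H^{m,\eps},\gamma} \, .
\end{equation*}

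I do not expect a real obstacle here: the only point needing a short verification is $\widetilde{\sigma} \in {\bf S}^0({\mathcal O})$, which is the standard statement that multiplication by $(\gamma^2+|\xi|^2)^{-m/2}$ lowers the order by $m$ uniformly in $\gamma$, and everything else is bookkeeping identical to the wavetrain case. As in Proposition \ref{prop13}, one should also note in passing that the scaling $\eps\, V$ rather than $V$ is immaterial for this statement, since $m>0$ means no $k$-derivative of the symbol enters the argument.
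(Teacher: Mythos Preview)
Your proof is correct and follows essentially the same approach as the paper: the paper itself merely states that ``the analogue of Proposition \ref{prop4} works in exactly the same way,'' and your argument is precisely that of Proposition \ref{prop4} transported to the pulse setting, namely factoring off the Fourier multiplier $(\gamma^2+|\xi+k\beta/\eps|^2)^{m/2}$ to reduce to a degree-$0$ symbol and then invoking Proposition \ref{prop13}. Your additional verifications (that $\widetilde{\sigma}\in{\bf S}^0$ and that $\Lambda$ preserves ${\mathcal S}(\R^{d+1})$) are routine but welcome details that the paper leaves implicit.
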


\noindent There is also a smoothing effect in the case $m<0$ that is analogous to the one proved in Proposition
\ref{prop5} for wavetrains.

\begin{proposition}
\label{prop15}
Let $n \ge d+2$, and let $a \in S^{-1}_n$. Then $\opeg (a)$ in \eqref{singularpseudop} defines a bounded
operator from $L^2 (\R^{d+1})$ to $H^{1,\eps}(\R^{d+1})$: there exists a constant $C>0$, that only depends
on $\sigma$ and $V$ in the representation \eqref{singularsymbolp}, such that for all $\eps \in \, ]0,1]$ and for
all $\gamma \ge 1$, there holds
\begin{equation*}
\forall \, u \in {\mathcal S} (\R^{d+1}) \, ,\quad \left\| \opeg (a) \, u \right\|_{H^{1,\eps},\gamma} \le C \, \| u \|_0 \, .
\end{equation*}
\end{proposition}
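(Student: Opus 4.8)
The plan is to follow the proof of Proposition \ref{prop5} in the wavetrain case almost verbatim, substituting Theorem \ref{thm6} for Theorem \ref{thm1} and Proposition \ref{prop13} for Proposition \ref{prop3}. First I would invoke Proposition \ref{prop13} to obtain the $L^2$ bound $\| \opeg (a) \, u \|_0 \le (C/\gamma) \, \| u \|_0$, so that, by the definition of the norm $\| \cdot \|_{H^{1,\eps},\gamma}$ on $\R^{d+1}$, the whole statement reduces to proving $\| (\partial_{x_j} +(\beta_j/\eps) \, \partial_\theta) \opeg (a) \, u \|_0 \le C \, \| u \|_0$ uniformly in $\eps \in \, ]0,1]$ and $\gamma \ge 1$; I would treat the case $j=1$ only, the notation being lighter. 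Since $\sigma \in {\bf S}^{-1}$, the integral \eqref{singularpseudop} defining $\opeg (a) \, u$ is absolutely convergent for $u \in {\mathcal S}(\R^{d+1})$, so I may differentiate under the integral sign and split the result as $T_1 +T_2 +T_3$, with $T_1$ collecting the contributions in which the singular derivative falls on the exponential, and $T_2$, $T_3$ the contributions in which it falls on the symbol through $\partial_{x_1} V$ (accompanied by a factor $\eps$) and through $(\beta_1/\eps) \, \partial_\theta V$ (with no such factor) respectively.

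Next I would dispose of $T_1$ and $T_2$ by Proposition \ref{prop13}. For $T_1$, the relevant symbol is built from $\sigma_\flat (v,\zeta,\gamma) := i\, \zeta_1 \, \sigma (v,\zeta,\gamma)$, which lies in ${\bf S}^0$ because $\sigma \in {\bf S}^{-1}$; hence $T_1 = \opeg (a_\flat) \, u$ with $a_\flat \in S^0_n$, and Proposition \ref{prop13} applies directly. For $T_2$, the relevant symbol comes from $\partial_v \sigma (v,\zeta,\gamma) \cdot w \in {\bf S}^{-1}$ via the substitution $(v,w) \mapsto (\eps \, V,\eps \, \partial_{x_1} V)$; since $\langle \theta \rangle \, \partial_{x_1} V = \partial_{x_1}(\langle \theta \rangle \, V)$ belongs to ${\mathcal C}^{n-1}_b$ and $\partial_{x_1} V$ is valued in a compact set, this gives $a_\sharp \in S^{-1}_{n-1}$ with $n-1 \ge d+1$, so Proposition \ref{prop13} again applies (and even gains a factor $1/\gamma$). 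This is one of the two places where the hypothesis $n \ge d+2$, rather than merely $n \ge d+1$, is used.

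The main obstacle, exactly as in the wavetrain case, is the term $T_3$, whose symbol $\beta_1 \, \partial_v \sigma (\eps \, V(x,\theta),\xi+k\beta/\eps,\gamma) \cdot \partial_\theta V(x,\theta)$ is not of the form \eqref{singularsymbolp} because no compensating $\eps$ stands in front of $\partial_\theta V$. Here Proposition \ref{prop13} cannot be quoted; instead I would apply Theorem \ref{thm6} directly, and the point that has no analogue in the wavetrain proof --- where mere boundedness of $\partial_\theta V$ was enough --- is that Theorem \ref{thm6} requires $\langle \theta \rangle$ times the symbol, together with its $\partial_x^\alpha \, \partial_\theta^j \, \partial_\xi^\beta$ derivatives ($\alpha,\beta \in \{0,1\}^d$, $j \in \{0,1\}$), to be bounded. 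The factor $\partial_v \sigma (\eps \, V,\xi+k\beta/\eps,\gamma)$ and all its admissible derivatives are bounded uniformly in $\eps,\gamma$ because $\partial_v \sigma \in {\bf S}^{-1}$, $\eps \, V$ stays in the fixed compact set $K$, and each additional $x$- or $\theta$-derivative produces either one more $v$-derivative of $\sigma$ (still of negative order) against a bounded factor $\eps \, \partial V$, or one more $\xi$-derivative (hence extra frequency decay). So everything comes down to checking $\langle \theta \rangle \, \partial_x^\alpha \partial_\theta^\ell V \in L^\infty$ for $|\alpha| \le d$ and $\ell \le 2$, i.e. for $|\alpha|+\ell \le n$ in the worst case; I would derive this by an elementary induction on $\ell$ from the hypothesis $\langle \theta \rangle \, V \in {\mathcal C}^n_b$ of Definition \ref{def4}, using the identities $\langle \theta \rangle \, \partial_\theta V = \partial_\theta (\langle \theta \rangle \, V) -(\theta/\langle \theta \rangle) \, V$ and $V = \langle \theta \rangle^{-1}(\langle \theta \rangle \, V) \in {\mathcal C}^n_b$. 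Once $\Ng \cdot \Nd \le C$ is established for the $T_3$-symbol, Theorem \ref{thm6} yields $\| T_3 \|_0 \le C \, \| u \|_0$, and combining the estimates of $T_1$, $T_2$, $T_3$ gives the claim; the remaining checks (the chain-rule bookkeeping, the fact that $\partial_x^\alpha \partial_\theta^j V$ itself is bounded for $|\alpha|+j \le n$) are routine and I would only sketch them.
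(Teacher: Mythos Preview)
Your proposal is correct and follows essentially the same approach as the paper: mimic the proof of Proposition~\ref{prop5}, handle $T_1$ and $T_2$ via Proposition~\ref{prop13}, and treat the problematic term $T_3$ by invoking Theorem~\ref{thm6} directly, exploiting the $\langle\theta\rangle$-decay of $\partial_\theta V$ (inherited from $\langle\theta\rangle V \in {\mathcal C}^n_b$) in lieu of controlling a $k$-derivative. The paper's own proof is only a one-sentence sketch to this effect, and you have simply filled in the routine verifications it leaves implicit.
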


If one compares with the proof of Proposition \ref{prop5}, the analogue of the term $T_3$ does not have
an additional $\eps$ factor but depends linearly on $\partial_\theta V$. Hence it falls into the framework of
Theorem \ref{thm6}, uniformly in $\eps$ and $\gamma$ (we use the decay in $\theta$ rather than controlling
one $k$-derivative).

We can extend the above results to singular amplitudes which are defined in the following way.

\begin{definition}[Singular amplitudes]
\label{def5}
Let $m \in \R$, and let $n \in \N$. Then we let $A^m_n$ denote the set of families of functions
$(\widetilde{a}_{\eps,\gamma})_{\eps \in ]0,1],\gamma \ge 1}$ that are constructed as follows:
\begin{multline}
\label{singularamplitudep}
\forall \, (x,\theta,y,\omega,\xi,k) \in \R^{d+1} \times \R^{d+1} \times \R^{d+1} \, ,\\
\widetilde{a}_{\eps,\gamma} (x,\theta,y,\omega,\xi,k) := \sigma \left(
\eps \, V(x,\theta),\eps \, W(y,\omega),\xi+\dfrac{k \, \beta}{\eps},\gamma \right) \, ,
\end{multline}
where $\sigma \in {\bf S}^m({\mathcal O}_1 \times {\mathcal O}_2)$, $\langle \theta \rangle \, V$ and $\langle
\omega \rangle \, W$ belong to the space ${\mathcal C}^n_b (\R^{d+1})$, and where furthermore $V$, resp. $W$,
takes its values in a convex compact subset $K_1$, resp. $K_2$, of ${\mathcal O}_1$, resp. ${\mathcal O}_2$, that
contains the origin.
\end{definition}

Our continuity results of Propositions \ref{prop6} and \ref{prop7} extend to the case of pulses. For $\widetilde{a}
\in A_n^m$, our goal is to define, whenever the formula makes sense, the singular oscillatory integral operator
acting on functions $u \in {\mathcal S}(\R^{d+1})$ as follows:
\begin{multline}
\label{singularopampp}
\opteg (\widetilde{a}) \, u (x,\theta) := \dfrac{1}{(2\, \pi)^{d+1}} \, \int_{\R^{d+1} \times \R^{d+1}}
{\rm e}^{i\, (\xi \cdot (x-y) +k \, (\theta-\omega))} \,
\sigma \left( \eps \, V(x,\theta),\eps \, W(y,\omega),\xi+\dfrac{k \, \beta}{\eps},\gamma \right) \\
u (y,\omega) \, {\rm d}\xi \, {\rm d}k \, {\rm d}y \, {\rm d}\omega \, .
\end{multline}
We have the following result for bounded amplitudes. (The integral in \eqref{singularopampp} has to be understood
as the limit in ${\mathcal S}'(\R^{d+1})$ of a truncation process in $(\xi,k)$.)

\begin{proposition}
\label{prop16}
Let $n \ge d+1$, and let $\widetilde{a} \in A^m_n$ with $m \le 0$. Then $\opteg (\widetilde{a})$ in
\eqref{singularopampp} defines a bounded operator on $L^2 (\R^{d+1})$: there exists a constant $C>0$, that
only depends on $\sigma$, $V$ and $W$ in the representation \eqref{singularamplitudep}, such that for all
$\eps \in \, ]0,1]$ and for all $\gamma \ge 1$, there holds
\begin{equation*}
\forall \, u \in {\mathcal S} (\R^{d+1}) \, ,\quad \left\| \opteg (\widetilde{a}) \, u \right\|_0 \le
\dfrac{C}{\gamma^{|m|}} \, \| u \|_0 \, .
\end{equation*}
\end{proposition}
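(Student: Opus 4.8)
The plan is to follow the proof of Proposition~\ref{prop13} (itself patterned on Proposition~\ref{prop6}), the only new feature being that the amplitude $\widetilde{a}_{\eps,\gamma}$ of \eqref{singularamplitudep} has no decay whatsoever in $\theta$ or $\omega$, so that none of Theorems~\ref{thm6}, \ref{thm7}, \ref{thm8} can be applied to it as it stands. To get around this I would first peel off the part that is a pure Fourier multiplier. Writing for brevity $V=V(x,\theta)$, $W=W(y,\omega)$, $\zeta := \xi+k\,\beta/\eps$, and letting $\sigma(v,w)$ stand for $\sigma(v,w,\zeta,\gamma)$, and using that the convex compact sets $K_1,K_2$ contain the origin, I decompose
\begin{multline*}
\widetilde{a}_{\eps,\gamma} = \sigma(0,0) + \big[\sigma(\eps\,V,0) - \sigma(0,0)\big]
+ \big[\sigma(0,\eps\,W) - \sigma(0,0)\big] \\
+ \big[\sigma(\eps\,V,\eps\,W) - \sigma(\eps\,V,0) - \sigma(0,\eps\,W) + \sigma(0,0)\big] .
\end{multline*}
By Taylor's formula the second bracket equals $\eps\,V(x,\theta)\cdot\int_0^1\partial_v\sigma(t\eps\,V,0)\,{\rm d}t$, the third equals $\eps\,W(y,\omega)\cdot\int_0^1\partial_w\sigma(0,t\eps\,W)\,{\rm d}t$, and the fourth (a mixed second difference) equals $\eps^2$ times a bilinear expression in $V(x,\theta)$ and $W(y,\omega)$ with coefficients $\int_0^1\!\int_0^1\partial_v\partial_w\sigma(t\eps\,V,s\eps\,W)\,{\rm d}t\,{\rm d}s$; recall that the $v,w$-derivatives of $\sigma$ still belong to ${\bf S}^m$.

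I would then bound the four resulting oscillatory integral operators separately. The multiplier with symbol $\sigma(0,0,\zeta,\gamma)$ is bounded on $L^2$, by Plancherel, with norm $\sup_\zeta|\sigma(0,0,\zeta,\gamma)|\le C_{0,0,K}\,\gamma^{m}=C\,\gamma^{-|m|}$ since $m\le0$. The second operator is a singular pseudodifferential operator (its amplitude depends only on $(x,\theta,\xi,k)$): its $\langle\theta\rangle$-weighted amplitude is $\eps\,\langle\theta\rangle\,V(x,\theta)\cdot\int_0^1\partial_v\sigma(t\eps\,V,0,\zeta,\gamma)\,{\rm d}t$, which together with its $\partial_x^\alpha\partial_\theta^j\partial_\xi^\beta$-derivatives is in $L^\infty$, since $\langle\theta\rangle\,V\in{\mathcal C}^n_b$ (Definition~\ref{def5}), the Fa\`a di Bruno formula produces only further derivatives of $V$ (bounded because $\langle\theta\rangle^{-1}\in{\mathcal C}^\infty_b$) and of $\sigma$ (which merely add frequency decay), and $n\ge d+1$ is exactly enough for the $d+1$ derivatives in $(x,\theta)$; thus Theorem~\ref{thm6} applies, with a bound $O(\eps\,\gamma^{-|m|})$. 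The third operator's amplitude depends only on $(y,\omega,\xi,k)$ and carries the factor $\eps\,W(y,\omega)$ with $\langle\omega\rangle\,W\in{\mathcal C}^n_b$, so Theorem~\ref{thm7} applies; the one point to check is the single $k$-derivative it requires, and this is where the $\eps$ prefactor is essential: $\partial_k$ hitting the frequency $\xi+k\,\beta/\eps$ produces a factor $\beta/\eps$, which exactly cancels the $\eps$ and leaves $\beta\,W(y,\omega)\int_0^1\nabla_\zeta\partial_w\sigma\,{\rm d}t$, bounded (indeed $O(\gamma^{m-1})$). The fourth operator's amplitude carries the factor $\eps^2\,V(x,\theta)\,W(y,\omega)$ with both $\langle\theta\rangle\,V$ and $\langle\omega\rangle\,W$ in ${\mathcal C}^n_b$, so its $\langle\theta\rangle\langle\omega\rangle$-weighted amplitude, together with the $\partial_{x,\theta}^\alpha\partial_{y,\omega}^\beta\partial_\xi^\nu$-derivatives demanded by Theorem~\ref{thm8}, is in $L^\infty$, and Theorem~\ref{thm8} gives a bound $O(\eps^2\,\gamma^{-|m|})$. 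The convergence as $\delta\to0$ of the truncated integrals \eqref{singularopampp} for each piece is part of the conclusions of Theorems~\ref{thm7} and \ref{thm8} (and is trivial for the multiplier), so $\opteg(\widetilde{a})\,u$ is well defined as the sum; adding the four estimates yields $\|\opteg(\widetilde{a})\,u\|_0\le (C/\gamma^{|m|})\,\|u\|_0$.

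I expect the only genuinely delicate point to be the one just described: because the ``raw'' amplitude $\widetilde{a}_{\eps,\gamma}$ does not decay in $\theta$ or $\omega$, one cannot feed it directly into the Calder\'on--Vaillancourt-type results of Section~\ref{sect6}, and one must first subtract off the Fourier-multiplier part $\sigma(0,0,\cdot,\gamma)$; once that is done, every remaining piece automatically carries at least one factor $\eps\,V(x,\theta)$ or $\eps\,W(y,\omega)$, which supplies both the missing $\langle\theta\rangle^{-1}$ or $\langle\omega\rangle^{-1}$ decay and the $\eps$ needed to neutralise the singular $1/\eps$ coming from the $k$-derivative in Theorem~\ref{thm7}. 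Everything else is the same Fa\`a di Bruno bookkeeping as in the proofs of Propositions~\ref{prop3}, \ref{prop6} and \ref{prop13}, and, exactly as there, the scaling $\eps\,V$ rather than $V$ plays no essential role in this particular $L^2$ estimate.
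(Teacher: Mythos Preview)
Your proof is correct and rests on the same idea as the paper's: the raw amplitude has no decay in $\theta$ or $\omega$, so one must first subtract off pieces to expose a factor $\eps\,W(y,\omega)$ (and/or $\eps\,V(x,\theta)$) that supplies the weight needed by Theorems~\ref{thm6}--\ref{thm8} and the $\eps$ that neutralises the $1/\eps$ from the $k$-derivative in Theorem~\ref{thm7}.

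The paper is simply more economical. It splits into only two pieces,
\[
\widetilde{a}_{\eps,\gamma}=\sigma\big(\eps V(x,\theta),0,\zeta,\gamma\big)
+\sigma_\sharp\big(\eps V(x,\theta),\eps W(y,\omega),\zeta,\gamma\big)\cdot\eps\,W(y,\omega),
\]
handles the first by invoking Proposition~\ref{prop13} (which already contains your terms~1 and~2), and feeds the second directly into Theorem~\ref{thm7}: the $\langle\omega\rangle$-decay comes from the explicit $W(y,\omega)$ factor, the single $k$-derivative is tamed by the $\eps$, and the residual $(x,\theta)$-dependence through $V$ causes no trouble since Theorem~\ref{thm7} only asks for the $\langle\omega\rangle$ weight. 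Your four-term split further separates this second piece into a part depending only on $(y,\omega)$ and a genuinely mixed part, calling on Theorem~\ref{thm8} for the latter; this is valid but unnecessary, since Theorem~\ref{thm7} already handles an amplitude depending on all variables. In short: same strategy, slightly finer bookkeeping on your side, with the paper's version avoiding the detour through Theorem~\ref{thm8}.
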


\begin{proof}[Proof of Proposition \ref{prop16}]
We use the decomposition
\begin{equation*}
\widetilde{a}_{\eps,\gamma} =\sigma \left( \eps \, V(x,\theta),0,\xi+\dfrac{k \, \beta}{\eps},\gamma \right)
+\sigma_\sharp \left( \eps \, V(x,\theta),\eps \, W(y,\omega),\xi+\dfrac{k \, \beta}{\eps},\gamma \right) \cdot
\eps \, W(y,\omega) \, .
\end{equation*}
Boundedness on $L^2$ for the first term follows from Proposition \ref{prop13} since we deal here with a
singular pseudodifferential operator. As far as the second term is concerned, we can apply Theorem \ref{thm7}
because we have gained some decay in $\omega$ and taking one $k$-derivative is harmless due to the $\eps$
factor.
\end{proof}

\noindent The smoothing effect for amplitudes of degree $-1$ is more subtle.

\begin{proposition}
\label{prop17}
Let $n \ge d+2$, and let $\widetilde{a} \in A^{-1}_n$. Then $\opteg (\widetilde{a})$ in \eqref{singularopampp}
defines a bounded operator from $L^2 (\R^{d+1})$ into $H^{1,\eps}(\R^{d+1})$: there exists a constant $C>0$,
that only depends on $\sigma$, $V$ and $W$ in the representation \eqref{singularamplitudep}, such that for all
$\eps \in \, ]0,1]$ and for all $\gamma \ge 1$, there holds
\begin{equation*}
\forall \, u \in {\mathcal S} (\R^{d+1}) \, ,\quad \left\| \opteg (\widetilde{a}) \, u \right\|_{H^{1,\eps},\gamma}
\le C \, \| u \|_0 \, .
\end{equation*}
\end{proposition}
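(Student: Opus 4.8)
The plan is to imitate the proof of Proposition \ref{prop7}, its wavetrain counterpart, replacing each appeal to Theorem \ref{thm2} by the corresponding pulse statement (Theorem \ref{thm6}, \ref{thm7} or \ref{thm8}) and treating with extra care the one term that, in the present setting, no longer carries a compensating power of $\eps$. We start from the fact that Proposition \ref{prop16} already gives $\| \opteg (\widetilde{a}) \, u \|_0 \le (C/\gamma) \, \| u \|_0$, and that the truncated functions
\[
T_\delta (x,\theta) = \frac{1}{(2\pi)^{d+1}} \int_{\R^{d+1} \times \R^{d+1}}
{\rm e}^{i \, (\xi \cdot (x-y) + k \, (\theta - \omega))} \, \chi (\delta \xi, \delta k) \,
\widetilde{a}_{\eps,\gamma} (x,\theta,y,\omega,\xi,k) \, u(y,\omega) \, {\rm d}\xi \, {\rm d}k \, {\rm d}y \, {\rm d}\omega
\]
associated with \eqref{singularopampp} are, as in the proof of Proposition \ref{prop16}, bounded in $L^2 (\R^{d+1})$ uniformly in $\delta, \eps, \gamma$ and converge in ${\mathcal S}' (\R^{d+1})$ towards $\opteg (\widetilde{a}) \, u$. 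It therefore suffices to establish, for each $j \in \{ 1, \dots, d \}$, a bound $\| (\partial_{x_j} + (\beta_j / \eps) \, \partial_\theta) \, T_\delta \|_0 \le C \, \| u \|_0$ uniform in $\delta, \eps, \gamma$; a weak-compactness argument together with uniqueness of the ${\mathcal S}'$-limit then places $\opteg (\widetilde{a}) \, u$ in $H^{1,\eps} (\R^{d+1})$ with the asserted estimate.

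Differentiating $T_\delta$ under the integral sign (legitimate since $\widetilde{a} \in A^{-1}_n$ makes the truncated amplitudes compactly supported in $(\xi,k)$) and taking $j = 1$ to lighten notation, one obtains $(\partial_{x_1} + (\beta_1/\eps) \, \partial_\theta) \, T_\delta = T_{1,\delta} + T_{2,\delta} + T_{3,\delta}$: the amplitude of $T_{1,\delta}$ is $i \, (\xi_1 + k \, \beta_1/\eps) \, \sigma(\eps V, \eps W, \xi + k\beta/\eps, \gamma)$, that is, the amplitude built from the symbol $i \, \xi_1 \, \sigma \in {\bf S}^0$, hence it lies in $A^0_n$; the amplitude of $T_{2,\delta}$ is $\partial_v \sigma (\eps V, \eps W, \xi + k\beta/\eps, \gamma) \cdot \eps \, \partial_{x_1} V$, which lies in $A^{-1}_{n-1}$ (substitute $(v, w', w) \mapsto (\eps V, \eps \, \partial_{x_1} V, \eps W)$ in the ${\bf S}^{-1}$ symbol $\partial_v \sigma (v, w, \xi, \gamma) \cdot w'$, noting that $\langle \theta \rangle \, \partial_{x_1} V = \partial_{x_1}(\langle \theta \rangle V) \in {\mathcal C}^{n-1}_b$); and the amplitude of $T_{3,\delta}$ is $\beta_1 \, \partial_v \sigma (\eps V, \eps W, \xi + k\beta/\eps, \gamma) \cdot \partial_\theta V(x,\theta)$. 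Since $n - 1 \ge d + 1$, Proposition \ref{prop16} bounds $T_{1,\delta}$ and $T_{2,\delta}$ in $L^2$, uniformly in the parameters, exactly as in the proof of Proposition \ref{prop7}.

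The delicate term is $T_{3,\delta}$: unlike the wavetrain case, where the finite measure of $\T$ allowed a crude estimate, here no $\eps$ multiplies $\partial_\theta V$. This is where the weights in Definition \ref{def5} are used. From $\langle \theta \rangle \, V \in {\mathcal C}^n_b$ one deduces, by Leibniz's rule and induction on the order of differentiation (using that $\partial_\theta^i \langle \theta \rangle$ is bounded for $i \ge 1$), that $\langle \theta \rangle \, \partial_x^\alpha \, \partial_\theta^j V \in L^\infty (\R^{d+1})$ whenever $|\alpha| + j \le n$. We then peel off the $W$-dependence, writing as in the proof of Theorem \ref{thm3} $\partial_v \sigma(\eps V, \eps W, \cdot) = \partial_v \sigma(\eps V, 0, \cdot) + \sigma_\sharp(\eps V, \eps W, \cdot) \cdot \eps \, W$ with $\sigma_\sharp \in {\bf S}^{-1}$, and splitting $T_{3,\delta} = T_{3,\delta}^{(a)} + T_{3,\delta}^{(b)}$ accordingly. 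The term $T_{3,\delta}^{(a)}$, with amplitude $\beta_1 \, \partial_v \sigma(\eps V(x,\theta), 0, \xi + k\beta/\eps, \gamma) \cdot \partial_\theta V(x,\theta)$, is independent of $(y,\omega)$ and hence is the operator $\op$ of a symbol: $\partial_v \sigma \in {\bf S}^{-1}$ is bounded, $\langle \theta \rangle$ times $\partial_\theta V$ and its derivatives $\partial_x^\alpha \partial_\theta^j$ with $|\alpha| \le d$, $j \le 1$ are bounded (this uses $n \ge d+2$), and $\chi(\delta \xi, \delta k)$ contributes boundedly, so Theorem \ref{thm6} yields the uniform $L^2$ bound. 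The term $T_{3,\delta}^{(b)}$ carries an $\eps$ and moreover decays in $\omega$ through $W$ (since $\langle \omega \rangle \, W \in {\mathcal C}^n_b$), so $\langle \theta \rangle \, \langle \omega \rangle$ times its amplitude and the relevant derivatives are bounded uniformly in $\eps, \gamma$, and Theorem \ref{thm8} applies. Adding the four estimates gives the uniform bound on $\| (\partial_{x_1} + (\beta_1/\eps) \, \partial_\theta) \, T_\delta \|_0$, and the proof concludes as indicated above. The main obstacle is precisely the treatment of $T_{3,\delta}$: one cannot invoke the $\langle \omega \rangle$-weighted Theorem \ref{thm7} directly on it (its amplitude does not decay in $\omega$ before the splitting, and a $k$-derivative there would produce an inadmissible factor $1/\eps$), so both the Taylor decomposition and the $\langle \theta \rangle$-decay built into the class $A^{-1}_n$ are essential.
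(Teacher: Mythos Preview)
Your proof is correct and follows essentially the same approach as the paper: you reduce to bounding the singular derivatives, split into $T_1+T_2+T_3$, handle $T_1,T_2$ via Proposition \ref{prop16}, and for the delicate term $T_3$ you peel off the $W$-dependence by Taylor expansion so that the first piece is a pseudodifferential symbol with $\theta$-decay (Theorem \ref{thm6}) and the second carries both an $\eps$ factor and $\omega$-decay (Theorem \ref{thm8}). The only cosmetic difference is that you work explicitly with the truncated $T_\delta$ and a weak-compactness passage to the limit, whereas the paper invokes directly the formal differentiation under the integral sign justified earlier.
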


\begin{proof}[Proof of Proposition \ref{prop17}]
Proposition \ref{prop16} already gives a control of $\gamma$ times the $L^2$ norm so it only remains to
estimate the derivatives. Applying similar arguments as in the proof of Proposition \ref{prop7}, the derivative
$(\partial_{x_1} +\beta_1/\eps \, \partial_\theta) \, \opteg (\widetilde{a}) \, u$ is computed by differentiating
formally under the integral sign as long as the amplitude obtained by such differentiation defines a bounded
operator on$L^2$. We compute
\begin{equation*}
\left( \partial_{x_1} +\dfrac{\beta_1}{\eps} \, \partial_\theta \right) \opteg (\widetilde{a}) \, u = T_1 +T_2 +T_3 \, ,
\end{equation*}
where we use the notation
\begin{align*}
T_1 (x,\theta) &:= \dfrac{1}{(2\, \pi)^{d+1}} \, \int_{\R^{d+1} \times \R^{d+1}}
{\rm e}^{i \, (\xi \cdot (x-y)+k \, (\theta-\omega))} \\
& \qquad \qquad i \, \left( \xi_1+\dfrac{k \, \beta_1}{\eps} \right) \, \sigma \left(
\eps \, V(x,\theta),\eps \, W(y,\omega),\xi+\dfrac{k \, \beta}{\eps},\gamma \right)
\, u(y,\omega) \, {\rm d}\xi \, {\rm d}k \, {\rm d}y \, {\rm d}\omega \, ,\\
T_2 (x,\theta) &:= \dfrac{1}{(2\, \pi)^{d+1}} \, \int_{\R^{d+1} \times \R^{d+1}}
{\rm e}^{i \, (\xi \cdot (x-y)+k \, (\theta-\omega))} \\
& \qquad \qquad \left[ \partial_v \, \sigma \left(
\eps \, V(x,\theta),\eps \, W(y,\omega),\xi+\dfrac{k \, \beta}{\eps},\gamma \right) \cdot
\eps \, \partial_{x_1} \, V (x,\theta) \right] \, u(y,\omega) \, {\rm d}\xi \, {\rm d}k \, {\rm d}y \, {\rm d}\omega \, ,\\
T_3 (x,\theta) &:= \dfrac{\beta_1}{(2\, \pi)^{d+1}} \, \int_{\R^{d+1} \times \R^{d+1}}
{\rm e}^{i \, (\xi \cdot (x-y)+k \, (\theta-\omega))} \\
& \qquad \qquad \left[ \partial_v \, \sigma \left(
\eps \, V(x,\theta),\eps \, W(y,\omega),\xi+\dfrac{k \, \beta}{\eps},\gamma \right) \cdot
\partial_\theta \, V (x,\theta) \right] \, u(y,\omega) \, {\rm d}\xi \, {\rm d}k \, {\rm d}y \, {\rm d}\omega \, .
\end{align*}
The $L^2$ bound of the terms $T_1,T_2$ follows from Propositions \ref{prop16} since we deal with bounded
amplitudes of sufficient smoothness. The only term to consider with care is $T_3$. We decompose its amplitude
as follows:
\begin{multline*}
\partial_v \, \sigma \left( \eps \, V(x,\theta),\eps \, W(y,\omega),\xi+\dfrac{k \, \beta}{\eps},\gamma \right) \cdot
\partial_\theta \, V (x,\theta)
=\partial_v \, \sigma \left( \eps \, V(x,\theta),0,\xi+\dfrac{k \, \beta}{\eps},\gamma \right) \cdot
\partial_\theta \, V (x,\theta) \\
+\sigma_\flat \left( \eps \, V(x,\theta),\eps \, W(y,\omega),\xi+\dfrac{k \, \beta}{\eps},\gamma \right) \cdot
\left[ \eps \, W(y,\omega),\partial_\theta \, V (x,\theta) \right] \, ,
\end{multline*}
where $\sigma_\flat (v,w,\xi,\gamma)$ acts bilinearly on its arguments. The first term in the decomposition
is a pseudodifferential symbol that has decay in $\theta$ and for which we can use Theorem \ref{thm6} (the
decay in $\theta$ implies that we do not need to control $k$-derivatives). The second term in the decomposition
has decay in both $\theta$ and $\omega$ and it even has an $\eps$ factor. We can thus apply either Theorem
\ref{thm7} or \ref{thm8} and derive an $L^2$ bound that is uniform in the parameters $\eps$, $\gamma$.
\end{proof}

The argument of Lemma \ref{lem5} based on integration by parts still works for amplitudes of degree $1$,
and we have

\begin{lemma}
\label{lem6}
Let $\widetilde{a} \in A_n^1$, $n \ge 3\, (d+1)$. Then $\opteg (\widetilde{a})$ in \eqref{singularopampp}
is well-defined from ${\mathcal S}(\R^{d+1})$ into ${\mathcal S}'(\R^{d+1})$ as the limit of the operators
associated with the amplitude $\chi (\delta \, \xi,\delta \, k) \, \widetilde{a}$, $\chi \in {\mathcal C}^\infty_0
(\R^{d+1})$.
\end{lemma}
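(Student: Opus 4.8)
The plan is to mimic the proof of Lemma \ref{lem5} from Part A, replacing the discrete summation over $k \in \Z$ by an integration over $k \in \R$ and using the decay provided by the factor $\langle \theta \rangle \, V$ (and $\langle \omega \rangle \, W$) in the representation \eqref{singularamplitudep}. First I would fix $u, v \in {\mathcal S}(\R^{d+1})$, set $I_\delta := \int_{\R^{d+1}} T_\delta(x,\theta) \, v(x,\theta) \, {\rm d}x \, {\rm d}\theta$, apply Fubini's Theorem, and write $I_\delta$ as an integral over $(x,\theta,\xi,k)$ against a kernel $U(x,\theta,\xi,k) := \int_{\R^{d+1}} {\rm e}^{-i(y \cdot \xi + k\,\omega)} \, \widetilde{a}_{\eps,\gamma}(x,\theta,y,\omega,\xi,k) \, u(y,\omega) \, {\rm d}y \, {\rm d}\omega$. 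The goal is then to show that for each fixed $\eps, \gamma$ one has a pointwise bound of the form $|U(x,\theta,\xi,k)| \le C(\eps,\gamma,\widetilde{a},u) \, \langle k \rangle^{-2} \prod_{j=1}^d \langle \xi_j \rangle^{-2}$ (possibly also $\langle \theta \rangle^{-1}$, which is harmless since $v \in {\mathcal S}$), so that the dominated convergence Theorem applies to pass to the limit $\delta \to 0$ inside $I_\delta$ and the limit is manifestly independent of $\chi$.

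To obtain the decay of $U$ in $(\xi,k)$, I would multiply by $(1 - i k)^3 \prod_{j=1}^d (1 - i \xi_j)^3$ and integrate by parts in $(y,\omega)$, exactly as in the proof of Lemma \ref{lem5}. Each such integration by parts lands on $\widetilde{a}_{\eps,\gamma}$; since $\widetilde{a} \in A_n^1$ with $n \ge 3(d+1)$, and since in the representation \eqref{singularamplitudep} the substituted functions $V, W$ are such that $\langle \theta \rangle V$ and $\langle \omega \rangle W$ lie in ${\mathcal C}^n_b(\R^{d+1})$, the derivatives $\partial_y^\beta \partial_\omega^\ell \widetilde{a}_{\eps,\gamma}$ with $|\beta| + \ell \le 3(d+1)$ satisfy $|\partial_y^\beta \partial_\omega^\ell \widetilde{a}_{\eps,\gamma}(x,\theta,y,\omega,\xi,k)| \le C_{\eps,\gamma} \, \langle \omega \rangle^{-1} \, (1 + |\xi|^2 + k^2)^{1/2}$, using the degree-$1$ growth in frequency of ${\bf S}^1$ and the Fa\`a di Bruno formula (the $1/\eps$ factors from differentiating the argument $\xi + k\beta/\eps$ are absorbed into $C_{\eps,\gamma}$). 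The extra factor $(1 + |\xi|^2 + k^2)^{1/2}$ is dominated by the cube, leaving a net gain of order $\langle k \rangle^{-2} \prod_j \langle \xi_j \rangle^{-2}$, while the $\langle \omega \rangle^{-1}$ weight together with the rapid decay of $u$ makes the $(y,\omega)$ integral converge absolutely and uniformly in $(x,\theta,\xi,k)$.

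Once the pointwise bound on $U$ is in hand, the remaining $(x,\theta)$ integration against $v \in {\mathcal S}$ and the $(\xi,k)$ integration are both absolutely convergent (note $|\chi(\delta\xi,\delta k)| \le \|\chi\|_{L^\infty}$), so dominated convergence yields $\lim_{\delta \to 0} I_\delta = \int {\rm e}^{i(\xi\cdot(x-y)+k(\theta-\omega))} \widetilde{a}_{\eps,\gamma}(x,\theta,y,\omega,\xi,k) \, u(y,\omega) \, v(x,\theta)$ with all integrations understood in this absolutely convergent sense; this limit does not involve $\chi$ at all, which gives independence from the truncation function. Since this holds for every $v \in {\mathcal S}(\R^{d+1})$, the sequence $T_\delta$ converges in ${\mathcal S}'(\R^{d+1})$, and we define $\opteg(\widetilde{a})\,u$ to be this limit. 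The main obstacle — really the only delicate point — is bookkeeping the degree-$1$ frequency growth of $\sigma \in {\bf S}^1$ against the three powers of $(1 - i\xi_j)$ and $(1 - ik)$: one must check that three derivatives in each of the $d+1$ variables $(y,\omega)$ suffice to produce enough decay, which is exactly why the hypothesis $n \ge 3(d+1)$ appears; everything else is the routine truncation-and-dominated-convergence machinery already used repeatedly in Part A.
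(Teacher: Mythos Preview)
Your approach is correct and mirrors the paper's, which simply points back to the integration-by-parts argument of Lemma~\ref{lem5}. One minor slip: the claimed bound $|\partial_y^\beta\partial_\omega^\ell\widetilde{a}_{\eps,\gamma}| \le C_{\eps,\gamma}\langle\omega\rangle^{-1}(1+|\xi|^2+k^2)^{1/2}$ fails for $\beta=0,\ \ell=0$ (the undifferentiated amplitude tends to $\sigma(\eps V,0,\cdot)$ as $\omega\to\infty$ and has no $\omega$-decay), but this is harmless since the rapid decay of $u\in\mathcal{S}$ alone makes the $(y,\omega)$ integral converge --- the extra $\langle\omega\rangle^{-1}$ weight you invoke is not needed.
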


Singular oscillatory integral operators and singular pseudodifferential operators are closely linked. The results
below are the direct extensions of Theorems \ref{thm3} and \ref{thm4}. There is a new technical difficulty which
arises because the set of $\theta$-frequencies is no longer discrete and we thus really need to take derivatives
while we had to deal with finite differences in Part A. However, the general ideas in the proof are quite similar.

\begin{theorem}
\label{thm9}
Let $\widetilde{a} \in A_n^0$, $n \ge 2\, (d+1)$, be given by \eqref{singularamplitudep}, and let $a \in S_n^0$
be defined by
\begin{equation*}
\forall \, (x,\theta,\xi,k) \in \R^{d+1} \times \R^{d+1} \, ,\quad a_{\eps,\gamma} (x,\theta,\xi,k) :=
\sigma \left( \eps \, V(x,\theta),\eps \, W(x,\theta),\xi+\dfrac{k \, \beta}{\eps},\gamma \right) \, .
\end{equation*}
Then there exists a constant $C \ge 0$ such that for all $\eps \in \, ]0,1]$ and for all $\gamma \ge 1$, there holds
\begin{equation*}
\forall \, u \in {\mathcal S} (\R^{d+1}) \, ,\quad
\left\| \opteg (\widetilde{a}) \, u -\opeg (a) \, u \right\|_0 \le \dfrac{C}{\gamma} \, \| u \|_0 \, .
\end{equation*}

If $n \ge 3\, d +3$, then for another constant $C$, there holds
\begin{equation*}
\forall \, u \in {\mathcal S} (\R^{d+1}) \, ,\quad
\left\| \opteg (\widetilde{a}) \, u -\opeg (a) \, u \right\|_{H^{1,\eps},\gamma} \le C \, \| u \|_0 \, ,
\end{equation*}
uniformly in $\eps$ and $\gamma$.
\end{theorem}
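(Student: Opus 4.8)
The plan is to imitate the proof of Theorem~\ref{thm3}, replacing at each point where the $\theta$-frequencies entered through a finite difference over $\Z$ the corresponding integration by parts over $\R$. First I would establish the pulse analogue of Proposition~\ref{prop2}: if $\widetilde{a}_{\eps,\gamma}$ is given by \eqref{singularamplitudep} and $a_{\eps,\gamma}$ is its restriction to the diagonal $(y,\omega)=(x,\theta)$, then $\opteg(\widetilde{a})-\opeg(a)=\optilde(r_1)+\optilde(r_2)$, where $r_1$ is produced exactly as in Part~A by a first-order Taylor expansion in the space variable $y$ followed by one integration by parts in $\xi$, so that $r_1$ involves $\partial_y\partial_\xi\sigma$ with $\partial_{\xi_j}\sigma\in{\bf S}^{-1}$ and carries the explicit factor $\eps$ coming from the substitution $\eps W$ in \eqref{singularamplitudep}; and $r_2$ comes from the change $\omega\rightsquigarrow\theta$ in the last slot of $W$. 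Here the only structural difference with Part~A occurs: in place of Abel's transformation and the finite difference $R_2(\cdot,k+1)-R_2(\cdot,k)$ one uses $(\omega-\theta)\,{\rm e}^{i\,k\,(\theta-\omega)}=i\,\partial_k\,{\rm e}^{i\,k\,(\theta-\omega)}$ and integrates by parts once in $k$, so that $r_2=-i\,\partial_k R_2$ with $R_2$ the difference quotient of $\widetilde{a}_{\eps,\gamma}$ in its fourth argument. Since $\partial_k$ acting on $\sigma(\cdot,\cdot,\xi+k\beta/\eps,\gamma)$ produces a factor $\beta/\eps$, one first performs the usual decomposition $\widetilde{a}_{\eps,\gamma}=\sigma(\eps V(x,\theta),0,\xi+k\beta/\eps,\gamma)+\sigma_\sharp(\eps V(x,\theta),\eps W(y,\omega),\xi+k\beta/\eps,\gamma)\cdot\eps W(y,\omega)$ with $\sigma_\sharp\in{\bf S}^0$; the first summand is independent of $(y,\omega)$ and therefore contributes the same pseudodifferential operator to both $\opteg(\widetilde{a})$ and $\opeg(a)$, hence drops out, while the second carries the $\eps$ needed to absorb the $\beta/\eps$.

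The term $r_1$ is then handled exactly as in Theorem~\ref{thm3}: the decay of $\partial_{\xi_j}\sigma\in{\bf S}^{-1}$ gives $1/\gamma$, the $\eps W$-substitution gives $\eps$, the $\omega$- and $\xi$-derivatives are harmless (the latter only improve the decay), and the weight $\langle\omega\rangle$ demanded by Theorem~\ref{thm7} is supplied by $\langle\omega\rangle\,W\in{\mathcal C}^n_b(\R^{d+1})$; one obtains $\Ng r_1\Nd_{\rm Amp}\le C\,\eps/\gamma$, and Theorem~\ref{thm7} turns this into the $r_1$-part of the first estimate.

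The term $r_2$ is the main obstacle, precisely because one now has to differentiate genuinely in $k$ and, through the difference quotient $R_2$, in $\omega$, rather than form finite differences. The difficulty is that $R_2$ is not of the simple product form to which Theorem~\ref{thm8} applies: the contributions of $R_2$ coming from the explicit $\eps W(y,\omega)$ factor and from $\partial_w\sigma_\sharp$ decay in $\omega$, but the contribution in which $\omega$ has been replaced by $\theta$ decays only in $\theta$. The remedy is to split $R_2$ accordingly and to apply Theorem~\ref{thm8} (joint $\langle\theta\rangle\langle\omega\rangle$ decay, no $k$-derivative) to the piece that decays in both variables --- this is where the extra $\eps$ lives, so that $\partial_k R_2$ still carries decay $1/\gamma$ from $\partial_\xi\sigma_\sharp\in{\bf S}^{-1}$ --- and the $\langle\theta\rangle$-variant of Theorems~\ref{thm6}--\ref{thm7} to the piece that depends only on $(x,\theta,\xi,k)$ and decays in $\theta$. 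Keeping track of the fact that the single factor $\eps$ can pay for only one integration by parts in $k$ forces the Taylor expansions in $\omega$, together with the auxiliary ones in $y$ needed to reduce the second piece to a genuine symbol, to be carried one order further than in Part~A; this is why the regularity threshold is raised to $n\ge 3(d+1)$. The conclusion is $\Ng r_2\Nd_{\rm Amp}\le C/\gamma$ in the appropriate combined sense; together with the estimate of $r_1$ this gives $\|\opteg(\widetilde{a})\,u-\opeg(a)\,u\|_0\le (C/\gamma)\,\|u\|_0$.

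For the $H^{1,\eps}$ bound when $n\ge 3\,d+3$, I would follow the scheme of Proposition~\ref{prop17} and of the proof of \eqref{estimthm32}: differentiating under the integral sign, $(\partial_{x_1}+\tfrac{\beta_1}{\eps}\,\partial_\theta)\optilde(r_i)\,u$ is the sum of $\optilde\big((i\,\xi_1+\tfrac{i\,k\,\beta_1}{\eps})\,r_i\big)\,u$, $\optilde(\partial_{x_1}r_i)\,u$ and $\tfrac{\beta_1}{\eps}\,\optilde(\partial_\theta r_i)\,u$. Multiplying by $i\,\xi_1+\tfrac{i\,k\,\beta_1}{\eps}$ amounts to replacing $\partial_{\xi_j}\sigma$ (resp.\ $\partial_\xi\sigma_\sharp$) by the order-$0$ symbol $i\,\xi_1\,\partial_{\xi_j}\sigma$ (resp.\ $i\,\xi_1\,\partial_\xi\sigma_\sharp$), so that term only contributes $O(\eps)$, resp.\ $O(1)$, in $\Ng\cdot\Nd_{\rm Amp}$; the terms $\partial_{x_1}r_i$, $\partial_\theta r_i$ cost one additional space derivative on $V,W$, absorbed by $n\ge 3\,d+3$; and the singular prefactor $\beta_1/\eps$ in front of $\partial_\theta r_i$ is cancelled by the $\eps$ carried by $r_1$ and by the $\eps W$ piece of $r_2$. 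The essential point --- the pulse substitute for Part~A's remark on estimating the two halves of $r_2$ separately --- is that for the $r_2$-contributions this singular derivative must be computed on the amplitude \emph{before} performing the $k$-integration by parts, so that the $\eps$ remains available to absorb $\beta_1/\eps$ and no spurious factor $1/\eps$ appears. Reassembling the pieces through Theorems~\ref{thm6}--\ref{thm8} yields the analogue of \eqref{estimthm32}, uniformly in $\eps$ and $\gamma$.
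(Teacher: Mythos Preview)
Your overall architecture mirrors the paper's: reduce to an amplitude with an explicit $\eps W(y,\omega)$ factor, split $\opteg(\widetilde{a})-\opeg(a)=\optilde(r_1)+\optilde(r_2)$ with $r_1$ coming from the $y$-Taylor expansion and $r_2$ from the $\omega$-Taylor expansion combined with one $k$-integration by parts, and for the $H^{1,\eps}$ bound revert to the ``raw'' amplitude $r_{2,\sharp}$ (before the $k$-integration by parts) so that the singular factor $\beta_1/\eps$ can be absorbed. The treatment of $r_1$ and the overall plan for the smoothing estimate are essentially the paper's.

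The genuine gap is in your handling of $r_2$ for the $L^2$ bound. After writing $r_2=\tfrac{1}{i}\int_0^1\partial_\omega\partial_k\widetilde a_{\eps,\gamma}(x,\theta,x,(1-t)\theta+t\omega,\xi,k)\,dt$ and expanding, the $k$-derivative eats the unique factor $\eps$: the leading piece (the paper's $r_3$) reads, up to constants,
\[
\sum_j\beta_j\,\sigma_j\!\left(\eps V(x,\theta),0,\xi+\tfrac{k\beta}{\eps},\gamma\right)\cdot\frac{W(x,\theta)-W(x,\omega)}{\theta-\omega},
\]
with $\sigma_j\in{\bf S}^{-1}$. This term carries \emph{no} $\eps$, genuinely depends on \emph{both} $\theta$ and $\omega$ through the difference quotient, and does \emph{not} enjoy the product decay $\langle\theta\rangle^{-1}\langle\omega\rangle^{-1}$ required by Theorem~\ref{thm8}; nor is it ``a piece that depends only on $(x,\theta,\xi,k)$'' to which Theorem~\ref{thm6} could apply. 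Your proposed dichotomy (joint $\langle\theta\rangle\langle\omega\rangle$ decay where the $\eps$ lives, versus a genuine symbol in $(x,\theta)$) therefore does not cover $r_3$. The paper's fix is different: one observes that $\frac{W(x,\theta)-W(x,\omega)}{\theta-\omega}$ decays like $\langle\theta-\omega\rangle^{-1}(\langle\theta\rangle^{-1}+\langle\omega\rangle^{-1})$, i.e.\ in two \emph{independent but non-axis-aligned} directions of the $(\theta,\omega)$-plane, and invokes a variant of Theorem~\ref{thm8} with this tilted weight. The remainder $r_2-r_3$ does retain an $\eps$ and decays in $\langle\theta-\omega\rangle$ only, so there one can afford the single $k$-derivative demanded by Theorem~\ref{thm7}. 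Without this observation the $L^2$ estimate does not close under the stated hypothesis $n\ge 2(d+1)$; your suggestion that ``the Taylor expansions \dots\ be carried one order further'' and that this explains $n\ge 3(d+1)$ is not what is going on --- the $L^2$ bound needs only $n\ge 2(d+1)$, and the threshold $3d+3$ enters solely in the $H^{1,\eps}$ part, because the amplitude $(i\xi_1+\tfrac{ik\beta_1}{\eps})r_{2,\sharp}$ has degree $+1$ and one must invoke Lemma~\ref{lem6} (hence $n\ge 3(d+1)$) merely to define the corresponding oscillatory integral.
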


The reason why we need $3\, d+3$ derivatives on the symbol for the smoothing effect (rather than $2\, d+3$ as
in Theorem \ref{thm3}) will be explained in the proof below.

\begin{proof}[Proof of Theorem \ref{thm9}]
Let us first observe that when the amplitude $\widetilde{a}$ does not depend on $(y,\omega)$, there is
no error in the difference $\opteg (\widetilde{a}) \, u -\opeg (a) \, u$, so we can restrict to the case where
$\widetilde{a}$ has the form
\begin{equation*}
\widetilde{a}_{\eps,\gamma} (x,\theta,y,\omega,\xi,k) := \sigma \left(
\eps \, V(x,\theta),\eps \, W(y,\omega),\xi+\dfrac{k \, \beta}{\eps},\gamma \right) \cdot \eps \, W(y,\omega) \, ,
\end{equation*}
where $\sigma (v,w_1,\xi,\gamma) \cdot w_2$ acts linearly on $w_2$.

$\bullet$ Following the ideas of Proposition \ref{prop2}, we can decompose the difference $\opteg (\widetilde{a})
-\opeg (a)$ as $\opteg (r_1) +\opteg (r_2)$, with $r_1$ as in Proposition \ref{prop2}, and
\begin{equation*}
r_2 := \dfrac{1}{i} \, \int_0^1 \partial_{\omega} \, \partial_k
\widetilde{a}_{\eps,\gamma}  \big( x,\theta,x,(1-t) \, \theta +t \, \omega,\xi,k \big) \, {\rm d}t \, .
\end{equation*}

The amplitude $r_1$ reads
\begin{multline*}
r_1 = \dfrac{1}{i} \, \sum_{j=1}^d \int_0^1 \sigma_j \left( \eps \, V(x,\theta),\eps \,
W((1-t) \, x +t\, y,\omega),\xi+\dfrac{k \, \beta}{\eps},\gamma \right) \cdot \eps \,
\partial_{y_j} W((1-t) \, x +t \, y,\omega) \, {\rm d}t \\
+\dfrac{1}{i} \, \sum_{j=1}^d \int_0^1 {\rm d}_w \sigma_j \left( \eps \, V(x,\theta),\eps \,
W((1-t) \, x +t\, y,\omega),\xi+\dfrac{k \, \beta}{\eps},\gamma \right) \cdot [\eps \, \partial_{y_j} W,
\eps \, W] ((1-t) \, x +t \, y,\omega) \, {\rm d}t \, ,
\end{multline*}
with $\sigma_j := \partial_{\xi_j} \, \sigma \in {\bf S}^{-1}$. To prove that $\optilde (r_1)$ is bounded on
$L^2$, we wish to apply Theorem \ref{thm7} and we thus try to control $\Ng r_1 \Nd_{\rm Amp}$. Since
we can use some decay in $\omega$, it is sufficient to control one derivative in $k$, $2\, d$ derivatives
in $\xi$, $d$ derivatives in $x$, $d$ derivatives in $y$, one derivative in $\theta$ and one derivatives in
$\omega$. The worst case occurs when $d$ derivatives in $x$ and $d$ derivatives in $y$ act on the
term $\partial_{y_j} W (\dots)$ and we thus need $W$ to have $2\, d+2$ derivatives in $L^\infty$ with
the weight $\langle \omega \rangle$. The factor $\eps$ allows for a uniform control of the $k$-derivative,
and we thus get a bound of the form
\begin{equation*}
\Ng r_1 \Nd_{\rm Amp} \le \dfrac{C}{\gamma} \, ,
\end{equation*}
with the quantity $\Ng r_1 \Nd_{\rm Amp}$ defined in Theorem \ref{thm7}.

$\bullet$ Let us now look at the operator $\opteg (r_2)$, which is more complicated. We compute
\begin{multline*}
r_2 = \dfrac{1}{i} \, \sum_{j=1}^d \int_0^1 \beta_j \, \sigma_j \left( \eps \, V(x,\theta),\eps \,
W(x,(1-t) \, \theta +t\, \omega),\xi+\dfrac{k \, \beta}{\eps},\gamma \right) \cdot
\partial_\omega W(x,(1-t) \, \theta +t \, \omega) \, {\rm d}t \\
+\dfrac{1}{i} \, \sum_{j=1}^d \int_0^1 \beta_j \, {\rm d}_w \sigma_j \left( \eps \, V(x,\theta),\eps \,
W(x,(1-t) \, \theta +t \, \omega),\xi+\dfrac{k \, \beta}{\eps},\gamma \right) \cdot [\eps \, \partial_\omega W,W]
(x,(1-t) \, \theta +t \, \omega) \, {\rm d}t \, ,
\end{multline*}
with again $\sigma_j := \partial_{\xi_j} \, \sigma \in {\bf S}^{-1}$. The "leading" part of the amplitude $r_2$ is
\begin{align*}
r_3 &:= \dfrac{1}{i} \, \sum_{j=1}^d \int_0^1 \beta_j \, \sigma_j \left( \eps \, V(x,\theta),
0,\xi+\dfrac{k \, \beta}{\eps},\gamma \right) \cdot \partial_\omega W(x,(1-t) \, \theta +t \, \omega) \, {\rm d}t \\
&= \dfrac{1}{i} \, \sum_{j=1}^d \beta_j \, \sigma_j \left( \eps \, V(x,\theta),0,\xi+\dfrac{k \, \beta}{\eps},\gamma \right)
\cdot \dfrac{W(x,\theta) -W(x,\omega)}{\theta -\omega} \, ,
\end{align*}
for $\theta \neq \omega$. Using separate estimates for $|\theta -\omega| \le 1$ or $|\theta -\omega| \ge 1$, we
obtain
\begin{equation*}
\sup_{\alpha,\beta \in \{ 0,1\}^{d+1}} \, \sup_{\nu \in \{ 0,1,2\}^d} \,
\left\| \langle \theta -\omega \rangle \, (\langle \theta \rangle^{-1} +\langle \omega \rangle^{-1})^{-1} \,
\partial_{x,\theta}^\alpha \, \partial_{y,\omega}^\beta \, \partial_\xi^\nu \, r_3
\right\|_{L^\infty (\R^{d+1} \times \R^{d+1} \times \R^{d+1})} \le \dfrac{C}{\gamma} \, ,
\end{equation*}
uniformly in the parameters $\eps$ and $\gamma$. (Observe that we cannot take any $k$-derivative because
there is no $\eps$ factor in the amplitude $r_3$, but we have decay in two directions of $\R^2$ that are either
$(\theta-\omega,\theta)$ or $(\theta-\omega,\omega)$.) We are not exactly in the framework of Theorem
\ref{thm8} (where the decay takes place in the $(\theta,\omega)$ directions) but we claim that the continuity
result of Theorem \ref{thm8} still holds if one replaces the weight $\langle \theta \rangle \, \langle \omega \rangle$
by the above weight. The only important point is to have a weight in two independent directions of the
$(\theta,\omega)$-plane. We can therefore conclude that the oscillatory integral operator $\optilde (r_3)$ is
bounded on $L^2$ with an operator norm controlled by $1/\gamma$.

$\bullet$ It remains to prove a bound in $L^2$ for the operator $\optilde (r_2 -r_3)$, and $r_2-r_3$ has the form
\begin{multline*}
\dfrac{1}{i} \, \sum_{j=1}^d \int_0^1 \beta_j \, \left( \sigma_j \left( \eps \, V(x,\theta),
\eps \, W(x,(1-t) \, \theta +t \, \omega),\xi+\dfrac{k \, \beta}{\eps},\gamma \right)
-\sigma_j \left( \eps \, V(x,\theta), 0,\xi+\dfrac{k \, \beta}{\eps},\gamma \right) \right) \\
\cdot \partial_\omega W(x,(1-t) \, \theta +t \, \omega) \, {\rm d}t \\
+\dfrac{1}{i} \, \sum_{j=1}^d \int_0^1 \beta_j \, {\rm d}_w \sigma_j \left( \eps \, V(x,\theta),\eps \,
W(x,(1-t) \, \theta +t \, \omega),\xi+\dfrac{k \, \beta}{\eps},\gamma \right) \cdot [\eps \, \partial_\omega W,W]
(x,(1-t) \, \theta +t \, \omega) \, {\rm d}t \, .
\end{multline*}
The $L^\infty$ norm of this quantity is uniformly controlled by
\begin{equation*}
\dfrac{C}{\gamma} \, \eps \, \int_0^1 \langle \theta +t\, (\omega-\theta) \rangle^{-2} \, {\rm d}t
\le \dfrac{C}{\gamma} \, \eps \, \langle \omega-\theta \rangle^{-1} \, .
\end{equation*}
The $1/\gamma$ factor comes from the fact that $\sigma_j$ belongs to ${\bf S}^{-1}$, and the exponent
$-2$ in the integrand comes from the fact that we have two functions that both have decay in their "fast"
variable. Since we have an $\eps$ factor available in the amplitude $r_2-r_3$, we can control one $k$-derivative
in $L^\infty$ just using the weight $\langle \omega-\theta \rangle$. Observe that we really need to control
one $k$-derivative because this term has decay in one single direction of the $(\theta,\omega)$-plane so
we are not able to use the same argument as for the amplitude $r_3$.

At this stage, we have seen that each piece in the decomposition of the oscillatory integral operator
$\opteg (\widetilde{a}) -\opeg (a)$ gives rise to a bounded operator on $L^2$ with operator norm
$O(1/\gamma)$. We thus obtain the first part of Theorem \ref{thm9}.

$\bullet$ In order to prove the smoothing effect, we need to control the first order singular derivatives of the
difference $\opteg (\widetilde{a}) \, u -\opeg (a) \, u$. If we stick to the above decomposition $r_1+r_2$, the
piece $r_3$ will be differentiated with respect to $\theta$ and multiplied by $1/\eps$. There will then be little
chance to obtain a uniform control of this piece since we had no $\eps$ factor there. We therefore use
another decomposition of the amplitude and write
\begin{equation*}
\opteg (\widetilde{a}) \, u -\opeg (a) \, u =\opteg (r_1) \, u +\opteg (r_{2,\sharp}) \, u \, ,
\end{equation*}
where $r_1$ is the same amplitude as above, and $r_{2,\sharp}$ denotes the amplitude
\begin{equation}
\label{amplituder2diese}
r_{2,\sharp} := \sigma \left(
\eps \, V(x,\theta),\eps \, W(x,\omega),\xi+\dfrac{k \, \beta}{\eps},\gamma \right) \cdot \eps \, W(x,\omega)
-\sigma \left(
\eps \, V(x,\theta),\eps \, W(x,\theta),\xi+\dfrac{k \, \beta}{\eps},\gamma \right) \cdot \eps \, W(x,\theta) \, ,
\end{equation}
which each expression on the right-hand side has degree $0$ with respect to the frequencies (observe that
here we have not applied Taylor's formula and integration by parts to get some decay in the frequency variables).

The singular derivatives of the term $\opteg (r_1) \, u$ are computed according to the formula
\begin{equation*}
\left( \partial_{x_1} +\dfrac{\beta_1}{\eps} \right) \, \opteg (r_1) \, u
=\opteg \left( i\, \left( \xi_1+\dfrac{k\, \beta_1}{\eps} \right) \, r_1 \right) \, u +\opteg (\partial_{x_1} r_1) \, u
+\dfrac{\beta_1}{\eps} \, \opteg (\partial_\theta r_1) \, u \, .
\end{equation*}
Estimating each term in the above decomposition follows from arguments that were already used above.
In particular, there is no problem here with the $\eps$ factors since the $\theta$ derivative on $r_1$ yields
an additional $\eps$ factor, and it also yields some decay in the $\theta$-direction. We can therefore prove
a uniform $L^2$ bound for the singular derivatives of $\opteg (r_1) \, u$ as long as the regularity $n$ of the
functions $V,W$ in the amplitude satisfies $n \ge 2\, d+3$ (compare with Theorem \ref{thm3}).

Let us now look at the singular derivative of the term $\opteg (r_{2,\sharp}) \, u$:
\begin{equation*}
\left( \partial_{x_1} +\dfrac{\beta_1}{\eps} \right) \, \opteg (r_{2,\sharp}) \, u
=\opteg \left( i\, \left( \xi_1+\dfrac{k\, \beta_1}{\eps} \right) \, r_{2,\sharp} \right) \, u
+\opteg (\partial_{x_1} r_{2,\sharp}) \, u +\dfrac{\beta_1}{\eps} \, \opteg (\partial_\theta r_{2,\sharp}) \, u \, .
\end{equation*}
There is a subtletly here because the first amplitude on the right-hand side has degree $+1$ with repect
to the frequencies, and this is the reason why we need $n \ge 3\, d+3$ in Theorem \ref{thm9} (in order to
give a meaning to this quantity). For this first term, we use the Taylor formula and integrate by parts to get
\begin{multline*}
\opteg \left( i\, \left( \xi_1+\dfrac{k\, \beta_1}{\eps} \right) \, r_{2,\sharp} \right) \, u =\dfrac{1}{(2\, \pi)^{d+1}}
\, \int_{\R^{d+1} \times \R^{d+1}} {\rm e}^{i\, (\xi \cdot (x-y) +k \, (\theta-\omega))} \\
\left( \int_0^1 \partial_\omega \, \partial_k \, b_{\eps,\gamma} (x,\theta,x,(1-t) \, \theta +t\, \omega,\xi,k) \,{\rm d}t
\right) \, u (y,\omega) \, {\rm d}\xi \, {\rm d}k \, {\rm d}y \, {\rm d}\omega \, ,
\end{multline*}
with
\begin{equation*}
b_{\eps,\gamma} (x,\theta,y,\omega,\xi,k) := i\, \left( \xi_1+\dfrac{k\, \beta_1}{\eps} \right) \, \sigma \left(
\eps \, V(x,\theta),\eps \, W(y,\omega),\xi+\dfrac{k \, \beta}{\eps},\gamma \right) \cdot \eps \, W(y,\omega) \, .
\end{equation*}
Since $b_{\eps,\gamma}$ has degree $+1$, its $k$-derivative has degree $0$. More precisely, we can check
that all terms arising when computing the derivative $\partial_\omega \, \partial_k \, b_{\eps,\gamma}$ yield an
oscillatory integral operator that is bounded on $L^2$ for $n \ge 3\, d+3$.

The terms $\opteg (\partial_{x_1} r_{2,\sharp}) \, u$ and $\beta_1/\eps \, \opteg (\partial_\theta r_{2,\sharp})
\, u$ are estimated by using the expression \eqref{amplituder2diese}. Let us observe that the second term in
the right-hand side of \eqref{amplituder2diese} is independent of $(y,\omega)$ so it gives rise to a genuine
pseudodifferential operator (for which the continuity criterion of Theorem \ref{thm6} is less restrictive than the
analogous result for oscillatory integral operators). Eventually, the interested reader can check that, using
either Theorem \ref{thm6} or Theorem \ref{thm8}, all amplitudes arising when computing $\partial_{x_1}
r_{2,\sharp}$ and $\beta_1/\eps \, \partial_\theta r_{2,\sharp}$ define oscillatory integral operators that are
bounded on $L^2$ and whose operator norm is $O(1)$ uniformly in $\eps,\gamma$. We feel free at this
stage to shorten the details that are very similar to many of the above arguments.
\end{proof}

In the same spirit as Theorem \ref{thm4}, we have the following result in the case of pulses.

\begin{theorem}
\label{thm10}
Let $\widetilde{a} \in A_n^1$, $n \ge 3\, d +4$, be given by \eqref{singularamplitudep}, and let $a \in S_n^1$
be defined by
\begin{equation*}
\forall \, (x,\theta,\xi,k) \in \R^{d+1} \times \R^{d+1} \, ,\quad a_{\eps,\gamma} (x,\theta,\xi,k)
:= \sigma \left( \eps \, V(x,\theta),\eps \, W(x,\theta),\xi+\dfrac{k \, \beta}{\eps},\gamma \right) \, .
\end{equation*}
Then the operator $\opteg (\widetilde{a}) -\opeg (a)$ is bounded on $L^2$, namely there exists a constant
$C \ge 0$ such that for all $\eps \in \, ]0,1]$ and for all $\gamma \ge 1$, there holds
\begin{equation*}
\forall \, u \in {\mathcal S} (\R^{d+1}) \, ,\quad
\left\| \opteg (\widetilde{a}) \, u -\opeg (a) \, u \right\|_0 \le C \, \| u \|_0 \, .
\end{equation*}
In particular, $\opteg (\widetilde{a})$ maps $H^{1,\eps}$ into $L^2$ and there exists a constant $C \ge 0$
such that for all $\eps \in \, ]0,1]$ and for all $\gamma \ge 1$, there holds
\begin{equation*}
\forall \, u \in {\mathcal S} (\R^{d+1}) \, ,\quad
\left\| \opteg (\widetilde{a}) \, u \right\|_0 \le C \, \| u \|_{H^{1,\eps},\gamma} \, .
\end{equation*}
\end{theorem}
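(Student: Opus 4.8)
The plan is to follow the proof of Theorem \ref{thm4} almost line by line, substituting Theorem \ref{thm9} for Theorem \ref{thm3}, Theorems \ref{thm7}--\ref{thm8} for Theorem \ref{thm2}, and Lemma \ref{lem6} for Lemma \ref{lem5}, while paying attention to the points flagged in the proof of Theorem \ref{thm9} where the continuous $\theta$-frequency forces genuine $\partial_k$-derivatives in place of finite differences. First I would reduce to the case where the amplitude carries an explicit $\eps$ factor: writing $\sigma(v,w,\xi,\gamma)=\sigma(v,0,\xi,\gamma)+\sigma_\sharp(v,w,\xi,\gamma)\cdot w$ with $\sigma_\sharp\in{\bf S}^1$, the $w$-independent term $\sigma(\eps V(x,\theta),0,\xi+k\beta/\eps,\gamma)$ yields the same operator whether read as a symbol or as an amplitude, hence drops out of $\opteg(\widetilde a)-\opeg(a)$; so one may assume $\widetilde a_{\eps,\gamma}(x,\theta,y,\omega,\xi,k)=\sigma_\sharp(\eps V(x,\theta),\eps W(y,\omega),\xi+k\beta/\eps,\gamma)\cdot\eps W(y,\omega)$, with $\sigma_\sharp\in{\bf S}^1$ and an explicit $\eps$.

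Next I would run the truncation process of \eqref{singularopampp}: by Lemma \ref{lem6} (applicable since $n\ge 3d+4\ge 3(d+1)$), $\opteg(\widetilde a)\,u$ is the $\mathcal S'$-limit of the truncated integrals $T_\delta$ built with $\chi(\delta\xi,\delta k)$. Applying to the truncated amplitude the pulse version of Proposition \ref{prop2} (valid in finite regularity by the usual mollification argument, exactly as in the proof of Theorem \ref{thm9}), decompose $T_\delta=T_{1,\delta}+\optilde(r_{1,\delta})\,u+\optilde(r_{2,\delta})\,u$. The leading piece $T_{1,\delta}$, carrying the diagonal amplitude $\widetilde a_{\eps,\gamma}(x,\theta,x,\theta,\xi,k)$, converges in $\mathcal S'$ to $\opeg(a)\,u$ (integrate first in $(y,\omega)$ and use the decay of $\widehat u$). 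For $r_{1,\delta}$, split off the terms where $\partial_{\xi_j}$ falls on $\chi_2(\delta\xi)$: they carry an extra $\delta$ factor and an amplitude built from $\partial_{y_j}\widetilde a\in A^1_{n-1}$, $n-1\ge 3d+3$, so Lemma \ref{lem6} gives their $\mathcal S'$-convergence and they vanish in the limit; the main term is $\chi(\delta\xi,\delta k)\,r_1$, where $r_1$ is assembled from $\partial_{y_j}\partial_{\xi_j}\widetilde a\in A^0_{n-1}$ --- a degree-$0$ amplitude carrying an $\eps$ and retaining the $\langle\omega\rangle^{-1}$ decay inherited from $W$ --- so Theorem \ref{thm7} yields $\optilde(\chi(\delta\xi,\delta k)\,r_1)\,u\to\optilde(r_1)\,u$ with $\|\optilde(r_1)\,u\|_0\le (C\eps/\gamma)\,\|u\|_0$.

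The delicate term is $\optilde(r_{2,\delta})\,u$, where, $\theta$-frequencies now being continuous, $r_2=\frac1i\int_0^1\partial_\omega\partial_k\widetilde a_{\eps,\gamma}(x,\theta,x,(1-t)\theta+t\omega,\xi,k)\,\mathrm dt$. Here $\partial_k$ produces a factor $1/\eps$ together with $\partial_\xi\sigma_\sharp\in{\bf S}^0$; the $1/\eps$ is absorbed by the $\eps W$ factor, so $r_2$ is of degree $0$. I would then mimic the $r_2=r_3+(r_2-r_3)$ splitting of the proof of Theorem \ref{thm9}: the piece $r_3$, obtained by freezing the $w$-slot of $\sigma_\sharp$ at $0$, has no $\eps$ but decays in two independent directions of the $(\theta,\omega)$-plane (essentially $(\theta-\omega,\theta)$ or $(\theta-\omega,\omega)$), so the skew-weight variant of Theorem \ref{thm8} gives an $L^2$ bound $O(1/\gamma)$; the remainder $r_2-r_3$ carries an $\eps$ and decays in a single direction, so Theorem \ref{thm7} applies after using the $\eps$ to afford one $k$-derivative, giving an $L^2$ bound $O(1)$. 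Passing to the limit $\delta\to0$ (again using the $\mathcal S'$-convergence from Lemma \ref{lem6} for the truncated pieces), one obtains $\opteg(\widetilde a)\,u-\opeg(a)\,u=\optilde(r_1)\,u+\optilde(r_2)\,u$ with $L^2$-norm $\le C\,\|u\|_0$, which is \eqref{estimthm4}; the mapping property $H^{1,\eps}\to L^2$ for $\opteg(\widetilde a)$ then follows by combining this with Proposition \ref{prop14} applied to $\opeg(a)$. The main obstacle is precisely the analysis of $r_2$: unlike the wavetrain case, one cannot hide behind a finite difference in $k$, so one must carefully bookkeep how the $1/\eps$ from $\partial_k$ cancels against the $\eps W$ factor and secure $\langle\cdot\rangle$-decay in the correct (possibly skew) directions of the $(\theta,\omega)$-plane --- this is exactly why the regularity threshold must be raised to $n\ge 3d+4$.
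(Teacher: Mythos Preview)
Your proposal is correct and follows essentially the same route as the paper, which merely says ``very similar to that of Theorem \ref{thm4}, with suitable modifications as in Theorem \ref{thm9}''; you have spelled out precisely those modifications. One small slip: in your treatment of $r_3$, since $\sigma_\sharp\in{\bf S}^1$ the $\partial_{\xi_j}\sigma_\sharp$ factors belong only to ${\bf S}^0$ (not ${\bf S}^{-1}$ as in Theorem \ref{thm9}), so the skew-weight bound on $\optilde(r_3)$ is $O(1)$ rather than $O(1/\gamma)$ --- but this is harmless, as $O(1)$ is all Theorem \ref{thm10} claims.
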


\noindent The proof is very similar to that of Theorem \ref{thm4}, with suitable modifications as in Theorem
\ref{thm9} in order to get some decay in the fast variables $\theta$ and/or $\omega$.

\section{Singular pseudodifferential calculus II. Adjoints and products}
\label{sect8}

The same results as in Section \ref{sect5} hold in the context of pulses. We just state the corresponding
results without proof in view of a future application to nonlinear geometric optics problems. The two first
results deal with adjoints of singular pseudodifferential operators while the last two deal with products.

\begin{proposition}
\label{prop18}
Let $a \in S_n^0$, $n \ge 2\, (d+1)$, and let $a^*$ denote the conjugate transpose of the symbol $a$.
Then $\opeg (a)$ and $\opeg (a^*)$ act boundedly on $L^2$ and there exists a constant $C \ge 0$ such
that for all $\eps \in \, ]0,1]$ and for all $\gamma \ge 1$, there holds
\begin{equation*}
\forall \, u \in {\mathcal S} (\R^{d+1}) \, ,\quad
\left\| \opeg (a)^* \, u -\opeg (a^*) \, u \right\|_0 \le \dfrac{C}{\gamma} \, \| u \|_0 \, .
\end{equation*}

If $n \ge 3\, d +3$, then for another constant $C$, there holds
\begin{equation*}
\forall \, u \in {\mathcal S} (\R^{d+1}) \, ,\quad
\left\| \opeg (a)^* \, u -\opeg (a^*) \, u \right\|_{H^{1,\eps},\gamma} \le C \, \| u \|_0 \, ,
\end{equation*}
uniformly in $\eps$ and $\gamma$.
\end{proposition}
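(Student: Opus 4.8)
The plan is to reduce Proposition \ref{prop18} to the comparison result between singular oscillatory integral operators and singular pseudodifferential operators, exactly as Proposition \ref{prop8} was reduced to Theorem \ref{thm3} in Part A. First I would observe that, for a symbol $a = (a_{\eps,\gamma})_{\eps,\gamma} \in S_n^0$ given by the representation \eqref{singularsymbolp} with $a_{\eps,\gamma}(x,\theta,\xi,k) = \sigma(\eps\, V(x,\theta),\xi+k\,\beta/\eps,\gamma)$, the $L^2$-adjoint $\opeg(a)^*$ coincides with the singular oscillatory integral operator $\opteg(\widetilde{b})$ associated with the amplitude
\begin{equation*}
\widetilde{b}_{\eps,\gamma}(x,\theta,y,\omega,\xi,k) := a_{\eps,\gamma}(y,\omega,\xi,k)^*
= \sigma\left(\eps\, V(y,\omega),\xi+\dfrac{k\,\beta}{\eps},\gamma\right)^*.
\end{equation*}
This identity is the usual formal manipulation (conjugating the kernel, swapping the roles of $(x,\theta)$ and $(y,\omega)$), and one must be a little careful that it is legitimate at the level of generality considered here: it should be justified first for very smooth amplitudes with compact support in $(\xi,k)$ — where all integrals converge absolutely — and then extended to the general case by the truncation-and-regularization argument already used repeatedly in Sections \ref{sect6} and \ref{sect7}, using that $\langle\theta\rangle\, V \in {\mathcal C}^n_b(\R^{d+1})$ so that $\widetilde{b}$ is a legitimate singular amplitude in the class $A_n^0$ of Definition \ref{def5} (with the roles of $V$ and $W$ played by $V$ in the $(y,\omega)$ slot; in fact $\widetilde{b}$ does not depend on $(x,\theta)$ through the substitution, which only makes matters easier).

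Once this identification is in place, the symbol attached to $\widetilde{b}$ in the sense of Theorem \ref{thm9} is obtained by setting $y = x$ and $\omega = \theta$, which gives back exactly $a_{\eps,\gamma}(x,\theta,\xi,k)^*$, i.e. the symbol $a^*$. Thus $\opeg(a)^* - \opeg(a^*) = \opteg(\widetilde{b}) - \opeg(\widetilde{b}|_{y=x,\omega=\theta})$, and the two assertions of Proposition \ref{prop18} follow directly from the two estimates of Theorem \ref{thm9}: the $L^2$ bound
\begin{equation*}
\left\| \opteg(\widetilde{b})\, u - \opeg(a^*)\, u \right\|_0 \le \dfrac{C}{\gamma}\, \| u \|_0
\end{equation*}
holds under $n \ge 2\,(d+1)$, and the smoothing bound in $H^{1,\eps}$ holds under $n \ge 3\,d+3$ — which matches precisely the regularity thresholds stated in Proposition \ref{prop18}. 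The fact that $\opeg(a^*)$ (and hence $\opeg(a)^*$) acts boundedly on $L^2$ is then immediate from Proposition \ref{prop13} applied to the symbol $a^* \in S_n^0$, $m = 0$.

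The one place that deserves genuine attention — and which I expect to be the main obstacle — is the rigorous justification of the adjoint identity $\opeg(a)^* = \opteg(\widetilde{b})$ in the pulse setting, because here the $\theta$-frequency variable $k$ ranges over all of $\R$ rather than over $\Z$, so the kernel of $\opeg(a)$ is only an oscillatory distribution and not an absolutely convergent integral. One has to insert the cut-offs $\chi(\delta\,\xi,\delta\,k)$, compute the adjoint at the truncated level where Fubini is licit, and then pass to the limit $\delta \to 0$, invoking the uniqueness of the limit in ${\mathcal S}'(\R^{d+1})$ established in Theorems \ref{thm7} and \ref{thm8} (and the remark that the limit is independent of $\chi$, so in particular of whether the truncation is applied before or after taking the adjoint). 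Apart from this bookkeeping, which is entirely parallel to arguments already carried out in the excerpt, there is nothing new; I would therefore present the proof tersely, stating the adjoint identity, citing the truncation procedure of Section \ref{sect7} for its justification, and then simply invoking Theorem \ref{thm9}.
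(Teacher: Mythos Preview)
Your proposal is correct and follows exactly the approach the paper intends: the paper states Proposition \ref{prop18} without proof, noting that the arguments of Section \ref{sect5} carry over, and the proof of the wavetrain analogue (Proposition \ref{prop8}) consists precisely of identifying $\opeg(a)^*$ with $\opteg(\widetilde{b})$ for the amplitude $\widetilde{b}_{\eps,\gamma}(x,\theta,y,\omega,\xi,k) = a_{\eps,\gamma}(y,\omega,\xi,k)^*$ and then invoking the comparison theorem (here Theorem \ref{thm9} in place of Theorem \ref{thm3}). Your attention to the truncation argument needed to justify the adjoint identity in the pulse setting is appropriate and in the spirit of the paper's repeated use of that device.
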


\begin{proposition}
\label{prop19}
Let $a \in S_n^1$, $n \ge 3\, d +4$, and let $a^*$ denote the conjugate transpose of the symbol $a$.
Then $\opeg (a)$ and $\opeg (a^*)$ map $H^{1,\eps}$ into $L^2$ and there exists a family of operators
$R^{\eps,\gamma}$ that satisfies
\begin{itemize}
 \item there exists a constant $C \ge 0$ such that for all $\eps \in \, ]0,1]$ and for all
      $\gamma \ge 1$, there holds
\begin{equation*}
\forall \, u \in {\mathcal S} (\R^{d+1}) \, ,\quad
\left\| R^{\eps,\gamma} \, u \right\|_0 \le C \, \| u \|_0 \, ,
\end{equation*}

 \item the following duality property holds
\begin{equation*}
\forall \, u,v \in {\mathcal S} (\R^{d+1}) \, ,\quad
\langle \opeg (a) \, u,v \rangle_{L^2} -\langle u,\opeg (a^*) \, v \rangle_{L^2} =\langle
R^{\eps,\gamma} \, u,v \rangle_{L^2} \, .
\end{equation*}
In particular, the adjoint $\opeg (a)^*$ for the $L^2$ scalar product maps $H^{1,\eps}$ into $L^2$.
\end{itemize}
\end{proposition}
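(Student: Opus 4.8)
The plan is to mirror exactly the argument used for Proposition \ref{prop9} in the wavetrain case, which reduced the statement about adjoints to the comparison Theorem \ref{thm4} applied to a suitable amplitude. Here the same strategy works, with Theorem \ref{thm4} replaced by its pulse analogue Theorem \ref{thm10}. First I would observe that, as in the proof of Proposition \ref{prop18}, the high regularity $n \ge 3d+4$ is exactly what is needed to justify passing to the limit in the standard truncation process (the truncation in $(\xi,k)$ that defines singular oscillatory integral operators of degree $1$, cf. Lemma \ref{lem6}) and to conclude that the $L^2$-adjoint of $\opeg(a)$ coincides with the singular oscillatory integral operator $\opteg(\widetilde{b})$ whose amplitude is
\begin{equation*}
\widetilde{b}_{\eps,\gamma}(x,\theta,y,\omega,\xi,k) := a_{\eps,\gamma}(y,\omega,\xi,k)^*
= \sigma\left( \eps\, V(y,\omega),\xi+\dfrac{k\,\beta}{\eps},\gamma \right)^* \, .
\end{equation*}
Because $\langle \theta\rangle\, V \in {\mathcal C}^n_b(\R^{d+1})$, this amplitude is of the form \eqref{singularamplitudep} (with the substituted function depending only on $(y,\omega)$, hence trivially a valid $W$-type substitution and a constant $V$-type substitution), so $\widetilde{b} \in A_n^1$ and Theorem \ref{thm10} applies.

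Next I would apply Theorem \ref{thm10} to the pair $(\widetilde{b}, b)$, where $b \in S_n^1$ is the symbol obtained from $\widetilde{b}$ by the diagonal restriction $b_{\eps,\gamma}(x,\theta,\xi,k) := \widetilde{b}_{\eps,\gamma}(x,\theta,x,\theta,\xi,k) = \sigma(\eps\,V(x,\theta),\xi+k\beta/\eps,\gamma)^* = a^*_{\eps,\gamma}(x,\theta,\xi,k)$. Theorem \ref{thm10} gives that $\opteg(\widetilde{b}) - \opeg(b) = \opteg(\widetilde{b}) - \opeg(a^*)$ is bounded on $L^2$ uniformly in $\eps,\gamma$. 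Setting $R^{\eps,\gamma} := \opeg(a)^* - \opeg(a^*) = \opteg(\widetilde{b}) - \opeg(a^*)$ then yields the required uniform $L^2$ bound $\| R^{\eps,\gamma} u\|_0 \le C\|u\|_0$. The duality identity $\langle \opeg(a)u,v\rangle_{L^2} - \langle u,\opeg(a^*)v\rangle_{L^2} = \langle R^{\eps,\gamma}u,v\rangle_{L^2}$ is then just the definition of $R^{\eps,\gamma}$ together with the adjoint identification $\langle \opeg(a)u,v\rangle_{L^2} = \langle u,\opeg(a)^*v\rangle_{L^2}$. Finally, the mapping property $\opeg(a),\opeg(a^*) : H^{1,\eps}\to L^2$ follows from Proposition \ref{prop14} (with $m=1$), and since $\opeg(a^*)$ maps $H^{1,\eps}$ into $L^2$ and $R^{\eps,\gamma}$ maps $L^2$ (a fortiori $H^{1,\eps}$) into $L^2$, the same holds for $\opeg(a)^* = \opeg(a^*) + R^{\eps,\gamma}$.

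The only genuinely delicate point, and the one I would write out with some care, is the justification that the $L^2$-adjoint of the degree-$1$ operator $\opeg(a)$ really does coincide with the oscillatory integral operator $\opteg(\widetilde{b})$ in the sense of Lemma \ref{lem6} — that is, that the truncation-and-limit procedure defining $\opteg(\widetilde{b})$ is compatible with taking adjoints. This is where the extra regularity $n \ge 3d+4$ (rather than $2(d+1)$) is consumed: one introduces the frequency cut-off $\chi(\delta\xi,\delta k)$, notes that the adjoint of $\op(\chi(\delta\cdot)a_{\eps,\gamma})$ is the honest oscillatory integral operator with the truncated amplitude $\chi(\delta\cdot)\widetilde{b}_{\eps,\gamma}$, and then passes to the limit $\delta\to 0$ using that $\opeg(a)$ is bounded $H^{1,\eps}\to L^2$ (Proposition \ref{prop14}) together with the convergence of the truncated oscillatory integrals in ${\mathcal S}'$ furnished by Lemma \ref{lem6}. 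Everything else is a verbatim transcription of the wavetrain argument, so I would state the remaining steps concisely and refer back to the proof of Proposition \ref{prop9}.
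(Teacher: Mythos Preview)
Your approach is correct and is exactly the one the paper intends: the paper states Proposition \ref{prop19} without proof, referring back to Section \ref{sect5}, and the argument you give is precisely the transcription of the proof of Proposition \ref{prop9} with Theorem \ref{thm4} replaced by Theorem \ref{thm10} and Lemma \ref{lem5} replaced by Lemma \ref{lem6}.

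One small slip worth fixing: with your definition $R^{\eps,\gamma} := \opeg(a)^* - \opeg(a^*)$, the duality identity does not quite come out as stated. Indeed, using $\langle \opeg(a)u,v\rangle = \langle u,\opeg(a)^*v\rangle$ you get
\[
\langle \opeg(a)u,v\rangle - \langle u,\opeg(a^*)v\rangle
= \langle u,(\opeg(a)^*-\opeg(a^*))v\rangle,
\]
which is $\langle u,R^{\eps,\gamma}v\rangle$, not $\langle R^{\eps,\gamma}u,v\rangle$. The fix is immediate: take instead $R^{\eps,\gamma} := \big(\opeg(a)^*-\opeg(a^*)\big)^* = \opeg(a)-\opeg(a^*)^*$, which is still uniformly bounded on $L^2$ (the adjoint of a bounded operator has the same norm), and then the identity holds as written. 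Everything else in your argument is correct.
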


\begin{proposition}
\label{prop20}
Let $a,b \in S_n^0$, $n \ge 2\, (d+1)$. Then there exists a constant $C \ge 0$ such that for all
$\eps \in \, ]0,1]$ and for all $\gamma \ge 1$, there holds
\begin{equation*}
\forall \, u \in {\mathcal S} (\R^{d+1}) \, ,\quad
\left\| \opeg (a) \, \opeg (b) \, u -\opeg (a \, b) \, u \right\|_0 \le \dfrac{C}{\gamma} \, \| u \|_0 \, .
\end{equation*}
If $n \ge 3\, d +3$, then for another constant $C$, there holds
\begin{equation*}
\forall \, u \in {\mathcal S} (\R^{d+1}) \, ,\quad
\left\| \opeg (a) \, \opeg (b) \, u -\opeg (a \, b) \, u \right\|_{H^{1,\eps},\gamma} \le C \, \| u \|_0 \, ,
\end{equation*}
uniformly in $\eps$ and $\gamma$.

Let $a \in S_n^1,b \in S_n^0$ or $a \in S_n^0,b \in S_n^1$, $n \ge 3\, d +4$. Then there exists a constant
$C \ge 0$ such that for all $\eps \in \, ]0,1]$ and for all $\gamma \ge 1$, there holds
\begin{equation*}
\forall \, u \in {\mathcal S} (\R^{d+1}) \, ,\quad
\left\| \opeg (a) \, \opeg (b) \, u -\opeg (a \, b) \, u \right\|_0 \le C \, \| u \|_0 \, .
\end{equation*}
\end{proposition}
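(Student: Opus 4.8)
The plan is to follow closely the pattern of Propositions \ref{prop10} and \ref{prop11}, replacing their wavetrain ingredients by the pulse versions: Theorems \ref{thm9} and \ref{thm10} in place of Theorems \ref{thm3} and \ref{thm4}, and Propositions \ref{prop18}--\ref{prop19} in place of Propositions \ref{prop8}--\ref{prop9}.

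As a preliminary step I would record the auxiliary product estimate, i.e. the pulse analogue of Proposition \ref{prop10}: for two symbols $a,c$ the operator $\opeg (a) \, \opeg (c)^*$ coincides with the singular oscillatory integral operator $\opteg (\widetilde{d})$ associated with the amplitude
\begin{equation*}
\widetilde{d}_{\eps,\gamma} (x,\theta,y,\omega,\xi,k) := a_{\eps,\gamma} (x,\theta,\xi,k) \, c_{\eps,\gamma} (y,\omega,\xi,k)^* \, .
\end{equation*}
Writing $a_{\eps,\gamma}$ through $(\sigma_a,V)$ and $c_{\eps,\gamma}$ through $(\sigma_c,W)$ as in \eqref{singularsymbolp}, this amplitude has the form \eqref{singularamplitudep} with symbol $\sigma (v,w,\xi,\gamma) := \sigma_a (v,\xi,\gamma) \, \sigma_c (w,\xi,\gamma)^* \in {\bf S}^{m_a+m_c}$, and the substituted functions $V,W$ still satisfy the $\langle \theta \rangle$, $\langle \omega \rangle$ decay required in Definition \ref{def5}; its diagonal symbol (set $y=x$, $\omega=\theta$) is exactly $a \, c^*$. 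Theorem \ref{thm9} (when $m_a=m_c=0$) or Theorem \ref{thm10} (when one of the two degrees equals $1$) then controls $\opeg (a) \, \opeg (c)^* - \opeg (a \, c^*)$: an $O(1/\gamma)$ bound on $L^2$, an $H^{1,\eps}$ smoothing bound when $n \ge 3\, d+3$, and an $O(1)$ bound on $L^2$ in the degree-$1$ case.

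The bulk of the proof is then the $**$ trick, verbatim as in Proposition \ref{prop11}. Write $\opeg (b) = \opeg (b^{**}) = \opeg (b^*)^* + R^{\eps,\gamma}$, where $R^{\eps,\gamma}$ is the remainder produced by Proposition \ref{prop18} (if $b \in S_n^0$) or Proposition \ref{prop19} (if $b \in S_n^1$): it is $O(1/\gamma)$-bounded on $L^2$, it maps $L^2$ into $H^{1,\eps}$ when $b \in S_n^0$ and $n \ge 3\, d+3$, and it is merely $L^2$-bounded when $b \in S_n^1$. Consequently
\begin{equation*}
\opeg (a) \, \opeg (b) = \opeg (a) \, \opeg (b^*)^* + \opeg (a) \, R^{\eps,\gamma} \, .
\end{equation*}
The first term is treated by the auxiliary estimate above with $c = b^*$ (so $c^* = b$), giving $\opeg (a) \, \opeg (b^*)^* = \opeg (a \, b)$ plus a remainder of the announced type. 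For the second term one invokes the mapping properties of $\opeg (a)$: when $a \in S_n^0$ it acts boundedly on both $L^2$ and $H^{1,\eps}$ (the latter being the pulse analogue of Lemma \ref{lem4}, proved exactly as Proposition \ref{prop15}), and when $a \in S_n^1$ it maps $H^{1,\eps}$ into $L^2$ (Proposition \ref{prop14} combined with Theorem \ref{thm10}). Matching degrees and regularity in each of the three listed cases, together with the symmetric argument obtained by exchanging the roles of $a$ and $b$, yields the three claimed estimates.

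The step I expect to be most delicate is purely bookkeeping: tracking the regularity indices so that every composition $\opeg (a) \, R^{\eps,\gamma}$ and every application of Theorems \ref{thm9}--\ref{thm10} lands in the correct space with a norm that is uniform in $(\eps,\gamma)$. In particular, the jump from $2\, d+3$ (sufficient in the wavetrain case, Proposition \ref{prop11}) to $3\, d+3$ for the $H^{1,\eps}$ smoothing statement is inherited directly from Theorem \ref{thm9}: there the amplitude $r_{2,\sharp}$ carries frequency degree $+1$ before integration by parts, so estimating its singular derivative costs one extra block of $d$ derivatives on $V$ and $W$. Once Theorems \ref{thm9}--\ref{thm10} and Propositions \ref{prop18}--\ref{prop19} are in hand, no new analytic difficulty appears, and the proof is a routine assembly of these building blocks.
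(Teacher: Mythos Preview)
Your proposal is correct and follows exactly the approach the paper intends: the paper states Proposition \ref{prop20} without proof, noting that ``the same results as in Section \ref{sect5} hold in the context of pulses,'' and your outline is precisely the pulse transcription of the proof of Proposition \ref{prop11}, with Theorems \ref{thm9}--\ref{thm10} and Propositions \ref{prop18}--\ref{prop19} substituted for their wavetrain counterparts and the $**$ argument carried over verbatim. Your remark on the regularity bookkeeping (in particular the shift from $2\,d+3$ to $3\,d+3$ inherited from Theorem \ref{thm9}) is the only point requiring care, and you have identified it correctly.
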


\begin{proposition}
\label{prop21}
Let $a \in S_n^{-1},b \in S_n^1$, $n \ge 3\, d +4$. Then $\opeg (a) \, \opeg (b)$ defines a bounded operator on
$H{1,\eps}$ and there exists a constant $C \ge 0$ such that for all $\eps \in \, ]0,1]$ and for all $\gamma \ge 1$,
there holds
\begin{equation*}
\forall \, u \in {\mathcal S} (\R^{d+1}) \, ,\quad
\left\| \opeg (a) \, \opeg (b) \, u -\opeg (a \, b) \, u \right\|_{H^{1,\eps},\gamma} \le C \, \| u \|_0 \, .
\end{equation*}
\end{proposition}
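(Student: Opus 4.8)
The plan is to mimic the ``$**$'' argument of Proposition \ref{prop11}, now in the degree $\pm 1$ situation and in the pulse framework; the one-sidedness of the statement (the smoothing factor $\opeg(a)$ always sits on the left) is precisely what makes the argument go through. First I would apply Proposition \ref{prop19} to the symbol $b^{*}\in S_{n}^{1}$ — legitimate since $n\ge 3d+4$ — to obtain a family $R^{\eps,\gamma}$, bounded on $L^{2}$ uniformly in $\eps,\gamma$, such that, testing against $v\in{\mathcal S}(\R^{d+1})$ and taking the $L^{2}$-adjoint of the resulting duality identity, one gets $\opeg(b)=\opeg(b^{*})^{*}-(R^{\eps,\gamma})^{*}$ as operators on ${\mathcal S}(\R^{d+1})$, with $(R^{\eps,\gamma})^{*}$ again bounded on $L^{2}$ uniformly. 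Hence
\begin{equation*}
\opeg(a)\,\opeg(b)=\opeg(a)\,\opeg(b^{*})^{*}-\opeg(a)\,(R^{\eps,\gamma})^{*}.
\end{equation*}

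For the last term, Proposition \ref{prop15} ($n\ge d+2$) gives that $\opeg(a)$ maps $L^{2}$ into $H^{1,\eps}$ with norm uniform in $\eps,\gamma$; composing with the $L^{2}$-bounded operator $(R^{\eps,\gamma})^{*}$ yields an operator $L^{2}\to H^{1,\eps}$ of uniformly bounded norm, hence a contribution to $\opeg(a)\opeg(b)u-\opeg(ab)u$ that is $O(1)\,\|u\|_{0}$ in $\|\cdot\|_{H^{1,\eps},\gamma}$. For the term $\opeg(a)\,\opeg(b^{*})^{*}$ I would argue exactly as in the proof of Proposition \ref{prop10}: after the usual truncation/regularization in $(\xi,k)$, this operator coincides with the singular oscillatory integral operator $\opteg(\widetilde c)$ whose amplitude is $\widetilde c_{\eps,\gamma}(x,\theta,y,\omega,\xi,k):=a_{\eps,\gamma}(x,\theta,\xi,k)\,b_{\eps,\gamma}(y,\omega,\xi,k)$. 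Since $a$ comes from a symbol in ${\bf S}^{-1}$ and $b$ from one in ${\bf S}^{1}$, their product lies in ${\bf S}^{0}$, so $\widetilde c\in A_{n}^{0}$; moreover its ``diagonal'' symbol $a_{\eps,\gamma}(x,\theta,\xi,k)b_{\eps,\gamma}(x,\theta,\xi,k)$ is exactly $(ab)_{\eps,\gamma}$ with $ab\in S_{n}^{0}$. Theorem \ref{thm9} — the $H^{1,\eps}$ comparison, which requires $n\ge 3d+3$, satisfied here — then gives $\|\opeg(a)\opeg(b^{*})^{*}u-\opeg(ab)u\|_{H^{1,\eps},\gamma}\le C\,\|u\|_{0}$ uniformly in $\eps,\gamma$. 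Adding the two contributions yields the announced estimate. Finally, the boundedness of $\opeg(a)\opeg(b)$ on $H^{1,\eps}$ is immediate from the composition $\opeg(b)\colon H^{1,\eps}\to L^{2}$ (Proposition \ref{prop14}, $n\ge d+1$) followed by $\opeg(a)\colon L^{2}\to H^{1,\eps}$ (Proposition \ref{prop15}), both uniform in $\eps,\gamma$.

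The main obstacle is the rigorous identification of $\opeg(a)\opeg(b^{*})^{*}$ with the oscillatory integral operator $\opteg(\widetilde c)$ in this finite-regularity, non-decaying-frequency setting: one must run the truncation argument, check that $\widetilde c$ genuinely belongs to $A_{n}^{0}$ with the weighted ($\langle\theta\rangle$, $\langle\omega\rangle$) regularity used throughout Part B, and verify that the threshold $n\ge 3d+3$ of Theorem \ref{thm9} is met — after which the estimate reduces to the bookkeeping already done in Sections \ref{sect6}--\ref{sect7}. It is worth stressing that the argument is genuinely asymmetric: it succeeds because the smoothing operator $\opeg(a)$ acts on the \emph{left} of the $L^{2}$-bounded remainder $(R^{\eps,\gamma})^{*}$, whereas (as noted after Proposition \ref{prop12}) the analogous statement with a degree $+1$ operator acting on the left fails, since one would then be forced to estimate a term of the type $(\partial_{x_{1}}+(\beta_{1}/\eps)\partial_{\theta})\opeg(\cdot)$ with no compensating $\eps$ factor.
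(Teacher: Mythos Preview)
Your proposal is correct and follows exactly the approach the paper intends: the paper states Proposition \ref{prop21} without proof, referring implicitly to the $**$ argument of Propositions \ref{prop11}--\ref{prop12} (wavetrain case), and you have spelled out precisely that argument in the pulse setting, invoking the correct pulse-side ingredients (Propositions \ref{prop14}, \ref{prop15}, \ref{prop19}, and the $H^{1,\eps}$ part of Theorem \ref{thm9}). Your closing remarks on the asymmetry of the $+1,-1$ product and on the verification that $\widetilde c\in A_n^0$ with the weighted regularity of Definition \ref{def5} are exactly the points the paper leaves to the reader.
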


\noindent Our final result is G{\aa}rding's inequality.

\begin{theorem}
\label{thm11}
Let $\sigma \in {\bf S}^0$ satisfy $\text{\rm Re} \, \sigma (v,\xi,\gamma) \ge C_K>0$ for all $v$ in a compact
subset $K$ of ${\mathcal O}$. Let now $a \in S_0^n$, $n \ge 2\, d+2$ be given by \eqref{singularsymbolp}, where
$V$ is valued in a convex compact subset $K$. Then for all $\delta >0$, there exists $\gamma_0$ which depends
uniformly on $V$, the constant $C_K$ and $\delta$, such that for all $\gamma \ge \gamma_0$ and all $u \in
{\mathcal S}(\R^{d+1})$, there holds
\begin{equation*}
\text{\rm Re } \langle \opeg (a) \, u ;u \rangle_{L^2} \ge (C-\delta) \, \| u \|_0^2 \, .
\end{equation*}
\end{theorem}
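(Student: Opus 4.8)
The plan is to run the classical square-root argument, using \emph{only} the first-order calculus of Propositions \ref{prop18} and \ref{prop20}: since this argument needs the adjoint and product rules exactly once each, the $O(1/\gamma)$ control of the remainders provided there is enough, and no iteration (which the singular calculus does not allow) is required. Write $c := C_K$ for the lower bound, and assume $0 < \delta < c$ (otherwise bound $\opeg (a)$ on $L^2$ by Proposition \ref{prop13} and we are done). The first step is to build a square-root symbol. The Hermitian matrix $\text{\rm Re}\, \sigma := \tfrac12 (\sigma +\sigma^*)$ satisfies $\text{\rm Re}\, \sigma (v,\xi,\gamma) \ge c\, I$ for $v \in K$; since $\sigma \in {\bf S}^0$, the bound on $\partial_v \sigma$ in Definition \ref{def1} (uniform in $\xi,\gamma$ on compact subsets of ${\mathcal O}$) shows that $\text{\rm Re}\, \sigma (v,\xi,\gamma) \ge (c -\delta/4)\, I$ for all $\xi,\gamma$ and all $v$ in a bounded open set ${\mathcal O}'$ with $K \subset {\mathcal O}'$ and $\overline{{\mathcal O}'} \subset {\mathcal O}$. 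On ${\mathcal O}'$ the Hermitian matrix $\text{\rm Re}\, \sigma -(c -\delta/2)\, I$ is $\ge (\delta/4)\, I >0$, hence has a positive-definite Hermitian square root $p := \big( \text{\rm Re}\, \sigma -(c -\delta/2)\, I \big)^{1/2}$; since the matrix square root is real-analytic on positive-definite matrices, the Fa\`a di Bruno formula yields $p \in {\bf S}^0 ({\mathcal O}')$, and $p^* = p$, so $p^*\, p = p^2 = \text{\rm Re}\, \sigma -(c -\delta/2)\, I$ on ${\mathcal O}'$.

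Let $q = (q_{\eps,\gamma})$ be the singular symbol built from $p$ by the substitution \eqref{singularsymbolp} with the \emph{same} function $V$ (valued in the convex compact $K \subset {\mathcal O}'$). Then $q \in S^0_n$, $q^* = q$, and $q^*\, q \in S^0_n$ is the singular symbol attached to $\text{\rm Re}\, \sigma -(c -\delta/2)\, I$. Consequently the singular symbol $\tfrac12 (a +a^*)$ attached to $\text{\rm Re}\, \sigma$ equals $q^*\, q +(c -\delta/2)\, I$, and $\opeg$ of the constant symbol $(c -\delta/2)\, I$ is the Fourier multiplier $(c -\delta/2)\, \mathrm{Id}$.

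Now I would assemble the estimate. Since $\opeg (a) +\opeg (a)^*$ is self-adjoint,
\begin{equation*}
\text{\rm Re}\, \langle \opeg (a)\, u ;u \rangle_{L^2}
= \tfrac12 \big\langle \big( \opeg (a) +\opeg (a)^* \big)\, u ;u \big\rangle_{L^2} ,
\end{equation*}
and Proposition \ref{prop18} applied to $a$ gives $\opeg (a) +\opeg (a)^* = \opeg (a +a^*) +R_1^{\eps,\gamma}$ with $\| R_1^{\eps,\gamma} \|_{L^2 \to L^2} \le C/\gamma$. Using $\tfrac12 (a +a^*) = q^*\, q +(c -\delta/2)\, I$ and the previous remark,
\begin{equation*}
\text{\rm Re}\, \langle \opeg (a)\, u ;u \rangle_{L^2}
= \text{\rm Re}\, \langle \opeg (q^*\, q)\, u ;u \rangle_{L^2} +\Big( c -\frac{\delta}{2} \Big)\, \| u \|_0^2 +\tfrac12 \text{\rm Re}\, \langle R_1^{\eps,\gamma}\, u ;u \rangle_{L^2} .
\end{equation*}
By Proposition \ref{prop20} applied to $q,q$ one has $\opeg (q)\, \opeg (q) = \opeg (q^*\, q) +R_2^{\eps,\gamma}$, and by Proposition \ref{prop18} applied to $q$ one has $\opeg (q)^* = \opeg (q) +R_3^{\eps,\gamma}$, with $R_2^{\eps,\gamma},R_3^{\eps,\gamma}$ of size $O(1/\gamma)$ on $L^2$; together with $\| \opeg (q)\, u \|_0 \le C \| u \|_0$ (Proposition \ref{prop13}) this gives
\begin{equation*}
\langle \opeg (q^*\, q)\, u ;u \rangle_{L^2}
= \| \opeg (q)\, u \|_0^2 +\langle \opeg (q)\, u ; R_3^{\eps,\gamma}\, u \rangle_{L^2} -\langle R_2^{\eps,\gamma}\, u ;u \rangle_{L^2} \ge -\frac{C_0}{\gamma}\, \| u \|_0^2 .
\end{equation*}
Hence $\text{\rm Re}\, \langle \opeg (a)\, u ;u \rangle_{L^2} \ge \big( c -\delta/2 -C_1/\gamma \big)\, \| u \|_0^2$, where $C_1$ depends only on $\delta$, on $C_K$ (through the construction of $p$) and on finitely many semi-norms of $\sigma$ and $V$. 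Choosing $\gamma_0$ with $C_1/\gamma_0 \le \delta/2$ — which depends uniformly on $V$, $C_K$ and $\delta$ — we conclude $\text{\rm Re}\, \langle \opeg (a)\, u ;u \rangle_{L^2} \ge (C_K -\delta)\, \| u \|_0^2$ for $\gamma \ge \gamma_0$, which is the asserted inequality (with $C = C_K$).

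The one step that is not entirely routine is the construction of $p$ as an element of ${\bf S}^0$ on an open set: one must first enlarge the compact set $K$, on which positivity is assumed, into an open set on which $\text{\rm Re}\, \sigma$ remains uniformly positive in $(\xi,\gamma)$ — this uses only the uniform bound on $\partial_v \sigma$ — and then check that the matrix square root of a uniformly positive-definite ${\bf S}^0$ symbol is again an ${\bf S}^0$ symbol, which follows from the analyticity of the matrix square root and the Fa\`a di Bruno formula. Everything else is an application of the already established continuity and calculus rules, which is also why the regularity $n \ge 2d +2$ inherited from Propositions \ref{prop18}, \ref{prop20} and \ref{prop13} is exactly what is needed.
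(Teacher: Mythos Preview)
Your proof is correct and follows essentially the same approach the paper points to: the classical square-root argument of \cite{williams3}, implemented here via Propositions \ref{prop18} and \ref{prop20} together with the $L^2$ boundedness of Proposition \ref{prop13}. The paper itself gives no proof for Theorem \ref{thm11} (nor for Theorem \ref{thm5} in the wavetrain case), merely remarking that the argument of \cite[page~155]{williams3} goes through once the adjoint and product rules are available without compact support; your write-up supplies exactly those details, including the only nonroutine step---enlarging $K$ to an open set on which $\text{\rm Re}\,\sigma$ stays uniformly positive so that the square-root symbol $p$ lives in ${\bf S}^0({\mathcal O}')$.
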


There is of course an extended version of the singular calculus that allows for pseudodifferential cut-offs just
as in the wavetrains case.

\bibliographystyle{plain}
\bibliography{CGW2}
\end{document}